\numberwithin{equation}{section}
\def\eps{\varepsilon}
\def\calC{\mathcal{C}}
\def\calL{\mathcal{L}}
\def\calP{\mathcal{P}}
\def\loc{\mathrm {loc}}
\def\ext{\mathrm {ext}}
\def\intt{\mathrm {int}}
\def\R{\mathbb{R}}
\def\N{\mathbb{N}}
\def\calH{\mathcal{H}}
\newcommand\Div{\mathrm{div}}
\newcommand\Lip{\mathrm{Lip}}
\providecommand{\dcalH}{\, \mathrm{d}\calH}
\providecommand{\dcalL}{\, \mathrm{d}\calL}
\providecommand{\supp}{\operatorname{supp}}
\providecommand{\dr}{\, \mathrm{d} r}
\providecommand{\dt}{\, \mathrm{d} t}
\providecommand{\ds}{\, \mathrm{d} s}
\providecommand{\dx}{\, \mathrm{d} x_1}
\providecommand{\dxy}{\, \mathrm{d} x}
\providecommand{\dy}{\, \mathrm{d} x_2}
\providecommand{\dxy}{\, \mathrm{d} \mathbf{x}}
\newcommand{\dist}{\operatorname{dist}}
\newcommand{\Tr}{\operatorname{Tr}}
\newcommand{\rank}{\operatorname{rank}}
\newtheorem{thrm}{Theorem}[section]
\newtheorem{lmm}[thrm]{Lemma}
\newtheorem{prpstn}[thrm]{Proposition}
\newtheorem{rmrk}[thrm]{Remark}
\begin{document}
\title{Energy scaling laws for geometrically linear elasticity models for microstructures in shape memory alloys$^*$}
\author{Sergio Conti$^1$, Johannes Diermeier$^1$, David Melching$^2$ and Barbara Zwicknagl$^3$}
\date{March 8, 2020}
%
%
%
%
%
\maketitle
\renewcommand*{\thefootnote}{*}
\footnotetext{This work was mainly done while all authors were at the University of Bonn and 
was partially supported by the {\em Deutsche Forschungsgemeinschaft} via
project 211504053 - SFB 1060/A{06}.}
\renewcommand*{\thefootnote}{\arabic{footnote}}
\footnotetext[1]{
 Institut f\"ur Angewandte Mathematik, Universit\"at Bonn,
    53115 Bonn, Germany}
\footnotetext[2]{Fakult\"at f\"ur Mathematik, Universit\"at Wien, 
1090 Wien, Austria}
\footnotetext[3]{
Institut f\"ur Mathematik,    {Humboldt-Universit\"at zu Berlin,
10117} Berlin, Germany}

\begin{abstract} We consider a singularly-perturbed two-well problem in the context of planar geometrically linear elasticity to model a rectangular martensitic nucleus in an austenitic matrix. We derive the scaling regimes for the minimal energy in terms of the problem parameters, which represent the {shape} of the nucleus, the quotient of the elastic moduli of the two phases, the surface energy constant, and the volume fraction of the two martensitic variants. We identify several different scaling regimes, which are distinguished either by the exponents in the parameters, or by logarithmic corrections, for which we have matching upper and lower bounds.
\end{abstract}

\section{Introduction}
Solid-solid phase transitions are a classical model problem in the variational study of pattern formation in solids, both in the context of the theory of relaxation and in the study of singularly perturbed problems.
Their study has led on the one side to many important abstract developments in the calculus of variations, on the other side to a mathematical explanation of the physical behavior of shape-memory alloys and other materials with peculiar properties \cite{ball-james:87,ball-james:92,bhattacharya:92,MuellerLectureNotes,ball:04,kohn:06,James2019}.
The basic model is a vectorial, nonconvex variational problem, where the integrand depends on the gradient of the deformation field.
The study of the macroscopic material behavior is strongly coupled to the development of the theory of quasiconvexity and relaxation \cite{MuellerLectureNotes,Dacorognabuch},
and focuses on average properties of the microstructures without resolving the geometric details and the microscopic length scales.

A finer analysis requires the introduction of a length scale, typically in the form of a small parameter times a convex function of a second gradient, which penalizes interfaces.
The resulting singularly-perturbed  nonconvex problem contains  a 
scale dependence and is much more difficult to study in detail, a numerical treatment is in most cases not feasible {either}.
Starting with the papers by Kohn and Müller  \cite{kohn-mueller:92,kohn-mueller:94} it has become clear that the key
property is the scaling of the optimal energy in terms of the parameters present in the problem, and that it is appropriate
to start by focusing on the exponents and ignoring the prefactor. One obtains mesoscopic phase diagrams which characterize the different 
regimes of material behavior and the qualitative properties of the microstructure \cite{kohn:06,CapellaOtto2009,knuepfer-kohn:11,knuepfer-kohn-otto:13}.
The techniques developed for singularly-perturbed functionals modeling martensitic microstructures have proven useful also in the study
of a variety of other physical problems, such as {for example} magnetic microstructures \cite{choksi-kohn:98,choksi-et-al:98,knuepfer-muratov:11}, flux tubes in superconductors \cite{CKO03,conti-et-al:15,ContiGoldmanOttoSerfaty2018},
diblock copolymers  \cite{Choksi01}, wrinkling in thin elastic films \cite{JinSternberg2,belgacem-et-al:02,bella-kohn:14}, 
and compliance minimization \cite{Kohn-Wirth:15}.

One aspect which is very important {for} practical applications of materials with solid-solid phase transitions is the detailed study of the transformation path from austenite to martensite and the corresponding hysteresis. 
It is known that the amplitude of the hysteresis cycle crucially depends on the microstructures that emerge during nucleation \cite{cui-et-al:06,zjm:09,zwicknagl:14}.
Specifically, transition-state theory explains that the transformation from austenite to martensite is strongly influenced by the energetics of the critical nucleus, which is a small inclusion of martensite in an austenitic matrix. 
{It is known that stress-free inclusions with interfaces of finite total area (or length, in two dimensions) are possible only for special material parameters \cite{dolzmann-mueller:95,kirchheim:03,knuepfer-kohn:11,knuepfer-kohn-otto:13,rueland:16,rueland:16_2,ContiKlarZwicknagl2017,cesana-et-al:19}.}

We investigate here a variational model for the formation of microstructures in a martensitic nucleus embedded in an austenitic matrix. 
The mechanical framework is the theory of geometrically linear elasticity, the mathematical framework is a singularly perturbed nonconvex vectorial functional. 
Before discussing the large body of mathematical literature that has been devoted to variants of this problem in the last decades, let us briefly introduce the setting. 
For simplicity we work in two spatial dimensions and consider a large body, identified with $\R^2$, 
which is mostly austenitic with a bounded martensitic inclusion $\omega\subset\subset\R^2$.
The inclusion $\omega$ is selected by a process slower than elastic equilibration and therefore, for the present purposes, fixed.

We take the austenite state as reference configuration and denote by $u:\R^2\to\R^2$ the elastic displacement.
We assume that two variants of martensite are relevant, which are characterized by strains $A, B\in\R^{2\times 2}_\text{sym}$.
Experimentally it is known that martensitic transformations are to a very good approximation volume preserving \cite{bhattacharya:92}, therefore we assume $\Tr A=\Tr B=0$.
Relaxation theory predicts zero macroscopic energy if $A$ and $B$ are compatible 
and austenite can be realized as a weighted average of the two martensitic variants. This means
that 
there are matrices $\hat A, \hat B\in\R^{2\times 2}$
with $\hat A-A$ and $\hat B-B$ skew-symmetric 
such that
$\rank (\hat A-\hat B)=1$ and 
$(1-\theta) \hat A+\theta \hat B=0$ for some $\theta\in (0,1)$.
In this situation, a finer analysis, which includes a singular perturbation regularizing the microstructure, is necessary in order to understand the detailed material behavior.
Since austenite/martensite interfaces which are not aligned with the rank-one direction have very large energy, one {expects} the nucleus to be elongated in the rank-one direction.
For mathematical simplicity it is convenient to further restrict the geometry. Since $\hat A$ and $\hat B$ are rank-one connected, we have $\hat A-\hat B=c\otimes n$ for some $c,n\in\R^2$. By scaling we can assume $|c|=|n|=1$.
From $\Tr A=\Tr B=0$ one obtains 
$\Tr \hat A=\Tr \hat B=0$ and therefore
$c\cdot n=0$, and from $(1-\theta) \hat A+\theta \hat B=0$ one obtains
 $\hat A=\theta c\otimes n$, $\hat B=(\theta-1)c\otimes n$. By a change of variables one can reduce to the case that
 $\theta\le\frac12$, $c=e_1$ and $n=e_2$. We shall then assume that the martensitic domain is a rectangle elongated along $e_1$, and by scaling it suffices to consider
\begin{eqnarray}\label{eq:Omega}
\Omega_{2L}:=(0,2L)\times(0,1)\subset\R^2.
\end{eqnarray}
The same pair of matrices allows for a second rank-one connection, rotated by 90 degrees. Therefore we can assume without loss of generality that
\[L\geq \frac 12. \]
In particular, both edges of $\Omega_{2L}$ are aligned with the habit planes of exact austenite/martensite interfaces.
In the austenite the elastic energy vanishes if 
the strain $e(u)$, defined by
\begin{equation}\label{eqdefeu}
 e(u):=\left(\nabla u\right)_{\text{sym}}:=\frac{1}{2}(\nabla u+\nabla^Tu), 
\end{equation}
vanishes;  in the martensite if $e(u)\in\{A,B\}$, and (assuming sufficient regularity) grows quadratically close to these minima. The relevant constructions have strains which are 
not larger than a multiple of the order parameter, hence we do not expect the behavior of the energy at infinity to be important for the scaling results we shall derive, provided sufficient coercivity is present. For simplicity we restrict to quadratic energies, characterized as the squared distance from the energy wells. 
We use $|a|:=(\text{Tr } a^Ta)^{1/2}=(\sum a_{ij}^2)^{1/2}$ for the Euclidean norm of a matrix and $\dist(a,M):=\inf\{|a-m|:m\in M\}$ for the distance of a matrix to a set and consider the 
functional $J:W_\loc^{1,2}(\R^2;\R^2)\to\R\cup\{\infty\}$ given by
\begin{eqnarray}\label{eq:funcfull}
&&J(u):=\mu\int_{\R^2\setminus\Omega_{2L}}|e(u)|^2\text{ d} \mathcal{L}^2+\int_{\Omega_{2L}}\dist^2\left(e( u),K\right)\text{ d} \mathcal{L}^2+\eps|D^2u|(\Omega_{2L}).
\end{eqnarray}
Let us briefly explain the terms in the functional.
The first term in $J(u)$ represents the elastic energy of the surrounding austenite, where $\mu$ stands for the ratio of typical elastic moduli of austenite and martensite.
This term favors configurations whose gradients are approximately skew symmetric. The second term measures the elastic energy inside the martensitic nucleus, which vanishes on
\begin{eqnarray}\label{eq:K}
K:=\left\{\frac{1}{2}\left(\begin{array}{c c}
0&\theta\\
\theta&0
\end{array}\right),\ \frac{1}{2}\left(\begin{array}{c c}
0&-1+\theta\\
-1+\theta&0
\end{array}\right)\right\}{.}
\end{eqnarray}
The parameter $\theta\in(0,1/2]$ measures the compatibility between this majority martensitic variant and the surrounding austenite.
The so-measured compatibility has been found to play an important role in the control of the thermal hysteresis of the phase transition (see e.g. \cite{james-zhang:05,cui-et-al:06,zjm:09} and the references therein). 
Of particular interest is the almost compatible case $\theta\ll1$ which corresponds to particularly low hysteresis
 \cite{cui-et-al:06,zjm:09,zwicknagl:14}.
The third term in \eqref{eq:funcfull} 
is a singular perturbation that regularizes the nonconvex part of the functional. It 
prevents  too fine oscillations between the martensitic variants in $\Omega_{2L}$, and can be related to an interfacial energy, $\eps>0$ being a typical surface energy constant per unit length.
Whereas one could physically imagine that similar terms are present also in the austenitic phase, they are normally not included 
since the convex austenitic energy does not need regularization. Although we expect most of our results to carry over to a setting in which this term is extended to $\R^2$, for brevity we do not pursue this investigation here.
The energy of the austenite/martensite interface depends only on the shape of the inclusion, which is fixed here, and is hence irrelevant for the present purposes.
We denote by $D^2u$ the second distributional derivative of $u$. If it is a measure, then we denote by
$|D^2u|(\Omega_{2L})$ {the} total variation of $D^2u$, otherwise we set
$|D^2u|(\Omega_{2L}):=\infty$.
Existence of minimizers can be readily established by the direct method of the calculus of variations and will not be discussed explicitly, as it is not important for the study of the scaling of the energy.
\\
To determine exact minimizers of functionals like \eqref{eq:funcfull} is generally not possible, and we follow the strategy to determine the scaling regimes of the minima in terms of the problem parameters $L$, $\mu$, $\theta$ and $\eps$.
{We remark that $L\to\infty$ corresponds to the long-inclusion limit, which is relevant due to the compatibility condition; $\theta\to0$ is the almost-compatible limit, which is the one of low-hysteresis materials; $\eps\to0$ is the large-body limit, in which complex structures arise.}

Our main result is the following scaling law for the minimal energy.
For an overview over the individual regimes we refer to Section {\ref{sec:tabulars}} below. Explicit constructions are given in Section {\ref{sec:ub}}.
{We remark that the same result {holds for}  nonlinear energy densities $W_A(e(u))$, $W_M(e(u))$ with $\frac1c|a|^2\le W_A(a)\le c |a|^2$ and $\frac1c \dist^2(a,K)\le W_M(a)\le c \dist^2(a,K)$ for all $a\in\R^{2\times 2}$.}

\begin{thrm}\label{th:main}

There exists a constant $c>0$ such that for all {$\mu>0$, $\varepsilon>0$, $\theta\in (0,1/2]$, and $L\geq1/2$}
\begin{align*}
  &\frac{1}{c}\mathcal{I}({\mu,\eps,\theta,L})
   \leq \min_u J(u) 
 \leq  c\mathcal{I}({\mu,\eps,\theta,L}),
\end{align*}
where {$J$ was defined in (\ref{eq:Omega}), (\ref{eqdefeu}), (\ref{eq:funcfull}), (\ref{eq:K}) and}
\begin{align*}
{\mathcal{I}({\mu,\eps,\theta,L})}:= \min\Big{\{ }&
 \theta^2 L,  & {\text{(constant)}}\\
 &\mu \theta^2 \ln (3+L), & {\text{(affine)}} \\
 &\mu \theta^2 \ln (3+ \frac L  \mu ) + \varepsilon\theta ,&{\text{(linear interpolation)}}\\
 &{\mu \theta^2 \ln (3+ \frac { \varepsilon {L}}{\mu \theta^{2}}) 
 +\mu\theta^2\ln (3+\frac{\eps}{\mu^2\theta^2})
 + \varepsilon^{1/2}\theta^{3/2},}   & {\text{(single truncated branching)}}\\
 &{\mu \theta^2 \ln (3+ \frac { \varepsilon {L}}{\mu \theta^{2}}) 
 +\mu\theta^2\ln (3+\frac{\theta}{\mu})}
,  & {\text{(corner laminate)}}\\
 &\varepsilon^{2/3}\theta^{2/3} L^{1/3} +\varepsilon L,  & {\text{(branching)}}\\
 &\mu^{1/2}\varepsilon^{1/2}\theta L^{1/2}  (\ln (3+ \frac 1 {\theta^{2}}))^{1/2}+\varepsilon L,  & {\text{(laminate)}} \\
 &\mu^{1/2}\varepsilon^{1/2}\theta L^{1/2} (\ln (3+ \frac{\varepsilon}{ \mu^{3}\theta^{2}{L}}))^{1/2}+\varepsilon L
 \Big{\}}  & {\text{(two-scale branching).}}
 \end{align*}
\end{thrm}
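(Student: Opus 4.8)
The plan is to prove the upper and lower bounds separately. For the upper bound, one exhibits for each of the eight terms in $\mathcal I$ an explicit competitor $u$ whose energy is bounded by a constant times that term; the minimum over all competitors then yields $\min_u J\le c\,\mathcal I$. The constructions are the ones sketched in the regime names: the constant state $e(u)\equiv 0$ outside and $e(u)\in K$ on all of $\Omega_{2L}$ (actually the single-variant state giving energy $\theta^2L$); an affine interpolation across the height of the rectangle; a simple laminate between the two variants in $K$ with period tuned to balance the $\eps|D^2u|$ term against the austenitic mismatch $\mu|e(u)|^2$; branching (Kohn--Müller) constructions refining the laminate toward the long austenite/martensite interfaces, in single, truncated, and two-scale variants; and corner-laminate constructions handling the short edges. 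In each case one splits the energy into the austenite term, the martensite term, and the surface term, estimates each, and optimizes the free geometric parameters (laminate period, branching refinement rates, truncation length). This is done in detail in Section \ref{sec:ub}; here one only needs to record that each construction lies in $W^{1,2}_{\loc}$ with $D^2u$ a measure on $\Omega_{2L}$, and to collect the resulting bounds.

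For the lower bound one must show $\min_u J\ge \frac1c\mathcal I$, i.e. that \emph{every} admissible $u$ has energy at least a constant times one of the eight terms — equivalently, that the energy is bounded below by the minimum. The natural strategy is a case distinction driven by the parameters. A first dichotomy is whether the surface term $\eps|D^2u|(\Omega_{2L})$ is large or small compared with the target; if it is large we are essentially done via the $\eps L$-type contributions. Otherwise $u$ is ``close to a single variant'' in a quantitative averaged sense, and one extracts information from the austenite term: since $e(u)\to0$ at infinity and $e(u)$ is close to $K$ inside, there is a jump in the average strain across $\partial\Omega_{2L}$ that forces a logarithmically divergent elastic field in the austenite (this is the origin of all the $\mu\theta^2\ln(\dots)$ terms), with the precise logarithm depending on the length scales $L$, $\mu$, $\eps/(\mu\theta^2)$, $\eps/(\mu^2\theta^2)$ at which the construction can be cut off. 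The genuinely microstructural lower bounds (the branching and laminate terms) are obtained by the standard interpolation/Fourier or slicing arguments of Kohn--Müller type: one tests the excess elastic energy against the surface energy on strips parallel to $e_1$, using that the minority phase must occupy volume fraction $\sim\theta$, to get a lower bound of the form (elastic)$^a$(surface)$^b$(volume)$^c$, optimized over the scale. The logarithmic corrections in the laminate terms come from iterating this estimate over dyadic scales.

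The main obstacle is the lower bound, and within it the bookkeeping of the case distinction: one must show that the eight regimes genuinely exhaust all parameter ranges and that in each range the corresponding lower-bound mechanism is active, with the cross-over thresholds matching the minima in $\mathcal I$ up to constants. Technically the hardest single ingredient is the interaction between the austenitic ($\mu$-weighted) far field and the interfacial energy: the logarithm must be cut off at the correct scale, which is itself determined by optimizing the branching construction, so the lower bound for the $\mu\theta^2\ln(3+\eps L/(\mu\theta^2))$-type terms requires a careful multiscale argument rather than a one-shot estimate. A secondary difficulty is handling the corners of $\Omega_{2L}$, where the two incompatible habit planes meet; there the corner-laminate term $\mu\theta^2\ln(3+\theta/\mu)$ appears and one needs a localized lower bound near each short edge. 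I would organize the proof as: (1) reduce to the regime $\eps|D^2u|$ small; (2) prove the rigidity/trace estimate giving the logarithmic austenite lower bounds; (3) prove the Kohn--Müller-type microstructure lower bounds on interior strips; (4) combine (2)--(3) with the localization near corners; (5) check that the case analysis covers all $(\mu,\eps,\theta,L)$ and that the thresholds agree with $\mathcal I$.
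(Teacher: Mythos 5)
Your upper-bound half is essentially the paper's route: one explicit competitor per regime (constant, affine, linear interpolation, truncated/corner/branched/laminated constructions), each estimated by splitting into austenite, martensite and surface contributions and optimizing the free scales; this is exactly what Theorem \ref{th:upperbound} and Section \ref{sec:ub} do (modulo a small slip in your description: the $\theta^2L$ regime comes from $u\equiv 0$, not from a single-variant state inside $\Omega_{2L}$, which is the affine regime $\mu\theta^2\ln(3+L)$). The lower bound, however, is where the actual content of the theorem lies, and your outline has concrete gaps. First, your opening dichotomy ``if $\eps|D^2u|(\Omega_{2L})$ is large we are done via the $\eps L$-type terms, otherwise $u$ is close to a single variant'' does not work: $I(u)\ge \eps L$ does not imply $I(u)\ge c\,\mathcal I$ (in the branching regime $\mathcal I\sim \eps^{2/3}\theta^{2/3}L^{1/3}\gg\eps L$), and a competitor whose surface energy is below the target can be a fine laminate, nowhere near a single variant. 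The geometric-mean regimes $\mu^{1/2}\eps^{1/2}\theta L^{1/2}\ln^{1/2}(\cdots)$ cannot be produced by such a one-shot global dichotomy; in the paper they come from a per-slice dichotomy (the sets $\mathcal C$, $\mathcal P$, $\mathcal R$, $\mathcal G$) combined with an optimization over two auxiliary scales $\lambda$ and $m$ in Lemma \ref{lem:thetasmallbranching} and Proposition \ref{lem:lbbranching}.

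Second, your microstructural lower bound is stated as the scalar Kohn--M\"uller interpolation ``on strips parallel to $e_1$'', but here only $e(u)$ is controlled: $\partial_2u_2$ does not distinguish the two variants and $\partial_2u_1$ is not controlled without an independent bound on $\partial_1u_2$, so neither horizontal nor vertical slicing sees the order parameter. The paper must slice in the almost-diagonal direction $\xi=(\tfrac14,1)$ (BD-type slicing, \eqref{eq:defuxi}--\eqref{eq:estduxi}) and prove separate estimates for $u_1$ near the corners (Lemma \ref{lemmaystin}) and for $u_2$ along good slices (Lemma \ref{lem:costsu2inA}); this vectorial issue is absent from your plan. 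Third, the logarithms $\ln(3+\tfrac1{\theta^2})$, $\ln(3+\tfrac{\eps}{\mu^3\theta^2L})$, $\ln(3+\tfrac\theta\mu)$, $\ln(3+\tfrac{\eps}{\mu^2\theta^2})$ are not obtained by ``iterating the interpolation estimate over dyadic scales'': as already observed in the scalar setting, such localization arguments lose the correct $\theta$-dependence. The paper instead constructs capacity-type logarithmic test functions supported on the minority set of a good slice, with boundary corrections built differently for $\mu\le\theta$ and $\theta<\mu$, and pairs them with the boundary trace via the duality of Lemma \ref{lemmah12app}; the corner logarithm $\mu\theta^2\ln(3+\tfrac\theta\mu)$ and the term $\mu\theta^2\ln(3+\tfrac{\eps}{\mu^2\theta^2})$ need the dedicated boundary arguments of Lemmas \ref{lemmaystin} and \ref{lemmabdryln}, which your step (4) only gestures at. As written, your lower-bound plan would not recover several of the eight regimes, so the proposal is not yet a proof of the theorem.
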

{
\begin{proof}
 {The upper bound follows from  Theorem \ref{th:upperbound}, and the lower bound follows from Theorem \ref{th:lowerbound}.}

\end{proof}
}
The proof of Theorem \ref{th:main} is split into two main steps. 
In Section \ref{sec:upperbound}
we combine constructions from the literature with some new ones for the upper bound. 
The ansatz-free lower bound is proven in Section \ref{sec:lowerbound}.

{We remark that the constant $3$ inside the terms of the form $\ln(3+x)$ is, to a certain degree, arbitrary. We choose 3 so that $\ln (3+x)\ge 1$ for all $x\ge0$. This simplifies some estimates in the proofs. The constant 3 could be replaced by 2 or by any number larger than 1, changing correspondingly the constant $c$ in the statement.}\\
{We point out that in contrast to previous works on a single austenite/martensite interface (see e.g. \cite{conti:06,zwicknagl:14,conti-zwicknagl:16}) there is no relevant regime which corresponds to a construction with a single laminate near the left and right boundaries of the nucleus.}

\subsection{Comparison to the literature and new contributions}
 We point out that the study of microstructures in shape memory alloys by means of the Calculus of Variations has a long history, and we generalize and build on several earlier works that we will briefly discuss to point out our new contributions. Our proof uses techniques developed in the study of scalar-valued models taking into account all problem parameters on the one hand,  and of vectorial models in specific parameter regimes on the other hand, combined with several new arguments. As will be outlined in more detail below, the latter include in particular
 \begin{itemize}
 \item the careful treatment of the elastic energy in the austenite part {which completely surrounds a} martensitic inclusion{. This} introduces new difficulties in both the upper and the lower bound of Theorem \ref{th:main}; and 
 \item the explicit use of the full geometrically linearized energy instead of the scalar simplification, {which requires e.g.} a $BD$-type slicing argument in the proof of the lower bound.
 \end{itemize}
  In the '90s, Kohn \& M\"uller proposed a reduced scalar-valued model for the formation of microstructures near interfaces  between austenite and twinned martensite \cite{kohn-mueller:92,kohn-mueller:94}. These models are by now well understood in terms of scaling of the minimal energy and more quantitative properties of minimizers in specific regimes ({see} e.g. \cite{conti:00,conti:06,diermeier:10,zwicknagl:14,cdz:15,diermeier:16,conti-zwicknagl:16}). Roughly speaking, depending on the problem parameters, minimizers are expected to be uniform, or show laminated structures, or branched patterns, where the different martensitic variants finely mix close to the interface. Compared to the setting we consider here, in these earlier works there were two main simplifications: \\
  First, only one component of the displacement has been taken into account, i.e., only functions $u=(u_1,u_2)$ with $u_2=0$ are considered, which makes the problem scalar (similarly for $u_1=0$). On a more technical level, the {first two terms of the} functional \eqref{eq:funcfull} then in particular provide control on the full gradient of the displacements. In our more general setting, only the symmetric part of the gradient is {directly} controlled by the functional, and this introduces several additional difficulties in the proof of the lower bound (see also the discussion of vectorial models below). Nevertheless, the constructions we use to prove the upper bound here, are in fact scalar valued. {Some of them build upon constructions introduced in the above mentioned references, 
  but others are new, as for example the one for the corner laminate and the single truncated branching, see the proof of Theorem \ref{th:upperbound}  in}
  Section \ref{sec:upperbound}. \\
  Second, in the above references, only one austenite/martensite interface is considered. That is, in the elastic energy of the austenite part (the first term in \eqref{eq:funcfull}), only the contribution from $(-\infty,0)\times(0,1)$ is taken into account. If we restricted the energy in \eqref{eq:funcfull} to this strip, there would be configurations with vanishing total energy, e.g.,
\[u(x)=\begin{cases}
0,&\text{\ if \ }x\in (-\infty,0]\times (0,1),\\
(0,\theta x_1),&\text{\ if\ }x\in(0,2L) \times (0,1).
\end{cases} \]
The functional \eqref{eq:funcfull} is more nonlocal than the scalar valued models in the sense that interactions between the traces at the upper and lower boundaries of $\Omega_{2L}$ (captured by the elastic energy of the austenite part) make it sometimes more favorable to pay elastic energy inside $\Omega_{2L}$ to release elastic energy in the austenite part. Let us consider a typical example. Deep in $\Omega_{2L}$, in the simplified setting one expects the affine configuration $u_1(x)=\theta x_2$, which has zero energy in $\Omega_{2L}$. In our setting instead, by Rellich's trace theorem, this configuration bears elastic energy in the austenite part. It therefore competes with a single laminate, which requires only surface energy in the interior of $\Omega_{2L}$.
The proof of the lower bound correspondingly needs a treatment of the interplay of the energy in the austenite and the martensite on many different scales. This is done by line integrals, inspired by the arguments used for proving Korn-Poincar\'e inequalities in $BD$, 
see for example Lemma \ref{lemmaystin}
and Lemma \ref{lem:costsu2inA} below.
One important ingredient is a separate treatment of the austenite part, where one controls
three of the four components of $\nabla u$ (but with a coefficient $\mu$), and the martensite part, where only the two diagonal entries are 
controlled independently of the variant.
\\[1mm]
On the other hand, we extend techniques developed in \cite{chan:13,chan-conti:14,chan-conti:14-1}. In these works, the geometrically nonlinear analogue to \eqref{eq:funcfull} has been considered for the case $\theta=\frac{1}{2}$ and hard austenite $\mu=\infty$. Some of their techniques, in particular related to localization in the proof of the lower bound, have been adopted to the geometrically linear setting and refined in two of the authors' Masters's theses \cite{diermeier:13,melching:15} on which  we build here. A main difficulty in our setting compared to those works (in addition to the {`non-locality'} due to the elastic energy in the austenite part discussed above) lies in the treatment of small $\theta$. As pointed out in \cite{conti-zwicknagl:16}, such localization techniques are not sufficient to obtain the precise scaling of laminated structures since the logarithmic corrections require a rather precise understanding of the geometry of the set in which 
the minority variant is active. Here, a careful $BD$-type slicing argument for almost diagonal slices allows us to combine the techniques from the studies of vectorial models with techniques developed to treat small volume fractions in the scalar valued case. {In particular, test functions need to be obtained that reproduce the fine-scale structure of the martensite and have a controlled behavior at the boundaries, see Lemma \ref{lem:thetasmallbranching} where for example separate test functions on the top and bottom boundaries need to be constructed  for $\theta\ge\mu$ (the {parameter range} with corner laminates) and $\theta\le \mu$ (without corner laminates){, and also} Lemma \ref{lemmabdryln} where the corner logarithm is treated by a test function on the boundary.
At the same time the ``horizontal'' interpolation between different variants needs to be localized in order to capture the optimal power of $\theta$ (Lemma \ref{lemmainterpolationestim}).}\\
Analytical results on microstructures for related three-dimensional models based on geometrically linearized elasticity functionals were obtained in \cite{CapellaOtto2009,CapellaOtto2012,TS} for a cubic-to-tetragonal phase transition and in \cite{rueland:16,rueland:16_2} for a cubic-to-orthorhombic transition. As in our case, the focus there lies on planar austenite/martensite interfaces. Some results that take into account also the volume dependence of the energy of a martensitic inclusion (by penalizing the area of the austenite/martensite interfaces) and the resulting optimal shapes of nuclei (which typically differ significantly from a rectangle) were given in \cite{knuepfer-kohn:11} for a two-well potential  and in \cite{knuepfer-kohn-otto:13,bella-goldman:15} for the cubic-to-tetragonal transition in whole space and domains with generic corners, respectively. We note that these works predict a different scaling behavior. We hope that a precise understanding of microstructures in a fixed domain as derived {here}
provides also a step towards a better understanding of the full nucleation problem.

\subsection{Notation}
Throughout the text, we denote by $c$ positive constants that may change from expression to expression, we use
 $x\lesssim y$ to state that there is $c>0$ such that $x\le cy$. We use capitalized letters and $c_i$ with indices $i\in\N$ to denote specific fixed constants that will not be changed throughout the text.\\
For a measurable set $A\subset \R^d$ {with $\calL^d(A)\neq 0$} and a function $w\in L^1(A)$, we denote the average by $\langle w\rangle_{A}:=\calL^d(A)^{-1}\int_Aw\text{\,d}\calL^d$. 

{{\bf Energy.}}

Let us first fix a notation for  the function space
\[
 \mathcal{X}:=\left\{ u\in W_{\text{loc}}^{1,2}(\R^2,\R^2)\,:\,\partial_i u_j \in BV(\Omega_{2L}) \text{ for } i,j \in \{1,2\},\, \nabla u\in L^2(\R^2,\R^{2\times 2})\right\}
\]
on which the energy is finite. In the proofs it will be convenient to consider a slightly modified energy functional where the symmetrized gradient in the elastic energy of the austenite part is replaced by the full gradient. This does not change the scaling regimes of the minimal energy due to 
Korn's inequality {since} 
the constant in Korn's inequality in $\R^2 \setminus \Omega_{2L}$ can be chosen independently of $L$, i.e., there is a constant $C_K>0$ independent of $L$ such that 
\begin{eqnarray}\label{eq:korn}
\min_{A\in\text{Skew}(2)}\|\nabla u-A\|_{L^2(\R^2\setminus \Omega_{2L})}\leq C_K\|e(u)\|_{L^2(\R^2\setminus \Omega_{2L})} \text{\ for all\ }u\in W_{\text{loc}}^{1,2}(\R^2,\R^2). 
\end{eqnarray}
To see this, we use the decomposition
\[\R^2\setminus\Omega_{2L}=[(-\infty,0)\times\R]\cup[\R\times(1,\infty)]\cup[(2L,\infty)\times\R]\cup[\R\times(-\infty,0)]. \]
Each one of the sets on the right-hand side is a half-space, and therefore on each of them, a Korn's inequality holds with a constant independent of $L$ (see \cite{kondratev-oleinik:88}). Further, every set intersects another one on a set of infinite measure{. Hence, for any function $u$, the Korn's inequality in each one of the four parts holds with the same skew symmetric matrix $A$, and therefore, \eqref{eq:korn} holds.}
We may therefore without changing the qualitative scaling behavior replace the symmetrized gradient in the first term in (\ref{eq:funcfull}) by the full gradient and define
\[
I(u):=\mu\int_{\R^2\setminus\Omega_{2L}}|\nabla u|^2\text{ d} \mathcal{L}^2+\int_{\Omega_{2L}} \min\left\{ |e(u)-\theta e_1 \odot e_2|^2,|e(u)+(1-\theta) e_1 \odot e_2|^2\right\}\text{ d} \mathcal{L}^2+\eps|D^2u|(\Omega_{2L})
\]
where 
\[e_1\odot e_2 := (e_1\otimes e_2)_{\text{sym}}.\]
Sometimes it will be useful to consider the energy only on parts of the domain. For any Borel set $A\subset \R^2$ we define 
\begin{align*}
 I_{A}(u)&:=\mu\int_{A\setminus\Omega_{2L}}|\nabla u|^2\text{ d} \mathcal{L}^2+\\
 &+\int_{A\cap\Omega_{2L}} \min\left\{ |e(u)-\theta e_1 \odot e_2|^2,|e(u)+(1-\theta) e_1 \odot e_2|^2\right\}\text{ d} \mathcal{L}^2+\eps|D^2u|(A\cap \Omega_{2L}),\\
  I^\intt(u)&:= I_{\Omega_{2L}}(u) \qquad  \text{ and } \qquad
  I^\ext(u):= I_{\R^2\setminus \Omega_{2L}}(u) .
\end{align*}
{{\bf The $H^{1/2}$-norm.}}\\
It has proven useful to interpret the energetic contribution in the austenite region as a trace norm at the austenite/martensite interface. 
For $\rho >0$ and $u_0 \in L^2((0,\rho))$ we define the $H^{1/2}$-seminorm by
\[
 [u_0]_{H^{1/2}((0,\rho))}^2:= \inf \Big\{ \int_{-\infty}^0\int_0^{\rho} |\nabla v (x_1,x_2)|^2 \text{ d}x_2\text{ d}x_1 \,:\, v(0,x_2)=u_0(x_2), v\in W^{1,2}_{\text{loc}}((-\infty,0)\times(0,\rho)) \Big\}.
\]
The subspace of $L^2((0,\rho))$ on which this seminorm is finite is called $H^{1/2}((0,\rho))$. 
%
{We state a variant of } Lemma 4.1 from \cite{conti-zwicknagl:16}.
\begin{lmm}\label{lem:interpolH12}
{Let $\omega\subset\subset(0,1)$.} Then there is $c=c(\omega)>0$  such that for all $v \in H^{1/2}((0,1))$ and $\psi \in H^{1/2}((0,1))\cap H^1((0,1))$ {with $\supp \psi\subset\omega$ one has}
 \[
  \int_0^1 v(t) \psi'(t) \text{ d} t \leq c [v]_{H^{1/2}((0,1))} [\psi]_{H^{1/2}((0,1))} .
 \]
\end{lmm}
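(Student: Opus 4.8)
The statement is a duality-type inequality comparing an $L^2$-pairing against the product of two $H^{1/2}$-seminorms, with one factor compactly supported inside $(0,1)$. The natural route is to move the derivative off $\psi$ by means of harmonic extensions and an integration by parts in the upper (or lower) half-strip, so that the bilinear form $\int_0^1 v\psi'$ is rewritten as a Dirichlet-type pairing $\int\int \nabla V\cdot \nabla \Psi$ of the two extensions, which is then estimated by Cauchy--Schwarz. The plan is as follows. First I would let $V$ be (a near-optimal competitor for) the harmonic extension of $v$ to $(-\infty,0)\times(0,1)$ realizing $[v]_{H^{1/2}((0,1))}$, and likewise let $\Psi$ be such an extension of $\psi$; since $\psi$ has support compactly contained in $\omega\subset\subset(0,1)$, one can arrange $\Psi$ to vanish near $x_2=0$ and $x_2=1$ (e.g. take the harmonic extension of $\psi$ on the half-plane $(-\infty,0)\times\R$, which automatically decays, or multiply by a cutoff supported in a neighborhood of $\omega$; the cutoff costs only a factor $c(\omega)$ because $\psi\in H^{1/2}\cap H^1$ and the $H^{1/2}$-seminorm of $\chi\psi$ is controlled by $[\psi]_{H^{1/2}}+\|\psi\|_{H^1}$ — and here one uses that $[\psi]_{H^{1/2}((0,1))}$ and $\|\psi\|_{H^1}$ are comparable on functions supported in $\omega$, or more simply one keeps the extension harmonic so no cutoff is needed).

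Next I would write, for the smooth case,
\[
\int_0^1 v(t)\,\psi'(t)\,\mathrm{d}t
= \int_0^1 V(0,x_2)\,\partial_{x_2}\Psi(0,x_2)\,\mathrm{d}x_2,
\]
and integrate by parts over the strip $S=(-\infty,0)\times(0,1)$. Because $\Psi$ vanishes near the horizontal edges $x_2=0,1$ and decays as $x_1\to-\infty$, the only boundary contribution is the one above, and one obtains
\[
\int_0^1 V(0,x_2)\,\partial_{x_2}\Psi(0,x_2)\,\mathrm{d}x_2
= -\int_S \big(\partial_{x_2}V\,\partial_{x_1}\Psi\big)\,\mathrm{d}x
\]
after using $\partial_{x_1}\big(\text{something harmonic}\big)$-type manipulations; more robustly, the cleanest version is to use that for the harmonic extension $\Psi$ one has $\partial_{x_2}\Psi = $ the harmonic conjugate relation, so $\int_0^1 V(0,\cdot)\partial_{x_2}\Psi(0,\cdot) = \int_S \nabla V\cdot \nabla\Psi^\perp$ where $\Psi^\perp$ is the conjugate, and $|\nabla\Psi^\perp|=|\nabla\Psi|$. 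Either way one lands on a bound
\[
\Big|\int_0^1 v\,\psi'\Big| \le \|\nabla V\|_{L^2(S)}\,\|\nabla\Psi\|_{L^2(S)}
\le c\,[v]_{H^{1/2}((0,1))}\,[\psi]_{H^{1/2}((0,1))},
\]
using Cauchy--Schwarz and the near-optimality of $V,\Psi$. Finally I would remove the smoothness assumption by density: smooth functions (with support of $\psi$ still inside $\omega$) are dense in the relevant spaces, all quantities in the inequality are continuous with respect to the $H^{1/2}$-seminorm on the left and the $H^{1/2}\cap H^1$ topology on the right, so the inequality passes to the limit.

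\textbf{Main obstacle.} The delicate point is the boundary-term bookkeeping in the integration by parts: one must ensure that no contribution appears on the horizontal edges $x_2=0$ and $x_2=1$ of the strip, and this is precisely where the hypothesis $\supp\psi\subset\subset\omega\subset\subset(0,1)$ enters — it lets us take an extension $\Psi$ of $\psi$ vanishing in a neighborhood of those edges, so that the edge integrals vanish identically (and the constant $c$ acquires its dependence on $\omega$ through the distance of $\omega$ to $\partial(0,1)$). A secondary technical nuisance is choosing the right pair of conjugate-harmonic extensions so that the cross term $\int_S \nabla V\cdot\nabla\Psi^\perp$ comes out with the correct sign and magnitude; this is standard once one recalls that on a simply connected domain the harmonic conjugate exists and is isometric in the Dirichlet norm, but it should be stated carefully. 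Since this is exactly Lemma 4.1 of \cite{conti-zwicknagl:16} up to the harmless replacement of $(0,1)$ by a generic interval and the localization of $\psi$'s support, I would expect the proof here to be a short adaptation of that argument.
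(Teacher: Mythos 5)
Your overall mechanism is viable and is in fact the mechanism of Lemma \ref{lemmah12app}: rewrite $\int_0^1 v\psi'$ as a bulk pairing $\int \nabla V\cdot(\nabla\Psi)^\perp$ of two extensions via the divergence theorem and conclude by Cauchy--Schwarz. Note, however, that the paper proves the present lemma differently (and more cheaply): it reduces to $\supp v\subset\subset(0,1)$ by subtracting a constant $v_0$ from $v$ --- free because $\int_0^1\psi'=0$ --- using the $H^{1/2}$-Poincar\'e inequality $\|v-v_0\|_{L^2}\le[v]_{H^{1/2}}$ to pay for the cutoff $\varphi(v-v_0)$, and then concludes with a one-line Fourier-series Cauchy--Schwarz.

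The genuine gap in your argument is the construction of an extension $\Psi$ of $\psi$ that vanishes near the horizontal edges $x_2\in\{0,1\}$ with $\|\nabla\Psi\|_{L^2}\le c(\omega)[\psi]_{H^{1/2}((0,1))}$; this is exactly where the constant $c(\omega)$ must be produced, and your two justifications do not deliver it. First, the claim that $[\psi]_{H^{1/2}((0,1))}$ and $\|\psi\|_{H^1((0,1))}$ are comparable on functions supported in $\omega$ is false: only $[\psi]_{H^{1/2}}\lesssim\|\psi\|_{H^1}$ holds, and for $\psi_k(t)=k^{-1/2}\eta(t)\sin(kt)$ with $\eta\in C_c^\infty(\omega)$ one has $\|\psi_k\|_{H^1}\sim k^{1/2}$ while $[\psi_k]_{H^{1/2}}\sim 1$. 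So, as written, your cutoff bound leaves $\|\psi\|_{H^1}$ on the right-hand side, which does not prove the lemma. Second, the fallback ``keep the extension harmonic so no cutoff is needed'' does not repair this: the harmonic extension of $\psi$ (extended by zero) to the half-plane does not vanish on the lines $x_2=0$ and $x_2=1$, and the near-optimal extension realizing the infimum in the definition of $[\psi]_{H^{1/2}}$ need not vanish there either, so the edge terms in your integration by parts over the strip do not disappear. What closes the gap is a Poincar\'e-type input: since $\psi$ vanishes on $(0,1)\setminus\omega$, a set of positive measure, one has the fractional Poincar\'e inequality $\|\psi\|_{L^2((0,1))}\le c(\omega)[\psi]_{H^{1/2}((0,1))}$; with it, extension of $\psi$ by zero has controlled $H^{1/2}(\R)$ seminorm (the extra Gagliardo term is $\int_\omega|\psi(x)|^2\big(\tfrac1x+\tfrac1{1-x}\big)\,\mathrm{d}x\le c(\omega)\|\psi\|_{L^2}^2$), the half-plane harmonic extension has controlled energy, and the vertical cutoff costs only $c(\omega)\|\psi\|_{L^2}$ by Poisson-kernel decay. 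Alternatively, do what the paper does and put the cutoff on the $v$ side after subtracting $v_0$. Without one of these inputs the dependence $c=c(\omega)$ never materializes. (A minor further point: on the unbounded strip the divergence theorem also needs the flux through $\{x_1=-R\}$ to vanish along a sequence $R_j\to\infty$, which should be said explicitly.)
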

{{\bf Remark.} Lemma 4.1 from \cite{conti-zwicknagl:16} incorrectly does not state
that $c$ depends on $\omega$ (or on $\supp\psi$). This is not relevant for the usage in  
 \cite{conti-zwicknagl:16}, since the test function $\psi$ can be constructed to be supported in $(1/12, 11/12)$.}
\begin{proof}
If $\supp v\subset\subset(0,1)$, then the assertion is readily proven by Fourier series,
\begin{equation*}
 |\sum_k \hat v_k^* i k \psi_k | \le (\sum_k |k|\, |v_k|^2)^{1/2} (\sum_k |k|\, |\psi_k|^2)^{1/2}.
\end{equation*}
Otherwise, one fixes $\varphi\in C^\infty_c((0,1))$ with $\varphi=1$ on $\omega$, applies 
the Poincar\'e estimate for $H^{1/2}$ to obtain $\|v-v_0\|_{L^2((0,1))}\le  [v]_{H^{1/2}((0,1))}$ for some $v_0\in\R$,
and {then applies} 
the previous assertion to $\tilde v:=\varphi (v-v_0)$ and $\psi$.
\end{proof}

In our proof of the lower bound, we shall use a related estimate given in the next lemma.

\newcommand{\myvarphi}{v}
\begin{lmm}\label{lemmah12app} Let $\omega\subset\R^2$ be a bounded Lipschitz set, $\Psi\in\Lip(\bar\omega)$, $\myvarphi\in W^{1,2}(\omega)$. Then
 \begin{equation*}
\left|  \int_{\partial\omega} \myvarphi\partial_\tau \Psi  \dcalH^1 \right| \le \|\nabla\Psi\|_{L^2(\omega)} \|\nabla\myvarphi\|_{L^2(\omega)},
 \end{equation*}
 where $\partial_\tau$ denotes the tangential derivative and $v$ is identified with its trace on $\partial\omega$.
\end{lmm}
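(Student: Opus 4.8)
The plan is to convert the boundary integral into a bulk integral over $\omega$ via the Gauss--Green theorem, exploiting the elementary fact that the tangential derivative is the normal component of the $\pi/2$-rotated gradient. Writing $\nabla^\perp f:=(\partial_2 f,-\partial_1 f)$ and letting $\tau=(-\nu_2,\nu_1)$ be the unit tangent associated with the outer unit normal $\nu$ on $\partial\omega$, one has $\partial_\tau f=\nabla f\cdot\tau=\nabla^\perp f\cdot\nu$ for $f\in C^1$. The key structural point is $\Div\nabla^\perp f=\partial_1\partial_2 f-\partial_2\partial_1 f=0$, so the rotated gradient is divergence-free; this is exactly what lets the argument go through with $\Psi$ only Lipschitz, since one never needs the distributional divergence or a normal trace of $\nabla\Psi$, but only $\nabla\Psi\in L^\infty(\omega)\subset L^2(\omega)$. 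Because also $|\nabla^\perp f|=|\nabla f|$ pointwise, this will produce exactly the constant $1$.

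First I would reduce to $\myvarphi\in C^\infty(\bar\omega)$. As $\omega$ is a bounded Lipschitz set, $C^\infty(\bar\omega)$ is dense in $W^{1,2}(\omega)$ and the trace operator $W^{1,2}(\omega)\to L^2(\partial\omega)$ is bounded, so one picks $\myvarphi_k\in C^\infty(\bar\omega)$ with $\myvarphi_k\to\myvarphi$ in $W^{1,2}(\omega)$ and with traces $\myvarphi_k\to\myvarphi$ in $L^2(\partial\omega)$. The restriction $\Psi|_{\partial\omega}$ is Lipschitz with respect to arclength, so its tangential derivative $\partial_\tau\Psi$ (understood as the a.e.\ derivative of $\Psi$ along the boundary curve) lies in $L^\infty(\partial\omega,\calH^1)$; hence $\int_{\partial\omega}\myvarphi_k\,\partial_\tau\Psi\dcalH^1\to\int_{\partial\omega}\myvarphi\,\partial_\tau\Psi\dcalH^1$, and since $\|\nabla\myvarphi_k\|_{L^2(\omega)}\to\|\nabla\myvarphi\|_{L^2(\omega)}$ it suffices to prove the inequality for each smooth $\myvarphi_k$.

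For smooth $\myvarphi_k$ I would argue in three steps. (i) Integrate by parts along $\partial\omega$, which is a finite disjoint union of closed Lipschitz curves: parametrizing each component by arclength $\gamma$, the function $(\myvarphi_k\Psi)\circ\gamma$ is Lipschitz and periodic, so the integral of its derivative vanishes, giving $\int_{\partial\omega}\myvarphi_k\,\partial_\tau\Psi\dcalH^1=-\int_{\partial\omega}\Psi\,\partial_\tau\myvarphi_k\dcalH^1$. (ii) Using $\partial_\tau\myvarphi_k=\nabla^\perp\myvarphi_k\cdot\nu$ and the classical Gauss--Green theorem applied to the vector field $\Psi\,\nabla^\perp\myvarphi_k\in\Lip(\bar\omega;\R^2)$ (Lipschitz because $\Psi$ is Lipschitz and $\myvarphi_k$ smooth), together with $\Div\nabla^\perp\myvarphi_k=0$, one obtains $-\int_{\partial\omega}\Psi\,\partial_\tau\myvarphi_k\dcalH^1=-\int_\omega\Div(\Psi\,\nabla^\perp\myvarphi_k)\dcalL^2=-\int_\omega\nabla\Psi\cdot\nabla^\perp\myvarphi_k\dcalL^2$. (iii) Estimate by the Cauchy--Schwarz inequality, pointwise in $\R^2$ and then in $L^2(\omega)$, using $|\nabla^\perp\myvarphi_k|=|\nabla\myvarphi_k|$, to get $\bigl|\int_{\partial\omega}\myvarphi_k\,\partial_\tau\Psi\dcalH^1\bigr|\le\|\nabla\Psi\|_{L^2(\omega)}\|\nabla\myvarphi_k\|_{L^2(\omega)}$. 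Passing to the limit $k\to\infty$ concludes.

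The points I expect to require genuine care, and which I would write out in full, are the two low-regularity justifications: in step (i), that the boundary integration by parts is legitimate on a merely Lipschitz domain (reduced, via the arclength parametrization, to the fundamental theorem of calculus for the Lipschitz periodic function $(\myvarphi_k\Psi)\circ\gamma$), and in step (ii), the applicability of the divergence theorem to the Lipschitz vector field $\Psi\,\nabla^\perp\myvarphi_k$. The decisive idea — and the reason the estimate holds with a universal constant and without any regularization of $\Psi$ — is the use of the rotated, divergence-free gradient in place of $\nabla\Psi$ directly.
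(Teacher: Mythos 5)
Your argument is correct, and it rests on the same decisive identity as the paper's proof — rotate the gradient to get a divergence-free field, apply Gauss--Green, and conclude by Cauchy--Schwarz using $|\nabla^\perp f|=|\nabla f|$ — but the execution is dual to the paper's. The paper keeps $v$ fixed, applies the divergence theorem to $f=v(\nabla\Psi)^\perp$ for $\Psi\in C^2$, and then handles the Lipschitz case by mollifying $\Psi$ and passing to the limit via weak-$*$ convergence of $\partial_\tau\Psi_\eps$ in $L^\infty(\partial\omega)$ (which needs uniform convergence plus the uniform Lipschitz bound to identify the limit). You instead never touch $\Psi$: you smooth $v$, integrate by parts tangentially along the closed Lipschitz boundary curves to move the derivative onto $v_k$, and apply Gauss--Green to $\Psi\,\nabla^\perp v_k$. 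What your route buys is that the only limit passage is the elementary one in $v_k$ (trace continuity plus $\partial_\tau\Psi\in L^\infty(\partial\omega)$), avoiding the weak-$*$ trace argument; what it costs is that the low-regularity work migrates to the boundary, where you must interpret $\partial_\tau\Psi$ as the a.e.\ arclength derivative of $\Psi\circ\gamma$ (the same interpretation the paper uses implicitly), invoke the a.e.\ product rule for the Lipschitz function $(v_k\Psi)\circ\gamma$ together with the vanishing of the integral of the derivative of a periodic Lipschitz function, and use the Gauss--Green theorem for Lipschitz vector fields on a Lipschitz domain — all legitimate, and you correctly flag exactly these as the points to write out, including the sign convention $\tau=(-\nu_2,\nu_1)$ that makes $\partial_\tau v_k=\nabla^\perp v_k\cdot\nu$.
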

\begin{proof}
Assume first that $\Psi\in C^2(\bar\omega)$. We define $f\in W^{1,2}(\omega;\R^2)$ by
 $f:=\myvarphi\,(\nabla\Psi)^\perp =(-\myvarphi\partial_2\Psi, \myvarphi\partial_1\Psi)$ and observe that $\Div f= \nabla\Psi\times \nabla\myvarphi{=\partial_2v\partial_1\Psi-\partial_1v\partial_2\Psi}$. Then
 \begin{equation*}
  \left|\int_{\partial\omega} \myvarphi \, \partial_\tau \Psi \dcalH^1\right|
  =\left|\int_{\partial\omega} f\cdot \nu\dcalH^1\right|
  =\left|\int_{\omega} \Div f\dxy\right|
 =\left|\int_{\omega} \nabla\Psi\times \nabla\myvarphi\dxy\right|
 \end{equation*}
implies the result. 
In the general case, we choose a sequence of smooth functions $\Psi_\eps\in C^\infty(\R^2)$ such that $\|\nabla \Psi_\eps\|_{L^\infty(\omega)}
\le \|\nabla \Psi\|_{L^\infty(\omega)}$, with $\Psi_\eps$ converging uniformly and strongly in $W^{1,2}(\omega)$ to $\Psi$. 
Then $\partial_\tau \Psi_\eps$ converges weakly{-$\ast$} to $\partial_\tau \Psi$ in $L^\infty(\partial\omega)$, therefore
 \begin{equation*}
  \left|\int_{\partial\omega} \myvarphi \, \partial_\tau \Psi \dcalH^1\right|
=\left|\lim_{\eps\to0}   \int_{\partial\omega} \myvarphi \, \partial_\tau \Psi_\eps  \dcalH^1\right|
  =\left|\lim_{\eps\to0}   \int_{\omega} \nabla\Psi_\eps\times \nabla\myvarphi\dxy\right|
  =\left|\int_{\omega} \nabla\Psi\times \nabla\myvarphi\dxy\right|.
 \end{equation*}
\end{proof}

We will moreover use the following variant of Lemma 2 in \cite{conti:06} that has also been used in the proof of Theorem 1 in \cite{zwicknagl:14}:
\begin{lmm}\label{lem:interpolSC}
 There is $c>0$ such that for all $\rho>0$, $v\in H^{1/2}((0,\rho))$,  $a\in\R$, and $b\in \R$ there holds
 \[
  c\rho^2 a^2 \leq  |a|\int_0^\rho |v(y)-a y - b|\text{ d}y + [v]^2_{H^{1/2}((0,\rho))}.
 \]

\end{lmm}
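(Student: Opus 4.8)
The plan is to reduce to the scale-invariant case $\rho=1$ and then argue by Fourier series. First I would rescale: setting $\tilde v(t):=\rho^{-1}v(\rho t)$, $\tilde a:=a$, $\tilde b:=\rho^{-1}b$, one checks that $[\tilde v]_{H^{1/2}((0,1))}^2=[v]_{H^{1/2}((0,\rho))}^2/\rho$ (the $H^{1/2}$-seminorm scales like the function values, since the Dirichlet energy on the half-strip is scale invariant up to the width), while $\int_0^1|\tilde v(t)-\tilde a t-\tilde b|\dt = \rho^{-2}\int_0^\rho|v(y)-ay-b|\dy$ and $\rho^2a^2=\tilde a^2$. Dividing the claimed inequality by $\rho$, it becomes equivalent to: there is $c>0$ with $c\,\tilde a^2\le |\tilde a|\int_0^1|\tilde v-\tilde a t-\tilde b|\dt+[\tilde v]_{H^{1/2}((0,1))}^2$ for all $\tilde v\in H^{1/2}((0,1))$, $\tilde a,\tilde b\in\R$. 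So it suffices to treat $\rho=1$.

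For $\rho=1$, the key observation is that both sides are minimized, as functions of $b$, in a compatible way, so I may replace $b$ by the optimal constant; after subtracting it, assume $\langle v\rangle_{(0,1)}=\tfrac12 a$ (the average of $a t$), equivalently $\langle v-at\rangle_{(0,1)}=0$. Write $w:=v-at$, which has zero mean, so its Fourier cosine/sine expansion on $(0,1)$ has no zeroth mode, giving $[w]_{H^{1/2}}^2\ge c\|w\|_{L^2((0,1))}^2$ by the $H^{1/2}$-Poincar\'e inequality, and also (by orthogonality / the explicit half-strip harmonic extension) $[v]_{H^{1/2}((0,1))}^2=[at+w]_{H^{1/2}}^2$. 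The point is to show that if the $H^{1/2}$-seminorm of $v$ is small, then $v$ is close in $L^1$ to an affine function of the form $\alpha t+\beta$ with $|\alpha|$ comparable to $[v]_{H^{1/2}}$, so that it cannot also be close in $L^1$ to $at+b$ unless $|a|$ is correspondingly small; more precisely I would argue by contradiction and compactness. Suppose the inequality fails: there are $v_n,a_n,b_n$ with $|a_n|=1$ (by homogeneity in the pair $(v,a,b)$, since the inequality is invariant under $(v,a,b)\mapsto(\lambda v,\lambda a,\lambda b)$) and $\int_0^1|v_n-a_nt-b_n|\dt+[v_n]_{H^{1/2}}^2\to0$. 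Then $[v_n]_{H^{1/2}}\to0$, so after subtracting the mean $v_n$ converges to a constant in $L^2$, hence in $L^1$; but $v_n\to a_nt+b_n$ in $L^1$ with $|a_n|=1$, and along a subsequence $a_n\to a$ with $|a|=1$, $b_n\to b$, forcing the limit $at+b$ (with $a\ne0$) to be constant on $(0,1)$, a contradiction.

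The main obstacle is purely technical: making the compatibility of the two minimizations over $b$ precise and keeping track of the homogeneity so that the normalization $|a|=1$ is legitimate; the Fourier/Poincar\'e input is standard (and already used implicitly in \cite{conti:06}). I would in fact prefer the direct Fourier computation over the compactness argument if a constant with explicit structure is wanted: expanding $v$ on $(0,1)$ and using that $[v]_{H^{1/2}}^2\sim\sum_k |k|\,|\hat v_k|^2$ while $at$ has Fourier coefficients decaying like $1/k$, one gets $[v]_{H^{1/2}}^2\ge c a^2$ once $\|v-at-b\|_{L^1}$ is small, by pairing $v-at$ against a fixed smooth test function that detects the linear mode; balancing the two terms gives the stated additive inequality. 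Either route yields the claim; I would present the compactness version as it is shortest. $\square$
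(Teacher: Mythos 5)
The paper itself does not prove this lemma: it is quoted as a variant of Lemma 2 of \cite{conti:06} (used also in \cite{zwicknagl:14}), so there is no in-paper argument to compare with, and your proof has to stand on its own. In substance it does: the reduction to $\rho=1$ by scaling, the normalization $|a|=1$ via the degree-two homogeneity of both sides under $(v,a,b)\mapsto(\lambda v,\lambda a,\lambda b)$, the $H^{1/2}$-Poincar\'e inequality, and the fact that an affine function of unit slope has $L^1((0,1))$-distance at least $1/4$ from every constant are exactly the right ingredients, and the middle paragraph (the ``compatible minimization over $b$'' and the orthogonality remark) is not actually needed for the final argument.

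Two slips should be repaired. First, the scaling exponent: with $\tilde v(t)=\rho^{-1}v(\rho t)$ one has $[\tilde v]^2_{H^{1/2}((0,1))}=\rho^{-2}[v]^2_{H^{1/2}((0,\rho))}$, not $\rho^{-1}[v]^2_{H^{1/2}((0,\rho))}$: the half-strip Dirichlet energy is invariant under the dilation, and the amplitude factor $\rho^{-1}$ enters squared. With the correct exponent all three terms carry a factor $\rho^2$, one divides by $\rho^2$, and the reduction closes, giving exactly the $\rho=1$ statement you wrote down; with $\rho^{-1}$ as stated the reduction would not close, so this is worth fixing. Second, in the contradiction argument the step ``$b_n\to b$ along a subsequence'' is unjustified, since $v_n$ and $b_n$ may drift to infinity by a common additive constant. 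The fix is one line: normalize $\langle v_n\rangle_{(0,1)}=0$ (admissible, as the statement is invariant under adding the same constant to $v$ and $b$), so that $\|v_n\|_{L^2((0,1))}\le C[v_n]_{H^{1/2}((0,1))}\to0$ and hence $\|a_nt+b_n\|_{L^1((0,1))}\to0$, which contradicts $\min_{\beta\in\R}\int_0^1|a_nt+\beta|\,\mathrm{d}t=|a_n|/4=1/4$; no subsequences are needed. Note also that the compactness framing can be dropped altogether in favour of a quantitative two-line proof, presumably close to the cited references: $\tfrac14|a|\rho^2=\min_\beta\int_0^\rho|ay+\beta|\,\mathrm{d}y\le\int_0^\rho|v-ay-b|\,\mathrm{d}y+\int_0^\rho|v-\langle v\rangle_{(0,\rho)}|\,\mathrm{d}y\le\int_0^\rho|v-ay-b|\,\mathrm{d}y+C\rho\,[v]_{H^{1/2}((0,\rho))}$ (the Poincar\'e constant on $(0,\rho)$ scales like $\rho^{1/2}$ in $L^2$, hence like $\rho$ after Cauchy--Schwarz); multiplying by $|a|$ and absorbing $C\rho|a|[v]_{H^{1/2}}\le\tfrac18\rho^2a^2+2C^2[v]^2_{H^{1/2}}$ gives the claim with an explicit constant.
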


{\bf{BD-type slicing.}}

For any  $u:\R^2\rightarrow \R^2$, $x_1\in \R$, and  $\xi\in \R^2$ with $\xi_1\neq 0\neq \xi_2$, we define  the one-dimensional, scalar-valued function on the slice $(x_1,0)+\R\xi$ as
\begin{align}\label{eq:defuxi}
u_{x_1}^{\xi}(s):= \frac 1 {\xi_1\xi_2}u((x_1,0) +s\xi)\cdot \xi.
\end{align}
Notice that this definition is {motivated by the characterization of $BD$ via suitable one-dimensional sections} as introduced by Ambrosio, Coscia and Dal Maso  {\cite[Proposition 3.2]{ambrosio-et-al:97}}. {For convenience, our definition} differs {from the one given in the above reference} by the prefactor $\frac 1 {\xi_1\xi_2}$.

If $u\in W_{\text{loc}}^{1,2}(\R^2,\R^2)$ we have $u_{x_1}^\xi\in W_{\text{loc}}^{1,2}(\R)$ for almost every $x_1\in  \R$, and 
\begin{align}\label{eq:estduxiv}
{u_{x_1}^\xi}'(s) 
&= \left(\frac {\xi_1}{\xi_2} \partial_1 u_1 + \partial_2 u_1+\partial_1 u_2 +\frac{\xi_2}{\xi_1} \partial_2 u_2\right)((x_1,0)+s\xi).
\end{align}
Throughout this work we will fix the direction  $\xi:=(\frac 14,1) $ and define for some $x_1\in (0,2L-\xi_1)$ the almost diagonal segment with base point $(x_1,0)$  as
\begin{eqnarray}\label{eq:sliceDelta}
\Delta^\xi_{x_1}:=\left\{(x_1,0)+s\xi\,:\, s\in (0,1)\right\}. 
\end{eqnarray}
With a small abuse of notation we shall write, for $f:\Delta^\xi_{x_1}\to\R$,
\begin{equation}\label{eqdeffl2delta}
 \|f\|_{L^2(\Delta^\xi_{x_1})}^2:=\int_0^1 |f|^2((x_1,0)+s\xi) \text{ d}s,
\end{equation}
and the same for $L^1$. This definition differs from the usual one, in which one integrates with respect to $\calH^1$, by a factor of $\sqrt{17/16}$, which is irrelevant for our argument but would make notation cumbersome.


For any $a\in\R$, almost every $x_1\in (0,2L-\xi_1)$ and almost every $s \in (0,1)$
we compute 
\begin{align}\label{eqestduxiv1}
| {u_{x_1}^\xi}'(s)-a | &=
\Big|\frac{1}{\xi_1\xi_2}\sum_{ij}\xi_i\xi_j( e(u)-ae_1\odot e_2)_{ij}   \Big|((x_1,0)+s\xi) \nonumber\\
&\le \frac{|\xi|^2}{|\xi_1\xi_2|} |e(u)-ae_1\odot e_2|((x_1,0)+s\xi)\le 5 |e(u)-ae_1\odot e_2|((x_1,0)+s\xi)
\end{align}
where in the last step we used the specific choice $\xi=(1/4,1)$.
This implies in particular 
\begin{align}\label{eq:estduxi}
&\min\left\{|{u_{x_1}^\xi}'-\theta|^2,|{u_{x_1}^\xi}'+(1-\theta)|^2\right\}(s) 
 \leq 25 \min \left\{|e(u)-\theta e_1 \odot e_2|^2,|e(u)+(1-\theta) e_1 \odot e_2|^2\right\}((x_1,0)+s\xi).  
 \end{align}
{We remark that vertical slices cannot be used {to obtain similar estimates}, since $\partial_2 u_2$ does not distinguish between the two variants and $\partial_2 u_1$ cannot be controlled without an independent estimate on $\partial_1 u_2$.} 
\clearpage
 
\section{Upper bound}\label{sec:upperbound}
In this section we will prove the upper bound, i.e., the second inequality in Theorem \ref{th:main}. For that, we provide explicit constructions for the different energy {scaling regimes} in Subsection \ref{sec:ub}. In Subsection \ref{sec:tabulars} we give an overview {over typical} parameter {ranges} to illustrate that indeed all scalings are attained.
\subsection{Explicit constructions}\label{sec:ub}
Let us point out that all our constructions will be scalar valued. 
{W}e define for every $u\in W^{1,2}_{\text{loc}}(\R^2,\R)$ with $\nabla u \in BV(\Omega,\R^2)$
\[
 {E}(u):={I((u,0))}
 \leq  \int_{\Omega_{2L}}\min\left\{|\nabla u - \theta e_2|^2, |\nabla u +(1-\theta)e_2|^2\right\} \text{ d} \mathcal{L}^2 + \mu \int_{\R^2\setminus\Omega_{2L}}|\nabla u|^2 \text{ d} \mathcal{L}^2 +\varepsilon |D^2u|(\Omega_{2L})
\]
and correspondingly ${E}_A(u):=I_A({(u,0)})$, ${E}^\intt$, and ${E}^\ext$.
Some of the test functions we consider below are taken from the literature, 
{some constructions have to be modified, and some are new.} 
We shall use only constructions that are symmetric with respect to the axis $\{x_1=L\}$, {working explicitly in $(-\infty,L]\times\R$ and then extending each construction by symmetry.}
This introduces an additional term $|D^2u|(\{L\}\times(0,1))$. In some cases, it vanishes (constant, affine, linear interpolation). In the other cases, we use that 
 the relevant gradients are bounded, 
 and hence
the term ${\eps} |D^2u|(\{L\}\times(0,1))$ is bounded by $\eps$. Then this term can be incorporated in the regimes using $\eps\leq\mu\theta^2$, see the proof of Theorem \ref{th:upperbound}. We will therefore not explicitly mention this term in the discussion of the constructions below.
  We shall use the following short-hand notation: For $a<b$, we set  
 \begin{eqnarray}
 \label{eq:iota}
 \iota_{a,b}:[a,b]\rightarrow\R , \quad{\iota_{a,b}}(t):=\frac{t-a}{b-a},
 \end{eqnarray}
 i.e., $\iota_{a,b}$ is the affine function with $\iota_{a,b}(a)=0$, $\iota_{a,b}(b)=1$.\\
 We start with auxiliary lemmata to estimate the energy contribution from $\R^2\setminus \Omega_{2L}$. 
\begin{lmm}
\label{lem:lemout}
Let $\bar{L}\in[1/2,\infty)$, $1\leq\alpha<\beta\leq \bar{L}$. Then there exists $u_{\alpha,\beta,\bar{L}}\in
W^{1,2}_\loc{\left(\left((-\infty,\bar L]\times \R\right)\setminus\Omega_{\bar{L}}\right)}\cap C^0{\left(\left((-\infty,\bar L]\times \R\right)\setminus\Omega_{\bar{L}}\right)}$ such that
\begin{itemize}
\item[(i)] on the lower boundary $u_{\alpha,\beta,\bar{L}}(x_1,0)=0$ for all $x_1\in[0,\bar{L}]$;
\item[(ii)] on the upper boundary
\[u_{\alpha,\beta,\bar{L}}(x_1,1)=\begin{cases}
0,&\text{\qquad if\ }x_1\in[0,\alpha],\\
\theta\iota_{\alpha,\beta}(x_1),&\text{\qquad if\ }x_1\in(\alpha,\beta],\\
\theta,&\text{\qquad if\ }x_1\in(\beta,\bar{L}];
\end{cases} \]
\item[(iii)] on the interface $u_{\alpha,\beta,\bar{L}}(0,x_2)=0$  {for all $x_2\in[0,1]$;}
\item[(iv)] and there is a constant $c>0$ independent of $\alpha$, $\beta$, $\bar{L}$ and $\theta$ such that  
\[\int_{\R^2\setminus\Omega_{2\bar{L}}}\left|\nabla u_{\alpha,\beta,\bar{L}}\right|^2\dcalL^2\leq c\theta^2\left(\ln\left(3+\frac{\bar{L}}{\alpha}\right)+\frac{\beta+\alpha}{\beta-\alpha}\right). \]
\end{itemize}
\end{lmm}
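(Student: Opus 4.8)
The plan is to write down an explicit extension that \emph{decouples} the two terms in (iv): on the part of the exterior that lies below the rectangle ($x_2\le0$) or to its left ($x_1\le0$) I set $u\equiv0$, while above the rectangle I use a self-similar ``cone'' profile that carries the prescribed trace on $\{x_2=1\}$, decays as $x_2\to\infty$, and costs $\theta^2\ln(3+\bar L/\alpha)$ for the plateau of that trace and $\theta^2\frac{\beta+\alpha}{\beta-\alpha}$ for its ramp. Precisely, let $g\colon[0,\bar L]\to\R$ denote the function prescribed on the upper boundary in (ii), i.e.\ $g\equiv0$ on $[0,\alpha]$, $g=\theta\iota_{\alpha,\beta}$ on $[\alpha,\beta]$ and $g\equiv\theta$ on $[\beta,\bar L]$, so that $g'=\frac{\theta}{\beta-\alpha}\mathbf{1}_{(\alpha,\beta)}$ and $0\le g\le\theta$; fix once and for all a cutoff $\chi\in C^\infty([0,\infty);[0,1])$ with $\chi\equiv1$ near $0$ and $\supp\chi\subseteq[0,1]$, and set
\[
 u_{\alpha,\beta,\bar L}(x_1,x_2):=
 \begin{cases}
  g(x_1)\,\chi\!\left(\dfrac{x_2-1}{x_1}\right) & \text{if } 0<x_1\le\bar L \text{ and } x_2\ge1,\\[2mm]
  0 & \text{on the remainder of the domain.}
 \end{cases}
\]

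Since $\alpha\ge1$ we have $g\equiv0$ on $[0,1]$, so $u_{\alpha,\beta,\bar L}$ already vanishes on the strip $\{0<x_1\le1\}$; in particular it vanishes near the re-entrant corner $\{x_1=0\}$ and has no singularity there. On its support $x_1\ge\alpha\ge1$, so all first derivatives below are bounded; together with the fact that the upper formula gives the value $g(x_1)$ on $\{x_2=1\}$ and $0$ on $\{x_1=0\}$ (because $g(0)=0$), this shows $u_{\alpha,\beta,\bar L}\in W^{1,\infty}_{\loc}\subset W^{1,2}_{\loc}$ and continuous. Conditions (i) and (iii) hold because $u_{\alpha,\beta,\bar L}\equiv0$ on $\{x_2\le0\}$ and on $\{x_1\le0\}$, and (ii) holds because $u_{\alpha,\beta,\bar L}(x_1,1)=g(x_1)\chi(0)=g(x_1)$.

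For (iv) it suffices to estimate $\int_R|\nabla u_{\alpha,\beta,\bar L}|^2$ over $R:=\{0<x_1\le\bar L,\ x_2\ge1\}$, since the gradient vanishes elsewhere. Writing $t=(x_2-1)/x_1$, the chain rule and $2ab\le a^2+b^2$ give $|\nabla u_{\alpha,\beta,\bar L}|^2\le 2g'(x_1)^2\chi(t)^2+2g(x_1)^2\chi'(t)^2\frac{(x_2-1)^2+x_1^2}{x_1^4}$. Integrating in $x_2$ over $(1,\infty)$ and substituting $t=(x_2-1)/x_1$ converts the first summand into a $\chi$-dependent constant times $x_1\,g'(x_1)^2$ and the second into a $\chi$-dependent constant times $x_1^{-1}g(x_1)^2$ (here $\supp\chi\subseteq[0,1]$ is used to make $\int_0^\infty\chi'(t)^2(1+t^2)\,\mathrm{d}t$ finite). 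The remaining $x_1$-integrals are $\frac{\theta^2}{(\beta-\alpha)^2}\int_\alpha^\beta x_1\,\mathrm{d}x_1=\frac{\theta^2}{2}\frac{\beta+\alpha}{\beta-\alpha}$, using $g'^2=\frac{\theta^2}{(\beta-\alpha)^2}\mathbf{1}_{(\alpha,\beta)}$, and $\le\theta^2\int_\alpha^{\bar L}\frac{\mathrm{d}x_1}{x_1}=\theta^2\ln(\bar L/\alpha)\le\theta^2\ln(3+\bar L/\alpha)$, using $0\le g^2\le\theta^2$ and $g\equiv0$ on $[0,\alpha]$. Adding the two contributions yields (iv) with a constant depending only on $\chi$.

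The only step here that requires insight rather than computation is the choice of ansatz: the self-similar scaling $x_2-1\sim x_1$ of the cutoff is precisely what makes the $\partial_2$-contribution integrable all the way to $x_2=\infty$ while producing only a \emph{logarithm} in $x_1$ from the plateau $g\equiv\theta$ on $[\beta,\bar L]$, and at the same time confines the $g'$-term to the triangle $\{\alpha<x_1<\beta,\ x_2-1\lesssim x_1\}$, whose area $\tfrac12(\beta^2-\alpha^2)$ produces the factor $\frac{\beta+\alpha}{\beta-\alpha}$. The hypothesis $\alpha\ge1$ enters only to keep this profile regular at the corner; everything else is routine bookkeeping.
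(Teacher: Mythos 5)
Your proof is correct: the ansatz $u=g(x_1)\chi((x_2-1)/x_1)$ above the top boundary, extended by zero, satisfies (i)--(iii) exactly, the chain-rule bound and the substitution $t=(x_2-1)/x_1$ are sound, the two $x_1$-integrals give precisely $\tfrac{\theta^2}2\frac{\beta+\alpha}{\beta-\alpha}$ and $\theta^2\ln(\bar L/\alpha)$, and the hypothesis $\alpha\ge1$ indeed kills any singularity at the re-entrant corner since $g\equiv g'\equiv0$ on $[0,1]$. It is, however, a genuinely different construction from the paper's: there, the unit strip is collapsed by a map $T$ and the extension is built in polar coordinates around the left end of the inclusion, the prescribed top trace being multiplied by the angular factor $1-\phi/(2\pi)$, so the field wraps around the left side and below and decays like $r^{-1/2}$ beyond $r=\bar L$; the logarithm comes from the angular term $\int \frac1r|\partial_\phi \hat f|^2$ and the ramp term from the radial interpolation. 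Your extension instead lives entirely in the cone $\{0\le x_2-1\le x_1\le\bar L\}$ above the sample and vanishes identically to the left and below, so the two contributions in (iv) decouple, there is no behaviour at infinity to control (the support is bounded), and no polar coordinates or transformation $T$ are needed --- a more elementary argument. What the paper's version buys is that it mirrors the geometry of the lower bound (the exterior paths around the left end used in Lemma \ref{lem:gammawuerfel}), distributing the mismatch between the top trace $\theta$ and the bottom trace $0$ over the full exterior angle, which is the ``natural'' near-harmonic competitor; your version concentrates all the exterior energy above the sample, which is equally good at the level of scaling. One cosmetic slip: ``$0$ on the remainder of the domain'' assigns the value $0$ on the segment $\{\bar L\}\times(0,1)$, which clashes with the limit $\theta$ from above at the single point $(\bar L,1)$, so the function as written is not continuous there; since that segment is a null set not involved in (i)--(iv) (and left unspecified by the paper's own formula as well), redefining $u$ along it by interpolation repairs continuity without affecting anything else.
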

\begin{proof}
{We use polar coordinates, denoting by $\phi(x)\in(0,2\pi)$ and $r(x)\in[0,\infty)$ the coordinates of
$x=(x_1,x_2)\in\R^2\setminus\left((0,\infty)\times\{0\}\right)$ so 
that}
\[x_1=r(x)\cos(\phi(x)),\qquad x_2=r(x)\sin(\phi(x)). \]
We define $\hat{f}_{\alpha,\beta}:(0,2\pi)\times[0,\infty) \rightarrow \R$ by
\[
{\hat{ f}_{\alpha,\beta}(\phi,r)}:= 
 \begin{cases}
  0,							&\text{if } r\in [0,\alpha],\\
    (1-\frac {\phi}{2\pi} )\iota_{\alpha,\beta}(r)\theta,	&\text{if }r\in (\alpha, \beta], \\
  (1- \frac {\phi}{2\pi} )\theta 			,	&\text{if }r \in (\beta, {\bar{L}}],\\
 \frac {{\bar{L}}^{1/2}}{r^{1/2}}  (1-\frac {\phi}{2\pi}) \theta,	&\text{if }r \in ({\bar{L}},\infty),\\
 \end{cases}
\]
and consider the  transformation $T:\R^2\rightarrow \R^2$ given by
\[
T(x_1,x_2):=
 \begin{cases}
  (x_1,x_2),			&\text{\qquad if\ } x_2\le 0,\\
  (x_1,0),			&\text{\qquad if\ }0<x_2\le 1,\\
  (x_1,x_2-1),		& \text{\qquad if\ }x_2> 1. 
   \end{cases}
\]
We set for $x\in\left((-\infty,\bar{L}]\times\R\right)\setminus\overline{\Omega_{2\bar{L}}}$
\[u_{\alpha,\beta,\bar{L}}(x):=\hat{f}_{\alpha,\beta}\left( \phi(T(x)),r(T(x))\right). \]
Note that for $x_1>0$ and $x_2\searrow 1$, we have $r(T(x_1,x_2))\to x_1$, and $\phi(T(x_1,x_2))\searrow 0$. Similarly, for $x_1>0$ and $x_2\nearrow0$, we have 
$r(T(x_1,x_2))\to x_1$ and $\phi(T(x_1,x_2)\to2\pi$. Finally, for $x_2\in(0,1)$ and $x_1\nearrow 0$, we have {$r(T(x_1,x_2))\to 0$}. Therefore  $u_{\alpha,\beta,\bar{L}}$ {has a continuous extension} to $\left((-\infty,\bar{L}]\times\R\right)\setminus\Omega_{\bar{L}}$ {and it satisfies} (i), (ii) and (iii). 
It remains to verify (iv). From the definition of $T$ we obtain, with
 $f_{\alpha,\beta}( x):=\hat{f}(\phi(x),r(x))$,
\[
\|\nabla u_{\alpha,\beta,\bar{L}}\|^2_{{{\bar{L}}}^2({(}(-\infty,\bar{L}) \times \R{)} \setminus \Omega_{\bar L})} = \|\nabla f_{\alpha,\beta}\|^2_{L^2({(}(-\infty, \bar{L}) \times \R{)} \setminus {(}[0,\bar{L}]\times \{0\}{)}) } +\|\partial_1 f_{\alpha,\beta}(\cdot,0)\|^2_{L^2((-\infty,0))} 
\]
where $\partial_1$ refers to the usual derivative in direction $e_1$.
Using polar coordinates we compute
\begin{align*}
 \|\nabla f_{\alpha,\beta}\|^2_{L^2({B_{\bar{L}}(0)}\setminus{(} [0,\bar{L}]\times\{0\}{)})}
 &= \int_0^{{\bar{L}}}\int_0^{2\pi}  \frac 1 r |\partial_\phi{\hat f}_{\alpha,\beta}|^2 + r |\partial_r {\hat f}_{\alpha,\beta}|^2 \text{ d}\phi \text{ d}r  
 \leq c \theta^2\left( \int_{\alpha}^{{{\bar{L}}}}  \frac 1 {r}\text{ d}r + \int_\alpha^\beta \frac {r}{(\beta-\alpha)^2} \text {d}r\right) \\
 & \le c {\theta^2 }\left(  \ln {\frac{{\bar{L}}}\alpha} + \frac {\beta +\alpha}{\beta - \alpha}\right)
\end{align*}
and
\begin{align*}
  \|\nabla f_{\alpha,\beta}\|^2_{L^2({(}(-\infty,\bar{L})\times \R{)}\setminus B_{\bar{L}}(0))}
 &\leq \int_{ \bar{L}}^\infty \int_0^{2\pi}  \frac 1 r |\partial_\phi {\hat f}_{\alpha,\beta}|^2 + r |\partial_r \hat f_{\alpha,\beta}|^2 \text{ d}\phi \text{ d}r  
 \leq c \theta^2 \int_{\bar{L}}^\infty \frac{\bar{L}}{r^{2}}\text{ d}r
 = c\theta^2.
\end{align*}
We also estimate, 
\[
 \|\partial_1 f_{\alpha,\beta}(\cdot,0)\|^2_{L^2((-\infty,0))} = \frac{\theta^2}{4}\int_{\alpha}^\beta \frac 1 {(\beta-\alpha)^2} \text{ d}x_1+ \frac {\theta^2}{16}\int_{\bar{L}}^\infty \frac {\bar{L}}{x_1^3}\text{ d}x_1 
 {=\frac{\theta^2/4}{\beta-\alpha}+\frac{\theta^2/{32}}{\bar{L}}\le \frac{\theta^2}{\beta-\alpha}.}
\]
{Recalling that $\alpha\ge 1$, we conclude}
\[
 \|\nabla u_{\alpha,\beta,\bar{L}}\|^2_{L^2({(}(-\infty,\bar{L}) \times \R) \setminus ((0,\bar{L})\times (0,1)){)}} \leq  c \theta^2 \left( \ln {(3+\frac{\bar{L}}\alpha)} + \frac {\beta +\alpha}{\beta - \alpha}\right) .
\]
\end{proof}
{
\begin{rmrk}
We will often use that the upper bound in Lemma \ref{lem:lemout}(iv) is monotonically increasing in $\bar{L}$ and decreasing in $\beta$.
\end{rmrk}
}
The following lemma has been used in \cite{conti-zwicknagl:16} without being explicitly stated. We refer to Fig.~\ref{figlamoutside} for a sketch.
\begin{lmm}
\label{lem:lamoutside}
Let $N\in\N$, $N\ge 1$, $h\in(0,\frac{1}{2}]$, $\theta\in[0,\frac{1}{2}]$. Then there exists $v_{N,h}\in W^{1,2}_\loc\left((-\infty,0)\times(0,1)\right)\cap{C^0}\left((-\infty,0]\times[0,1]\right)$ such that
\begin{itemize}
\item[(i)] $v_{N,h}(x_1,x_2)=v_{N,h}(x_1, x_2+\frac{1}{N})$ for all $x_2\in[0,1-\frac{1}{N}]$ and all $x_1{\in(-\infty,0]}$;
\item[(ii)] for $x_1=0$ we have
\[ v_{N,h}(0,x_2)=\begin{cases}
\theta x_2,&\text{\qquad if \ }x_2\in[0,\frac{1-h}{N}],\\
{\theta\frac{1-h}{h} (\frac1N-x_2)},&\text{\qquad if \ }x_2\in(\frac{1-h}{N},\frac{1}{N}];
\end{cases}
 \]
 \item[(iii)] for all $x_1\in(-\infty,0]$, we have
 $v_{N,h}(x_1,0)=v_{N,h}(x_1,1)=0$;
 \item[(iv)] {for all $x_1\in(-\infty,-\frac1N]$ and all $x_2$, we have
 $v_{N,h}(x_1,x_2)=0$;}
 \item[(v)] and there exists $c>0$ independent of $N$, {$\theta$} and $h$ such that
 \[\int_{(-\infty,0)\times(0,1)}\left|\nabla v_{N,h}\right|^2\,\dcalL^2\leq c\frac{\theta^2}{N}\ln\left(3+\frac{1}{h}\right). \]
\end{itemize}
\end{lmm}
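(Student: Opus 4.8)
The plan is to build $v_{N,h}$ explicitly as a periodic stacking of $N$ copies of a single "tooth" profile, and then estimate the Dirichlet energy of one period. First I would reduce to the case $N=1$ on the strip $(-\infty,0)\times(0,\frac1N)$: by periodicity condition (i) it suffices to construct a function $w_h$ on $(-\infty,0)\times(0,1)$ which vanishes on the two horizontal edges $x_2\in\{0,1\}$, vanishes identically for $x_1\le -1$, equals the prescribed piecewise-affine "tent" profile $g_h$ on $\{x_1=0\}$ (namely $g_h(x_2)=\theta x_2$ for $x_2\le 1-h$ and $g_h(x_2)=\theta\frac{1-h}{h}(1-x_2)$ for $x_2\in(1-h,1]$), and has Dirichlet energy $\lesssim \theta^2\ln(3+\frac1h)$; then one sets $v_{N,h}(x_1,x_2):=\frac1N\, w_h\big(Nx_1, N(x_2-\frac kN)\big)$ on each horizontal sub-strip $x_2\in(\frac kN,\frac{k+1}N)$, $k=0,\dots,N-1$. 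The $\frac1N$ prefactor in amplitude together with the anisotropic rescaling $(x_1,x_2)\mapsto(Nx_1,Nx_2)$ leaves the Dirichlet energy of each of the $N$ sub-strips equal to $\frac1{N^2}$ times that of $w_h$, and since the boundary datum (ii) is exactly $\frac1N g_h(N(x_2-\frac kN))$ — with amplitude $\theta/N$, which is why $w_h$ should be built with boundary amplitude $\theta$ and then scaled — the $N$ contributions sum to $\frac1N\|\nabla w_h\|^2_{L^2}\lesssim \frac{\theta^2}{N}\ln(3+\frac1h)$, giving (v). Conditions (iii) and (iv) for $v_{N,h}$ follow from the corresponding properties of $w_h$, and (i) is built in.

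For the construction of $w_h$ itself I would use the standard self-similar / branching-type building block adapted to a single interface: on the triangular (or trapezoidal) region near $\{x_1=0\}$ where $x_2$ is close to $1$ one needs to interpolate a profile that is $O(\theta h)$ in amplitude but has a kink at $x_2=1-h$, and one resolves this kink by "opening it up" logarithmically as $x_1$ decreases. Concretely I would define $w_h$ to be $\theta x_2$ on a region bounded away from $x_2=1$, and on the remaining thin region near the top edge interpolate between $g_h$ at $x_1=0$ and $0$ at $x_1=-1$, making the transition layer near the kink have width $\sim h|x_1|$ (so it is width $\sim h$ at the interface and width $\sim h$ at $x_1=-1$ — actually one wants a dyadic/logarithmic family of scales), so that the gradient in the layer is $O(\theta)$ over an area $\sum_{\text{scales}} h\cdot(\text{scale})$, producing the $\ln(3+1/h)$ factor. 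Equivalently, one can invoke a rescaled version of the estimate already worked out in Lemma \ref{lem:lemout}(iv) (the $\frac{\beta+\alpha}{\beta-\alpha}$-type term there is exactly the same kink-resolution cost), or cite the corresponding construction from \cite{conti-zwicknagl:16} since the lemma is stated as having been used there. I would present the core profile via an explicit formula using the $\iota_{a,b}$ notation and compute $\|\nabla w_h\|^2_{L^2}$ by splitting into the affine region (energy $O(\theta^2)$) and the logarithmically-many transition strips (energy $O(\theta^2\ln(3+1/h))$).

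The main obstacle is the matching at the horizontal lines $x_2=\frac kN$: one must ensure $v_{N,h}$ is continuous (condition (iii) requires it to vanish on $x_2=0$ and $x_2=1$, and the $W^{1,2}_{\loc}$ requirement needs no jumps across interior lines $x_2=\frac kN$), which is why $w_h$ must be constructed to vanish on \emph{both} horizontal edges $x_2=0$ and $x_2=1$ — the profile $g_h$ indeed satisfies $g_h(0)=g_h(1)=0$, so this is consistent, but the interior construction of $w_h$ in the strip must respect this boundary vanishing everywhere, not just at $x_1=0$, and must glue to the zero function at $x_1=-1$ in a $W^{1,2}$ way (cutoff in $x_1$). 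A secondary technical point is verifying that the anisotropic rescaling $(x_1,x_2)\mapsto(Nx_1,Nx_2)$ together with amplitude scaling $\frac1N$ genuinely reproduces the boundary datum (ii) exactly and not just up to a constant; this is a direct check. Neither of these is deep, so once $w_h$ is pinned down the proof is a routine scaling computation.
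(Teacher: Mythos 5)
Your global structure --- build one cell, extend $\frac1N$-periodically, rescale amplitude by $\frac1N$ --- is exactly the route the paper takes (it writes the formula directly on one period $(-\frac1N,0]\times[0,\frac1N)$ and extends periodically), and your verification of (i)--(iv) and of the factor $\frac1N$ in the energy is fine. The gap sits in the only nontrivial step, the energy estimate for the single-cell profile. As written, a transition layer of width $\sim h|x_1|$ carrying a ``gradient $O(\theta)$'' over an area $\sum_{\text{scales}} h\cdot(\text{scale})$ cannot be right: the tent has height $\theta(1-h)\approx\theta$ and must drop to zero across the layer, so the vertical derivative in the layer is of order $\theta$ divided by the layer width, i.e.\ of order $\theta/h$ near $x_1=0$, not $O(\theta)$; and with your area bookkeeping the energy would come out as $O(\theta^2 h)$, which produces no logarithm at all. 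Conversely, the most naive way to glue to zero at $x_1=-1$, the linear interpolation $(1+x_1)\,g_h(x_2)$, has energy $\sim\int_{-1}^0(1+x_1)^2\,\theta^2 h^{-2}\cdot h\,\mathrm dx_1\sim\theta^2/h$, which is far too large. So the precise geometry of the layer is the whole content of the lemma, and your sketch does not pin it down.

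What does work (and is what the paper does) is to let the kink open up linearly: in the unit cell the layer is $\{x_2>(1-h)(x_1+1)\}$, of width $h+(1-h)|x_1|$; the function is $\theta x_2$ below the kink line and interpolates linearly to $0$ at the top edge above it. Then $|\nabla v|\lesssim \theta/(h+(1-h)|x_1|)$ in the layer, the energy per unit $x_1$ is $\sim\theta^2/(h+|x_1|)$, and integrating over $x_1\in(-1,0)$ gives precisely $\theta^2\ln(3+\frac1h)$, plus $O(\theta^2)$ from the affine part; moreover at $x_1=-1$ the layer fills the whole cell, so the function vanishes there with no extra cutoff. Your dyadic picture can be repaired to say the same thing, but the correct accounting is energy $\sim\theta^2$ \emph{per scale} times $\sim\ln\frac1h$ scales, not a uniformly $O(\theta)$ gradient. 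Also, the appeal to a rescaled Lemma \ref{lem:lemout}(iv) is off target: the $\frac{\beta+\alpha}{\beta-\alpha}$ term there is the cost of the horizontal ramp, not a kink-resolution logarithm (the analogue would be the angular $\ln$ term, and the exterior geometry is different, so it cannot be literally invoked); citing \cite{conti-zwicknagl:16} would be legitimate, since the paper notes the lemma is implicit there, but your own explicit construction, as described, does not close the estimate.
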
 
\begin{figure}
 \begin{center}
  \includegraphics[width=7cm]{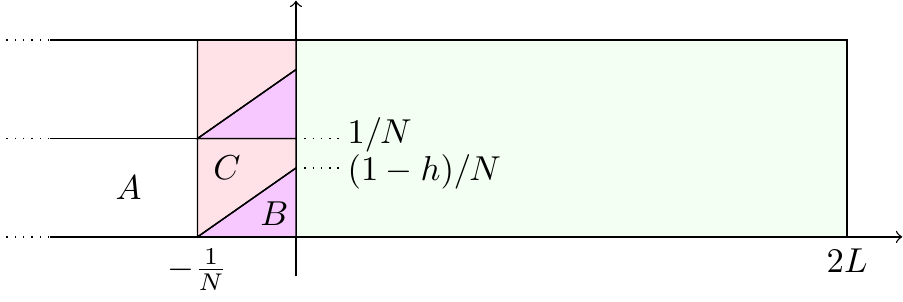}
 \end{center}
\caption{Construction in the proof of Lemma \ref{lem:lamoutside} for $N=2$.}
\label{figlamoutside}
\end{figure}
\begin{proof}
{We write $v$ for $v_{N,h}$ and set}
\[v(x):=\begin{cases}
0,&\text{\qquad if\ } {x\in A:=(-\infty,-\frac{1}{N}]\times[0,\frac1N)},\\
\theta x_2,&\text{\qquad if\ }{x\in B:=\{x_1\in(-\frac{1}{N},0], 0\leq x_2\leq (1-h)(x_1+\frac{1}{N})\}},\\
\frac{(1-Nx_2)\theta(1-h)(Nx_1+1)}{N\left(h-(1-h)Nx_1\right)},&\text{\qquad if\ }
{x\in C:=\{
x_1\in(-\frac{1}{N},0], (1-h)(x_1+\frac{1}{N})< x_2<\frac{1}{N}\}},
\end{cases} \]
and extend it periodically in $x_2$ to $(-\infty,0]\times[0,1]$ as stated in (i) {(see Fig.~\ref{figlamoutside})}.
One easily checks that $v$ is continuous and satisfies (ii), (iii) { and (iv)}. To show (v), we 
{first work in $(-\frac1N,0]\times[0,\frac1N){=B\cup C}$. In region $C$ we have
$(1-h)(Nx_1+1)\le Nx_2<1$, and therefore $0< 1-Nx_2\le h-(1-h)Nx_1$, so that
\begin{equation*}
 |\nabla v|(x)\le \theta(1-h)\Big( \frac{2}{h-(1-h)Nx_1}+
  \frac{1-Nx_2}{(h-(1-h)Nx_1)^2}\Big)
  \le \frac{3 \theta (1-h)}{h-(1-h)Nx_1} \text{ for all }x\in C.
\end{equation*}
Since $|\nabla v|=\theta$ in $B$ we obtain
\begin{equation*}
\begin{split}
 \int_{-1/N}^0\int_0^{1/N} |\nabla v|^2\dy\dx&\le
 \frac{\theta^2}{N^2} +9 \theta^2 \int_{-1/N}^0\int_{(1-h)(x_1+\frac{1}{N})}^{1/N}
 \frac{{(1-h)^2}}{(h-(1-h)Nx_1)^2}\dy\dx\\
 &=
 \frac{\theta^2}{N^2} +\frac{9 \theta^2}N \int_{-1/N}^0 \frac{{(1-h)^2}}{h-(1-h)Nx_1}\dx
 {\leq}
 \frac{\theta^2}{N^2} +\frac{9 \theta^2}{N^2} \ln \frac1h\le
  c\frac{\theta^2}{N^2}\ln\left(3+\frac{1}{h}\right).
\end{split}
\end{equation*}
By periodicity the proof is concluded.
}
\end{proof}
We now recall the basic branching construction from \cite{conti-zwicknagl:16}, which refines the one in \cite{kohn-mueller:94} (see Fig.~\ref{fig:branching}).
\begin{figure}
 \begin{center}
  \includegraphics[height=3.2cm]{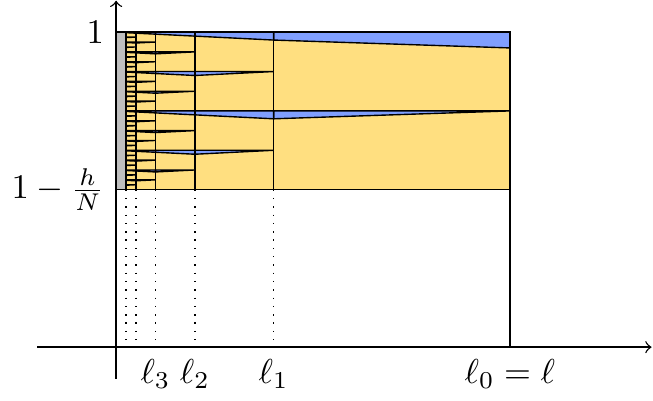}\hskip.1cm
  \includegraphics[height=2.9cm]{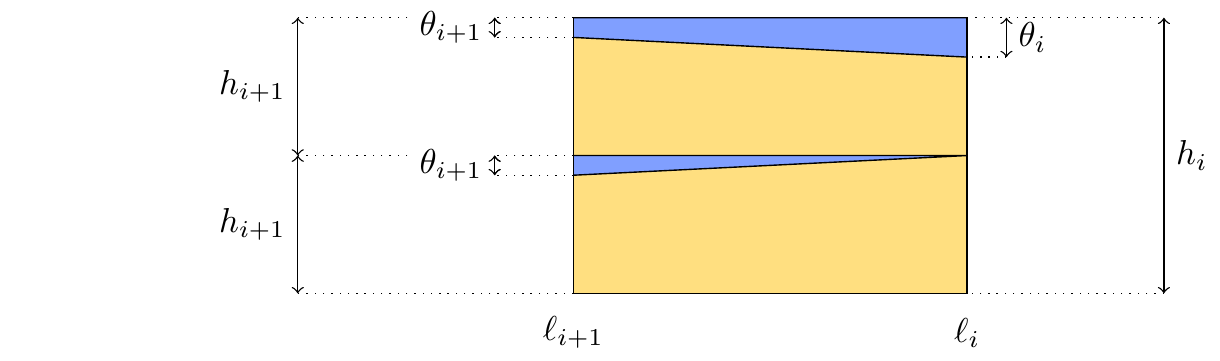}
 \end{center}
\caption{Sketch of the branched construction in Lemma \ref{lem:2scbrobenbranchN}. Left: global construction, with $h=1/2$, $N=1$, and four refinement steps. The gray region $(0,\ell_{k+1})\times(1-\frac hN,1)$ is the one where linear interpolation is used. Right: enlargement of a unit cell $(\ell_{i+1},\ell_i)\times(1-h_i,1)$ in the refinement. The colors mark the regions  $\{\partial_2u=\theta\}$ {(yellow)} and $\{\partial_2u=\theta-1\}$ {(blue)}.}
\label{fig:branching}
\end{figure}

\begin{lmm}
\label{lem:2scbrobenbranchN}
Suppose that $\theta\in(0,1/2]$, $N\in\N$, $N\ge 1$, $h\in[\theta,1]$, $\ell\in[\theta,\infty)$. Then there exists $u:=u_{h,\ell,N}\in W^{1,\infty}\left((0,\ell)\times(1-\frac hN,1)\right)\cap C^0\left([0,\ell]\times[1-\frac hN,1]\right)$ with the following properties:
\begin{itemize}
\item[(i)] $u(0,x_2)=\frac{\theta}{h}(1-h)(1-x_2)$ for all $x_2\in[1-\frac hN,1]$;
\item[(ii)] 
\[u(\ell,x_2)=\begin{cases}
\theta x_2 -\theta(1-\frac1N),&\text{\qquad if \ }x_2\in [1-\frac hN,1-\frac\theta N],\\
(1-\theta)(1-x_2),&\text{\qquad if \ }x_2\in (1-\frac\theta N,1];
\end{cases} \]
\item[(iii)] $u(x_1,1-\frac hN)=\frac1N\theta(1-h)$ for all $x_1\in [0,\ell]$;
\item[(iv)] $u(x_1,1)=0$ for all $x_1\in[0,\ell]$;
\item[(v)] $|D^2u|\left((0,\ell)\times (1-\frac hN,1)\right)\leq c{(\ell+\frac hN)}${and $\|\nabla u\|_{L^\infty}\le c$;}
\item[(vi)]
\[\int_{(0,\ell)\times(1-\frac hN,1)}\left(\partial_1u\right)^2+\min\left\{(\partial_2u-\theta)^2,(\partial_2u+1-\theta)^2\right\}\dcalL^2\leq c\frac{\theta^2 h}{N^3\ell}. \]
\end{itemize}
\end{lmm}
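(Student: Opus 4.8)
The plan is to construct $u=u_{h,\ell,N}$ as an explicit self-similar branching construction on the single strip $(0,\ell)\times(1-\frac hN,1)$, and then verify the six listed properties by direct computation. By a rescaling $x_2\mapsto N(x_2-1)+1$ and $x_1\mapsto x_1$ one reduces to the case $N=1$, working on $(0,\ell)\times(1-h,1)$ with boundary data that scale accordingly; I would therefore first set up the construction for $N=1$ and then record how the estimates transform under this affine change of variables (the $x_1$-derivative is unchanged, the $x_2$-derivative picks up a factor $N$, areas shrink by $1/N$, total variation of $D^2u$ transforms in the stated way, which explains the $N^{-3}$ in (vii) and the $\frac hN$ in (v)).

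For the $N=1$ case I would use the classical Kohn--M\"uller self-similar refinement towards the boundary $\{x_1=0\}$, adapted to volume fraction $\theta$ as in \cite{conti-zwicknagl:16}. Concretely, pick a geometric sequence of vertical cut positions $\ell=\ell_0>\ell_1>\ell_2>\cdots$ with $\ell_k=\ell\lambda^{-k}$ (refinement ratio $\lambda>1$ to be optimized, e.g. $\lambda=2$) accumulating at $0$, and in each slab $(\ell_{k+1},\ell_k)\times(1-h,1)$ double the number of oscillation periods of $\partial_2 u$ between the values $\theta$ and $\theta-1$ (with the $\theta$-region occupying fraction $1-\theta$, so that the average of $\partial_2 u$ is $0$ and the construction is compatible with the laminate on the right edge). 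On each such slab $u$ is piecewise affine up to a thin transition layer where $\partial_1 u\neq 0$; in that layer one interpolates linearly between the coarser and finer laminate, which forces $|\partial_1 u|\lesssim \theta h/\ell_k$ on a region of width comparable to $\ell_k-\ell_{k+1}\sim\ell_k$. Summing the elastic energy $\int(\partial_1 u)^2$ over slabs gives a geometric series $\sum_k \ell_k\cdot h\cdot(\theta h/\ell_k)^2\sim \theta^2 h^3\sum_k \ell_k^{-1}$, which is dominated by its smallest-scale... wait, it is actually dominated by the largest $\ell_k^{-1}$, i.e. by the finest visible scale; truncating the refinement at the scale where this balances the surface energy is exactly what produces $\theta^2h/\ell$ after optimization. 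The surface energy $\varepsilon|D^2u|$ is $\varepsilon\sum_k(\text{interfaces in slab }k)$, the number of interfaces in slab $k$ being $\sim 2^k/h$ times... here one instead sums interface lengths $\sim h\cdot 2^k \cdot \ell_k$-independent contributions plus the $\ell_k$-length horizontal pieces; the bound (v) $|D^2u|\lesssim \ell+h$ comes out by the standard geometric-series cancellation (this is where the $h$ rather than $h/N$ before rescaling appears). The left boundary value (i) $u(0,x_2)=\frac\theta h(1-h)(1-x_2)$ and the right boundary value (ii) are enforced by hand as the data of the coarsest ($x_1=\ell$) and infinitely-fine ($x_1=0$) laminates respectively, and (iii), (iv) follow since $\partial_1 u=0$ on the horizontal edges and one arranges $u=\frac1N\theta(1-h)$ resp. $u=0$ there.

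The main obstacle, and the part requiring care rather than routine bookkeeping, is (vii): getting the energy bound $c\,\theta^2 h/(N^3\ell)$ with the correct powers of \emph{all four} of $\theta,h,N,\ell$ simultaneously, uniformly in the stated ranges $h\in[\theta,1]$, $\ell\in[\theta,\infty)$. One has to (a) choose the refinement ratio and the cell aspect ratios so that the transition-layer elastic energy in cell $k$ is $\sim \theta^2 h^3/\ell_k$ and not merely $\lesssim$ something weaker, (b) check that the interpolation layers in adjacent slabs do not overlap (this constrains how fast one may refine and uses $h\le 1$ and $\ell\ge\theta$, i.e. the hypotheses), and (c) confirm that the dominant contribution is the coarsest slab $k=0$, giving $\theta^2 h^3/\ell$, and then track the correct conversion $h^3\mapsto h\cdot(h/N)^2\cdot$ wait --- more precisely after the $N$-rescaling the $x_2$-extent is $h/N$, the laminate amplitude of $u$ scales like $1/N$, so $u\sim \theta h/N$, $\partial_1 u\sim \theta h/(N\ell_k)$, and $\int(\partial_1 u)^2\sim \sum_k \ell_k (h/N)(\theta h/(N\ell_k))^2=\theta^2h^3N^{-3}\sum_k\ell_k^{-1}\sim \theta^2h^3/(N^3\ell)$, and one must separately check $\int\min\{(\partial_2 u-\theta)^2,(\partial_2 u+1-\theta)^2\}$ is controlled by the same quantity since $\partial_2 u$ takes the wrong value only in the thin transition layers. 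The $L^\infty$ bounds on $\nabla u$ in (v) are immediate from the construction since all affine pieces have slopes bounded by $1$ and the transition layers are designed with $|\partial_1 u|\le c$; I would verify this last claim uses $\ell\ge\theta$ and $h\ge\theta$ to keep the aspect ratios of transition layers bounded below. Throughout I would cite \cite{kohn-mueller:94} and \cite{conti-zwicknagl:16} for the underlying construction and only spell out the modifications needed to track $h$, $N$ and the volume fraction $\theta$ explicitly.
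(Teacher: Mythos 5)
Your overall route is the same as the paper's (a Kohn--M\"uller type refinement toward $x_1=0$ with slab positions $\ell_i\sim 3^{-i}\ell$ and periods halving at each step, truncated near $x_1=0$ and matched to the affine data there, citing \cite{conti-zwicknagl:16}), but two of your concrete choices are wrong and they break exactly the items the lemma asks for. First, the volume fraction: inside the strip of height $\frac hN$ the minority variant must occupy the fraction $\theta/h$ of each period (period $h_i=2^{-i}h/N$, minority width $\theta_i=2^{-i}\theta/N$), so that the slice average of $\partial_2u$ is $\theta-\frac{\theta}{h}=-\frac{\theta(1-h)}{h}$; this is what makes the construction consistent with the affine left datum (i), with (iii)--(iv) (which force $u(x_1,1)-u(x_1,1-\frac hN)=-\frac{\theta(1-h)}{N}\neq 0$ for $h<1$), and with the right-edge laminate (ii), whose minority fraction is $(\theta/N)/(h/N)=\theta/h$. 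Your prescription ``$\theta$-region occupying fraction $1-\theta$, so that the average of $\partial_2u$ is $0$'' is correct only for $h=1$; for $h<1$ the resulting function cannot satisfy (i)--(iv).

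Second, the elastic-energy bookkeeping in (vi) is not just imprecise but inconsistent. The horizontal rearrangement in slab $i$ has amplitude given by the minority width $\theta_i\sim 2^{-i}\theta/N$, which decays geometrically; hence $|\partial_1u|\lesssim\theta_i/\ell_i$ and the slab energy is $(\theta_i/\ell_i)^2\,\ell_i\,\frac hN=\frac{\theta^2h}{N^3\ell}\bigl(\tfrac34\bigr)^i$, a convergent series dominated by the \emph{coarsest} slab $i=0$, which is precisely the bound $c\,\theta^2h/(N^3\ell)$. You instead take a scale-independent amplitude ($|\partial_1u|\sim\theta h/(N\ell_k)$), which yields $\theta^2h^3N^{-3}\sum_k\ell_k^{-1}$; for a geometric sequence this sum is dominated by the finest retained scale, so your asserted identity $\sum_k\ell_k^{-1}\sim1/\ell$ is false (and it contradicts your own earlier remark that the finest scale dominates). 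With your amplitudes the truncated energy is $\sim\theta^2h^3/\ell_K\gg\theta^2h/(N^3\ell)$ in general, so (vi) is not reached. Relatedly, the truncation scale in this lemma cannot be chosen by ``balancing against the surface energy'': $\varepsilon$ does not appear here, and (v) is a pure total-variation bound. The correct rule is to stop at the largest $k$ with $\theta_k\le\ell_k$ and to interpolate affinely on $(0,\ell_k)$; this is what gives $\|\nabla u\|_{L^\infty}\le c$, and since $\ell_{k+1}<\theta_{k+1}$ implies $\ell_k\ell\lesssim\theta^2/N^2$, the affine region contributes elastic energy $\lesssim \frac hN\ell_k\lesssim\frac{\theta^2h}{N^3\ell}$ and total variation $\lesssim\ell+\frac hN$, completing (v) and (vi). Without the geometrically decaying amplitude and this truncation criterion, the key estimates of the lemma do not follow.
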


\begin{proof}
One can use the finite branching construction given in \cite{conti-zwicknagl:16} building on Lemma \cite[Lemma 5.2]{conti-zwicknagl:16} and truncation parameter $I\in\N$ such that $(3/2)^I\sim\ell/\theta$. The estimates then follow from the considerations in the proof of \cite[Proposition 6.1]{conti-zwicknagl:16}.

For the convenience of the reader we sketch the main steps of the construction, referring to 
Figure \ref{fig:branching} for an illustration.
 For $i\in\N$ we set $h_i:=2^{-i}h/N$,  $\theta_i:=2^{-i}\theta/N$, $\ell_i:=3^{-i}\ell$, and let $k$ be the largest integer such that $\theta_k\le \ell_k$.
For $0\le i\le k$, the function $x_2\mapsto \partial_2 u(\ell_i,x_2)$ is $h_i$-periodic, with
$\partial_2u(\ell_i,x_2)=\theta$ for $x_2\in (1-h_i,1-\theta_i)$ and 
 $\partial_2u(\ell_i,x_2)=\theta-1$ for $x_2\in(1-\theta_i,1)$.  
 In $(\ell_{i+1}, \ell_{i})\times(1-\frac hN,1)$, $0\le i<k$, the function $x_2\mapsto u(x_1,x_2)-\frac\theta{h}(1-h)(1-x_2)$ is $h_i$-periodic in the $x_2$ direction, {$u$} obeys (iii) and (iv),  and is defined interpolating the boundary values as sketched in Figure \ref{fig:branching}.
By construction $\partial_2 u\in\{\theta,\theta-1\}$ almost everywhere in this set. One checks that
$|\partial_1u|\le c\theta_i/\ell_i$, leading to
$|D^2u|([\ell_{i+1},\ell_i]\times(1-\frac hN,1))\le c 2^i \ell_i+c2^i \frac{h_i\theta_i}{\ell_i}$ 
and
 $\|\partial_1u\|_{L^2([\ell_{i+1},\ell_i]\times(1-\frac hN,1))}^2\le c (\frac{\theta_i}{\ell_i})^2 \ell_i\frac hN$. Summing the two geometric series, this leads to the bounds in (v) and (vi) on $(\ell_k,\ell)\times(1-\frac hN,1)$. In $(0,\ell_k)\times(1-\frac hN,1)$ we use an affine interpolation. The condition $\theta_k\le\ell_k$ gives $|\nabla u|\le c$ in this region, so that the elastic energy is bounded by $c\frac hN\ell_k$. 
 Since $\ell_{k+1}<\theta_{k+1}$ we have $\ell\le \frac\theta N (\frac32)^{k+1}\le 2 \frac\theta N 3^{k/2}$, which implies
 $\ell_k\ell \le 4 \frac{\theta^2}{N^2}$ and hence (v). In turn, $|D^2u|((0, \ell_k]\times(1-\frac hN,1))\le c (2^k\ell_k+\frac hN)\le c(\ell+\frac hN)$.  
 This concludes the proof.
\end{proof}

{Finally, we shall frequently use a function that interpolates between a single laminate and an affine function.
\begin{lmm}\label{lem:tildev}
Let $\theta\in(0,1/2]$ and $\tilde{\beta}>0$. There exists a function $\tilde{v}_{\tilde{\beta}}{=}\tilde{v}{\in C^0([0,\tilde{\beta}]\times[0,\theta])}$ such that
\begin{itemize}
\item[(i)] $\tilde{v}(0,x_2)=(-1+\theta)x_2$ and $\tilde{v}(\tilde{\beta},x_2)=\theta x_2$ for all $x_2\in[0,\theta]$;
\item[(ii)] $\tilde{v}(x_1,0)=0$ for all $x_1\in[0,\tilde{\beta}]$;
\item[(iii)] $\tilde{v}(x_1,\theta)=\theta \iota_{0,\tilde{\beta}}(x_1)+\theta(-1+\theta)$, with $\iota_{0,\tilde{\beta}}$ as in (\ref{eq:iota});
\item[(iv)]{$\|\nabla \tilde v\|_{L^\infty}\le 1+\frac \theta{\tilde\beta}$} and the energy is estimated by
\[\int_{(0,\tilde \beta)\times (0,\theta)}{\min}\left\{|\nabla \tilde{v} -
\theta e_2
|^2, 
|\nabla \tilde{v} +
(1-\theta)e_2
|^2\right\} \dcalL^2  +
\varepsilon |D^2\tilde{v}|((0,\tilde\beta)\times{(0,\theta))}\leq c\left( \frac {\theta^3}{\tilde  \beta} + \varepsilon (\tilde \beta + \frac{\theta^2}{\tilde\beta})\right).\]
\end{itemize}
\end{lmm}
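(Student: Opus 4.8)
The plan is to build $\tilde v$ explicitly as a two-phase, piecewise affine function with a single straight jump interface across which $\partial_2\tilde v$ switches between the two well values $\theta-1$ and $\theta$, placing the interface so that all the prescribed traces on $\partial\bigl((0,\tilde\beta)\times(0,\theta)\bigr)$ are attained. Concretely, set $s(x_1):=\theta\bigl(1-x_1/\tilde\beta\bigr)$ for $x_1\in[0,\tilde\beta]$, so that $s(0)=\theta$, $s(\tilde\beta)=0$ and $s(x_1)\in[0,\theta]$, and define
\[
\tilde v(x_1,x_2):=
\begin{cases}
(\theta-1)\,x_2, & 0\le x_2\le s(x_1),\\
\theta\,x_2-s(x_1), & s(x_1)< x_2\le \theta.
\end{cases}
\]
The two branches agree on the segment $\Gamma:=\{x_2=s(x_1)\}$, both equal to $(\theta-1)s(x_1)$, so $\tilde v\in C^0\bigl([0,\tilde\beta]\times[0,\theta]\bigr)$.

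First I would verify (i)--(iii) by direct substitution. Since $s(0)=\theta$, the lower region fills the whole slice $\{x_1=0\}$, hence $\tilde v(0,x_2)=(\theta-1)x_2$; since $s(\tilde\beta)=0$, the upper region fills $\{x_1=\tilde\beta\}$, hence $\tilde v(\tilde\beta,x_2)=\theta x_2$, which is (i). The bottom edge $\{x_2=0\}$ lies in the lower region, so $\tilde v(x_1,0)=0$, which is (ii). For (iii), the top edge $\{x_2=\theta\}$ lies in the upper region (or on $\Gamma$), so $\tilde v(x_1,\theta)=\theta^2-s(x_1)=\theta\,x_1/\tilde\beta+\theta(\theta-1)=\theta\,\iota_{0,\tilde\beta}(x_1)+\theta(-1+\theta)$.

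For (iv) I would use the two constant gradients. In the lower region $\nabla\tilde v=(0,\theta-1)=-(1-\theta)e_2$, so the martensitic integrand $\min\{|\nabla\tilde v-\theta e_2|^2,\,|\nabla\tilde v+(1-\theta)e_2|^2\}$ vanishes there and $|\nabla\tilde v|=1-\theta\le1$. In the upper region $\tilde v=\theta x_2+\theta x_1/\tilde\beta-\theta$, so $\nabla\tilde v=(\theta/\tilde\beta,\theta)$, hence $|\nabla\tilde v-\theta e_2|=\theta/\tilde\beta$ and, using $\sqrt{a^2+b^2}\le a+b$ for $a,b\ge0$ together with $\theta\le1$, $|\nabla\tilde v|\le\theta/\tilde\beta+\theta\le1+\theta/\tilde\beta$. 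This gives $\|\nabla\tilde v\|_{L^\infty}\le1+\theta/\tilde\beta$, and, bounding the area of the strip by $\tilde\beta\theta$,
\[
\int_{(0,\tilde\beta)\times(0,\theta)}\min\bigl\{|\nabla\tilde v-\theta e_2|^2,\,|\nabla\tilde v+(1-\theta)e_2|^2\bigr\}\dcalL^2\le\frac{\theta^2}{\tilde\beta^2}\,\tilde\beta\theta=\frac{\theta^3}{\tilde\beta}.
\]
Finally $\nabla\tilde v\in BV\bigl((0,\tilde\beta)\times(0,\theta);\R^2\bigr)$ with $D^2\tilde v$ concentrated on $\Gamma$, which joins $(0,\theta)$ to $(\tilde\beta,0)$; the jump of $\nabla\tilde v$ across $\Gamma$ is the constant vector $(\theta/\tilde\beta,1)$, of modulus $\sqrt{1+\theta^2/\tilde\beta^2}$, and $\calH^1(\Gamma)=\sqrt{\tilde\beta^2+\theta^2}$, so
\[
|D^2\tilde v|\bigl((0,\tilde\beta)\times(0,\theta)\bigr)=\sqrt{(\tilde\beta^2+\theta^2)\bigl(1+\theta^2/\tilde\beta^2\bigr)}=\tilde\beta+\frac{\theta^2}{\tilde\beta}.
\]
Adding $\varepsilon$ times this bound to the elastic estimate yields the inequality claimed in (iv).

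Since the construction is completely explicit and essentially one-dimensional on each vertical slice, I do not expect any real obstacle. The only point that requires a moment's thought is that the choice of interface $s(x_1)=\theta(1-x_1/\tilde\beta)$ is forced precisely by the requirement that the trace on $\{x_2=\theta\}$ equal $\theta\,\iota_{0,\tilde\beta}(x_1)+\theta(-1+\theta)$; once this is imposed, the remaining boundary conditions and the (in fact tight) bounds on the elastic energy and on $|D^2\tilde v|$ follow by elementary computation.
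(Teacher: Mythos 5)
Your construction is correct and is essentially the same as the paper's: the paper's explicit interpolant in \eqref{eq:tildev} is just the mirror-image version of yours (interface along the diagonal from $(0,0)$ to $(\tilde\beta,\theta)$, with the exact variant $\theta$ below and the $O(\theta/\tilde\beta)$ horizontal misfit carried by the $(\theta-1)$-region), and both yield the same traces and the same bounds $\frac{\theta^3}{\tilde\beta}$ for the elastic energy and $\tilde\beta+\frac{\theta^2}{\tilde\beta}$ for $|D^2\tilde v|$. No gaps; your verification of (i)--(iv) matches the paper's elementary computation.
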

\begin{proof} The standard interpolation 
\begin{eqnarray}
\label{eq:tildev}
 \tilde v(x):= 
 \begin{cases}
  \theta x_2,				&\text{if }x_2 \leq \frac\theta {\tilde \beta} x_1,\\
  -(1-\theta) x_2 +\frac \theta {\tilde \beta} x_1,	&\text{if }x_2 > \frac \theta {\tilde \beta} x_1
 \end{cases}
\end{eqnarray}
satisfies all required properties.
{We simplified the {estimate}  using $\theta\le \tilde\beta+\frac{\theta^2}{\tilde\beta}$.}\end{proof}
}
We shall now provide the proof of the upper bound in Theorem \ref{th:main}. We proceed in two steps: In the first step, we adapt a result from the literature (Proposition \ref{prop:upperbound}), and in the second step, we provide test functions for the remaining regimes (Theorem \ref{th:upperbound}).

\begin{figure}
 \begin{center}
  \includegraphics[width=14cm]{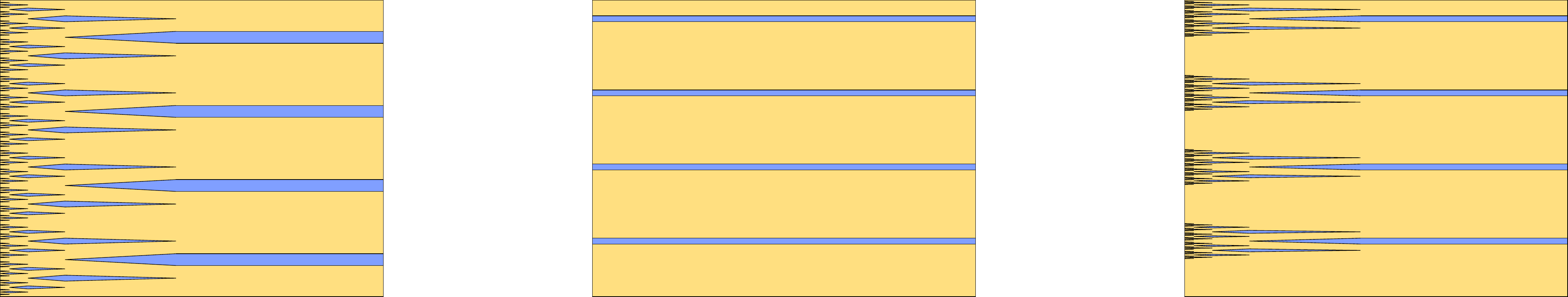}
 \end{center}
\caption{Sketch of the three regimes from Proposition \ref{prop:upperbound}. Only the left half of the martensite, in $\Omega_L$, is plotted, for parameters for which the $\eps L$ term is not relevant.
Left: branching construction from Prop.~\ref{prop:upperbound}(a).
Middle:  laminate from Prop.~\ref{prop:upperbound}(b).
Right: two-scale branching from Prop.~\ref{prop:upperbound}(c).
}
\label{fig-br2-disc}
\end{figure}

\begin{prpstn}[Upper bound: constructions from the literature]\label{prop:upperbound}
There exists $c>0$ such that for  all $\mu>0$, $\varepsilon>0$, $\theta\in(0,1/2]$, and  $L\geq1/2$ there holds
\begin{align*}
 \min_u E(u) 
 \leq c \min \Big{\{} 
 &\varepsilon^{2/3}\theta^{2/3} L^{1/3} +\varepsilon L,\,\, \mu^{1/2}\varepsilon^{1/2}\theta L^{1/2}  (\ln (3+ \frac 1 {\theta^{2}}))^{1/2}+\varepsilon L, \\
 &\mu^{1/2}\varepsilon^{1/2}\theta L^{1/2} (\ln (3+ \frac{\varepsilon}{ \mu^{3}\theta^{2}{L}}))^{1/2}+\varepsilon L
 \Big{\}}.
\end{align*}
\end{prpstn}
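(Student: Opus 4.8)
The plan is to construct, for each of the three regimes, a scalar test function $u$ on $(-\infty,L]\times\R$ (extended by the reflection $x_1\mapsto 2L-x_1$ to all of $\R^2$) combining an interior branching/laminate pattern in $\Omega_{2L}$ with the austenite-extension lemmas proved above, and to estimate $E(u)$ term by term. The overall bookkeeping is the same in all three cases: the cost splits into (1) the elastic plus surface energy of the interior microstructure, (2) the cost of attaching that microstructure to the constant/affine field deep inside the nucleus, and (3) the energy $E^\ext$ in the austenite, which by Lemma \ref{lem:lemout} and Lemma \ref{lem:lamoutside} contributes terms of the form $\mu\theta^2(\ln(3+\cdots)+\cdots)$. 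Since the claimed bound has the additive $\varepsilon L$ term, one is always free to fill a fixed-width collar near the boundary $\{x_1=0\}$ (and its mirror) with a standard single-laminate of period comparable to the collar width, paying $O(\varepsilon L)$ in surface energy; hence it suffices to match the leading non-$\varepsilon L$ term in each case.

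For regime (a), the pure branching bound $\varepsilon^{2/3}\theta^{2/3}L^{1/3}+\varepsilon L$: here I would use the Kohn–Müller-type branched construction recorded in Lemma \ref{lem:2scbrobenbranchN} directly, with a single branching tree refining from the interface toward the interior. Choosing $N=1$ and balancing the refinement length $\ell$ against $L$ — i.e. the standard optimization $\ell\sim(\varepsilon L/\theta^2)^{?}$ giving elastic energy $\sim\theta^2h/(N^3\ell)$ summed over cells against surface energy $\sim\varepsilon(\ell+h/N)$ per cell — yields $\varepsilon^{2/3}\theta^{2/3}L^{1/3}$, with the residual coarse part contributing $O(\varepsilon L)$. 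In this regime one may take $\mu$ effectively irrelevant by using a construction that is austenite-free near the top/bottom boundaries except on the collar (the branching pattern already has $\partial_2u\in\{\theta,\theta-1\}$ with zero mean, so the trace on $\{x_2=0,1\}$ can be arranged to vanish), so $E^\ext=O(\varepsilon L)$.

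For regimes (b) and (c), the laminate and two-scale branching bounds $\mu^{1/2}\varepsilon^{1/2}\theta L^{1/2}(\ln(3+\cdots))^{1/2}+\varepsilon L$: here the austenite term is genuinely active and carries the logarithm. I would use in the bulk of $\Omega_{2L}$ a single laminate of period $1/N$ with minority volume fraction parameter $h$ (so $\partial_2u$ alternates between $\theta$ and a large negative value on a thin layer of relative width $h$), matched to the austenite via Lemma \ref{lem:lamoutside}, which costs $\mu\theta^2 N^{-1}\ln(3+1/h)$ per unit length along the interface, i.e. $\mu\theta^2 \ln(3+1/h)$ total on a collar, and along the full top/bottom boundary of length $L$ one pays $\varepsilon N L$ in surface energy while the laminate's own elastic energy in $\Omega_{2L}$ vanishes. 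Optimizing $N$ gives the product $\mu^{1/2}\varepsilon^{1/2}\theta L^{1/2}(\ln(3+1/h))^{1/2}$; choosing $h\sim\theta^2$ gives regime (b), while for regime (c) one additionally refines each laminate needle by the finite branching of Lemma \ref{lem:2scbrobenbranchN} near the interface (the ``two-scale'' structure), which trades part of the surface cost for a smaller effective $h$, leading after a second optimization to $h\sim\varepsilon/(\mu^3\theta^2 L)$ and hence the stated logarithm. The main obstacle I expect is the careful matching at the interface $\{x_1=0\}$ between the periodic interior field and the austenite extension — ensuring continuity of traces, that the hypotheses $h\in(0,1/2]$, $N\ge1$, $\ell\ge\theta$ etc.\ of the lemmas are met in the relevant parameter range, and that in degenerate parameter regimes (e.g.\ when the optimal $N$ would be $<1$, or $h>1/2$) one falls back to the affine or constant construction, which is exactly why the statement is a minimum over several expressions and why the final reconciliation is deferred to Theorem \ref{th:upperbound}.
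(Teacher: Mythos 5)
Your overall skeleton coincides with the paper's: all three bounds come from combining the interior branching construction of Lemma \ref{lem:2scbrobenbranchN} with the austenite extension of Lemma \ref{lem:lamoutside}, and then optimizing the number $N$ of vertical periods and an interfacial volume-fraction parameter $h$. But the concrete parameter choices you propose do not deliver the stated scalings. In regime (a) you fix $N=1$ (a single branching tree spanning the full height) and optimize the refinement length $\ell$. This step fails: with $N=1$, $h=1$, Lemma \ref{lem:2scbrobenbranchN}(v)--(vi) gives elastic energy of order $\theta^2/\ell$ and surface energy of order $\eps\ell$, and whatever fills the remaining region costs at least a further $O(\eps L)$; optimizing over $\ell\le L$ one only reaches $\theta^2/L+\eps L$, which is not bounded by $c(\eps^{2/3}\theta^{2/3}L^{1/3}+\eps L)$ (take $\theta,L$ fixed and $\eps\to0$). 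The point of the branching regime is that the coarsest period must itself be small, i.e.\ one keeps $\ell=L$ and stacks $N$ periodic copies with $N$ the smallest integer above $\theta^{2/3}\eps^{-1/3}L^{-2/3}$, balancing $\theta^2/(N^2L)$ against $N\eps L$; this vertical refinement is exactly what a single tree cannot provide.

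In regime (b) the minority fraction of the bulk laminate is not a free parameter: for the elastic energy inside $\Omega_{2L}$ to vanish the slopes must lie in $\{\theta,\theta-1\}$, which forces $h=\theta$. With your choice $h\sim\theta^2$ the return slope is $-\theta(1-h)/h\approx -1/\theta$, far from $\theta-1$, and the bulk elastic energy is of order $L$, destroying the bound; the correct choice is $h=\theta$ in Lemma \ref{lem:lamoutside}, and the $\theta^2$ inside the logarithm of the statement is cosmetic since $\ln(3+1/\theta)\le\ln(3+1/\theta^2)$. In regime (c) the trade-off runs in the opposite direction from what you describe: the interior branching is used to make the minority region at the vertical interface \emph{wider} than $\theta$ (with a shallower return slope), because the austenite cost $\mu\theta^2N^{-1}\ln(3+1/h)$ decreases in $h$ while the branching elastic term $\theta^2h/(N^2L)$ increases in $h$; the optimum is $h=\min\{1,\mu^{3/2}\eps^{-1/2}\theta L^{1/2}\}$, so that $\ln(3+1/h^2)=\ln(3+\eps/(\mu^3\theta^2L))$, not $h\sim\eps/(\mu^3\theta^2L)$ (which is the reciprocal, and typically exceeds $1$), and when this optimal $h$ would drop below $\theta$ one must fall back to the pure laminate of (b). Finally, your generic ``collar'' remark is not justified: gluing an interior pattern whose trace does not vanish to the austenite costs of order $\mu\theta^2$ times a logarithm (this is precisely what Lemma \ref{lem:lamoutside} quantifies), not $O(\eps L)$, so the austenite matching cannot simply be absorbed into the $\eps L$ term.
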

\begin{proof}
{The assertion follows from the proof of \cite[Proposition 5.1]{conti-zwicknagl:16}, checking carefully that the differences between the two functionals are not relevant. 
{For clarity} we provide a short self-contained argument, based on 
Lemma \ref{lem:lamoutside} and  Lemma \ref{lem:2scbrobenbranchN} (both taken from \cite{conti-zwicknagl:16}).
The three constructions are illustrated in Figure \ref{fig-br2-disc}.
\begin{enumerate}
\item[(a)] Let $N\ge 1$, $h:=1$, $\ell:=L$, and let $u\in C^0([0,L]\times[1-\frac1N,1])$ be as in Lemma \ref{lem:2scbrobenbranchN}, extended periodically in $x_2$ to
$[0,L]\times [0,1]$, symmetrically for $x_1\in (L,2L]$ and then by zero outside $\overline{\Omega_{2L}}$. We have
\begin{equation*}
 E(u)\le c N \eps L + c\frac{\theta^2}{N^2L} {+\eps}.
\end{equation*}
Choosing $N$ to be the smallest integer above $\theta^{2/3}\eps^{-1/3}L^{-2/3}$, we obtain \\
$E(u)\le c(\varepsilon^{2/3}\theta^{2/3} L^{1/3} +\varepsilon L)$. 

 \item[(b)] 
Let $N\ge 1$, $h:=\theta$, and let $v_{N,\theta}$ be as in Lemma \ref{lem:lamoutside}. 
We extend it setting 
$v_{N,\theta}(x)=v_{N,\theta}(0,x_2)$ for $x_1\in(0,L]$,  symmetrically for $x_1\ge L$, and by zero on $\R\times (\R\setminus[0,1])$. By
Lemma \ref{lem:lamoutside}(ii) we have $\partial_1 v_{N,\theta}=0$ and $\partial_2 v_{N,\theta}\in\{\theta,\theta-1\}$ in $\Omega_{2L}$, so that
$I_{\Omega_{2L}}(v_{N,\theta})\le 4N\eps L$.
By Lemma \ref{lem:lamoutside}(v),
\begin{equation*}
 E(v_{N,\theta})\le c\mu\theta^2 N^{-1} \ln(3+\frac1{\theta^2}) + cN\eps L.
\end{equation*}
Choosing $N$ as the smallest integer above $(\eps^{-1}L^{-1}\mu\theta^2\ln(3+\frac1{\theta^2}))^{1/2}$, we obtain \\
$E(v_{N,\theta})\le c(\mu^{1/2}\varepsilon^{1/2}\theta L^{1/2} (\ln (3+ \frac 1 {\theta^{2}}))^{1/2}+\varepsilon L)$.

\item[(c)]
It remains to show that $I(c)\le c\left(
 \mu^{1/2}\varepsilon^{1/2}\theta L^{1/2}  (\ln (3+ \frac{\varepsilon}{ \mu^{3}\theta^{2}{L}}))^{1/2}+\varepsilon L\right)$. 
If $\mu^{3/2}\eps^{-1/2} \theta L^{1/2}\le \theta$, this follows from (b). Otherwise, we 
fix again $N\ge 1$, $h\in[\theta,1]$, $\ell:=L$ and use
 Lemma \ref{lem:2scbrobenbranchN} and  Lemma \ref{lem:lamoutside} to obtain a function $u$ with 
\begin{equation*}
 E(u)\le c\mu\theta^2 N^{-1} \ln(3+\frac1{h^2}) + cN\eps L +c\frac{\theta^2h}{N^2L}{+\eps}{,}
\end{equation*}
{where we used that $\ln (3+\frac{1}{h})\leq \ln(3+\frac{1}{h^2})$ since $h\leq 1$.}
We choose $h:=\min\{1,\mu L (\mu\theta^2/(\eps L))^{1/2}\}=\min\{1,\mu^{3/2}\eps^{-1/2}\theta L^{1/2}\}{\in[\theta,1]}$  and 
$N$ to be the smallest integer above $(\mu\theta^2\ln(3+\frac{\eps}{\mu^3\theta^2 L}) /(\eps L))^{1/2}$.
Using $h\le \mu^{3/2}\eps^{-1/2}\theta L^{1/2}$ {and $\ln (3+\frac{\eps}{\mu^3\theta^2 L}) \geq 1$},
the proof is concluded.
\end{enumerate}
}
\end{proof}

\begin{figure}
 \begin{center}
  \includegraphics[width=8cm]{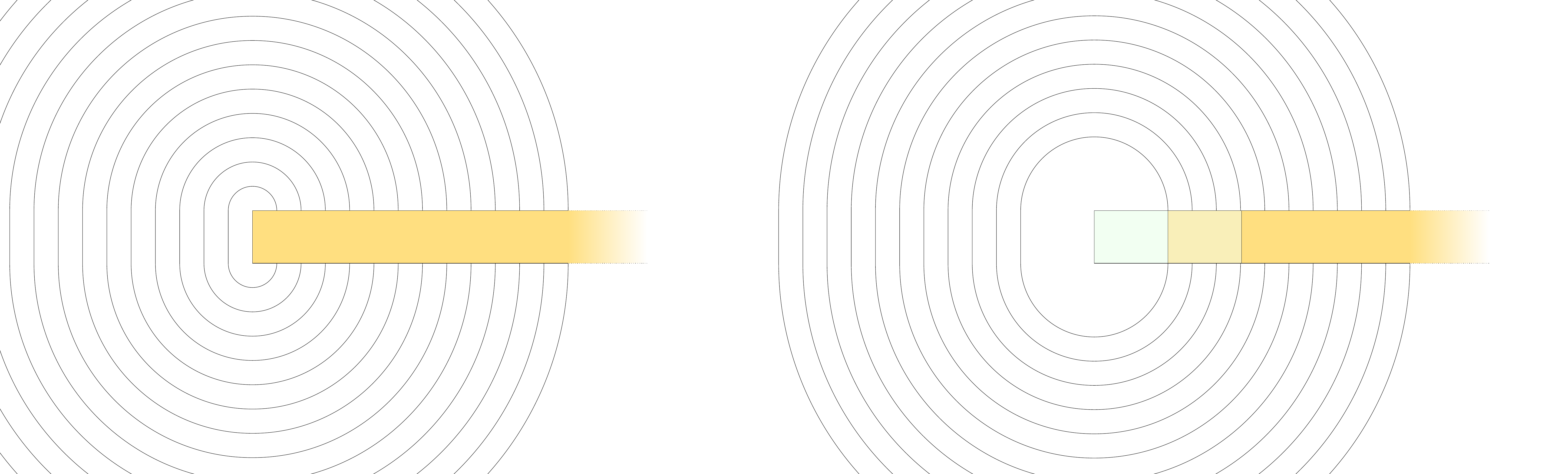}
 \end{center}
\caption{
Left: Sketch of the {affine} regime 
(see proof of Theorem \ref{th:upperbound}(ii)). 
The martensite in $\Omega_{2L}$ has an affine deformation $u=\theta x_2$; in the austenite the field lines of $\nabla u$ are sketched. As usual, we only plot the region with $x_1\le L$. 
Right: Sketch of {{the linear} interpolation} regime 
(see proof of Theorem \ref{th:upperbound}(iii)). 
The martensite has $u=\theta x_2$ only for $x_1\in [2\mu,L]$, it has $u=0$ for $x_1\in[0, \mu]$, and the affine interpolation in between. Correspondingly the field lines start at $x_1=\mu$. This construction is relevant for $\mu\gg 1$, for simplicity $\mu=1.4$ is plotted.}
\label{fig-regimes1}
\end{figure}

\begin{thrm}[Upper bound: conclusion]\label{th:upperbound}
 {There is a constant $c>0$ such that for all $\mu>0$, $\varepsilon>0$, $\theta\in (0,1/2]$, and $L\geq1/2$}
 \begin{align*}
  &\min_u J(u) 
 \leq  c\mathcal{I}({\mu,\eps,\theta,L}),
\end{align*}
where $\mathcal{I}({\mu,\eps,\theta,L})$ is given in Theorem \ref{th:main}.
\end{thrm}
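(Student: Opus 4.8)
The plan is to prove Theorem \ref{th:upperbound} by exhibiting, for each of the eight terms in $\mathcal{I}(\mu,\eps,\theta,L)$, a competitor $u$ whose energy $J(u)$ is controlled by that term (up to a universal constant), and then taking the minimum over all competitors. Since $J$ and the modified functional $I$ differ only by Korn's inequality \eqref{eq:korn} with an $L$-independent constant, and since all competitors are scalar-valued so that $J((u,0))\le E(u)$, it suffices to construct $u\in W^{1,2}_{\loc}(\R^2;\R)$ for each regime. Throughout we use the symmetry trick: build the competitor on $(-\infty,L]\times\R$, reflect across $\{x_1=L\}$, and extend by zero outside $\overline{\Omega_{2L}}$; the extra interfacial term $\eps|D^2u|(\{L\}\times(0,1))$ is either zero (constant, affine, linear interpolation) or bounded by $c\eps$ because the relevant gradients are bounded, and $c\eps$ can be absorbed into every regime using $\eps\le\mu\theta^2$ (which holds in the regime where this correction occurs, since otherwise the constant regime $\theta^2 L$ already dominates).

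The eight constructions split naturally. The \textbf{branching}, \textbf{laminate}, and \textbf{two-scale branching} regimes are exactly Proposition \ref{prop:upperbound}, so nothing new is needed there. The \textbf{constant} regime $\theta^2 L$ uses $u\equiv 0$: then $E^{\intt}(0)=\int_{\Omega_{2L}}\min\{|\theta e_2|^2,|(1-\theta)e_2|^2\}=\theta^2\cdot 2L$ and $E^{\ext}(0)=0$. The \textbf{affine} regime $\mu\theta^2\ln(3+L)$ uses $u=\theta x_2$ in $\Omega_{2L}$, extended to the austenite by (a reflected/periodized version of) Lemma \ref{lem:lemout} with $\alpha=1$, $\beta$ comparable to $L$ (so the $\frac{\beta+\alpha}{\beta-\alpha}$ term is $O(1)$), giving austenite energy $c\mu\theta^2\ln(3+L)$ and zero martensite energy. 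The \textbf{linear interpolation} regime $\mu\theta^2\ln(3+\frac L\mu)+\eps\theta$ uses $u=0$ for $x_1\in[0,\mu]$, $u=\theta x_2$ for $x_1\in[2\mu,L]$, affine interpolation in between; here Lemma \ref{lem:lemout} is applied with $\alpha\sim\mu$ (so the field lines start at $x_1\sim\mu$) giving $\mu\theta^2\ln(3+L/\mu)$ in the austenite, the interpolation strip $(\mu,2\mu)\times(0,1)$ costs martensite elastic energy $\sim\theta^2\mu$ (absorbable when $\eps\le\mu\theta^2$, i.e.\ $\mu\ge\eps/\theta^2$, else the constant regime wins) plus interfacial energy $\sim\eps\theta$ coming from $|D^2u|$ on the single slanted interface of height $\theta$ — actually one inserts the laminate-to-affine interpolation $\tilde v$ of Lemma \ref{lem:tildev} to make the martensite part exactly laminated, contributing $\eps\tilde\beta+\eps\theta^2/\tilde\beta$, optimized at $\tilde\beta\sim\theta$.

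The genuinely new constructions are the \textbf{single truncated branching} regime $\mu\theta^2\ln(3+\frac{\eps L}{\mu\theta^2})+\mu\theta^2\ln(3+\frac{\eps}{\mu^2\theta^2})+\eps^{1/2}\theta^{3/2}$ and the \textbf{corner laminate} regime $\mu\theta^2\ln(3+\frac{\eps L}{\mu\theta^2})+\mu\theta^2\ln(3+\frac\theta\mu)$. For the corner laminate one uses a laminate of period $1/N$ near $\{x_1=L\}$ as in Lemma \ref{lem:lamoutside} (costing $\eps N L$-type energy inside — but here with only one branching-free laminate column so the relevant count is different), plus the truncated/refined interpolation toward the interface $\{x_1=0\}$ handled by Lemma \ref{lem:lemout} with $\alpha$ chosen $\sim\mu\theta^2/(\eps)$ or similar to produce the first logarithm, while the second logarithm $\ln(3+\theta/\mu)$ comes from resolving a ``corner'' laminate refinement near the top-left and bottom-left corners where the laminate must reconnect to a zero boundary value — this is the boundary-layer logarithm of the type treated by Lemma \ref{lem:lamoutside}(v) with $h\sim\mu/\theta$. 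For single truncated branching one runs the branching construction of Lemma \ref{lem:2scbrobenbranchN} but truncates it after finitely many steps (one scale), interpolating affinely on the remaining strip; optimizing the number of steps and the truncation scale against $\eps$, $\mu$, $\theta$, $L$ produces the three displayed terms, with the $\eps^{1/2}\theta^{3/2}$ being the branching-energy/surface-energy balance at a single scale and the two logarithms being the austenite cost of funneling the branched pattern down to the interface.

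The main obstacle will be the bookkeeping in the two new regimes: one must choose the free parameters ($N$, $h$, truncation index, the $\alpha,\beta$ in Lemma \ref{lem:lemout}, the $\tilde\beta$ in Lemma \ref{lem:tildev}) as explicit functions of $(\mu,\eps,\theta,L)$, verify all the hypothesis ranges (e.g.\ $h\in[\theta,1]$, $\alpha\ge 1$, $N\ge 1$ — which may force case splits like $\theta\ge\mu$ versus $\theta\le\mu$, the former being the ``with corner laminate'' case), and then check that the sum of the (at most four) energy contributions — austenite logarithm(s), interior laminate $\eps NL$, branching elastic energy, and the various $\eps$-interfacial corrections — collapses to the claimed term after using $\ln(3+x)\ge 1$ and the standing assumption $\eps\le\mu\theta^2$ to absorb lower-order pieces. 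No single estimate is hard; the difficulty is organizing the eight cases and the internal case distinctions so that each parameter choice simultaneously satisfies the lemma hypotheses and yields exactly the target scaling.
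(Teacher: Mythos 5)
Your overall strategy coincides with the paper's: scalar competitors built regime by regime, symmetrization across $\{x_1=L\}$ with the extra $c\eps$ absorbed, Proposition \ref{prop:upperbound} for the branching, laminate and two-scale branching regimes, and Lemmas \ref{lem:lemout}, \ref{lem:lamoutside}, \ref{lem:2scbrobenbranchN}, \ref{lem:tildev} as building blocks. The constant, affine and linear-interpolation competitors are essentially those of the paper (one slip: in the affine regime you take $\beta$ comparable to $L$ in Lemma \ref{lem:lemout}, but then the top trace of the exterior function ramps from $0$ to $\theta$ over a length $\sim L$ and no longer matches the interior $\theta x_2$, so the competitor is not a single Sobolev function; the paper instead takes $\beta=2$ and simply bounds $\frac{\beta+\alpha}{\beta-\alpha}=3\le c\ln(3+L)$).

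For the two regimes that constitute the new content of the theorem, however, the proposal does not contain a working construction. For the corner laminate you claim the logarithm $\mu\theta^2\ln(3+\frac\theta\mu)$ comes from Lemma \ref{lem:lamoutside}(v) with $h\sim\mu/\theta$; but the boundary trace produced by that lemma has slope $-\theta(1-h)/h$ on its steep segment, which is compatible with a zero-energy interior laminate only for $h=\theta$ (slope $-(1-\theta)$); with $h\sim\mu/\theta$ the interior mismatch is uncontrolled (for $\mu\ll\theta^2$ the slope is $\gg 1$), so this route fails as described. The actual mechanism is different: near the left boundary the minority layer of a single laminate tapers linearly to zero thickness over a horizontal length $\gamma$, and the corner logarithm is the outcome of balancing the interior elastic cost $\theta^3/\gamma$ of the taper against the austenite cost $\mu\theta^2\ln(3+\gamma)$ of the resulting nonzero top-boundary trace, with $\gamma\sim\theta/(\mu\ln(3+\frac\theta\mu))$, while the other logarithm comes from Lemma \ref{lem:lemout} with $\alpha\sim\mu\theta^2/\eps$ after a $\tilde v$-interpolation to the affine bulk state. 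Similarly, in the single truncated branching regime you attribute $\eps^{1/2}\theta^{3/2}$ to a single-scale branching balance, whereas it is the interpolation cost $\theta^3/\tilde\beta+\eps\tilde\beta$ from a single laminate to the affine state, optimized at $\tilde\beta=\eps^{-1/2}\theta^{3/2}$; the branching of Lemma \ref{lem:2scbrobenbranchN} enters only with $N=1$, $h=\min\{1,\mu^2\theta^2\eps^{-1}\}$, $\ell=\mu\theta^2\eps^{-1}$, and its role is to generate the boundary logarithm $\mu\theta^2\ln(3+\frac{\eps}{\mu^2\theta^2})$ through Lemma \ref{lem:lamoutside}, while Lemma \ref{lem:lemout} with $\alpha\sim\mu\theta^2/\eps$ gives $\mu\theta^2\ln(3+\frac{\eps L}{\mu\theta^2})$. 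Identifying these specific length scales, checking the admissibility constraints ($h\in[\theta,1]$, $\alpha\ge1$, $3\gamma\le\alpha$) and organizing the case splits ($\eps\gtrless\mu\theta^2$, $\mu^2\theta\gtrless\eps$, $\ell\gtrless L/2$, $\mu\theta^2\gtrless\eps^{1/2}\theta^{3/2}$) is the substance of the proof rather than bookkeeping, and it is missing from your outline.
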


\begin{proof}
{By the estimate $|e(u)|\leq |\nabla u|$, it suffices to show an upper bound for $I$, which in turn follows from an upper bound for $E$.}
We provide test functions for the respective regimes separately. Some constructions are used for several test functions. We will describe them in detail the first time we use them and refer to the arguments in the path of the proof.
\begin{itemize}
\item[(0)] 
{Branching, laminate, two-scale branching:}
By Proposition \ref{prop:upperbound}, we have
\begin{align*} \min_uE(u) \leq c 
\min  \Big{\{}\varepsilon^{2/3}\theta^{2/3} L^{1/3}+\varepsilon L,\, & \mu^{1/2}\varepsilon^{1/2}\theta{L^{1/2} } (\ln (3+ \frac 1{\theta^{2}}))^{1/2} +\varepsilon L,\\
&\mu^{1/2}\varepsilon^{1/2} \theta {L^{1/2}}(\ln (3+\frac {\varepsilon}{ \mu^{3}\theta^{2}{L}}))^{1/2}+\varepsilon L
\Big{\}}.
\end{align*}
\item[(i)]{Constant:} Set ${u^{(1)}}:=0$ in $\R^2$. This shows $\min_{u}  E(u)\leq {E}({u^{(1)}}) \le2 \theta^2L$.
\item[(ii)]{Affine:} We aim to show that $\min_u E(u) \leq c \mu \theta^2 \ln (3+L)$. Choose $\bar{L}:=L+2$, $\beta:=2$ and $\alpha:=1$. We note that the assumptions of Lemma \ref{lem:lemout} are satisfied, and we shall use the function $u_{\alpha,\beta,\bar{L}}$. Precisely, we set
 \[{u^{(2)}}(x):=\begin{cases}
 \theta x_2,&\text{if }x\in\Omega_{L},\\
 u_{{1,2,L+2}}(x_1+2,x_2),&\text{if }x\in ((-\infty,L]\times\R)\setminus ((-2,L]\times(0,1)),\\
 0,&\text{if }x\in (-2,-1]\times (0,1),\\
 (1+x_1)\theta x_2,&\text{if }x\in (-1,0]\times(0,1),\\
 u^{(2)}(2L-x_1,x_2),&{\text{if }x_1>L}
  \end{cases} \]
{see Fig.~\ref{fig-regimes1}, left panel.}
We obtain, by Lemma \ref{lem:lemout} {(using that $\frac{\beta+\alpha}{\beta-\alpha}=3\leq c\ln(3+L)$)} and an explicit computation in $(-1,0)\times(0,1)$,
\[\min_u E(u)\leq E(u^{(2)})={E}^\ext({u^{(2)}}) \leq c \mu \theta^2 \ln (3+L).\]
\item[(iii)]
{{Linear} interpolation: We}
 aim to show that $\min_u E(u) \leq c \left(\mu \theta^2 \ln (3+L/\mu) +\varepsilon \theta\right)$.\\
We distinguish some cases.\\
a)  If $\mu\le1$, this holds by (ii). \\
b) If $\mu\ge L/3$, this holds by (i). \\
c) If $\mu\in(1,L/3)$, we choose $\alpha :=\mu$, $\beta:= 2\mu$ and $\overline{L}:=L$. Note that these choices are admissible for Lemma \ref{lem:lemout} since $\alpha=\mu>1$ and  $L\ge 3\mu\ge \beta$. We set (with $\iota$ as defined in \eqref{eq:iota})
 \[u^{(3)}(x):=\begin{cases}
 0,								& \text{if }x\in [0,\mu]\times[0,1]{,}\\
\iota_{\mu, 2\mu}(x_1)\theta x_2, 				& \text{if }x \in (\mu, 2\mu]\times[0,1]{,}\\
\theta x_2, 							& \text{if }x \in (2\mu, L] \times[0,1]{,}\\
u_{\mu,2\mu,L}(x),&\text{if\ } x\in((-\infty,L]\times\R)\setminus\overline{\Omega_L},\\
u^{(3)}(2L-x_1,x_2),&{\text{if }x_1>L}
 \end{cases}
  \]
  {see Fig.~\ref{fig-regimes1}, right panel.}
 Then ${E}^\ext({u^{(3)}}) \leq c\mu \theta^2 \ln (3+\frac{L}{\mu}) $ by Lemma \ref{lem:lemout}, and hence {by an explicit computation in $\Omega_L$ using that $\mu<1$}
 \[{E}({u^{(3)}}) \leq c\left( \mu \theta^2 \ln \big(3+\frac{L}{\mu}\big) +\varepsilon \theta  +{\frac {\eps\theta} \mu+\mu \theta^2}\right)\leq c\left( \mu \theta^2 \ln \big(3+\frac{L}{\mu}\big) +\varepsilon \theta\right).\]
 \item[(iv)] 
 Next we aim to show an auxiliary result, namely that
 \begin{equation}\label{eqivE}
 \min_u E(u) \leq c\left(\mu \theta^2 \ln (3+ \frac { \varepsilon {L}}{\mu \theta^{2}}) 
 {+\mu\theta^2\ln (3+\frac{1}{\theta^2})}
 + \varepsilon^{1/2}\theta^{3/2}\right).
 \end{equation}
 This bound is not needed for the proof of the theorem, but it introduces a new construction method that will be used for cases (v) and (vi) below. {The idea behind the construction is a single laminate close to the left and right boundaries of the nucleus, interpolated to an affine function deep in the bulk. Related estimates play also a role in the proof of the lower bound, see e.g. the assumptions of Proposition \ref{lem:lbbranching}.}
 \\
 We distinguish three cases:\\
 a) If $\mu\theta^2\eps^{-1}\leq 1$, then (\ref{eqivE}) follows from (ii).\\
  b) If $\mu\theta^2\eps^{-1}\geq  L$, we use the function ${v_{1,\theta}}$ from Lemma \ref{lem:lamoutside} with $N=1$ and $h=\theta$, and set
 \[u^{(4)}(x):=\begin{cases}
 \theta x_2,&\text{if }x\in[0,L]\times[0,1-\theta],\\
 {(1-\theta)(1-x_2)},&\text{if }x\in[0,L]\times(1-\theta,1],\\
 {v_{1,\theta}(x)},&\text{if }x\in(-\infty,0)\times [0,1],\\
 0, &{\text{if } x\in (-\infty,L]\times (\R\setminus[0,1]),}\\
 u^{(4)}(2L-x_1,x_2),&{\text{if }x_1>L}.
 \end{cases} \]
 Then by Lemma \ref{lem:lamoutside}, 
 \[E(u^{(4)})\leq c\left(\mu\theta^2\ln(3+{\frac{1}{\theta}})+\eps L\right)\leq c\mu\theta^2\ln(3+\frac{1}{\theta^2}) \]
 which concludes the proof of (\ref{eqivE}).\\
c) If $\mu\theta^2 \eps^{-1}\in(1, L)$, we use {$v_{1,\theta}$ as above} and Lemma \ref{lem:lemout} with ${\alpha:=\mu\theta^2\eps^{-1}}$, $\beta:=\alpha+\tilde{\beta}$ with $\tilde{\beta}\geq \alpha$ chosen below, 
    and {$\bar L:=\max\{L+1, \beta+1\}$.}  Precisely, we set with {$\tilde{v}_{\tilde{\beta}}$ from Lemma \ref{lem:tildev} and $v_{1,\theta}$ from Lemma \ref{lem:lamoutside}}
    \[U^{(4)}(x):=\begin{cases}
    \theta x_2, 									& \text{if } x\in [0,L]\times [0, 1-\theta]{,}\\
(1-\theta)(1-x_2),								& \text{if } x\in[0,\alpha]\times (1-\theta,1],\\
\tilde v_{{\tilde{\beta}}}(x_1-{\alpha},x_2-(1-\theta)) + \theta(1-\theta),		& \text{if } x\in(\alpha,{\alpha}+ \tilde \beta]\times
(1-\theta,1],\\
\theta x_2,									& \text{if }  x\in ( {\alpha}+ \tilde \beta,{L}]\times(1-\theta,1],\\
{ v_{1,\theta}(x),} &\text{if } x\in (-1,0)\times[0,1],\\
{u_{\alpha+1,\beta+1 ,\bar{L}}(x_1+1,x_2),}&\text{if }x\in((-\infty,L]\times\R)\setminus ((-1,L]\times[0,1]),\\
U^{(4)}(2L-x_1,x_2),&{\text{if }x_1>L}.
    \end{cases} \]
 One readily checks that $U^{(4)}$ is continuous. By Lemma \ref{lem:lemout} {and  Lemma \ref{lem:lamoutside}}, we have
\begin{equation*}
E^\ext({U^{(4)}}) \leq c\mu \theta^2\left(\ln \Big(3+\frac{L+\alpha+\tilde\beta{+1}}{\alpha}\Big) + \frac{\beta+\alpha}{\beta-\alpha} 
+\ln(3+\frac1{\theta^2})
\right){.}
\end{equation*}
Altogether, we obtain {using Lemma \ref{lem:tildev}} {and $\beta\ge 2\alpha$}
\begin{align*}
 {E}({U^{(4)}})&\leq c\left(\frac{\theta^3}{ {\tilde{\beta}}} + \varepsilon {\tilde{\beta}}
 +\frac{\eps\theta^2}{{\tilde{\beta}}} 
 \right)
+ c\mu \theta^2
{\ln\Big(3+ \frac{L}{\alpha}+\frac{\tilde{\beta}}{\alpha}\Big)}
+c\mu\theta^2\ln (3+\frac{1}{\theta^2}){+\eps}
\\
&\le c \left(\frac{\theta^3}{ {\tilde{\beta}}} + \varepsilon {\tilde{\beta}}
 \right)
+ c\mu \theta^2\ln\Big(3+ \frac{\eps L }{\mu\theta^2}+\frac{{\tilde{\beta}}}{\alpha}\Big) {+c\mu\theta^2\ln (3+\frac{1}{\theta^2})}
\end{align*}
where in the second step we used 
that $\tilde\beta\ge1$ implies $\eps\theta^2/\tilde{\beta}\leq\eps\tilde\beta${, and the assumption  $\eps\leq\mu\theta^2$.}
At this point we distinguish two further subcases. If $\mu\theta^2\le \eps^{1/2}\theta^{3/2}$ then we set $\tilde{\beta}:= \varepsilon^{-1/2} \theta^{3/2}{\geq \mu\theta^2\eps^{-1}}=\alpha$ and obtain
 \begin{equation*}
 {E}({U^{(4)}})\le  c\left(\varepsilon^{1/2}\theta^{3/2}
+ \mu \theta^2\ln\Big(3+ \frac{\eps L }{\mu\theta^2}+\frac{\eps^{1/2}\theta^{3/2}}{\mu\theta^2}\Big) \right)
{+c\mu\theta^2\ln (3+\frac{1}{\theta^2})}. 
 \end{equation*}
We treat the first logarithm using $\ln (3+x+y)\le \ln (3+x)+\ln (1+y)\le \ln (3+x)+y$ {for $x,y\geq 0$}, leading to
 \begin{equation*}
 {E}({U^{(4)}})\le  c\left(\varepsilon^{1/2}\theta^{3/2}
+ \mu \theta^2\ln\Big(3+ \frac{\eps L }{\mu\theta^2}\Big) \right){+c\mu\theta^2\ln (3+\frac{1}{\theta^2})},
 \end{equation*}
 which concludes the proof of (\ref{eqivE}).
If instead $\mu\theta^2> \eps^{1/2}\theta^{3/2}$ then we set {$\tilde{\beta}:= \alpha\ge \eps^{-1/2}\theta^{3/2}$} and obtain
 \begin{equation*}
 {E}({U^{(4)}})\le  c\left(\eps^{1/2}\theta^{3/2} 
+ \mu \theta^2\ln\Big(3+ \frac{\eps L }{\mu\theta^2}\Big) \right) {+c\mu\theta^2\ln (3+\frac{1}{\theta^2})}.
 \end{equation*}

\begin{figure}
 \begin{center}
  \includegraphics[width=8cm]{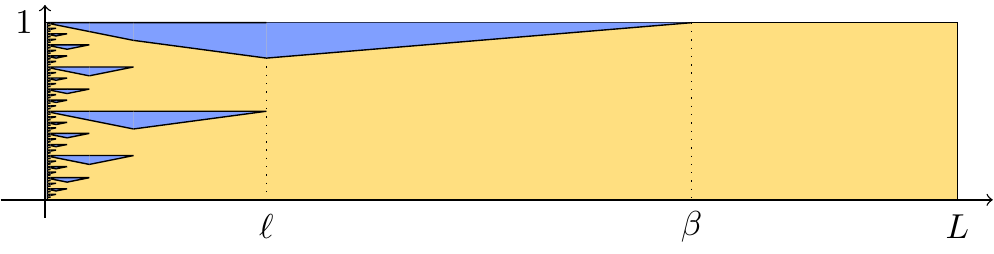}
 \end{center}
\caption{{Sketch of the single truncated branching construction. 
We refer to Fig.~\ref{fig:branching} for details of the branching construction on the left, to Fig.~\ref{fig-regimes1} for the field lines in the austenite.}}
\label{figsingletruncbra}
\end{figure}

\item[(v)]{Single truncated {branching}: We} aim to show that
$\min_u E(u)\leq c\left(\mu \theta^2 \ln (3+ \frac { \varepsilon {L}}{\mu \theta^{2}}) 
 +\mu\theta^2\ln (3+\frac{\eps}{\mu^2\theta^2})
 + \varepsilon^{1/2}\theta^{3/2}\right)$. Again, we distinguish several cases.\\
 a)  If  $\mu\theta^2\eps^{-1}\leq 1$, this follows from (ii).\\
 b) If  $\mu^2\theta^2\eps^{-1}{<} \theta$, then this follows from (iv).\\
c) If $\mu^2\theta^2\eps^{-1}{\geq\theta} $ {and $\mu\theta^2\eps^{-1}> 1$}, we 
use the truncated branching construction 
  from \cite[Proofs of Theorems 3.1, 3.2]{zwicknagl:14}, in the version of Lemma \ref{lem:2scbrobenbranchN} {and $v_{1,h}$ from Lemma \ref{lem:lamoutside}}. Precisely, we {choose $h:=\min\{1,\mu^2\theta^2\eps^{-1}\}\in[\theta,1]$}, $N:=1$, and 
  $\ell:=\mu\theta^2\eps^{-1}{\geq 1{\geq}h}$ {and set
    \begin{equation}\label{eqdefU5}
{u}^{(5)}(x):=\begin{cases}
\theta x_2,&\text{\ if\ } x\in[0,\min\{\ell,L\}]\times[0,1-h],\\
u_{h,\ell,1}(x),&\text{\ if\ }x\in[0,\min\{\ell,L\}]\times(1-h,1],\\
v_{1,h}(x),&\text{\ if\ }x\in[-1,0)\times[0,1],
\end{cases}
\end{equation}
which satisfies} 
  \begin{equation}\label{eqestw}
E_{{[-1,\ell)}\times(0,1)}(w)\leq {c\frac{\theta^2{h}}{\ell}+c\eps{(\ell+{h})}}{+c\mu\theta^2\ln(3+\frac{1}{h}) \le c\mu\theta^2\ln(3+\frac{\eps}{\mu^2\theta^2})}.  
  \end{equation}
  {In the second inequality we used that $\frac{\theta^2 h}{\ell}=\frac{\eps h}{\mu}\leq \mu\theta^2$ and $\ell\geq h$.
  
  If $\ell\geq L/2$, we extend $u^{(5)}$ inside $\Omega_L$ by a simple laminate, i.e., 
\[u^{(5)}(x):=\begin{cases}
\theta x_2,&\text{\ if\ }x\in(\ell,L]\times[0,1-\theta],\\
(1-\theta)(1-x_2),&\text{\ if\ }x\in(\ell, L]\times(1-\theta,1],\\
0,&\text{\ if\ }x\in((-\infty,L]\times\R)\setminus([-1,L]\times[0,1]),\\
u^{(5)}(2L-x_1,x_2),&{\text{\ if }x_1>L}.
\end{cases} \]
Note that $(\ell,L]=\emptyset$ if $L<\ell$. We have $E(u^{(5)})\leq c\left(E_{{[-1,\ell)}\times(0,1)}(u^{(5)})+\eps \ell\right)$, and the assertion follows.\\
Otherwise, if $\ell< L/2$, we proceed as in (iv)c) using Lemma \ref{lem:tildev} and Lemma \ref{lem:lemout}
with $\tilde{\beta}:=\alpha:=\ell$, $\beta:=\ell+\tilde\beta$, and $\bar{L}:=\max\{\beta+1,L+1\}$ and set 
\[u^{(5)}(x):=\begin{cases}
\theta x_2,									& \text{if }  x\in ( {\ell},{L}]\times[0,1-\theta],\\
\tilde v_{\tilde{\beta}}(x_1-{\ell},x_2-(1-\theta)) + \theta(1-\theta),		& \text{if } x\in(\ell,{\ell}+ \tilde \beta]\times
(1-\theta,1],\\
\theta x_2,									& \text{if }  x\in ( {\ell}+ \tilde \beta,{L}]\times(1-\theta,1],\\
u_{\ell+1,\ell+\tilde\beta+1 ,\bar{L}}(x_1+1,x_2),&\text{if }x\in((-\infty,L]\times\R)\setminus ((-1,L]\times[0,1]),\\
u^{(5)}(2L-x_1,x_2),&{\text{if }x_1>L}.
\end{cases} \]
This leads to
\[
 E(u^{(5)})\le c \left(\mu\theta^2\ln\left(3+\frac{\eps}{\mu^2\theta^2}\right)+\eps \ell +\frac{\theta^3}{\tilde\beta} + \eps\tilde\beta+\frac{\eps\theta^2}{\tilde\beta} + \mu\theta^2\ln
 \left(3+\frac{L+\ell+1}{\ell}\right){+\eps}\right),
\]
and the assertion follows as in (iv)c).}\\
\begin{figure}
 \begin{center}
  \includegraphics[width=10cm]{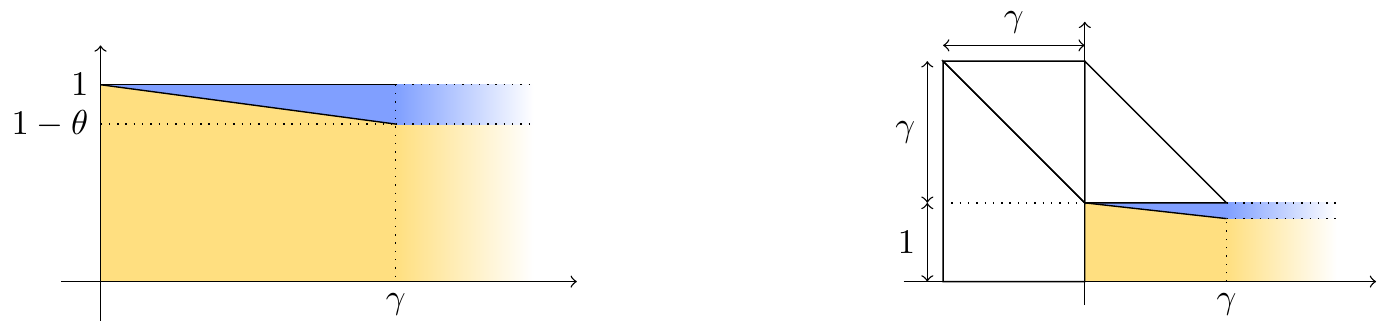}
 \end{center}
\caption{Construction for $\tilde u^{(6)}$. The left panel shows the construction {in the martensite,} the right panel the subdivision of the domain in the austenite phase.}
\label{figlnthetamu}
\end{figure}
\begin{figure}
 \begin{center}
  \includegraphics[height=3cm]{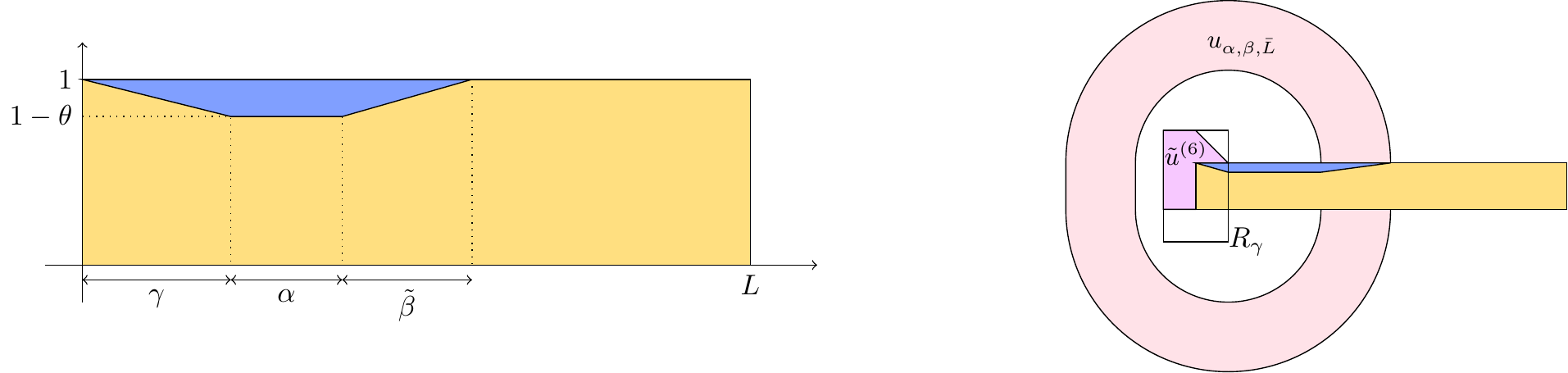}
 \end{center}
\caption{{Corner-laminate} construction for $u^{(6)}$. The left panel shows the construction in the martensitic region $\Omega_L$, the right panel (on a different scale, and with different parameters) the construction in the austenite. The shaded regions are those where $\tilde u^{(6)}\ne 0$ and $u_{\alpha,\beta,\bar{L}}(x_1-\gamma,x_2)\ne0$, respectively (see also Fig.~\ref{figlnthetamu}).}
\label{figlnthetamu2}
\end{figure}
\item[(vi)]{Corner laminate:} We show that
{\[\min_u E(u)\leq c\left({\mu \theta^2 \ln (3+ \frac { \varepsilon {L}}{\mu \theta^{2}}) 
 +\mu\theta^2\ln (3+\frac{\theta}{\mu})}
\right).\] }
Again, we distinguish several cases.\\
a) If $\eps\geq\mu\theta^2$, then the assertion follows from (ii).\\
b) If $\theta\le\eps<\mu\theta^2$, then {$\frac{\eps L}{\mu\theta^2}\geq \frac{L}{\mu\theta}>\frac{L}{\mu}$, and $\mu\theta^2> \eps> \eps\theta$, and the assertion} follows from (iii).\\
c) If $\theta/\mu\geq L$, this follows from (ii).\\
d) If $\mu^2\theta\le \eps$, then
$\frac{\eps L}{\mu\theta^2}\frac\theta\mu\ge L$. The assertion follows from (ii) using that $\ln(3+a)+\ln(3+b)\ge\ln(3+ab)$ for any $a,b\ge0$.\\
e) It remains to consider the case that $\eps<\min\{\mu\theta^2,\,\theta, {\mu^2\theta}\}$ and $\theta/\mu< L$. 
We choose {$\gamma:=\max\{\frac{1}{4},\frac{\theta}{\mu\ln(3+\frac\theta\mu)}\}$} and note that $\gamma< L$ since $\theta/ \mu< L$  and $L\geq\frac{1}{2}$.
We first construct a function $\tilde u^{(6)}$ in $R_\gamma:=[-\gamma,\gamma]\times[-\gamma,\gamma+1]$, 
\begin{subnumcases}{\tilde 
u^{(6)}(x):=}
\theta x_2, & $\text{if }x_1\in[0, \gamma],\  0\le x_2\le 1-{\theta\frac{x_1}\gamma}$,\nonumber\\
(1-\theta)(1-x_2) + \theta - \theta\frac{x_1}\gamma, &
${\text{if\ }}  x_1\in[0, \gamma],\  1- \theta\frac{x_1}\gamma< x_2\le1$,\nonumber\\
 \theta(1-\frac{x_1+x_2-1}\gamma), & $\text{if } x_1{\in[0,\gamma], \ 1<x_2\leq 1+\gamma-x_1}$,\label{eq:6B}\\
  \theta(1-\frac{x_2-1}\gamma), & ${\text{if }} x_1{\in[-\gamma,0], \ 1-x_1\leq x_2\le \gamma+1}$,\label{eq:6C}\\
  \theta(1+\frac{x_1}\gamma)\frac{x_2}{1-x_1}, &
  ${\text{if }} x_1{\in[-\gamma,0]}$ \text{ and } $0\le x_2< 1-x_1$,\label{eq:6A}\\
  0, & $\text{elsewhere in }R_\gamma$,\nonumber
\end{subnumcases}
see Fig.~\ref{figlnthetamu}. One easily checks that $\tilde u^{(6)}$ is continuous and that 
\[E_{(0,\gamma)\times(0,1)}(\tilde u^{(6)}){\leq c\left(\frac{\theta^3}{\gamma}+\eps(\gamma+\theta) + \frac{\eps\theta^2}{\gamma}\right)}
\leq c\left(\frac{\theta^3}{\gamma}+\eps \gamma+\eps\theta\right), \]
{where we used in the last estimate that $\eps\leq\theta$.}
To estimate the energy outside $\Omega_{2L}$, we observe that $|\nabla \tilde u^{(6)}|\leq c\theta/\gamma$ in the parts given in \eqref{eq:6B} and \eqref{eq:6C}, and {$|\nabla \tilde u^{(6)}(x)|\leq c\frac{\theta}{|1-x_1|}$} in the part given in \eqref{eq:6A} {(recall that $\gamma\geq 1/4$)}, which yields
\[E_{R_\gamma\setminus(0,\gamma)\times(0,1)}(\tilde u^{(6)})\leq c\mu\theta^2\ln\left(3+\gamma\right).\]
We then proceed as in (iv)c), using $\tilde{v}_{\tilde{\beta}}$ from Lemma \ref{lem:tildev} with $\tilde\beta:=3\mu\theta^2\eps^{-1}$.
Since $\eps<\mu\theta^2$ we have $1\le\tilde\beta$. We use Lemma \ref{lem:lemout} with $\alpha:=\tilde{\beta}$, 
 $\beta:=\alpha+\tilde{\beta}{=2\alpha}$ and $\bar{L}:=\max\{\beta,L\}$ and 
 define $u^{(6)}$ by
\[{u}^{(6)}(x):=\begin{cases}
{\tilde u^{(6)}(x)},&{\text{\ if\ }x\in R_\gamma,}\\
\theta x_2,&\text{\ if\ }x\in (\gamma, \gamma+\alpha+\tilde\beta]\times[0,1-\theta],\\
(1-\theta)(1-x_2),&\text{\ if\ }x\in (\gamma, \gamma+\alpha]\times(1-\theta,1],\\
\tilde{v}_{\tilde{\beta}}(x_1-(\gamma+\alpha),x_2-(1-\theta))+\theta(1-\theta),& \text{\ if\ }x\in(\gamma+\alpha,\gamma+\alpha+\tilde{\beta}]\times(1-\theta,1],\\
\theta x_2,&\text{\ if\ }x\in(\gamma+\alpha+\tilde{\beta},L]\times[0,1],\\
u_{\alpha,\beta,\bar{L}}(x_1-\gamma,x_2),&\text{\ otherwise in } (-\infty,L]\times\R,\\
 u^{(6)}(2L-x_1,x_2),&\text{\ if\  }x_1> L,
\end{cases} \]
see Fig.~\ref{figlnthetamu2}. The condition $\eps\le {\min\{\mu^2\theta,\mu\theta^2\}}$ implies $3\gamma\le \alpha$, so that the construction for $\tilde u^{(6)}$ and the one for $u_{\alpha,\beta,\bar{L}}(x_1-\gamma,x_2)$ match continuously.
The function $u^{(6)}$ is continuous and 
\begin{eqnarray*}
E(u^{(6)})&\leq& c\left(\frac{\theta^3}{\gamma}+\eps\gamma+{\eps\theta+}\mu\theta^2\ln(3+\gamma)+\eps\alpha+\frac{\theta^3}{\tilde{\beta}}+\eps(\tilde{\beta}+\frac{\theta^2}{\tilde{\beta}})+\mu\theta^2\ln(3+\frac{\bar{L}}{\alpha})+\mu\theta^2\frac{\beta+\alpha}{\beta-\alpha}{+\eps}\right)\\
&\leq&\mu\theta^2\ln(3+\frac{\theta}{\mu})+\mu\theta^2\ln\left(3+\frac{\eps L}{\mu\theta^2}\right).
\end{eqnarray*}
We used here {$\eps\gamma\leq\max\{\eps,\frac{\eps \theta}{\mu}\}\leq\mu\theta^2$ since $\eps\leq\min\{\mu\theta^2,\mu^2\theta\}$; and similarly  $\eps({1}+\theta+\alpha+\tilde \beta)\le c\mu\theta^2$ and $\frac{\eps\theta^2}{\tilde\beta}\le \frac{\theta^3}{\tilde \beta}=\frac{\eps\theta^3}{3\mu\theta^2}\le \mu\theta^2$.}
\end{itemize}
This concludes the proof of the upper bound.
\end{proof}

 \subsection{Comments on the scaling law}\label{sec:tabulars}
The purpose of this subsection is two-fold: On the one hand, in Subsection \ref{sec:regnec} we shall prove that all terms in the definition of $\mathcal{I}(\mu,\eps,\theta,L)$ are relevant in the sense that the statement is false if we remove one of them. Furthermore, we give some intuition on the constructions used in the proof of the upper bound. On the other hand, in Subsection \ref{sec:regimesrough}, we shall explain the different parameter regimes and motivate why they are  treated separately in the proof of the lower bound.
\subsubsection{Do all regimes really exist?}
\label{sec:regnec} 
{We} will use the following abbreviatory notation:
We denote as
{\em scaling} an expression like $\eps^{1/2}\theta^{3/2}$, and 
as {\em regime} something like $\mu\theta^2\ln(3+\frac L\mu)+\eps\theta$ (which is the sum of a few scalings). {In particular, $\mathcal I$ is defined in Theorem \ref{th:main} as the minimum of eight regimes.}

{We show below that no regime  $R=R(\mu,\eps,\theta, L)$ can be eliminated from the definition of $\mathcal I$ in Theorem \ref{th:main}. To do this,}
 we shall exhibit a sequence of parameters $\mu_j,\eps_j,\theta_j,L_j$ such that
$\lim_{j\to\infty} \frac{\mathcal I_R(\mu_j,\eps_j,\theta_j,L_j)}{R(\mu_j,\eps_j,\theta_j, L_j)}\to\infty$, where $\mathcal I_R$ is the minimum in Theorem \ref{th:main} without regime $R$.

Additionally, we show that no scaling can be eliminated {in the regimes that consist of more than one scaling}. Consider a regime $R$ which consists of the scalings $S^{(k)}$, in the sense that  $R=S^{(1)}+\dots +S^{(K)}$ {for $K\geq 2$}.  For any $i\in\{1, \dots, K\}$ let 
$R_i:=\sum_{k\ne i} S^{(k)}$ be the regime  $R$ without $S^{(i)}$. We shall provide a sequence  $(\mu_j,\eps_j,\theta_j,L_j)$ such that $\frac{R_i{(\mu_j,\eps_j,\theta_j,L_j)}}{\mathcal I{(\mu_j,\eps_j,\theta_j,L_j)}}\to0$, proving that 
$R$ cannot be replaced by $R_i$. In most cases, this will be done constructing a sequence with 
$\frac{\mathcal I_R}{R}\to\infty$,
$\frac{S^{(i)}}{R}\to1$ and $\frac{S^{(i)}}{S^{(k)}}\to0$ if $k\ne i$ along that sequence, which additionally shows that {the scaling} $S^{(i)}$ dominates the {regime} $R$.


To briefly sketch the ideas behind the constructions in the proof of the upper bound, we describe them only {inside} the martensitic nucleus. They should be considered to be extended optimally (in the sense of trace) to the austenite part. The precise constructions and references {to the literature} are given in Section \ref{sec:upperbound}. 
{We recall that we} write $\ln^\alpha x$ for $(\ln x)^\alpha$, and the same for $\ln^\alpha\ln x$. We write $a_j\sim b_j$ if there is a constant $c>0$ such that $\frac{1}{c} a_j\leq b_j\leq c a_j$ for all $j\in\N$. 

\begin{enumerate}
 \item $R=\theta^2L$ {(constant)}: This regime is attained by a constant test function, corresponding to austenite. This regime is the only one that does not depend on $\eps$ nor $\mu$. We take $\theta_j=L_j=\frac12$, $\eps_j=\mu_j\to\infty$. Then all other regimes have diverging energy. 
 \item $R=\mu\theta^2\ln(3+L)$ {(affine)}: This regime is attained by using an affine function inside the nucleus corresponding to the majority variant of martensite {(see Figure \ref{fig-regimes1} (left)).} We take $\theta_{{j}}=\mu_{{j}}=\frac12$, {$L_j\to\infty$, $\eps_j=e^{L_j}$}. All regimes which contain one of the scalings $\eps L$, $\eps\theta$, $\eps^{1/2}\theta^{3/2}$, $\theta^2 L$ have energ{ies} which diverge at least as a power of {$L_j$}, 
 and also $\mu_j\theta_j^2\ln(3+\frac{\eps_j L_j}{\mu_j\theta_j^2})\gg L_j$, and only $\mu\theta^2\ln(3+L)$ is logarithmic.
 \item $R=\mu\theta^2\ln(3+\frac L\mu)+\eps\theta$ {(linear interpolation)}. This regime is (when relevant) attained by a test function that is constant near the left and the right boundaries of $\Omega_{2L}$ (corresponding to austenite), and affine near the middle $\{x_1=L\}$ of the nucleus (corresponding to the majority variant of martensite). There is a competition of the energy inside the nucleus, which favours the test function to be in the martensitic variant on a large part, and the energy contribution from the austenite part, which favours the function to be constant in a large neighbourhood of the left and right boundaries {(see Figure \ref{fig-regimes1} (right)).}
 \begin{itemize}
\item[(a)]  $S=\mu\theta^2\ln(3+\frac L\mu)$: 
 We take {$L_j\to\infty$,} $\theta_j=\frac12$, $\mu_j=\frac{L_j}{\ln L_j}$, $\eps_j=\mu_j\theta_j$. Then $\eps_j\theta_j=\mu_j\theta_j^2$, $\frac {L_j}{\mu_j}=\ln L_j$, so that $R(\mu_j,\eps_j,\theta_j,L_j)\sim S(\mu_j,\eps_j,\theta_j,L_j)\sim L_j \frac{\ln\ln L_j}{\ln L_j}$, whereas $\theta_j^2 L_j \sim L_j$, $ \mu_j\theta_j^2\ln(3+L_j)\sim L_j$,
 $\eps_j L_j \sim L_j^2/\ln L_j$, 
 and $\frac{\eps_j L_j}{\mu_j\theta_j^2}=\frac{L_j}{\theta_j}$ implies
 $\mu_j\theta_j^2\ln(3+\frac{\eps_j L_j}{\mu_j\theta_j^2})\sim \mu_j\ln L_j\sim L_j$.
 \item[(b)]  $S=\eps\theta$: As above, 
 we take {$L_j\to\infty$,} $\theta_j=\frac12$, $\mu_j=\frac{L_j}{\ln L_j}$, but this time $\eps_j=\mu_j\theta_j (\ln \ln L_j)^2$. Then $\frac{L_j}{\mu_j}=\ln L_j$, so that $R(\mu_j,\eps_j,\theta_j,L_j)\sim S(\mu_j,\eps_j,\theta_j,L_j)\sim \theta_j^2L_j \frac{(\ln\ln L_j)^2}{\ln L_j}$, in the other terms the $\ln\ln L_j$ correction does not change the argument.
 \end{itemize}
\item $R=\mu\theta^2\ln(3+\frac{\eps L}{\mu\theta^2})+\mu\theta^2(3+\frac{\eps}{{\mu^2\theta^2}})+{\eps^{1/2}\theta^{3/2}}$
(single truncated branching). This regime is (when relevant) attained by a test function that consists of roughly three parts: Close to the left and right boundaries of $\Omega_{2L}$, a branching construction is used, which goes over to a single laminate, and then interpolates to an affine function (which corresponds to the majority variant of martensite) near the vertical middle $\{x_1=L\}$ of the nucleus, {see Fig.~\ref{figsingletruncbra}}. 
\begin{itemize}
\item[(a)] $S=\eps^{1/2}\theta^{3/2}$: We take $\theta_j=\frac12$, $L_j\to\infty$, $\mu_j=\frac{1}{L_j}$, $\eps_j=\frac{\mu_j\theta_j^2}{L_j}\ln L_j=\frac{\ln L_j}{4L_j^2}$.
 Then $\mu_j\theta_j^2=\frac{1}{ 4L_j}$, $\eps_j L_j=\frac{\ln L_j}{ 4 L_j}$. 
 In particular, $\theta_j^2L_j\sim L_j$, $\mu_j\theta_j^2 \ln L_j\sim\mu_j\theta_j^2\ln \frac{L_j}{\mu_j} \sim \frac{\ln L_j}{L_j}$.
 We have $\frac{\eps_j L_j}{\mu_j\theta_j^2}=\ln L_j$, $\frac{\eps_j}{\mu_j^2\theta_j^2}
 =\ln L_j$, 
 and $S(\mu_j,\eps_j,\theta_j,L_j)=\eps_j^{1/2}\theta_j^{3/2}\sim \frac{1}{L_j}\ln^{1/2} L_j$, so that $R(\mu_j,\eps_j,\theta_j,L_j)=O(\frac1L_j\ln\ln L_j)+S(\mu_j,\eps_j,\theta_j,L_j)
 =O(\frac{1}{L_j}\ln\ln L_j)+\frac{1}{L_j} \ln^{1/2} L_j$, and $S(\mu_j,\eps_j,\theta_j,L_j)/R(\mu_j,\eps_j,\theta_j,L_j)\to1$.
 All regimes which contain the scaling $\eps_j L_j\sim\frac{\ln L_j}{L_j}$ can be ignored.
 Finally, $\theta_j/\mu_j=\theta_j L$, hence 
 $\mu_j\theta_j^2 \ln\frac{\theta_j}{\mu_j}\sim \frac{1}{L_j}\ln L_j\gg S(\mu_j,\eps_j,\theta_j,L_j)\sim R(\mu_j,\eps_j,\theta_j,L_j)$. This concludes the proof.
 
 \item[(b)] $S=\mu\theta^2\ln(3+\frac{\eps L}{\mu\theta^2})$:
 We take $\theta_j=\frac12$, $L_j\to\infty$, $\mu_j=\frac1{L_j^{1/2}}$, $\eps_j = \frac{\mu_j\theta_j^2}{L_j}\ln L_j=\frac1{ 4L_j^{3/2}}\ln L_j$. Then $\mu_j\theta_j^2=\frac1{ 4L_j^{1/2}}$, $\eps_j L_j=\frac1{ 4L_j^{1/2}}\ln L_j$, 
 $\frac{\eps_j L_j}{\mu_j\theta_j^2}=\ln L_j$, 
 $\frac{\eps_j}{\mu_j^2\theta_j^2}=\frac{\ln L_j}{L_j^{1/2}}\to0$,
 and $\eps_j^{1/2}\sim \frac{1}{L_j^{3/4}}\ln^{1/2}L_j$. Therefore $S(\mu_j,\eps_j,\theta_j,L_j)$ dominates $R(\mu_j,\eps_j,\theta_j,L_j)$, 
 and $S(\mu_j,\eps_j,\theta_j,L_j)\sim  R(\mu_j,\eps_j,\theta_j,L_j)\sim \frac{1}{L_j^{1/2}}\ln\ln L_j$. 
 All regimes with $\eps_j L_j$ are higher, as is obviously $\theta_j^2L_j$. 
 {Further, $\frac{\theta_j}{\mu_j}\sim L_j^{1/2}$ implies $\mu_j\theta_j^2\ln(3+\frac{\theta_j}{\mu_j})\sim \frac{1}{L_j^{1/2}}\ln L_j$}, and finally, 
 $\mu_j\theta_j^2\ln L_j\sim \mu_j\theta_j^2\ln \frac{L_j}{\mu_j}\sim \frac{1}{L_j^{1/2}}\ln L_j\gg R{(\mu_j,\eps_j,\theta_j,L_j))}$.
 
 \item[(c)] $S=\mu\theta^2\ln (3+\frac{\eps}{\mu^2\theta^2})$:
 We take $L_j\to\infty$, $\theta_j=\frac1{\ln^{2/5}L_j}$, $\mu_j=\frac{\ln\ln L_j}{L_j\ln^{1/5} L_j}$, $\eps_j=\frac{\ln^5\ln L_j}{L_j^2\ln L_j}$. Then $\eps_j L_j=\frac{\ln^5\ln L_j}{L_j\ln L_j}$, $\mu_j\theta_j^2=\frac{\ln\ln L_j}{L_j\ln L_j}$, $(\eps_j L_j\mu_j\theta_j^2)^{1/2}=\mu_j\theta_j^2\ln^2\ln L_j$. 
 Further, $\frac{\eps_j L_j}{\mu_j\theta_j^2}=\ln^4\ln L_j$, $\frac{\eps_j}{\mu_j^2\theta_j^2}=\ln^{1/5} L_j\ln^3\ln L_j$, 
 and $S(\mu_j,\eps_j,\theta_j,L_j)
 \sim {\mu_j\theta_j^2 \ln\ln  L_j}\gg \mu_j\theta_j^2\ln\frac{\eps_j L_j}{\mu_j\theta_j^2}
 \sim {\mu_j\theta_j^2 \ln\ln\ln  L_j}$.
For the third scaling in this regime, 
 $\eps_j^{1/2}\theta_j^{3/2}=\frac{\ln^{5/2}\ln L_j}{L_j\ln^{1/2}L_j \ln^{3/5}L_j}=
{\mu_j\theta_j^2 \frac{\ln^{3/2}\ln L_j}{\ln^{1/10}L_j}}\ll S(\mu_j,\eps_j,\theta_j,L_j)$. For the {corner laminate} regime, we estimate $\frac{\theta_j}{\mu_j}=\frac{L_j}{\ln^{1/5}L_j\ln\ln L_j}$ which gives
 $\mu_j\theta_j^2\ln(3+\frac{\theta_j}{\mu_j})\sim {\mu_j\theta_j^2}\ln L_j\gg S(\mu_j,\eps_j,\theta_j,L_j)$. The other regimes are simpler. $\theta_j^2L_j$ is linear in $L_j$, $\mu_j\theta_j^2\ln(3+L_j)$ and {$\mu\theta^2\ln(3+\frac{L}{\mu})$} behave as $\mu_j\theta_j^2\ln L_j$, which is much larger than $S(\mu_j,\eps_j,\theta_j,L_j)\sim\mu_j\theta_j^2\ln\ln L_j$, and the last ones {(branching, laminate and two-scale-branching)} are eliminated by $\eps_j L_j$.
 \end{itemize}
\item {$R=\mu\theta^2\ln(3+\frac\theta\mu)+\mu\theta^2\ln(3+\frac{\eps L}{\mu\theta^2})$}
 {(corner laminate)} : This scaling is (when relevant) attained by a construction sketched in Figure \ref{figlnthetamu2} (left). Note that this leads to two relevant contributions from the austenite part as sketched in Figure \ref{figlnthetamu2} (right){.}
 \begin{itemize}
\item[(a)]  
 $S=\mu\theta^2\ln(3+\frac\theta\mu)$: 
 We take $L_j\to\infty$, $\theta_j=\frac1{L_j^2}$, $\mu_j=\frac{1}{L_j^2\ln L_j}$, $\eps_j=\frac{\ln^4\ln L_j}{L_j^7\ln L_j}$.
 Then $\frac{\theta_j}{\mu_j}=\ln L_j$, $\mu_j\theta_j^2=\frac{1}{L_j^6\ln L_j}$, $\eps_j L_j=\frac{\ln^4\ln L_j}{L_j^6\ln L_j}$.
 In particular, $\ln \frac{\eps_j L_j}{\mu_j\theta_j^2}\sim \ln\ln\ln L_j \ll \ln\frac{\theta_j}{\mu_j}= \ln\ln L_j$, and 
 $S(\mu_j,\eps_j,\theta_j,L_j)\sim \frac{1}{L_j^6\ln L_j}\ln\ln L_j$. 
 Therefore $S(\mu_j,\eps_j,\theta_j,L_j)$ dominates $R(\mu_j,\eps_j,\theta_j,L_j)$.
 To eliminate the other regimes, we observe that $\frac{\eps_j}{\mu_j^2\theta_j^2}=L_j {(}\ln L_j{)}{(}\ln^4\ln L_j {)}$ shows that 
 $\mu_j\theta_j^2\ln(3+\frac{\eps_j}{\mu_j^2\theta_j^2})/S(\mu_j,\eps_j,\theta_j,L_j)\to\infty$. 
  Since $\eps_j L_j=\frac{\ln^4\ln L_j}{L_j^6\ln L_j}\gg S(\mu_j,\eps_j,\theta_j,L_j)$, branching, laminates and two-scale branching are ruled out. Since
 $\mu_j\theta_j^2\ln L_j\sim \mu_j\theta_j^2L_j\ln \frac{L_j}{\mu_j}\sim \frac{1}{L_j^6}\gg S(\mu_j,\eps_j,\theta_j,L_j)${, and $\theta_j^2 L_j=\frac{1}{L_j^3}$, all remaining regimes are eliminated}. 
 \item[(b)] $S=\mu\theta^2\ln(3+\frac{\eps L}{\mu\theta^2})$.
 {    
 We take $L_j\to\infty$, $\theta_j=\mu_j=\frac1{L_j^2}$, $\eps_j=\frac{\ln L_j}{L_j^7}$.
 Then $\frac{\theta_j}{\mu_j}=1$, $\mu_j\theta_j^2=\frac{1}{L_j^6}$, $\eps_j L_j=\frac{\ln L_j}{L_j^6}$.
 In particular, $\ln \frac{\eps_j L_j}{\mu_j\theta_j^2}\sim \ln\ln L_j \gg \ln(3+\frac{\theta_j}{\mu_j})=\ln 4$, and 
 $S(\mu_j,\eps_j,\theta_j,L_j)\sim \frac{1}{L_j^6}\ln\ln L_j$. 
 Therefore $S(\mu_j,\eps_j,\theta_j,L_j)$ dominates $R(\mu_j,\eps_j,\theta_j,L_j)$.
 To eliminate the other regimes, we observe that $\frac{\eps_j}{\mu_j^2\theta_j^2}=L_j \ln L_j$ shows that 
 $\mu_j\theta_j^2\ln(3+\frac{\eps_j}{\mu_j^2\theta_j^2})/S(\mu_j,\eps_j,\theta_j,L_j)\to\infty$. 
 The bottom ones {(branching, laminates and two-scale branching)} are eliminated by $\eps_j L_j/S(\mu_j,\eps_j,\theta_j,L_j)
 \sim \frac{\ln L_j}{\ln\ln L_j}\to\infty$, the top ones {(constant, affine and linear interpolation)} by $\mu_j\le 1$ {which implies} 
 ${\min\{\theta_j^2 L, \mu_j\theta_j^2\ln(3+\frac{L_j}{\mu_j})\}\geq }\mu_j\theta_j^2\ln L_j\sim  \frac{\ln L_j}{L_j^6}\gg S(\mu_j,\eps_j,\theta_j,L_j)$. 
 }
 \end{itemize}
\item $R=\eps^{2/3}\theta^{2/3}L^{1/3}+\eps L$ {(branching)}: This regime is (when relevant) attained by a branching construction sketched in Figure \ref{fig-br2-disc} (left).
 \begin{itemize} 
\item[(a)]  $S=\eps^{2/3}\theta^{2/3}L^{1/3}$. We take  $\theta_j=\mu_j=L_j=\frac12$, $\eps_j=\frac1j\to0$. Only the last three regimes {(branching, laminates and two-scale branching)} have infinitesimal energy. 
 We have $\eps_j^{2/3}\theta_j^{2/3}L_j^{1/3}{\sim j^{-2/3}}$, {whereas ${\mu_j^{1/2}\eps_j^{1/2}\theta_j L_j^{1/2}}\sim j^{-1/2}$}. At the same time, $\eps_j^{2/3}\gg \eps_j${, and hence $S(\mu_j,\eps_j,\theta_j,L_j)$ dominates $R(\mu_j,\eps_j,\theta_j,L_j)$.}
 \item[(b)] $S=\eps L$. We take $\theta_j=\mu_j=\frac12$, 
 $L_j=j^{2/3}\to\infty$,
 $\eps_j=\frac1j\to0$. 
 Then $\eps_j L_j=j^{-1/3}\to0$, ${\eps_j}^{2/3}L_j^{1/3}=j^{-4/9}\ll j^{-1/3}$, 
 $\mu_j\theta_j^2=\frac18$, ${\mu_j^{1/2}\eps_j^{1/2}\theta_j L_j^{1/2}}\sim j^{-1/6}\gg j^{-1/3}$.
  \end{itemize}
\item $R={\mu^{1/2}\eps^{1/2}\theta L^{1/2}\ln^{1/2}(3+\frac1{\theta^2})}{+}\eps L$ {(laminate)}: This regime is attained by a laminate construction as sketched in Figure \ref{fig-br2-disc} (middle).
\begin{itemize}
\item[(a)] 
{$S=\mu^{1/2}\eps^{1/2}\theta L^{1/2}\ln^{1/2}(3+\frac1{\theta^2})$:}
We take $L_j\to\infty$, $\theta_j=\frac 1{L_j}$, $\mu_j=L_j^2e^{-L_j}$, $\eps_j=\frac1{L_j^2}e^{-L_j}$.
Then $\theta_j^2L_j=\frac1L_j$, $\mu_j\theta_j^2=e^{-L_j}$, $\eps_j L_j=\frac{1}{L_j}e^{-L_j}$, and ${\mu_j^{1/2}\eps_j^{1/2}\theta_j L_j^{1/2}}= \frac{1}{L_j^{1/2}}e^{-L_j}$.
We compute in detail the last two regimes. Since $\ln(3+\frac{1}{\theta_j^2})\sim\ln L_j$, {we have} $S(\mu_j,\eps_j,\theta_j,L_j)\sim \frac{1}{L_j^{1/2}}e^{-L_j} \ln L_j$ and $\eps_j L_j/S(\mu_j,\eps_j,\theta_j,L_j)\to0$.
Since
$\ln(3+\frac{\eps_j}{\mu_j^3L_j\theta_j^2})=\ln(3+\frac{e^{2L_j}}{L_j^7})\sim L_j$, {we have} 
${\mu_j^{1/2}\eps_j^{1/2}\theta_j L_j^{1/2}}\ln^{1/2}(3+\frac{\eps_j}{\mu_j^3L_j\theta_j^2})\sim e^{-L_j}$. With $\mu_j\theta_j^2/S(\mu_j,\eps_j,\theta_j,L_j)\to\infty$ the proof is concluded.

\item[(b)]{$S=\eps L$:} {We take $L_j\to\infty$, $\theta_j=\frac 1{\ln^{1/2} L_j}$, $\mu_j=\frac{1}{L_j^{3/2}}$, $\eps_j=\frac1{L_j^{5/2}\ln^{1/2}L_j}$.
Then $\mu_j\theta_j^2=\frac{1}{L_j^{3/2}\ln L_j}$, $\eps_j L_j=
\frac{1}{L_j^{3/2}\ln^{1/2} L_j}$, and ${\mu_j^{1/2}\eps_j^{1/2}\theta_j L_j^{1/2}}=\frac{1}{L_j^{3/2}\ln^{3/4} L_j}$. 
We compute in detail the last two regimes. Since $\ln(3+\frac{1}{\theta_j^2})\sim\ln\ln L_j$, {we have} 
${\mu_j^{1/2}\eps_j^{1/2}\theta_j L_j^{1/2}} \ln^{1/2}(3+\frac{1}{\theta_j^2})\sim\frac{1}{L_j^{3/2}\ln^{3/4} L_j}\ln^{1/2}\ln L_j\ll \eps_j L_j$.
Since
$\ln(3+\frac{\eps_j}{\mu_j^3L_j\theta_j^2})=\ln(3+L_j\ln^{1/2}L_j)\sim \ln L_j$, {we have} 
${\mu_j^{1/2}\eps_j^{1/2}\theta_j L_j^{1/2}}\ln^{1/2}(3+\frac{\eps_j}{\mu_j^3L_j\theta_j^2})\sim 
\frac{1}{L_j^{3/2}\ln^{1/4} L_j}\gg \eps_j L_j$.
Further,  $\theta_j^2L_j\sim L_j/\ln L_j$, $\mu\le 1$, and 
the three terms 
$\ln(3+L_j)$, $\ln(3+\frac{\eps_j}{\mu_j^2\theta_j^2})$
and $\ln(3+\frac{\theta_j}{\mu_j})$ {behave as} $ \ln L_j$, eliminating the first five regimes {(constant, affine, linear interpolation, single truncated branching and corner laminate)}.
}
\end{itemize}
 \item $R={\mu^{1/2}\eps^{1/2}\theta L^{1/2}\ln^{1/2}(3+\frac{\eps}{\mu^3\theta^2L})}+\eps L$ {(two-scale branching)}: This regime is (when relevant) attained by a two-scale branching construction sketched in Figure \ref{fig-br2-disc} (right). 
 \begin{itemize}
\item[(a)] $S=(\mu\theta^2 \eps L \ln(3+\frac{\eps}{\mu^3\theta^2L}))^{1/2}$: 
 We take ${L_j\to} \infty$, $\theta_j=\frac1{L_j}$, $\mu_j=\frac1{L_j^{5/2}}$, $\eps_j=\mu_j^3\theta_j^2{L_j}\ln L_j=\frac{1}{L_j^{8+\frac12}}\ln L_j$.
 Then $\mu_j\theta_j^2=\frac1{L_j^{4+\frac12}}$, $\eps_j L_j=\frac{1}{L_j^{7+\frac12}}\ln L_j$, $\eps_j^{2/3}\theta_j^{2/3}L_j^{1/3}=
 \frac{1}{L_j^{6}}\ln^{2/3}L_j$,
 ${\mu_j^{1/2}\eps_j^{1/2}\theta_j L_j^{1/2}}=\frac{1}{L_j^{6}}\ln^{1/2}L_j$, $\frac{\eps}{\mu_j^3\theta_j^2L_j}=\ln L_j$, so that
 $S(\mu_j,\eps_j,\theta_j,L_j)\sim \frac{1}{L_j^{6}}\ln^{1/2} L_j \ln^{1/2}\ln L_j$ and  $\eps_j L_j/S(\mu_j,\eps_j,\theta_j,L_j)\to0$.
 The {laminate is} eliminated by
 $(\mu_j\theta_j^2\eps L_j)^{1/2}\ln^{1/2}(3+\frac1{\theta_j^2})\sim \frac{1}{L_j^{6}}\ln L_j\gg S(\mu_j,\eps_j,\theta_j,L_j)$, those with $\mu_j\theta_j^2$
 by $\mu_j\theta_j^2/S(\mu_j,\eps_j,\theta_j,L_j)\to\infty$, and 
 {$\theta_j^2L_j=\frac{1}{L_j}$}.
  \item[(b)] $S=\eps L$:
{We take $L_j\to \infty$, $\theta_j=\frac1{\ln^{1/5}L_j}$, $\mu_j=\frac{1}{L_j\ln L_j}$, 
$\eps_j=\frac{1}{L_j^2\ln^{3/5}L_j}$. Then
$\mu_j\theta_j^2=\frac{1}{L_j\ln^{7/5}L_j}$, $\eps_jL_j=\frac{1}{L_j\ln^{3/5}L_j}=\mu_j\theta_j^2\ln^{4/5}L_j$, 
$\frac{\eps_j}{\mu_j^3\theta_j^2L_j}=\ln^{14/5}L_j$, which implies that 
 $S_{TSB}:={\mu_j^{1/2}\eps_j^{1/2}\theta_j L_j^{1/2}}\ln^{1/2}(3+\frac{\eps}{\mu^3\theta^2L}) \sim (\mu_j\theta_j^2\eps_jL_j)^{1/2}\ln^{1/2} \ln L_j
\sim{\frac{\ln^{1/2} \ln L_j}{L_j\ln L_j}\sim} \mu_j\theta_j^2{(}\ln^{2/5}L_j {)}{(}\ln^{1/2}\ln L_j{)}\ll \eps_j L_j$. 
To conclude, we need to check that for all regimes $R$ entering $\mathcal I$ we have
$S_{TSB}/R\to0$. 
This is obvious for $\theta_j^2 L_j$, for 
$\mu_j\theta_j^2\ln L_j$, and for all regimes that contain an $\eps_jL_j$ scaling.  Since $\mu_j\le 1$, the regime with $\ln(3+L_j/\mu_j)$ is also irrelevant.
Since $\eps_j^{1/2}\theta_j^{3/2}=\eps_jL_j$, this is also true for the regimes that contain the $\eps_j^{1/2}\theta_j^{3/2}$ scaling{, and finally $\frac{\theta_j}{\mu_j}=L_j\ln^{4/5}L_j$ shows that this also holds true for the corner laminate.} 
}
\end{itemize}
\end{enumerate}

\subsubsection{Rough overview over some parameter ranges}\label{sec:regimesrough}
The proof of the lower bound in Section \ref{sec:lowerbound} is split into several parts that address different parameter ranges. We shall briefly motivate and sketch heuristically why different behaviours are expected in the considered ranges, and how this is reflected in our scaling law.\\[.3cm]
(i) We first consider the range in which $\eps$ is not so small, in the sense that $\eps\geq\min\{\theta^2,\mu\theta^2\}$. This is the range considered in Subsection \ref{sec:lbunif}. Roughly speaking, interfacial energy is expensive, and one expects rather uniform structures. Note that in the scaling regimes, the "uniform" constructions of constant functions (austenite, $L\theta^2$) and affine functions (majority variant of martensite, $\mu\theta^2\ln(3+L)$) scale differently in the size of the nucleus $L$. Hence, comparing these two regimes leads for large $\mu$ to a competition between $\mu$ and $L$. 
 \begin{itemize}
\item[(a)] If $\mu< 1$ then elastic strain in the austenite part is more favourable than  elastic strain in the martensite part. Further, $\eps \geq\min\{\mu\theta^2,\theta^2\}=\mu\theta^2$ implies that also interfacial energy is expensive compared to elastic energy in the austenite part. Therefore, one would expect that low energy configurations behave roughly like affine functions (corresponding to the majority variant of martensite) inside the nucleus. This is reflected in our scaling law: Since $\mu\theta^2\ln(3+L)\leq\mu\theta^2\ln(3+\frac{L}{\mu})\leq c\mu\theta^2(3+\frac{L}{\mu})\leq c(\mu\theta^2+L\theta^2)\leq cL\theta^2, \quad \frac{\eps L}{\mu\theta^2}\geq L,\text{\ and\ }\eps L\geq \mu\theta^2 L\geq c\mu\theta^2\ln(3+L)$,
we obtain $\mathcal{I}(\mu,\eps,\theta,L)\sim \mu\theta^2\ln(3+L)$, which corresponds to the affine test function. 
\item[(b)] If $\mu\geq 1$, then the behaviour is different, and the above mentioned competition between $L$ and $\mu$ becomes relevant. Note that $\mu\geq 1$ and $\eps\geq\min\{\mu\theta^2,\theta^2\}=\theta^2$ implies that $\eps L\geq\theta^2 L$, and hence all the branching and laminate regimes with a scaling $\eps L$ are not relevant. To see that the regimes with three scalings are not relevant is more complicated: We always have $\frac{\eps L}{\mu\theta^2}\geq\frac{L}{\mu}$. Hence, if $\mu\theta^2\ln(3+\frac{L}{\mu})\gtrsim \eps \theta$, we have $\mu\theta^2\ln(3+{\frac{\eps L}{\mu\theta^2}})\geq \mu\theta^2\ln(3+\frac{L}{\mu})\gtrsim \mu\theta^2\ln(3+\frac{L}{\mu})+\eps \theta$, and we are done. Otherwise, $\eps\theta\gtrsim \mu\theta^2\ln(3+\frac{L}{\mu})$ implies $\eps\gtrsim\mu\theta^2$ and hence $\frac{\eps L}{\mu\theta^2}\gtrsim L$, which yields $\mu\theta^2\ln(3+\frac{\eps L}{\mu\theta^2})\gtrsim \mu\theta^2\ln(3+L)$. Summarizing, we obtain
\[\mathcal{I}(\mu,\eps,\theta,L)\sim\min\left\{\theta^2L,\mu\theta^2\ln(3+L),\mu\theta^2\ln(3+\frac{L}{\mu})+\eps \theta\right\}. \]
The examples given in Subsection \ref{sec:regnec} show that all scalings are relevant in this parameter range.
\end{itemize}
(ii) The parameter range $\eps\leq\min\{\theta^2,\mu\theta^2\}$ is more delicate since here many contributions compete. Note that in this range, the scaling ${\theta^2 L}$ is not relevant since $4{\theta^2 L}\geq \eps^{2/3}{\theta^{2/3}}L^{1/3}+\eps L$ (recall that $L\geq 1/2$). Also the regime $\mu\theta^2\ln(3+\frac{L}{\mu})+\eps \theta$ does not occur: {Indeed, 
if} $\mu\le 1$ then  $\mu\theta^2\ln(3+L)\leq\mu\theta^2\ln(3+\frac{L}{\mu})$.
If $\mu>1$,
then $\eps^{1/2}\theta^{3/2}\le \theta^2\le \mu\theta^2$,
{$\frac{\eps L}{\mu\theta^2}\le \frac{L}{\mu}$} and $\frac{\eps}{\mu^2\theta^2}\le \frac{1}{\mu^2}\le 1$. Summarizing, $ 4\mu\theta^2\ln(3+\frac{L}{\mu})\geq\min\{\mu\theta^2\ln(3+L),\mu\theta^2\ln(3+\frac{\eps L}{\mu\theta^2})+\mu\theta^2\ln(3+\frac{\eps}{\mu^2\theta^2})+\eps^{1/2}\theta^{3/2}\}$, and hence $\mu\theta^2\ln(3+\frac{L}{\mu})+\eps\theta$ does not occur.\\
In this parameter range, the main difficulty lies in the  logarithmic corrections. Roughly speaking, complex patterns and rather uniform structures can occur, and the overall behaviour is mainly determined by the comparison of $\eps L$ and several scalings with $\mu\theta^2$. The latter, however, contain logarithmic corrections that make the comparison rather involved and lead to mixtures of different constructions. There are mainly two qualitatively different reasons for the logarithmic terms: Some of them arise (rather locally) for laminated structures in the vicinity of the left and right boundaries of the nucleus (see Lemma \ref{lem:lamoutside}). Others are due to the fact that in long nuclei affine structure deep inside the nucleus lead to non-periodic boundary conditions at the top and bottom boundaries of the nucleus and hence to elastic strain in the austenite (see Lemma \ref{lem:lemout}). 
To indicate the different phenomena, we consider several subcases, corresponding to the competition between  $\eps^{2/3}\theta^{2/3}L^{1/3}$ and $\eps L$, and the size of $\mu$. 
\begin{itemize}
\item[(a)] Assume $\eps \geq\frac{\theta^2}{L^2}$. For these rather large values of $\eps$, one expects that the relevant structures are rather uniform with few horizontal interfaces passing through the whole nucleus. This behaviour is reflected in our scaling law as follows: We have $\eps L\geq \eps^{2/3}\theta^{2/3}L^{1/3}$ which means that the branching regime behaves as $\eps L$ and that the laminate and two-scale branching regimes are not relevant. 
{
Since $\big(\frac{\eps}{\mu^2\theta^2}\big)^{1/2}\leq \frac{\eps L}{\mu\theta^2}$, the {single truncated branching} regime reduces to 
$\mu\theta^2\ln(3+\frac{\eps L}{\mu\theta^2})+\eps^{1/2} \theta^{3/2}$.
The {corner laminate} regime can then be removed. Indeed, if $\eps\le\mu^2\theta$
then $\eps^{1/2}\theta^{3/2}\le \mu\theta^2$. If $\mu^2\theta\le \eps$, then $L\le\frac\theta\mu\frac{\eps L}{\mu\theta^2}$ implies
$\ln(3+L)\le \ln(3+\frac\theta\mu)+\ln(3+\frac\theta\mu\frac{\eps L}{\mu\theta^2})${, for details see the proof of Theorem \ref{th:upperbound} (vi)d). Therefore,}}
\[\mathcal{I}(\mu,\eps,\theta,L)\sim\min\left\{\mu\theta^2\ln(3+L),\ \mu\theta^2\ln(3+\frac{\eps L}{\mu\theta^2})+\eps^{1/2} \theta^{3/2},\ \eps L\right\}. \]
{The corresponding lower bound is} the statement of Proposition \ref{propinterp} with the additional assumption $\eps L^2\geq\theta^2$. Using $\eps L\geq \eps^{1/2}\theta^{3/2}$, $\frac{\eps L}{\mu\theta^2}\leq L$ and $\mu\theta^2\ln(3+\frac{\eps L}{\mu\theta^2})\leq \mu\theta^2+\eps L$, one can see that all the scalings are relevant.
\item[(b)] Assume $\eps \leq\min\{\theta^2,\mu\theta^2,\theta^2/L^2\}$. Note that the condition $\eps \leq\frac{\theta^2}{L^2}$ implies in particular $\eps L\leq\theta^2/L$, i.e., roughly speaking, in the martensite part, a single laminate is cheaper than having a constant function or interpolating from a constant function at the left and right boundaries to an affine function deep inside the nucleus. The first two conditions indicate that interfacial energy is cheap compared to elastic energy in both, the austenite and the martensite part. However, there are various competitions between the interfacial energy, the elastic energies in the austenite and martensite parts, and the size of the nucleus. We shall outline the main points in these competitions by considering the cases $\mu\geq 1$ (i.e., elastic energy in the austenite is more expensive than in the martensite part), $\frac{1}{L}\leq\mu\leq 1$ (i.e., elastic energy in the austenite part is less expensive than in the martensite part but the nucleus is rather large), and the case $\mu\leq \frac{1}{L}$ (i.e., elastic energy in the austenite part is less expensive than in the martensite part and the nucleus is not so large).
\begin{itemize}
\item Assume $\mu\geq 1$. Then interfacial energy is rather cheap, the size of the nucleus is small in terms of $\eps$, and elastic energy in the austenite part is rather expensive. Therefore, one expects that optimal configurations form complex microstructures inside the nucleus, with little strain the austenite part. This is reflected in our scaling law as follows: We have $\eps L\leq \eps^{2/3}\theta^{2/3}L^{1/3}\leq ({\frac{\theta^2}{L^2}})^{2/3}\theta^{2/3}L^{1/3}\lesssim \mu\theta^2$ (since $L\geq 1/2$ and $\mu\geq 1$), which shows that all regimes with a scaling $\mu\theta^2\ln(3+X)$ are irrelevant. Since also $\eps^{2/3}\theta^{2/3}L^{1/3}\lesssim \mu^{1/2}\eps^{1/2}\theta L^{1/2}$, also the laminate and two-scale branching regimes with a scaling $\mu^{1/2}\eps^{1/2}\theta L^{1/2}\ln^{1/2}(3+Y)$ are not relevant, and therefore 
{we are in the branching regime of Fig.~\ref{fig-br2-disc}(left),}
$$\mathcal{I}(\mu,\eps,\theta, L)\sim \eps^{2/3}\theta^{2/3}L^{1/3}.$$
\item Assume $\frac{1}{L}\leq\mu\leq 1$. The situation is similar to the case above. However, if the size $L$ of the nucleus is large (in terms of $\mu$), then one expects a competition between the formation of complex patterns inside the nucleus and elastic energy in the austenite part in the vicinity of the left and right boundaries: This is reflected in our scaling law as follows: Again, $\eps L\leq\eps^{2/3}\theta^{2/3}L^{1/3}\leq (\frac{\theta^2}{{L^2}})^{2/3}\theta^{2/3}L^{1/3}\lesssim \mu\theta^2$ implies that all regimes with a scaling $\mu\theta^2\ln(3+X)$ are not relevant. Furthermore, $\frac{\eps}{\mu^3\theta^2 L}\leq\frac{1}{\theta^2}$ since $\mu^3 L\geq\frac{1}{L^2}\geq\frac{\theta^2}{L^2}\geq \eps$, which means that two-scale branching is more favourable than laminates. Therefore, in this parameter range 
$$\mathcal{I}(\mu,\eps,\theta, L)\sim\min\left\{\eps^{2/3}\theta^{2/3}L^{1/3},\mu^{1/2}\eps^{1/2}\theta L^{1/2}\ln^{1/2}(3+\frac{\eps}{\mu^3\theta^2 L})+\eps L\right\}.$$ Using that $\eps^{2/3}\theta^{2/3}L^{1/3}\geq \mu^{1/2}\eps^{1/2}\theta L^{1/2}\ln^{1/2}(3+\frac{\eps}{\mu^3\theta^2 L})$ is equivalent to $y\ln^{1/2}(3+y)\geq 1$ for $y:=\frac{\eps}{\mu^3 L\theta^2}$, i.e., $y\geq c$, one easily checks that all three scalings are relevant.
\item Assume finally $\mu\leq \frac{1}{L}$. This is the richest and most complex parameter range. \\[.2cm]
If $\eps$ is very small in the sense that additionally $\eps\leq\frac{\mu^{3/2}\theta^2}{L^{1/2}}\leq\frac{\mu}{L}\theta^2$ then one expects the formation of complex patterns inside the nucleus. This is reflected in our scaling law as follows: On the one hand $\eps L\leq \eps^{2/3}\theta^{2/3}L^{1/3}\leq\mu\theta^2$, which shows that branching behaves as $\eps^{2/3}\theta^{2/3}L^{1/3}$, and that all the regimes with a term $\mu\theta^2\ln(3+X)$ are not relevant. On the other hand, $\eps L\leq \mu^{1/2}\eps^{1/2}\theta L^{1/2}$. Summarizing, only branching, two-scale branching and laminates {(see Fig. \ref{fig-br2-disc})} are relevant, and $$\mathcal{I}(\mu,\eps,\theta, L)\sim\min\{\eps^{2/3}\theta^{2/3}L^{1/3},\mu^{1/2}\eps^{1/2}\theta L^{1/2}\ln^{1/2}(3+\frac{\eps}{\mu^3\theta^2 L}),\mu^{1/2}\eps^{1/2}\theta L^{1/2}\ln^{1/2}(3+\frac{1}{\theta^2})\}.$$ As above, one easily checks that all the scalings are relevant. {For the case of large $\theta\geq m_1$, the logarithms disappear since $\ln(3+\frac{1}{\theta^2})\leq c$, and the corresponding lower bound is proven in Lemma \ref{lem:thetalargebranching}.}\\[.2cm]
Let us finally address the remaining range $\frac{\mu^{3/2}\theta^2}{L^{1/2}}\leq\eps\leq\min\{\mu\theta^2,\frac{\theta^2}{L^2}\}$ in which the overall behaviour is essentially determined by the logarithmic terms. Setting  $y:=\frac{\eps}{L\mu^3\theta^2}\geq 1$, we have $y/\ln^{3}(3+y)\geq c$ and hence $\eps^{2/3}\theta^{2/3}L^{1/3}\geq c\mu^{1/2}\eps^{1/2}\theta L^{1/2}\ln^{1/2}(3+\frac{\eps}{\mu^3\theta^2 L})$, which shows that branching is not relevant. We also note that $\frac{\eps L}{\mu\theta^2}\leq\frac{\eps}{\mu^2\theta^2}$. In this case, 
\begin{eqnarray*}
\mathcal{I}(\theta,\eps,L,\mu)= &\min\Big\{
 \mu \theta^2 \ln (3+L),
 \mu\theta^2\ln (3+\frac{\eps}{\mu^2\theta^2})
 + \varepsilon^{1/2}\theta^{3/2},
 {\mu \theta^2 \ln (3+ \frac { \varepsilon {L}}{\mu \theta^{2}}) 
 +\mu\theta^2\ln (3+\frac{\theta}{\mu})}
 ,\\
& \mu^{1/2}\varepsilon^{1/2}\theta L^{1/2}  \ln^{1/2} (3+ \frac 1 {\theta^{2}})+\varepsilon L, 
 \mu^{1/2}\varepsilon^{1/2}\theta L^{1/2} \ln^{1/2} (3+ \frac{\varepsilon}{ \mu^{3}\theta^{2}{L}})+\varepsilon L
\Big\}.
\end{eqnarray*}
Here many competitions between the "more local" (lower line) and "global"(upper line)  logarithms take place, and the different contributions are treated separately in the proof of the lower bound. Precisely, in Lemma \ref{lemmabdryln}, the "global" logarithms are captured which always are in competition with a single laminate ($\eps L$). Combined with the energy required for an interpolation from a {constant} to an affine function (see Lemma \ref{lem:tildev}), this leads to the lower bound in Proposition \ref{propinterp}. The case of a single laminate requires additional care since {there the} incompatibility at the left and right boundaries lead to a competition between complex microstructures {inside the nucleus} and elastic strain in the austenite part. We point out that the situation here is (even in a scalar-valued setting) more complicated and the scaling behaviour is more complex than in the well-studied case of a vertical austenite/martensite interface with periodic  boundary conditions at the top and bottom {boundaries}. This is in particular reflected in Proposition \ref{lem:lbbranching} by the additional regime $\mu\theta^2\ln(3+\frac{\theta}{\mu})$.

\end{itemize}

\end{itemize}

\clearpage

\section{Lower bound}\label{sec:lowerbound}
The proof of the lower bound will be divided in three main parts, addressing various regimes in which qualitatively different behavior is expected from the constructions in Subsection \ref{sec:ub}. We shall briefly outline the structure of the proof:\\

In Subsection \ref{sec:lbunif}, we deal with the case that $\eps$ is not very small. Specifically, we assume that one of $\theta^2$ and $\mu\theta^2$ is below $\eps$. Roughly speaking, in this regime, one expects rather uniform structures inside the nucleus. The lower bound in this regime is given in Proposition \ref{lem:logsammelnthetasmall}. The key competition is between the bulk energy in the martensite and the bulk energy in the austenite, and is made quantitative in Lemma  \ref{lem:gammawuerfel}.  \\
 
In Subsection \ref{sec:ubcomplexstrip}, we treat the case of small $\eps$, in which we expect microstructure. This is the most interesting and richest regime, in which a variety of one- and two-scale branching patterns appear. The smallness of $\eps$ corresponds to two conditions: {firstly,} it should be such that there is at least a single interface over the entire length of the sample, as made quantitative by comparing $\eps L$ with $\mu\theta^2$ (up to a logarithmic factor, see below for the precise condition). Secondly, it must be such that the cost of a branching pattern is 
not dominated by the cost of a single straight interface, {in the sense} that $ \eps L\le \eps^{2/3}\theta^{2/3}L^{1/3}$, {which is equivalent to}
$\eps L^2\le \theta^2$.
 Roughly speaking, in this regime, one expects complex patterns inside the whole martensitic nucleus, and contributions from the austenite part only close to the left and right boundaries of $\Omega_{2L}$. The lower bound is derived in Proposition \ref{lem:lbbranching}, which builds upon a series of {Lemmata} for specific parts of the estimate. \\

 In Subsection \ref{sec:lb3}, we address the remaining part of the small-$\eps$ {range} which is not covered in Subsection \ref{sec:ubcomplexstrip}, corresponding to the cases that $\mu$ is small ({in the sense that} $\mu\theta^2 \lesssim \eps {L}$) or that $L$ is large ({in the sense that}
$\theta^2<\eps L^2$). In this case, one expects that there are parts inside the nucleus in which the displacement is affine or a single laminate. The relevant  lower bound is obtained in Proposition \ref{lem:logsammeln}.  \\ 
 
 Finally, in Subsection \ref{sec:lbconclusion}, we put together the above results and conclude the proof of the lower bound.
\\
 
{We start by making} a few general observations and definitions that will be used all over the argument.
{The condition} $\nabla u\in BV{(\Omega_{2L};\R^{2\times 2})}$ implies that $u$ has a representative {which is continuous on $\overline{\Omega_{2L}}$} (see, for example, \cite[Lemma 9]{co16}). We work with this representative, and mainly work on slices in direction $\xi=(1/4,1)$.
One important quantity is the set $\mathcal C$ of slices which are almost affine with slope $\theta$ or $\theta-1$ {(recall \eqref{eq:defuxi})}
        \begin{equation}\label{eqdefC}
\mathcal{C}:= \Big{\{}x_1\in (0,L-\xi_1)\,:\, 
\min\big\{ \|u_{x_1}^\xi(s)-u^\xi_{x_1}(0)-s\theta\|_{L^\infty((0,1))}, 
\|u_{x_1}^\xi(s)-u^\xi_{x_1}(0)-s(\theta-1)\|_{L^\infty((0,1))}{\big\}} < \frac1{16}\theta\Big\}.
        \end{equation}
        These are slices which have almost no energy in the martensitic nucleus. The boundary values on the top and bottom of the slice, however, differ by approximately $\theta$ (or $1-\theta$). Therefore, these slices generate a large energy in the austenitic matrix.
        
Correspondingly, we shall consider the set $\mathcal P$ of slices where the boundary values are close,
\begin{equation}\label{eqdefPth}
\mathcal{P} := \left\{x_1\in (0,L-\xi_1)\,:\, 
|u((x_1,0)+\xi)-u(x_1,0)| \leq 2^{-7} \theta   \right\}.
\end{equation}
These slices generate very small energy in the austenitic matrix, but cannot be low energy inside the martensitic nucleous. They can be realized either by microstructure (with energy density at least $\eps$) or by having a deformation which does not match the eigendeformation of the martensite (with energy density at least $\theta^2$). The sets {$\mathcal{C}$ and $\mathcal{P}$} are clearly disjoint. This competition and these energy contributions will be made {precise} in Lemma \ref{lem:gammawuerfel}  below.

{Over the entire lower bound we shall often focus on ``typical'' slices, and relate the one-dimensional integrals over slices to the energy via Fubini's theorem. For example, recalling the definition (\ref{eqdeffl2delta}), one has
$\int_{(0,L)\times(0,1)} |f|^2 \dxy\ge \int_0^{L-\xi_1} \|f\|_{L^2(\Delta^\xi_{x_1})}^2 \dx
\ge \alpha^2 \calL^1\left(\{x_1\in(0, L-\xi_1): \|f\|_{L^2(\Delta^\xi_{x_1})}\ge\alpha\}\right)$ for any $\alpha>0$.
}

\subsection{A lower bound in the parameter range $\theta^2\le \eps$ {or} $\mu\theta^2\le \eps$}\label{sec:lbunif}
In this {case} the structure inside the martensite is coarse, and the optimal bound is obtained considering a  path that contains the segment $(x_1,0)$ {to} $(x_1+\xi_1,1)$, and then goes back in the austenite phase, staying at a distance of order $x_1$ from the  martensite (see Figure \ref{figlemma34} and Lemma \ref{lem:gammawuerfel}).

\begin{prpstn} \label{lem:logsammelnthetasmall}
There exists $c>0$ such that for all $u\in\mathcal{X}$, $\mu>0$, $\theta\in (0,1/2]$, $\eps>0$, and {$L\in[1/2,\infty)$} 
with
\begin{equation*}
 \min\{\theta^2,\mu\theta^2\}\le \eps
\end{equation*}

there holds 
\begin{equation*}
 I(u)\ge c \min\Big\{\mu\theta^2\ln (3+L), \theta^2L,  \mu\theta^2\ln (3+\frac L\mu)+\eps\theta\Big\}.
\end{equation*}
\end{prpstn}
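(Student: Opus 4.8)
The plan is to reduce the global estimate to a localized competition on "almost-diagonal" vertical strips of width one, using Fubini over the slicing direction $\xi=(1/4,1)$, and then to bound the energy on each strip from below by the expression $\mu\theta^2\ln(3+L)$, $\theta^2 L$, or $\mu\theta^2\ln(3+L/\mu)+\eps\theta$, depending on the qualitative behavior of the slice. First I would fix $u\in\mathcal X$ with $I(u)<\infty$ (otherwise there is nothing to prove) and decompose the base-point interval $(0,L-\xi_1)$ into the set $\mathcal C$ of slices that are almost affine with slope $\theta$ or $\theta-1$ (cf.\ \eqref{eqdefC}) and its complement. On $\mathcal C$, the slice has essentially no martensitic bulk energy, but $|u((x_1,0)+\xi)-u(x_1,0)|\ge\frac{15}{16}\theta$ (or $\ge\frac{15}{16}(1-\theta)$), so there is a genuine jump of order $\theta$ in the trace that must be paid for in the austenite. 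Off $\mathcal C$, by definition every slice $x_1$ has $\|u^\xi_{x_1}{}'-\theta\|_{L^\infty}$ and $\|u^\xi_{x_1}{}'-(\theta-1)\|_{L^\infty}$ both not simultaneously small, and combined with the hypothesis $\min\{\theta^2,\mu\theta^2\}\le\eps$ this forces, via the slicing estimate \eqref{eq:estduxi} (relating the one-dimensional derivative to $\dist^2(e(u),K)$) together with the surface term, an energy density of order $\min\{\theta^2,\mu\theta^2\}$ on a set of $x_1$ of positive measure; integrating gives a contribution scaling like $\min\{\theta^2,\mu\theta^2\}\cdot\calL^1((0,L-\xi_1)\setminus\mathcal C)$. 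The key quantitative input here is Lemma~\ref{lem:gammawuerfel} (stated later in the paper), which I would invoke in black-box form: on a unit square adjacent to the martensite at horizontal position $x_1$, a slice that is neither in $\mathcal C$ nor close-valued in the sense of $\mathcal P$ costs at least $c\min\{\theta^2,\mu\theta^2\}$, while a $\mathcal C$-slice costs at least $c\mu\theta^2$ in the austenite in an annular region of radius comparable to $x_1$.

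The main step is then to turn the $\mathcal C$-slice contributions into a logarithm. For a slice $x_1\in\mathcal C$, the trace of $u$ along $\Delta^\xi_{x_1}$ has a net increment of size $\sim\theta$ between the bottom and top edges of $\Omega_{2L}$; following the path that goes up the segment and returns through the austenite at distance $\sim x_1$ from the inclusion (the path described before Proposition~\ref{lem:logsammelnthetasmall} and in Figure~\ref{figlemma34}), a Cauchy--Schwarz / Korn-type estimate in the austenitic half-plane forces $\int|\nabla u|^2$ over a dyadic annulus at distance $2^k$ to be at least $c\mu\theta^2\calL^1(\mathcal C)^2/(\text{annulus width})$ — more precisely, one gets a logarithmic gain by summing over scales $x_1,2x_1,4x_1,\dots$ up to $\sim L$, yielding $\gtrsim\mu\theta^2\ln(3+L/\text{(something)})$ when $\mathcal C$ has nearly full measure. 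The cleanest way to organize this is: if $\calL^1(\mathcal C)\ge\frac12 L$, derive $I(u)\gtrsim\mu\theta^2\ln(3+L)$ directly from the annular argument (this is where the pure $\ln(3+L)$ regime comes from); if instead $\calL^1(\mathcal C)<\frac12 L$, then the off-$\mathcal C$ slices already give $I(u)\gtrsim\min\{\theta^2,\mu\theta^2\}\,L$, and since $\mu\theta^2 L\gtrsim\mu\theta^2\ln(3+L/\mu)+\eps\theta$ under the standing hypothesis $\mu\theta^2\ge\eps$ (and $\theta^2 L$ is itself one of the terms), we land in the claimed minimum. The third term $\mu\theta^2\ln(3+L/\mu)+\eps\theta$ arises in the intermediate case $\mu\gg1$ where the logarithmic sum can only run up to scale $\sim\mu$ before the austenitic cost of a single $\mathcal C$-slice saturates, and the residual length $\sim\mu$ near the left/right boundaries must either laminate (costing $\eps$ per unit length, hence $\gtrsim\eps$, but sharpened to $\eps\theta$ by a volume-fraction refinement) or stay affine.

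The hard part will be extracting the correct power of $\theta$ in the $\eps\theta$ term and the exact form $\ln(3+L/\mu)$ rather than $\ln(3+L)$ — this is precisely the delicate volume-fraction bookkeeping flagged in the introduction. I expect to need Lemma~\ref{lem:interpolSC} (the interpolation estimate $c\rho^2a^2\le|a|\int|v-ay-b|+[v]^2_{H^{1/2}}$) to handle slices that are affine-like but not in $\mathcal C$, converting a small deviation of the average slope from $0$ into either $H^{1/2}$ (austenite) energy or $L^1$-deviation energy, and Lemma~\ref{lem:interpolH12} (with its $\omega$-dependent constant) together with the $H^{1/2}$ trace interpretation of $I^\ext$ to localize the logarithm. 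The other subtlety is that the slicing direction $\xi$ is fixed and only controls the combination $\frac{\xi_1}{\xi_2}\partial_1u_1+\partial_2u_1+\partial_1u_2+\frac{\xi_2}{\xi_1}\partial_2u_2$; one must be careful that the quantity being estimated in the austenite is the full $\|\nabla u\|_{L^2}$ (available after Korn, via $I$ vs.\ the symmetrized functional $J$), so that the return-path argument is legitimate. Modulo these points, assembling the three cases above against the hypothesis $\min\{\theta^2,\mu\theta^2\}\le\eps$ gives the stated lower bound.
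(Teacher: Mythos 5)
Your overall plan (slicing in the direction $\xi$, splitting the base points into the almost-affine slices $\calC$ and the close-valued slices $\calP$, and invoking Lemma \ref{lem:gammawuerfel}) is the same skeleton the paper uses, but the case analysis you build on it has a genuine gap. In your first case, $\calL^1(\calC)\ge \tfrac12 L$, you claim $I(u)\gtrsim \mu\theta^2\ln(3+L)$ ``directly from the annular argument'', i.e.\ from the austenite energy alone. This is false: the path estimate only yields $I^\ext(u)\gtrsim \mu\theta^2\ln\frac{L+1-\xi_1}{\calL^1(\calP)+1}$, and $\calL^1(\calC)\ge\tfrac12 L$ merely forces $\calL^1(\calP)\le\tfrac12 L$, so the logarithm can degenerate to a constant. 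Concretely, for $\mu>1$, $\theta^2\le\eps$ and $\mu$ close to $L$ (say $\mu=L/\ln L$), the linear-interpolation competitor of Theorem \ref{th:upperbound}(iii) has $\calC$ of nearly full measure, $\calP$ of measure $\sim\mu$, and total energy $\sim\mu\theta^2\ln(3+\frac{L}{\mu})+\eps\theta\ll\mu\theta^2\ln(3+L)$; so the implication you assert in this case cannot be proved, and it is precisely this situation that generates the third regime, which your dichotomy never produces by any concrete mechanism. (In your second case you also invoke a ``standing hypothesis $\mu\theta^2\ge\eps$''; the hypothesis of the proposition is the reverse, $\min\{\theta^2,\mu\theta^2\}\le\eps$ — that branch survives anyway because $\min\{\theta^2,\mu\theta^2\}L$ dominates one of the first two terms of the minimum, but the justification as written is wrong.)

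What is missing is the optimization in $p:=\calL^1(\calP)$ and a separate argument for the $\eps\theta$ term. The paper keeps $p$ free: Lemma \ref{lem:gammawuerfel}(\ref{lem:gammawuerfelest}) gives $I(u)\ge c\,\mu\theta^2\ln\frac{L+1-\xi_1}{p+1}+c\,p\min\{\eps,\theta^2\}$, and the hypothesis upgrades $\min\{\eps,\theta^2\}$ to $\mu\theta^2$ when $\mu\le1$ and to $\theta^2$ when $\mu>1$; minimizing over $p\in[0,L]$ gives $\mu\theta^2\ln(3+L)$ in the first case, while in the second the minimizer sits at $p+1\sim\mu$, which is exactly the source of $\ln(3+\frac{L}{\mu})$. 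The $\eps\theta$ contribution is then obtained not by your vague ``volume-fraction refinement'' of a lamination cost, but by comparing slice averages: if both $\calP$ and $\calC$ are nonempty, a $\calP$-slice has $\int_0^1 (u^\xi_{x_p})'\,\mathrm{d}s\le\tfrac14\theta$ while a $\calC$-slice has $\int_0^1 (u^\xi_{x_c})'\,\mathrm{d}s\ge\tfrac34\theta$ (Lemma \ref{lem:gammawuerfel}(\ref{lem:gammawuerfelC})), so $|D^2u|(\Omega_{2L})\gtrsim\theta$ and hence $I(u)\gtrsim\eps\theta$; if $\calC=\emptyset$ one instead gets $\theta^2L$. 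Without tracking $p$ and without this comparison, neither the $\ln(3+\frac{L}{\mu})$ nor the $\eps\theta$ term in the claimed lower bound is reachable, so the proposal as it stands does not prove the proposition.
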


The key estimate, that will be useful also later in the proof, is the following.
We recall the definition of the sets $\mathcal P$ and $\mathcal C$ in 
(\ref{eqdefC}) and (\ref{eqdefPth}), 
{the definition of $\Delta^\xi_{x_1}$
in (\ref{eq:sliceDelta}) and of the $L^2(\Delta^\xi_{x_1})$ norm in (\ref{eqdeffl2delta}).} The geometry is illustrated in Figure \ref{figlemma34}.

\begin{lmm}[Bulk energy]\label{lem:gammawuerfel}
Suppose that $\theta\in(0,1/2]$, $L\in[1/2,\infty)$, $u\in{\mathcal{X}}$. The following holds:
\begin{enumerate}
\item\label{lem:gammawuerfelout} For $i\in\{1,2\}$
and for almost every $x_1\in (0,L-\xi_1)$
one has
\[\mu\int_{S_{x_1}}|\nabla u_i|^2\dcalH^1 \geq \frac18 \mu \frac{1}{1+ x_1}
 |u_i(x_1+\xi_1,1)-u_i(x_1,0)|^2,\]
 where $S_{x_1}$ is the polygonal {arc in $\R^2\setminus\Omega_{2L}$} joining the points
\begin{eqnarray*}
&&(x_1+\xi_1,1), \,(x_1+\xi_1,1+x_1),\,(-x_1,1+x_1), \,
(-x_1,-x_1), \,(x_1,-x_1), \, (x_1,0).
\end{eqnarray*}

\item\label{lem:gammawuerfelin}
{If $x_1\in[0, L-\xi_1]$ obeys} $\|\min \{|e(u)-\theta e_1 \odot e_2|,\,|e(u)+(1-\theta) e_1 \odot e_2|\}\|_{L^2(\Delta^\xi_{x_1})}^2 \leq \tilde{C}_1\theta^2$ and $|\partial_s\partial_s u_{x_1}^\xi|((0,1)) \leq \tilde{C}_1$, where 
$\tilde C_1:=2^{-13}$, {then} $x_1\in\mathcal C$.

\item\label{lem:gammawuerfelC}
For any $x_1\in\mathcal C$ we have
$|u_{x_1}^\xi(1)-u_{x_1}^\xi(0)|\ge \frac34\theta$.
In particular, the sets $\mathcal P$ and $\mathcal C$ 
are disjoint. 

\item\label{lem:gammawuerfelest}
One has
\begin{equation*}
I(u)\ge c \min\{\eps,\theta^2\} \calL^1([0,L-\xi_1]\setminus \mathcal C),
\end{equation*}
and 
\begin{equation*}
I(u)\ge  c\mu\theta^2\ln\frac{L+1-\xi_1}{\calL^1(P)+1}+c\min\{\eps,\theta^2\} \calL^1(\mathcal P).
\end{equation*}
\end{enumerate}
\end{lmm}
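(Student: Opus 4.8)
The plan is to prove the four items of Lemma \ref{lem:gammawuerfel} essentially in the order stated, since item \eqref{lem:gammawuerfelest} builds on the earlier ones. For item \eqref{lem:gammawuerfelout} I would use the one-dimensional estimate: along any rectifiable arc $S$ parametrized by arclength, a scalar Sobolev function $w$ satisfies $|w(\text{end})-w(\text{start})|^2\le \calH^1(S)\int_S|\nabla w|^2\dcalH^1$ by Cauchy--Schwarz; here $S_{x_1}$ is the explicit polygonal arc, which lies in $\R^2\setminus\Omega_{2L}$ by construction (all six vertices have either $x_2\le 0$, $x_2\ge 1$, or $x_1\le 0$), and its total length is bounded by $c(1+x_1)$. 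Applying this to $w=u_i$ and absorbing the constant gives the claimed bound with the factor $\tfrac18\mu\tfrac1{1+x_1}$ (one should double-check the constant $8$ against $\calH^1(S_{x_1})\le 6(1+x_1)+1$, but this is routine).

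For item \eqref{lem:gammawuerfelin} the idea is: on a slice $\Delta^\xi_{x_1}$ with small $L^2$-distance of ${u^\xi_{x_1}}'$ to $\{\theta,\theta-1\}$ and small total variation $|\partial_s\partial_s u^\xi_{x_1}|((0,1))\le\tilde C_1$, I would argue that ${u^\xi_{x_1}}'$ cannot jump between the two wells, hence it stays near a single well, say $\theta$; then $u^\xi_{x_1}(s)-u^\xi_{x_1}(0)-s\theta$ has derivative small in $L^2$, and by a $1$-d Poincaré/Sobolev estimate its $L^\infty$-norm is controlled by $\|{u^\xi_{x_1}}'-\theta\|_{L^2((0,1))}\le c\theta\sqrt{\tilde C_1}$, which is $<\tfrac1{16}\theta$ for $\tilde C_1=2^{-13}$ chosen small enough; the ``no jump'' part uses that a jump of size $\approx 1$ in $\partial_s u^\xi_{x_1}$ would cost $\ge 1$ in total variation, contradicting $\tilde C_1<1$ — more precisely, one uses the BV structure so that ${u^\xi_{x_1}}'$ has a good representative and one controls oscillations. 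Note that \eqref{eq:estduxi} and \eqref{eqestduxiv1} relate the slice energy to $|e(u)-ae_1\odot e_2|$, so the smallness hypothesis transfers correctly. Item \eqref{lem:gammawuerfelC} is then elementary: if $x_1\in\mathcal C$, then $|u^\xi_{x_1}(1)-u^\xi_{x_1}(0)-\theta|<\tfrac1{16}\theta$ or $|u^\xi_{x_1}(1)-u^\xi_{x_1}(0)-(\theta-1)|<\tfrac1{16}\theta$; in the first case $|u^\xi_{x_1}(1)-u^\xi_{x_1}(0)|\ge\tfrac{15}{16}\theta\ge\tfrac34\theta$, in the second $|u^\xi_{x_1}(1)-u^\xi_{x_1}(0)|\ge 1-\theta-\tfrac1{16}\theta\ge\tfrac34\theta$ since $\theta\le\tfrac12$. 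Recalling the definition \eqref{eq:defuxi}, $u^\xi_{x_1}(1)-u^\xi_{x_1}(0)$ is $\tfrac{1}{\xi_1\xi_2}$ times $(u((x_1,0)+\xi)-u(x_1,0))\cdot\xi$, so this forces $|u((x_1,0)+\xi)-u(x_1,0)|$ to be much larger than $2^{-7}\theta$, whence $x_1\notin\mathcal P$, giving disjointness.

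For item \eqref{lem:gammawuerfelest}, first inequality: by the contrapositive of \eqref{lem:gammawuerfelin}, for every $x_1\in[0,L-\xi_1]\setminus\mathcal C$ either the slice-martensite energy exceeds $\tilde C_1\theta^2$ or the slice second-variation exceeds $\tilde C_1$. Using \eqref{eq:estduxi} to bound slice energy by the genuine elastic energy density along the slice, and using that the singular-perturbation term controls $\int_0^{L-\xi_1}|\partial_s\partial_s u^\xi_{x_1}|((0,1))\dx\le c|D^2u|(\Omega_{2L})$ (via \eqref{eq:estduxiv}, since along the slice $\partial_s\partial_s u^\xi_{x_1}$ is a linear combination of second derivatives of $u$), a Fubini/Chebyshev argument as in the displayed remark before the subsection gives $I(u)\ge c\min\{\eps,\theta^2\}\calL^1([0,L-\xi_1]\setminus\mathcal C)$. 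For the second inequality: the set $\mathcal P$ of ``small-jump'' slices is by \eqref{lem:gammawuerfelC} disjoint from $\mathcal C$, so $\mathcal P\subset[0,L-\xi_1]\setminus\mathcal C$ and the first inequality already gives the $\min\{\eps,\theta^2\}\calL^1(\mathcal P)$ term. For the logarithmic term, I would use item \eqref{lem:gammawuerfelout}: on every slice $x_1\in[0,L-\xi_1]\setminus\mathcal P$ one has $|u_2(x_1+\xi_1,1)-u_2(x_1,0)|>2^{-7}\theta/\sqrt{2}$ (since $|u\cdot\xi|\le|\xi||u|$ and at least one component is large — more carefully, the jump in the $\xi$-direction controls the vector jump up to the factor $|\xi|$), hence $\mu\int_{S_{x_1}}|\nabla u|^2\dcalH^1\ge c\mu\theta^2/(1+x_1)$. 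The arcs $S_{x_1}$ are not disjoint, but one chooses a subfamily with controlled overlap (e.g. taking $x_1$ in a geometric sequence, or noting each point of $\R^2\setminus\Omega_{2L}$ lies on boundedly many $S_{x_1}$ with $x_1$ in a dyadic block), integrates $\tfrac1{1+x_1}$ over $([0,L-\xi_1]\setminus\mathcal P)$, and uses that a set of measure at least $L-\xi_1-\calL^1(\mathcal P)$ inside $[0,L-\xi_1]$ has $\int\tfrac{dx_1}{1+x_1}\ge c\ln\tfrac{L+1-\xi_1}{\calL^1(\mathcal P)+1}$ (the worst case being when $\mathcal P$ occupies the initial segment where $\tfrac1{1+x_1}$ is largest). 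Summing the austenite contributions over the chosen subfamily yields $I(u)\ge I^{\ext}(u)\ge c\mu\theta^2\ln\tfrac{L+1-\xi_1}{\calL^1(\mathcal P)+1}$.

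\textbf{The main obstacle} I expect is the overlap bookkeeping for the arcs $S_{x_1}$ in item \eqref{lem:gammawuerfelest}: one must extract from the family $\{S_{x_1}\}$ enough pairwise ``almost disjoint'' arcs (in the sense of bounded multiplicity of covering) so that summing the lower bounds $c\mu\theta^2/(1+x_1)$ reproduces the full logarithm $\ln\tfrac{L+1}{\calL^1(\mathcal P)+1}$ rather than a constant; a clean way is to restrict to $x_1$ ranging over a lacunary sequence $x_1\approx 2^k$ for $k$ from about $\log_2(\calL^1(\mathcal P)+1)$ up to about $\log_2 L$, on which the horizontal legs of $S_{x_1}$ at heights $\pm x_1$ and $1+x_1$ are genuinely separated, being careful that within each dyadic block a positive fraction of slices avoids $\mathcal P$ (which uses that $\mathcal P$ has total measure small relative to $L$, or else the claimed bound is vacuous). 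A secondary subtlety is making the ``no jump between wells'' argument in item \eqref{lem:gammawuerfelin} rigorous at the level of $BV$ one-dimensional slices, for which I would invoke that for $u\in\mathcal X$ and a.e.\ $x_1$ the slice ${u^\xi_{x_1}}'$ is a $BV$ function on $(0,1)$ whose total variation is controlled by the slicing of $D^2u$.
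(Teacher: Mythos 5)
Your plan follows the paper's proof essentially step by step: (i) is Cauchy--Schwarz along $S_{x_1}$ with $\calH^1(S_{x_1})=8x_1+1+\xi_1\le 8(1+x_1)$; (ii) uses the smallness of $|\partial_s\partial_s u^\xi_{x_1}|((0,1))$ to rule out switching between the wells and then integrates the derivative bound (the paper implements this by pinning ${u^\xi_{x_1}}'$ within $1/8$ of a constant $b$, which avoids having to treat separately the small set where ${u^\xi_{x_1}}'$ is far from both wells, but your variant can be patched to the same effect); (iii) is the same elementary computation; and in (iv) the first bound is the same Fubini/Chebyshev argument and the logarithm comes, as in the paper, from $\int_{(0,L-\xi_1)\setminus\mathcal P}\frac{\dx}{1+x_1}\ge \ln\frac{L+1-\xi_1}{\calL^1(\mathcal P)+1}$ by monotone rearrangement.

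The one point to correct is your ``main obstacle'': the overlap bookkeeping you worry about is a non-issue, and the lacunary fix you sketch is the only place where the argument could actually go wrong. For distinct $x_1$ the arcs $S_{x_1}$ meet only in finitely many points, and each of the five segment families (the two verticals at $x_1+\xi_1$ and $x_1$, the outer vertical at $-x_1$, and the two horizontals at $1+x_1$ and $-x_1$) is parametrized by $(x_1,\text{arclength})$ with unit Jacobian onto a subregion of $\R^2\setminus\Omega_{2L}$; hence $\int_0^{L-\xi_1}\int_{S_{x_1}}|\nabla u|^2\dcalH^1\dx\le 5\int_{\R^2\setminus\Omega_{2L}}|\nabla u|^2\dxy$ and you may simply integrate the slice bound $c\mu\theta^2/(1+x_1)$ over all of $(0,L-\xi_1)\setminus\mathcal P$, which is exactly what the paper does. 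By contrast, if you literally restrict to one slice per dyadic scale $x_1\approx 2^k$, the contributions $c\mu\theta^2/(1+2^k)$ form a convergent geometric series and you only recover $c\mu\theta^2$, not the logarithm; to rescue the dyadic version you must keep a positive measure fraction of each block and integrate, which brings you back to the Fubini argument anyway.
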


 \begin{figure}
\begin{center}
  \includegraphics[width=7cm]{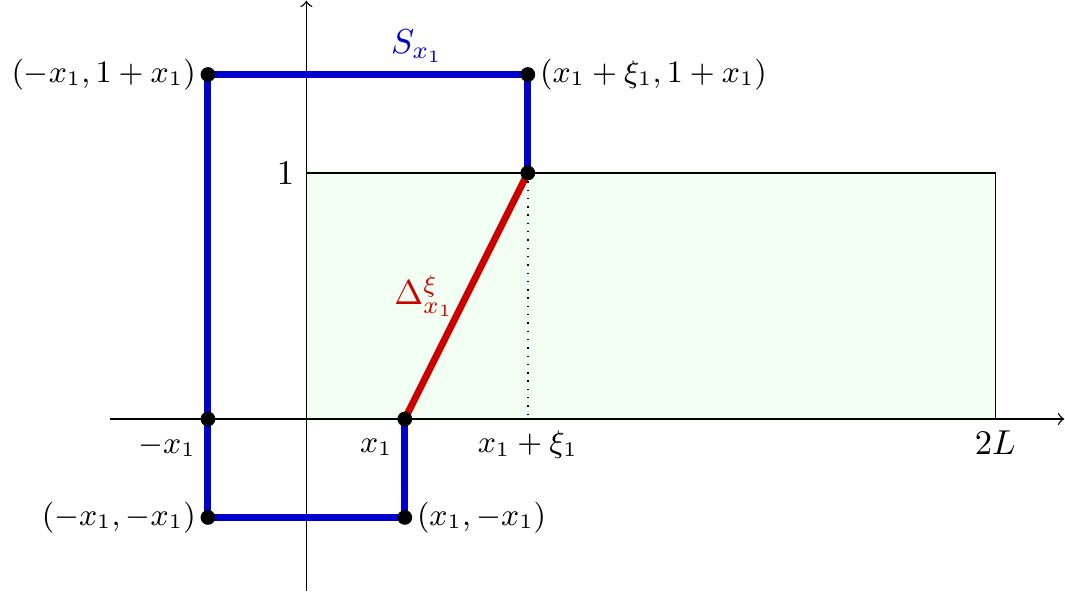} 
\end{center}
  \caption{
Sketch of the geometry in Lemma \ref{lem:gammawuerfel}. The set $S_{x_1}$ is marked blue,
the segment
 $\Delta^\xi_{x_1}$
red.
The path used here is analogous to the one in the upper bound construction, see 
Fig.~\ref{fig-regimes1} and
Lemma \ref{lem:lemout}.
 }
  \label{figlemma34}
 \end{figure}

\begin{proof}
(\ref{lem:gammawuerfelout}): The assertion follows by a direct computation, using that 
\[ \calH^1(S_{x_1})=8x_1+1+\xi_1\leq 8(1+x_1).\]
{Indeed, for} almost every $x_1 \in (0,L-\xi_1)$ we have that $u_i \in W^{1,2}(S_{x_1})$. 
It then follows that 
\begin{eqnarray*}
\int_{S_{x_1}}|\nabla u_i|^2\text{ d}\mathcal{H}^1\geq(\mathcal{H}^1(S_{x_1}))^{-1} \Big(\int_{S_{x_1}}|\nabla u_i|\text{ d}\mathcal{H}^1\Big)^2 \geq \frac{|u_i(x_1+\xi_1,1)-u_i(x_1,0)|^2}{8(1+x_1)}.
\end{eqnarray*}

(\ref{lem:gammawuerfelin}): Fix such an $x_1$ and define $v(s):= u^\xi_{x_1}(s)$. 
Since $|\partial_s \partial_s v|\le \frac18$, there is $b\in\R$ such that $|v'(s)-b|\le \frac18$ for almost all $s\in(0,1)$.
We observe that {by H\"older's inequality and \eqref{eq:estduxi}}
\begin{equation*}
 \min\{|b-\theta|, |b+(1-\theta)|\} \le 
 \int_0^1 \min\left\{|v'-\theta|, |v'+(1-\theta)|\right\}+|v'-b| \ds \le   {5}\tilde{C}_1^{1/2}\theta+\frac18 \le \frac14.
\end{equation*}
Therefore there is $\sigma\in\{0,1\}$ such that $|b+\sigma-\theta|\le \frac14$, and correspondingly $|v'(s)+\sigma-\theta|\le \frac12$ for almost all $s${. Since $|\theta-(1-\theta)|=1$ it follows that $|v'(s)+\sigma-\theta|=\min\{|v'-\theta|, |v'+1-\theta|\}$ for almost every $s$, and thus} 
\begin{equation*}
\int_0^1 |v'+\sigma-\theta|\ds=
\int_0^1 \min\{|v'-\theta|, |v'+1-\theta|\} \ds\le{5} \tilde C_1^{1/2}\theta.
\end{equation*}
Integrating we obtain
\begin{equation}
|v(s)+(\sigma-\theta)s-v(0)|\le {5}\tilde  C_1^{1/2}\theta \text{ for all $s\in(0,1)$.}
\end{equation}
Since  $\tilde C_1=2^{-13}$ this implies $x_1\in\mathcal C$. 

(\ref{lem:gammawuerfelC}): For $x_1\in\mathcal C$ we have 
$|u^\xi_{x_1}(1)-u^\xi_{x_1}(0)|\ge \min_{\sigma\in\{0,1\}} |\sigma-\theta| -2 \frac1{16}\theta
\ge {\frac34}\theta$.
The second assertion follows from 
${\frac34}\theta\le |u^\xi_{x_1}(1)-u^\xi_{x_1}(0)|
\le 4|\xi|\, |u((x_1,0)+\xi)-u(x_1,0)|$ and $|\xi|\le 2$.

{(\ref{lem:gammawuerfelest}) 
The first assertion follows from (\ref{lem:gammawuerfelin}) with Fubini's theorem. Indeed, if 
$x_1\in(0,L-\xi_1)\setminus\mathcal C$ {then} 
$\|\min \{|e(u)-\theta e_1 \odot e_2|,\,|e(u)+(1-\theta) e_1 \odot e_2|\}\|_{L^2(\Delta^\xi_{x_1})}^2 > \tilde{C}_1\theta^2$ or $\eps |\partial_s\partial_s u_{x_1}^\xi|((0,1)) > \eps \tilde{C}_1$.
Integrating over all such $x_1$ we obtain $I(u)\ge c \min\{\eps,\theta^2\} \calL^1([0,L-\xi_1]\setminus \mathcal C)$.

To prove the {other} one, for almost every $x_1\in (0, L-\xi_1)\setminus\mathcal P$
we obtain by (\ref{lem:gammawuerfelout}) that
\begin{equation*}
\mu\int_{S_{x_1}}|\nabla u|^2\text{ d}\mathcal{H}^1 \geq c \frac{\mu\theta^2}{1+ x_1}
\end{equation*}
so that, using Fubini and monotonicity of $1/(1+x_1)$,
\begin{eqnarray*}
I^\ext(u)\geq c\mu\theta^2\int_{(0, L-\xi_1)\setminus\mathcal P}\frac{1}{x_1+1}\text{ d}x_1\geq 
c\mu\theta^2\int_{\mathcal L^1(\mathcal P)}^{L-\xi_1} \frac{1}{x_1+1}\text{ d}x_1= c\mu\theta^2\ln\frac{L+1-\xi_1}{\mathcal L^1(\mathcal P)+1}.
\end{eqnarray*}
Finally, since $\mathcal P$ and $\mathcal C$ are disjoint, we have $\mathcal P\subset[0,L-\xi_1]\setminus\mathcal C$ and
$I(u)\ge c \min\{\eps,\theta^2\} \calL^1([0,L-\xi_1]\setminus \mathcal C)
\ge c \min\{\eps,\theta^2\} \calL^1(\mathcal P)$.
}
\end{proof}

\begin{proof}[Proof of Proposition \ref{lem:logsammelnthetasmall}]
{
By Lemma \ref{lem:gammawuerfel}(\ref{lem:gammawuerfelest}) we have, with $p:=\calL^1(\mathcal P)$,
\begin{equation}\label{eqboundpp34}
I(u)\ge c\mu\theta^2\ln\frac{L+1-\xi_1}{p+1} + cp \min\{\eps,\theta^2\}.  
\end{equation} 
We distinguish two cases. }

\begin{enumerate}
 \item { Assume $\mu\le1$. Then the assumption on $\eps$ implies $\mu\theta^2\le\eps$, so that $\mu\theta^2\le \min\{\eps,\theta^2\}$ and (\ref{eqboundpp34}) gives
 \begin{equation*}
I(u)\ge c \min_{p'\in[0,L-\xi_1]}\Big( \mu\theta^2\ln \frac{L+\frac34}{p'+1} + p'\mu\theta^2\Big)
=c\mu\theta^2\ln(L+\frac34) \ge c\mu\theta^2\ln (3+L),
 \end{equation*}
which concludes the proof. We used here that the expression to be minimized is nondecreasing in $p'$, hence the minimum is attained at $p'=0$.  
 }
 
 \item {
Assume $1<\mu$. Then the assumption on $\eps$ implies   $\theta^2\le\eps$, and
 (\ref{eqboundpp34}) gives
\begin{equation*}
I(u)\ge c \Big( \mu\theta^2\ln \frac{L+\frac34}{p+1} + p\theta^2\Big).
 \end{equation*}
If $p\ge\frac15 L$, then $I(u)\ge c \theta^2L$ and we are done. 
If $p=0$, then $I(u)\ge c \mu\theta^2 \ln(3+L)$ as above and we are done.
Assume now $0<p<\frac15 L$. We observe that $p\le \frac15 L$ and $\frac12\le L$ imply 
$\frac{10}9(p+1)\le L+\frac34$ and therefore
$I(u)\ge c\mu\theta^2\ln\frac{10}9\ge c \mu\theta^2$. Further, 
\begin{equation*}
I(u)\ge c \min_{p'\in[0,L-\xi_1]}\Big( \mu\theta^2\ln \frac{L+\frac34}{p'+1} + p'\theta^2\Big)
\ge c \min\left\{ \mu\theta^2\ln (3+L),\mu\theta^2\ln \Big(\frac {L+\frac34}\mu\Big),\theta^2L\right\}
 \end{equation*}
 (we used here that the only zero of the derivative is at $p'+1=\mu$, hence the minimum over $[0,\infty)$ is at $p'=\mu-1$). Since we had already proven $I(u)\ge c\mu\theta^2$, distinguishing the two cases $L/\mu\ge 3$ and $L/\mu\le 3$ gives
\begin{equation}\label{eqmutheta3lldsf}
I(u)\ge  c\min\left\{ \mu\theta^2\ln (3+L),\mu\theta^2\ln (3+\frac {L}\mu),\theta^2L\right\}.
 \end{equation}
At this point it only remains to obtain the $\eps\theta$ term. 
We are working under the assumption that $p>0$.
For almost any $x_p\in\mathcal P$ we have 
  \begin{equation*}
 \int_{(0,1)} (u_{x_p}^\xi)' (s) \ds = |u_{x_p}^\xi(1)-u_{x_p}^\xi(0)|\le \frac1{4}\theta.
\end{equation*}
If $\mathcal C=\emptyset$ then by 
 Lemma \ref{lem:gammawuerfel}(\ref{lem:gammawuerfelest})
 we have $I(u)\ge c\min\{\eps,\theta^2\}( L-\xi_1)\ge c \theta^2 L$, {and we are done}. Otherwise,  for any $x_c\in\mathcal C$ we have
\begin{equation*}
\frac34\theta\le |u_{x_c}^\xi(1)-u_{x_c}^\xi(0)|\le \int_{(0,1)} (u_{x_c}^\xi)' (s) \ds .
\end{equation*}
Therefore
\begin{equation*}
\frac12\theta\le \int_{(0,1)} | (u_{x_c}^\xi)' - (u_{x_p}^\xi)' | \ds\le
 4|D^2u|((0,L)\times(0,1))
\end{equation*}
and therefore $I(u)\ge c\eps\theta$. Recalling (\ref{eqmutheta3lldsf}) we have
\begin{equation*}
I(u)\ge c \min\left\{ \mu\theta^2\ln (3+L),\mu\theta^2\ln (3+\frac L\mu) +\eps\theta, \theta^2L\right\}
 \end{equation*}
which concludes the proof.
}
\end{enumerate}
\end{proof}

 \subsection{A lower bound in the {parameter range} $\eps L^2\le\theta^2$ and  {$\eps L\lesssim\mu \theta^{2}$}}\label{sec:ubcomplexstrip}
We turn to the case in which $\eps L^2\le \theta^2$, and formulate a lower bound on the energy restricted to  {the set}
\begin{equation}\label{eq:tildeOmegal}
{\tilde{\Omega}_\ell:=(-\infty,\ell)\times\R.  }
\end{equation}
We give these estimates for general $\ell\leq 2L$ since this does not require extra work. To prove the main theorem, however, we will only need the case $\ell=2L$.
\begin{prpstn}\label{lem:lbbranching}
 There exists  $c>0$ such that for all $u\in \mathcal{X}$,  $\theta \in (0, 1/2]$, 
{ $\eps>0$,  $\ell>0$, $\mu>0$ }
which obey
  \begin{equation*}
1\le \ell\le 2L\,,\qquad 
  \eps\ell^2\le\theta^2\,,\qquad
  \text{and}\qquad  \eps\ell\le \mu\theta^2 \min\left\{
   \ln(3+\frac{1}{\theta^2}),
  \ln(3+\frac{\eps}{\mu^3\theta^2\ell})
 \right\}
 \end{equation*}
there holds
  \begin{equation*}
  \begin{split}
  I_{\tilde{\Omega}_\ell}(u) \geq c\min 
  \Big{\{}&
{ \mu \theta^2 \ln(3+\ell)}, 
  {\mu\theta^2\ln(3+\frac\theta\mu)},
  {\varepsilon^{2/3}  \theta^{2/3}\ell^{1/3}}, 
  \\
  &
\mu^{1/2}\varepsilon^{1/2} { \theta}\ell^{1/2} (\ln (3+ \frac 1 {\theta^2}))^{1/2} , \mu^{1/2}\varepsilon^{1/2}{\theta} \ell^{1/2}  (\ln (3+ \frac \varepsilon {\mu^3\theta^2{\ell}}))^{1/2}
  \Big{\}}.
\end{split} \end{equation*}
 \end{prpstn}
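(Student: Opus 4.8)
The plan is to derive this lower bound by reducing, through the $BD$-type slicing in the fixed direction $\xi=(1/4,1)$, to a combination of one- and two-scale branching estimates inside the martensitic strip and the logarithmic austenite estimate already isolated in Lemma~\ref{lem:gammawuerfel}. As a preliminary reduction one may assume that $I_{\tilde{\Omega}_\ell}(u)$ is smaller than a fixed multiple of every term in the asserted minimum, since otherwise there is nothing to prove. Under this smallness assumption, Fubini's theorem applied to \eqref{eq:estduxi} forces all but a set of small $\calL^1$-measure of the almost-diagonal slices $\Delta^\xi_{x_1}$, $x_1\in(0,\ell-\xi_1)$, to carry little elastic and interfacial energy, hence by Lemma~\ref{lem:gammawuerfel}(\ref{lem:gammawuerfelin}) to lie in $\mathcal{C}$, i.e. to be almost affine with slope $\theta$ or $\theta-1$; equivalently, the ``defect'' region, where refinement or non-affine behaviour occurs, has small horizontal extent, and $\mathcal{C}$, $\mathcal{P}$ are disjoint by Lemma~\ref{lem:gammawuerfel}(\ref{lem:gammawuerfelC}).

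Next I would run the self-similar refinement argument near the left edge $x_1=0$ (the right edge is symmetric, the interior being handled below). On dyadic substrips $S_k:=(2^{-k-1}\lambda,2^{-k}\lambda)\times(0,1)$ with a base length $\lambda\lesssim\ell$, one estimates $I_{S_k}(u)$ from below in terms of the oscillation of the slice traces on the horizontal scale $2^{-k}\lambda$: the interpolation inequalities of Lemma~\ref{lem:interpolH12}, Lemma~\ref{lemmah12app} and Lemma~\ref{lem:interpolSC} convert $I^\ext_{S_k}(u)$ into an $H^{1/2}$-penalty on the trace of $u$ at $\{x_1=2^{-k}\lambda\}$, while $\int_{S_k}\dist^2(e(u),K)+\eps|D^2u|(S_k)$ penalizes, via \eqref{eq:estduxi} and a count of sign changes of ${u^\xi_{x_1}}'$, the number of minority-variant bands present at that scale. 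Summing the resulting per-scale lower bounds over $k$ as a geometric series, with the refinement terminated either when the band width reaches $\sim\theta$ (the volume-fraction cut-off) or when the austenite cost $\sim\mu\theta^2$ per scale begins to dominate (the $\mu$ cut-off), produces the three microstructure alternatives $\eps^{2/3}\theta^{2/3}\ell^{1/3}$, $\mu^{1/2}\eps^{1/2}\theta\ell^{1/2}(\ln(3+1/\theta^2))^{1/2}$ and $\mu^{1/2}\eps^{1/2}\theta\ell^{1/2}(\ln(3+\eps/(\mu^3\theta^2\ell)))^{1/2}$, mirroring the one- and two-scale branching constructions of Proposition~\ref{prop:upperbound} and Lemma~\ref{lem:2scbrobenbranchN}; the per-scale computations are the content of Lemma~\ref{lem:thetasmallbranching}, Lemma~\ref{lem:thetalargebranching} and Lemma~\ref{lemmainterpolationestim}, and this part is essentially the adaptation of the localization technique of \cite{chan-conti:14} to the geometrically linear, vectorial setting combined with small-volume-fraction refinements.

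It remains to treat the complementary case in which the refinement does \emph{not} run all the way to the boundary, i.e. there is a substrip $(a,\ell)\times(0,1)$, with $a$ of the order of a power of the parameters, on which the slices are, up to controlled error, either all affine with slope $\theta$, or arranged as a single laminate. In the affine case the mismatch of the boundary values at $x_2=0$ and $x_2=1$ feeds Lemma~\ref{lem:gammawuerfel}(\ref{lem:gammawuerfelout}), and $\int_a^\ell (1+x_1)^{-1}\dx \gtrsim \ln(3+\ell/a)\gtrsim\ln(3+\ell)$ (using $a\lesssim1$, $\ell\ge1$) gives $I^\ext_{\tilde{\Omega}_\ell}(u)\gtrsim\mu\theta^2\ln(3+\ell)$; this is also exactly the content of Lemma~\ref{lem:gammawuerfel}(\ref{lem:gammawuerfelest}) when $\calL^1(\mathcal{P})$ is small, and for large $\calL^1(\mathcal{P})$ that same item returns the (already absorbed) $\eps\ell$ term since $\eps\le\theta^2$. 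In the single-laminate case one must additionally pay for nucleating and terminating the minority variant near $\{x_1=a\}$: testing the $H^{1/2}$-energy of the trace against a function supported on the top and bottom boundaries of the strip produces the corner logarithm $\mu\theta^2\ln(3+\theta/\mu)$, which is the assertion of Lemma~\ref{lemmabdryln}, while the hypothesis $\eps\ell\le\mu\theta^2\ln(\cdots)$ absorbs the $\eps\ell$ contribution. Optimizing over the cut-off $a$ and taking the worst case over these two situations and over the refinement outcome yields the stated minimum; the bookkeeping that glues the local estimates together is the argument of Lemma~\ref{lemmaystin} and Lemma~\ref{lem:costsu2inA}.

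The main obstacle is the precise tracking of the logarithmic corrections: obtaining $\ln(3+1/\theta^2)$ or $\ln(3+\eps/(\mu^3\theta^2\ell))$, rather than a lossy $O(1)$ constant, requires controlling the actual $\calL^2$-measure of the set on which the minority variant is active across every dyadic scale, not merely counting interfaces, which is exactly why an $H^{1/2}$-type interpolation for almost-diagonal slices is needed in place of a crude $BV$ estimate. A second delicate point, tied to the vectorial nature of the problem, is that in the austenite only the symmetrized gradient is controlled by the energy, so the ``horizontal'' interpolation between variants and the boundary test functions used for the corner logarithm must be localized carefully in the transverse variable (into a set $\omega\subset\subset(0,1)$, cf.\ the hypothesis of Lemma~\ref{lem:interpolH12}) to avoid spurious losses, and separate test functions on the top and bottom boundaries have to be constructed according to whether $\theta\ge\mu$ or $\theta\le\mu$.
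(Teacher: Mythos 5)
Your overall plan is not the paper's argument, and it contains a step that fails. The preliminary reduction --- ``if the energy is below every term of the minimum, then all but a small set of slices lie in $\mathcal C$'' --- does not follow. Membership in $\mathcal C$ via Lemma \ref{lem:gammawuerfel}(\ref{lem:gammawuerfelin}) requires $|\partial_s\partial_s u^\xi_{x_1}|((0,1))\le \tilde C_1$, i.e.\ an $O(1)$ bound on the number of interfaces per slice, whereas a small energy only gives $\eps\,|\partial_s\partial_s u^\xi_{x_1}|$ small; in the branching and laminate regimes the competitors have $\sim h^{-1}\gg1$ interfaces per slice at negligible cost, so those slices are not in $\mathcal C$, and the penalty for failing to be in $\mathcal C$ is only $\min\{\eps,\theta^2\}$ per unit length, i.e.\ $\sim\eps\ell$ in total, which under the standing hypothesis $\eps\ell\le\mu\theta^2\ln(\cdots)$ can be far below every term you must prove. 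For the same reason the subsequent dyadic, per-scale summation near $x_1=0$ cannot be closed as described: as remarked in the introduction (following \cite{conti-zwicknagl:16}), localization over scales alone does not produce the factors $\ln(3+\tfrac1{\theta^2})$ and $\ln(3+\tfrac{\eps}{\mu^3\theta^2\ell})$, because these require measuring the set where the minority variant is active, not counting interfaces scale by scale. The lemmas you invoke as ``per-scale computations'' do not play that role: Lemma \ref{lem:thetasmallbranching} is a single global estimate, Lemma \ref{lemmainterpolationestim} and Lemma \ref{lemmabdryln} belong to the proof of Proposition \ref{propinterp} in the regime of Section \ref{sec:lb3}, and the corner term $\mu\theta^2\ln(3+\tfrac\theta\mu)$ in Proposition \ref{lem:lbbranching} comes from Lemma \ref{lemmaystin}, not from Lemma \ref{lemmabdryln}.

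What the paper actually does is quite different: after dispatching $\theta\ge m_1$ by the cell argument of Lemma \ref{lem:thetalargebranching}, it introduces two free parameters $\lambda,m$ and, for small $\theta$, selects a \emph{single} good diagonal slice $x_1^*$ outside the sets $\mathcal R$ (at most $\sim\lambda^{-1}$ interfaces and elastic energy $\lesssim\eps\lambda^{-1}$ on the slice), $\Gstar$ (Lemma \ref{lem:costsu2inA}) and a relaxed $\mathcal P_*$ defined on the scale $m$. The minority set $\omega$ of that slice is covered by at most $\lambda^{-1}$ intervals of total length $\lesssim m$, and a multi-bump logarithmic test function $\psi$ of height $\ln\frac1m$ is paired with $v'=(u^\xi_{x_1^*})'$; integration by parts plus the $H^{1/2}$-duality of Lemma \ref{lemmah12app} (with explicit 2D extensions $\Psi$) and the boundary corrections of Lemma \ref{lemmaystin} (case $\mu\le\theta$) or a truncation retaining a fixed fraction of $\omega$ in $(\frac13,\frac23)$ (case $\theta<\mu$) yields $I\gtrsim\min\{\eps\ell\lambda^{-1},\mu m^2,\mu\theta^2\lambda\ln\frac1m,\theta^2\ell^{-1}\lambda^2m\ln^2\frac1m,\mu\theta^2\ln(3+\frac\theta\mu),\mu\theta^2\ln(3+\ell)\}$; the three microstructure scalings then come from optimizing $\lambda$ and $m$ in the proof of Proposition \ref{lem:lbbranching}. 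If you want to salvage your approach, you would have to replace the $\mathcal C$-reduction by a slice selection with an interface budget tied to a free parameter (as in $\mathcal R$) and replace the dyadic bookkeeping by a mechanism that sees the measure of the minority set, which is exactly the role of the logarithmic test function.
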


The key estimate for the most difficult case, in which $\theta$ is small, is proven in Lemma \ref{lem:thetasmallbranching} below.
 A proof of similar statements has been provided by two of the authors in \cite{conti-zwicknagl:16} in the context of simplified scalar-valued models for austenite/martensite  interfaces and crystal plasticity. Our proof follows the general {strategy} of {the} proof and builds on techniques from there with two main differences: {First,} the vectorial structure requires more refined arguments; and second, the isotropic elastic modulus allows for more flexibility which is treated differently than in the corresponding model for dislocation microstructures.\\
The  vector-valued setting including a symmetrized gradient requires more careful slicing techniques than in \cite{conti-zwicknagl:16}. 
We follow a $BD$-type {approach} and consider diagonal slices instead of nearly vertical slices. 
An intuitive choice of the direction of slices would be  $(1,1)$. 
However, this would lead to problems in the case $L=1/2$. {Indeed, in this case $\Omega_{2L}=(0,1)^2$ contains only a single segment parallel to $(1,1)$ which connects the bottom and the top boundaries, and it would not be possible to choose a {`typical'} slice (in the sense of Fubini's theorem).} {Note that the energy controls only the symmetric part of the gradient of $u$, and hence we rely on $BD$-type slicing results which hold along diagonal slices but not on vertical ones.} 
For {these reasons}, we fixed the direction of the slices as 
\begin{equation*}
 \xi=(\frac14,1).
\end{equation*}

We first treat the simpler case in which $\theta$ is bounded away from zero. In this case we use a different proof which uses ideas that were introduced in \cite{chan-conti:14-1} in the geometrically nonlinear {setting} and were refined in the linear setting in \cite{diermeier:13,melching:15}. 
All these works treat only the case  $\theta=\frac 12$, but the argument can  easily {be} extended to the case $\theta \in [m_1, \frac 12 ]$. We start with this argument, which is simpler and does not require much preparation.

\begin{lmm}[The case of large $\theta$] \label{lem:thetalargebranching}
 Let $m_1 \in(0,1/2]${.} There exists $c>0$ depending only on $m_1$ such that for all $u\in \mathcal{X}$, 
  $\eps>0$,  $\ell>0$, $\mu>0$, $\theta>0$ 
which obey
  \begin{equation*}
1\le \ell\le 2L\,,\qquad 
  \eps\ell^2\le 1\,,\qquad
  \eps\ell\le \mu \,,\qquad 
  \text{and}\qquad  
  m_1\le \theta\le \frac12 
  \end{equation*}
 we have
 \[
  I_{\tilde{\Omega}_\ell}(u) \geq c\min 
  \Big{\{}
   \varepsilon^{2/3}{\ell^{1/3}}, {\mu^{1/2}\varepsilon^{1/2} \ell^{1/2} }
  \Big{\}}.
 \]
\end{lmm}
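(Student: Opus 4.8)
The plan is to adapt the localization-plus-slicing argument of \cite{chan-conti:14-1,diermeier:13,melching:15} to the geometrically linear functional $I_{\tilde\Omega_\ell}$, exploiting that for $\theta\in[m_1,1/2]$ the minority and majority slopes $\theta$ and $\theta-1$ are separated by a fixed gap and there are no logarithmic subtleties. First I would set up a dyadic covering of the strip $(0,\ell)\times(0,1)$ by rectangles $R_{j,k}$ of horizontal width $w_j\sim 2^{-j}\ell$ (for $0\le j\le J$ with $2^{-J}\ell\sim \ell_{\min}$ to be fixed) and appropriate vertical subdivisions, and prove a local rigidity estimate: on any such rectangle either the slice function $u^\xi_{x_1}$ is (up to small $L^\infty$ error) affine with slope in $\{\theta,\theta-1\}$ on the whole slice — so the rectangle ``carries a single variant'' — or the local energy $I_{R_{j,k}}(u)$ is bounded below by a fixed fraction of $\min\{\eps w_j,\theta^2 w_j\}=\eps w_j$ (using $\eps\le 1\le \theta^{-2}$ to absorb constants, exactly as in Lemma \ref{lem:gammawuerfel}(\ref{lem:gammawuerfelin})–(\ref{lem:gammawuerfelest})). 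This uses the slicing inequality \eqref{eq:estduxi} together with the fact that the singular-perturbation term controls $|D^2u^\xi_{x_1}|$ on slices via Fubini.

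Next I would run the standard branching dichotomy. Fix a ``good'' slice $x_1^\ast$ near the right edge $x_1=\ell$ (chosen by Fubini so that its one-dimensional energy is controlled by $\ell^{-1}I_{\tilde\Omega_\ell}(u)$). If along this slice $u^\xi_{x_1^\ast}$ is far from every affine-variant profile, the local estimate already gives $I_{\tilde\Omega_\ell}(u)\gtrsim \eps\ell\gtrsim \varepsilon^{2/3}\ell^{1/3}$ (since $\eps\ell^2\le1$) and we are done. Otherwise the slice is essentially a single laminate with $n$ interfaces for some $n\ge1$; then at scale $j$ the number of ``refinement'' cells needed to interpolate down to the austenite-compatible boundary behaviour at the top/bottom forces, by a Poincaré-type estimate on each cell à la Lemma \ref{lem:interpolSC}, a lower bound of the familiar Kohn–Müller form
\[
  I_{\tilde\Omega_\ell}(u)\;\gtrsim\;\sum_{j} \Big( \eps\, n_j\, \ell + \frac{\theta^2 \ell}{n_j^2 \ell^2}\cdot \ell\Big),
\]
where $n_j$ is the number of interfaces at generation $j$; optimizing the geometric sums over the number of branching generations and over $n_j$ yields the term $\varepsilon^{2/3}\theta^{2/3}\ell^{1/3}\sim \varepsilon^{2/3}\ell^{1/3}$ (here $\theta\sim1$). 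The competing possibility, namely that the configuration avoids refinement by paying elastic energy in the austenite $\R^2\setminus\Omega_{2L}$ near the lateral boundary, is handled exactly by Lemma \ref{lem:gammawuerfel}(\ref{lem:gammawuerfelout}) and the polygonal-arc estimate: a slice which is a single laminate but whose top/bottom traces differ by $\sim\theta$ contributes $\mu\theta^2/(1+x_1)$, and integrating over an interval of good slices of length comparable to $\ell$ gives the $\mu^{1/2}\varepsilon^{1/2}\ell^{1/2}$ branch after balancing against $\eps\ell$ (using the hypothesis $\eps\ell\le\mu$). Taking the minimum over the two mutually exclusive scenarios gives the claimed bound.

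The main obstacle, as in \cite{conti-zwicknagl:16}, is the passage from control of the symmetrized gradient to a usable one-dimensional rigidity statement: vertical slices are useless (as the remark after \eqref{eq:estduxi} notes), so all the branching bookkeeping has to be done on the $\xi=(\tfrac14,1)$ slices, and one must carefully track how a branching tree aligned with the $e_1$/$e_2$ axes interacts with slightly tilted slices — in particular verifying that a genuine branching refinement still produces $\gtrsim \eps w_j$ per cell on tilted slices and that the interpolation/Poincaré estimates (Lemma \ref{lem:interpolSC}) survive the tilt with only harmless constants depending on $m_1$. Since $\theta$ is bounded below, the delicate $\theta$-dependent geometry of the minority set that forces the logarithmic corrections in the general case is absent, so this is the only real subtlety; everything else is a routine rescaling of the arguments already developed in the cited references, with $\eps\ell^2\le1$ and $\eps\ell\le\mu$ used precisely to guarantee that at least one branching generation and at least one interface are energetically favourable.
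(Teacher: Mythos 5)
Your overall architecture (a Kohn--M\"uller-type branching dichotomy over a dyadic covering, run on the tilted slices) is not the paper's argument, and as sketched it has two genuine gaps. First, the ``bad slice'' branch is logically broken: from $\eps\ell^2\le 1$ one gets $\eps\ell=\eps^{2/3}\ell^{1/3}(\eps\ell^2)^{1/3}\le \eps^{2/3}\ell^{1/3}$, i.e.\ the inequality goes the \emph{opposite} way to what you claim, so concluding $I_{\tilde\Omega_\ell}(u)\gtrsim \eps\ell$ in that case is strictly weaker than the statement and cannot close the proof (indeed in this regime the asserted lower bound exceeds $\eps\ell$, so any case of your dichotomy that only yields $\eps\ell$ leaves the lemma unproven). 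The paper avoids this by localizing to a single square $Q$ of side $\rho\le 1$ chosen by Fubini: if the variant indicator on $Q$ is incompatible with small average shears, Poincar\'e on $Q$ together with the $BV$ term gives $I\gtrsim \min\{\ell,\eps\ell\rho^{-1}\}$, while if $|\langle\partial_2u_1\rangle_Q|$ or $|\langle\partial_1u_2\rangle_Q|$ is of order $m_1$ the affine behaviour is transported to the boundary \emph{componentwise} (using only $\partial_1u_1$, resp.\ $\partial_2u_2$) and penalized through Lemma \ref{lem:interpolSC}, producing $I\gtrsim\min\{\ell^{-1}\rho^2,\mu\rho\}$; optimizing $\rho$ in \eqref{eq:lbfinaltest} then yields exactly $\min\{\eps^{2/3}\ell^{1/3},\mu^{1/2}\eps^{1/2}\ell^{1/2}\}$. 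Your scheme has no such free localization scale, so the $\eps^{2/3}\ell^{1/3}$ term must come entirely from the branching bookkeeping, which is the second problem.

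Second, the Kohn--M\"uller summation you assert is not a routine adaptation in this vectorial setting: propagating the sliced quantity $u^\xi_{x_1}(s)=(4u_1+u_2)((x_1,0)+s\xi)$ horizontally between parallel slices (which is what counting interfaces across dyadic generations requires) needs control of $\partial_1(4u_1+u_2)=4\partial_1u_1+\partial_1u_2$, and $\partial_1u_2$ is \emph{not} controlled by the energy (only the combination $e_{12}(u)$ is, and $\partial_2u_1$ may cancel it). This is precisely why the paper's proof, and the references \cite{chan-conti:14-1,diermeier:13,melching:15} you cite, do not run a branching-tree lower bound at all but the single-square localization just described. Relatedly, your $\mu^{1/2}\eps^{1/2}\ell^{1/2}$ branch via Lemma \ref{lem:gammawuerfel}(\ref{lem:gammawuerfelout}) only gives $\mu\theta^2\ln(3+\ell)$ (which, to be fair, would suffice since $\eps\ell\le\mu$ implies $\mu\ge\mu^{1/2}\eps^{1/2}\ell^{1/2}$), but its premise --- that a definite fraction of slices have $O(\theta)$ trace mismatch whenever the interior is laminate-like --- is exactly what a branching competitor violates, so this case cannot be separated from the unproven branching estimate. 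In short, the key mechanism producing both scalings is missing, and the route chosen runs into an obstruction (no control of $\partial_1u_2$, $\partial_2u_1$ individually) that the sketch does not resolve.
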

\begin{figure}
 \begin{center}
  \includegraphics[width=8cm]{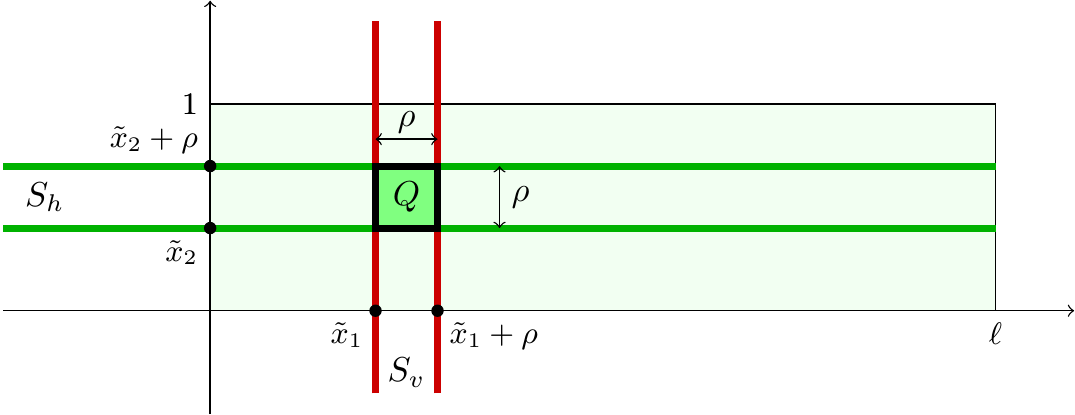}
 \end{center}
\caption{Sketch of the geometry in the proof of Lemma \ref{lem:thetalargebranching}. The horizontal and vertical stripes $S_h$ and $S_v$, as well as their intersection $Q$,  are emphasized.
\label{figlemmamm1}}
\end{figure}

\begin{proof}
Let $\sigma:\Omega_\ell\rightarrow \{\theta,-1+\theta\}$ be the function that indicates which variant is locally {attained,} i.e., such that $\min\{|e(u)-\theta e_1 \odot e_2|,|e(u)+(1-\theta)e_1\odot e_2|\} = |e(u)-\sigma e_1 \odot e_2|$ almost everywhere.
We fix $\rho \in (0,1]$, 
and choose $\tilde x_1\in [0,\ell-\rho]$, $\tilde x_2\in [0,1-\rho]$ such that the 
 infinite horizontal strip $S_h:=(-\infty,\ell)\times(\tilde{x}_2,\tilde{x}_2+\rho)$, and {the }infinite vertical strip $S_v:=(\tilde{x}_1,\tilde{x}_1+\rho)\times \R$  obey, 
 recalling the definition of $\tilde{\Omega}_{\ell}$ in \eqref{eq:tildeOmegal},
\begin{eqnarray}\label{eq:choiceQ}
I_Q(u)\leq c\ell^{-1}\rho^2 I_{\tilde{\Omega}_\ell}(u), \quad I_{S_v}(u) \leq c \ell^{-1}\rho I_{\tilde{\Omega}_\ell}(u),  \text{\ and\ } I_{S_h}(u) \leq c\rho I_{\tilde{\Omega}_\ell}(u),
\end{eqnarray}
{ where} $Q:=S_h\cap S_v\subset \Omega_\ell$ (see Figure \ref{figlemmamm1}).
By Poincar\'e's inequality we have that 
\begin{eqnarray}\label{eq:Pbigtheta}
 \|\partial_2 u_1-\langle \partial_2 u_1\rangle_Q \|_{L^1(Q)}\leq \rho |D^2u_1|(Q) \quad\text{  and } \quad\|\partial_1 u_2-\langle \partial_1 u_2\rangle_Q \|_{L^1(Q)}\leq \rho |D^2u_2|(Q)
\end{eqnarray}
where $\langle f\rangle_Q$ denotes the average of $f$ over $Q$.
Combined with H\"older's inequality, we obtain
\begin{eqnarray*}
 \|\sigma-\langle \partial_2u_1\rangle_Q -\langle \partial_1u_2\rangle_Q\|_{L^1(Q)} &\leq&  \|\sigma-\left( \partial_2u_1+\partial_1u_2\right)\|_{L^1(Q)}+ \|\partial_2u_1-\langle \partial_2u_1\rangle_Q \|_{L^1(Q)}\\
 &&+ \|\partial_1u_2-\langle \partial_1u_2\rangle_Q \|_{L^1(Q)}\\
&\leq& c\left(\rho I^{1/2}_Q(u)+\rho \varepsilon^{-1}I_Q(u)\right) \leq c\left(\rho^2\ell^{-1/2} I_{\tilde{\Omega}_\ell}^{1/2}(u)+\rho^3 \varepsilon^{-1}\ell^{-1}I_{\tilde{\Omega}_\ell}(u)\right).
\end{eqnarray*}
 If $\calL^2(\{\sigma=\theta\}\cap Q)\geq \frac 12\rho^2$, then
\[\frac{1}{2}\rho^2\left|\theta- \langle \partial_2u_1\rangle_Q-\langle \partial_1u_2\rangle_Q\right|\leq \left\|\sigma-\langle \partial_2u_1\rangle_Q -\langle \partial_1u_2\rangle_Q\right\|_{L^1(Q)}. \]
Otherwise $\calL^2(\{\sigma=\theta-1\}\cap Q)\geq \frac 12\rho^2$ and hence
\[\frac{1}{2}\rho^2|(\theta-1)- \langle \partial_2u_1\rangle_Q-\langle \partial_1u_2\rangle_Q|\leq \|\sigma-\langle \partial_2u_1\rangle_Q -\langle \partial_1u_2\rangle_Q\|_{L^1(Q)}. \]
Hence if $|\langle \partial_2u_1\rangle_Q|\leq m_1/4$ and $|\langle \partial_1u_2\rangle_Q|\leq m_1/4$, then {since $m_1\leq \theta$, $|\theta-\langle\partial_2u_1\rangle_Q-\langle\partial_1u_2\rangle_Q|\geq\theta/2$ and $|\theta-1-\langle\partial_2u_1\rangle_Q-\langle\partial_1u_2\rangle_Q|\geq\theta/2$, we deduce}
\[
 m_1 \rho^2 \leq c\left(\rho^2 \ell^{-1/2} I_{\tilde{\Omega}_\ell}^{1/2}(u)+\rho^3 \varepsilon^{-1}\ell^{-1}I_{\tilde{\Omega}_\ell}(u)\right),
\]
which implies that
\begin{align}\label{eq:lbfirstest}
 I_{\tilde{\Omega}_\ell}(u)\geq c\min\{\ell,\varepsilon \ell\rho^{-1}\}.
\end{align}
This estimate will be considered as one part of  (\ref{eq:lbfinaltest}) below. \\
It remains to consider the other cases.
If $|\langle \partial_2u_1\rangle_Q| > m_1/4$, we set  $a:= \langle \partial_2u_1\rangle_Q$ and $b:= \langle u_1 -{a} x_2\rangle_Q$. Then by Poincar\'e's and H\"older's inequalities {(see also \eqref{eq:Pbigtheta})}
\begin{align*}
 \|u_1 - a x_2 -b\|_{L^1(Q)}\leq \rho\|\nabla(u_1-\langle\partial_2u_1\rangle_Qx_2)\|_{L^1(Q)}
 &\leq \rho \|\partial_1u_1\|_{L^1(Q)} + \rho \|\partial_2u_1 -\langle \partial_2u_1 \rangle_Q\|_{L^1(Q)}\\
 &\leq \rho^2 I^{1/2}_Q(u) +\rho^2 \varepsilon^{-1} I_Q(u).
\end{align*}
By Fubini's theorem, there is $x_1^\ast\in(\tilde{x}_1,\tilde{x}_1+\rho)$ such that 
\[\int_{\tilde{x}_2}^{\tilde{x}_2+\rho}|u_1(x_1^\ast,x_2)-ax_2-b|\text{ d}x_2\leq   \rho I^{1/2}_Q(u) +\rho \varepsilon^{-1} I_Q(u)\]
and $u_1(x_1^*,\cdot)$ is the trace of $u_1$ on $\{x_1=x_1^*\}$.
We use the fundamental theorem to transfer this information to a corresponding slice on the boundary. 
Precisely, we estimate
\begin{align*}
&{
\int_{\tilde{x}_2}^{\tilde{x}_2+\rho}|u_1(0,x_2)-ax_2-b|\text{ d}x_2 }\\
&\leq 
  \int_{\tilde{x}_2}^{\tilde{x}_2+\rho}|u_1(x_1^\ast,x_2)-ax_2-b|\text{ d}x_2+\int_{\tilde{x}_2}^{\tilde{x}_2+\rho}\int_0^{x_1^\ast}|\partial_1u_1|(x_1,x_2)\text{ d}x_1 \text{ d}x_2
  \\
   &\leq c\left( \rho I^{1/2}_Q(u) +\rho \varepsilon^{-1} I_Q(u) + \ell^{1/2} \rho^{1/2} I^{1/2}_{S_h}(u)\right)\\
   &\leq c\left(  \rho^2 \ell^{-1/2} I_{\tilde{\Omega}_\ell}^{1/2}(u) + \rho^3\varepsilon^{-1}\ell^{-1} I_{\tilde{\Omega}_\ell}(u) + \rho \ell^{1/2} I_{\tilde{\Omega}_\ell}^{1/2}(u) \right) .
\end{align*}
Since $|a|\geq m_1/4$, we get by Lemma \ref{lem:interpolSC} applied to $u_1(0,\tilde x_2+\cdot)$ that 
\begin{align}\nonumber
m_1 \rho^2 
\leq c\left( \rho^2 \ell^{-1/2} I_{\tilde{\Omega}_\ell}^{1/2}(u) + \rho^3\varepsilon^{-1}\ell^{-1} I_{\tilde{\Omega}_\ell}(u) + \rho \ell^{1/2} I_{\tilde{\Omega}_\ell}^{1/2}(u)  +{m_1^{-1}}\mu^{-1}  I_{S_h}(u)\right)\\
\leq c\left( \rho^2 \ell^{-1/2} I_{\tilde{\Omega}_\ell}^{1/2}(u) + \rho^3\varepsilon^{-1}\ell^{-1} I_{\tilde{\Omega}_\ell}(u) + \rho \ell^{1/2} I_{\tilde{\Omega}_\ell}^{1/2}(u)  +\mu^{-1} \rho I_{\tilde{\Omega}_\ell}(u)\right), \label{eq:lbsecondtest}
\end{align}
where in the last step we subsumed $m_1$ in the constant $c$.
Finally, if $|\langle\partial_1u_2\rangle_Q|> m_1/4$, we proceed analogously, interchanging the indices $1$ and $2$, and obtain the estimate  
\begin{eqnarray*}
m_1\rho^2&\leq& c\left(\int_{\tilde{x}_1}^{\tilde{x}_1+\rho} |u_2(x_1,x_2^\ast)-\tilde{a}x_1-\tilde{b}|\text{ d}x_1+\int_{\tilde{x}_1}^{\tilde{x}_1+\rho}\int_0^{x_2^\ast}|\partial_2u_2|\text{ d}x + \mu^{-1}[u(\cdot ,0)]_{H^{1/2}(\tilde {x}_1,\tilde{x}_1+\rho)}^2\right)\\
&\leq& c\left(\rho^2\ell^{-1/2}I_{\tilde{\Omega}_\ell}^{1/2}(u)+\rho^3\eps^{-1}\ell^{-1}I_{\tilde{\Omega}_\ell}(u)+\rho I_{\tilde{\Omega}_\ell}^{1/2}(u) +\mu^{-1} \ell^{-1} \rho I_{\tilde{\Omega}_\ell}(u)\right)
\end{eqnarray*}
with $\tilde{a}:= \langle \partial_1u_2\rangle_Q$ and $\tilde{b}:= \langle u_2 -
{\tilde a} x_1\rangle_Q$ and an appropriately chosen $x_2^\ast\in (\tilde{x}_2,\tilde{x}_2+\rho)$.

Putting this together with \eqref{eq:lbfirstest} and \eqref{eq:lbsecondtest}, we see that for any $\rho\in (0,1)$
\begin{align}\label{eq:lbfinaltest}
 I_{\tilde{\Omega}_\ell}(u) \geq c\min \{   \ell, \varepsilon {\ell }\rho^{-1}, \ell^{-1}\rho^2 ,\mu \rho  \}
= c\min \{    \varepsilon {\ell }\rho^{-1}, \ell^{-1}\rho^2 ,\mu \rho  \},
\end{align}
where we used that $\rho\le\ell$ implies $\ell^{-1}\rho^2\le\ell$.
\\
If $\mu \geq \varepsilon^{1/3} \ell^{-1/3}$ we choose $\rho= \eps^{1/3} \ell^{2/3}$ and conclude $I_{\tilde{\Omega}_\ell}(u) \geq c \varepsilon^{2/3}\ell^{1/3}  $ ($\rho\le 1$ since $\eps\ell^2\le1$).\\
If $\mu < \varepsilon^{1/3} \ell^{-1/3}$ we choose $\rho= \mu^{-1/2}\eps^{1/2} \ell^{1/2}$ and conclude $I_{\tilde{\Omega}_\ell}(u) \geq c \mu^{1/2} \varepsilon^{1/2}\ell^{1/2} $ since $\mu^{1/2} \varepsilon^{1/2}\ell^{1/2}\leq  \ell^{-1}\rho^2=\mu^{-1}\eps$ by the assumption on $\mu$
({note that} $\rho\leq 1$ since $\mu\leq \eps\ell$).
This concludes the proof of Lemma \ref{lem:thetalargebranching}.
\end{proof}

If $\theta$ is small then a more complex procedure is needed, in order to capture the various logarithmic divergences in the energy. Before presenting the main estimate in Lemma \ref{lem:thetasmallbranching}  we need a number of preliminary results, which characterize the behavior of low-energy functions close to the corners in $(0,0)$ and $(0,1)$ and on ``good'' slices. 

 We {begin} by proving estimate for the behavior of $u_1$ close to the corners $(0,0)$ and $(0,1)$, which captures the logarithmic divergence of the matrix energy, see (\ref{equ1diffc}).  
 
\begin{lmm}\label{lemmaystin}
 {For any $c_*\in(0,1]$ there is 
 $C=C(c_*)$ such that whenever ${1\leq \ell\le 2L}$, $m\in(0,\frac14]$ and
 \begin{equation*}
 0<\mu\le \theta\le m
 \end{equation*}
one of the following holds: either
 \begin{equation*}
  \frac1C I(u)\ge \min\left\{\mu\theta^2 \ln(3+\frac\theta\mu),\mu\theta^2 \ln\frac1m,\mu\theta^2\ln(3+\ell)
  \right\}
 \end{equation*}
or 
\begin{equation}\label{equ1diffc}
\int_m^{1} \frac{|(u_1(x_1,1)-u_1(x_1,0)) |}{x_1}\dx\le
c_*\theta\ln\frac{1}{m}.
\end{equation}
}
\end{lmm}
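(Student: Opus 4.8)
The plan is to argue by contradiction with the first (``good'') alternative: I assume that
$I(u) \le \delta \min\{\mu\theta^2\ln(3+\theta/\mu),\,\mu\theta^2\ln(1/m),\,\mu\theta^2\ln(3+\ell)\}$ for a small absolute constant $\delta=1/C$ to be fixed, and I try to deduce \eqref{equ1diffc}. The key object is the boundary trace difference $g(x_1):=u_1(x_1,1)-u_1(x_1,0)$ on $x_1\in(0,1)$. First I would fix, for each dyadic scale $x_1\sim 2^{-k}$ with $2^{-k}\ge m$, a ``good'' slice near that scale: by Fubini applied to the interior elastic energy restricted to a dyadic box $B_k:=(2^{-k-1},2^{-k})\times(0,1)$, most vertical-ish slices $\Delta^\xi_{t}$ with $t\sim 2^{-k}$ satisfy the smallness hypotheses of Lemma \ref{lem:gammawuerfel}(\ref{lem:gammawuerfelin}) — i.e. the martensite energy density on the slice is $\le\tilde C_1\theta^2$ and $|\partial_s\partial_s u^\xi_t|((0,1))\le\tilde C_1$ — \emph{unless} the energy in $B_k$ already exceeds $c\min\{\eps,\theta^2\}\cdot 2^{-k}$, which, summed over the $\sim\ln(1/m)$ scales and using $\eps\ell^2\le\theta^2$ hence $\eps\lesssim\theta^2$, would contradict the energy bound if $\delta$ is small enough (this is where $\mu\theta^2\ln(1/m)$ enters, via $\mu\le\theta\le 1$ so $\theta^2\gtrsim\mu\theta^2$; one has to be a little careful, but the point is the total ``bad-scale'' measure is a small fraction of $\ln(1/m)$). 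On such a good slice, Lemma \ref{lem:gammawuerfel}(\ref{lem:gammawuerfelC}) gives that the slice is in $\mathcal C$, so the endpoint values differ by $\approx\theta$; more precisely the trace $u_1$ along $\Delta^\xi_t$ is within $5\tilde C_1^{1/2}\theta$ of an affine function of slope $\theta$ or $\theta-1$, hence $|g(t')|\le c\theta$ for nearby $t'$ and, crucially, $g$ is essentially \emph{constant of size $\approx\theta$} across the box $B_k$ up to an error controlled by the matrix energy and second-derivative mass in $B_k$.

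Next I would turn this into the integral bound. Write $\int_m^1 |g(x_1)|/x_1\,dx_1 = \sum_k \int_{B_k\cap(m,1)} |g|/x_1\,dx_1$. On each scale, $\int_{2^{-k-1}}^{2^{-k}}|g(x_1)|/x_1\,dx_1 \le (\ln 2)\,\sup_{B_k}|g| \cdot(1+\text{error})$, and $\sup_{B_k}|g|\le \theta + (\text{local fluctuation of }g\text{ over }B_k)$. The local fluctuation is bounded, via the fundamental theorem of calculus in the $x_1$-direction applied to $\partial_1 u_1(\cdot,1)$ and $\partial_1 u_1(\cdot,0)$ (boundary derivatives, controlled by the austenite energy $E^\ext$ near the top and bottom edges) together with the interior $|D^2u_1|$-mass and the discrepancy between $\partial_2 u_1$ and $\theta$ on the good slice — all of these are $O(I(u)^{1/2})$ or $O(I(u)/\eps)$ type quantities on the box. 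The sum of the ``main terms'' is $(\ln 2)\theta\cdot\#\{k: 2^{-k}\ge m\} \le \theta\ln(1/m)$ up to an absolute constant; to actually reach the prefactor $c_*$ (which can be small!) I must absorb the factor $\ln 2$ and any other $O(1)$ constants — this is done by noting that the estimate is only claimed with a \emph{fixed} $c_*$ on the right and a matching $C(c_*)$ on the left, so after the dyadic sum I can simply demand $\delta=\delta(c_*)$ small and, if necessary, restrict to $x_1\ge m$ with $m$ small enough that the ``error'' sums are a small fraction of $\theta\ln(1/m)$; the borderline where $\ell$ is not much larger than $1$, or $\theta/\mu$ is small, is exactly where the $\min$ on the left (with the $\ln(3+\ell)$ and $\ln(3+\theta/\mu)$ terms) saves me, because then the claimed lower bound on $I(u)$ is weak and the first alternative is easy to fall into.

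The error terms involving the austenite energy deserve a separate word: the derivatives $\partial_1 u_1(\cdot,0)$ and $\partial_1 u_1(\cdot,1)$ are traces of $\nabla u$ on the bottom and top edges of $\Omega_{2L}$, and they are controlled by $E^\ext(u) = \mu\|\nabla u\|^2_{L^2(\R^2\setminus\Omega_{2L})}$ only in an integrated, $H^{-1/2}$-type sense; to extract a pointwise-in-$x_1$ fluctuation bound on a dyadic box I would instead use the $H^{1/2}$ machinery (Lemma \ref{lem:interpolH12} / Lemma \ref{lem:interpolSC}, or the polygonal-arc trick of Lemma \ref{lem:gammawuerfel}(\ref{lem:gammawuerfelout}) applied at scale $2^{-k}$) to bound $\int_{B_k}|\partial_1 g|\,dx_1$ by $c\,\mu^{-1/2}(I^\ext_{B_k}(u))^{1/2}\cdot 2^{-k/2}$ and then Cauchy–Schwarz over the $\le\ln(1/m)$ scales, giving total error $\lesssim \mu^{-1/2}I(u)^{1/2}(\ln(1/m))^{1/2}$; plugging in $I(u)\le\delta\mu\theta^2\ln(1/m)$ makes this $\le c\sqrt\delta\,\theta\ln(1/m)$, absorbable into $c_*\theta\ln(1/m)$. \textbf{The main obstacle} I anticipate is precisely this: controlling the $x_1$-fluctuation of the boundary trace difference $g$ across each dyadic annulus while keeping every constant either absolute or absorbable into $\sqrt\delta$ — the vectorial/$BD$ nature means $\partial_1 u_1$ near the edges is not directly in $L^2$ with a good constant, so one must route the estimate through the half-space Korn/trace inequalities and the polygonal-path construction, and one must check that the geometric (dyadic) sum of these square-root errors genuinely closes rather than producing an extra $\ln(1/m)$ power. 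Everything else — the Fubini selection of good slices, the use of Lemma \ref{lem:gammawuerfel} to pin the slope to $\{\theta,\theta-1\}$, and the final dyadic summation — is routine given the lemmata already proved.
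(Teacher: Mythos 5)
There is a genuine gap, and it sits exactly where you flagged your ``main obstacle'', but it is worse than a constant-chasing issue. After your dyadic summation the main term is of size $\theta\ln\frac1m$ with an absolute constant of order one, because on a $\mathcal{C}$-slice the endpoint difference is $\approx\theta$ (Lemma \ref{lem:gammawuerfel}(\ref{lem:gammawuerfelC})), while \eqref{equ1diffc} must be proved with the given constant $c_*$, which may be arbitrarily small. Shrinking $\delta=1/C$ or invoking smallness of $m$ only shrinks your error terms, never this main term, so your dichotomy cannot close. Moreover, the route through $\mathcal{C}$-slices points in the wrong direction: if most slices over $(m,1)$ were in $\mathcal{C}$, then $g\approx\theta$ there, \eqref{equ1diffc} genuinely fails for small $c_*$, and the lemma then demands $I(u)\gtrsim\min\{\mu\theta^2\ln(3+\frac\theta\mu),\mu\theta^2\ln\frac1m,\mu\theta^2\ln(3+\ell)\}$ --- an amount of energy your argument has no mechanism to produce: the exterior cost of a $\mathcal C$-slice at $x_1\in(m,1)$ is only $\sim\mu\theta^2/(1+x_1)$ (the path has length $\sim 1+x_1$, not $x_1$), so summing over $(m,1)$ yields $O(\mu\theta^2)$, not $\mu\theta^2\ln\frac1m$; the logarithms must come from scales $x_1$ up to $q\le\ell$. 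Finally, your Fubini selection cannot force slices into $\mathcal{C}$ in the first place: the lemma assumes no lower bound on $\eps$ (indeed $\eps$ does not appear in the statement, and $\eps\ell^2\le\theta^2$ is not among its hypotheses), so a slice failing the hypotheses of Lemma \ref{lem:gammawuerfel}(\ref{lem:gammawuerfelin}) costs only $\min\{\eps,\theta^2\}$ per unit length, which the assumed energy budget covers comfortably when $\eps$ is tiny.

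The missing idea is a dichotomy not on slices but on the vertical variation of $u_1$ near the left edge. The paper fixes $q\in[1,\ell]$, assumes $I(u)\lesssim c_*^3\theta^3/q$ (otherwise a $\theta^3/q$ alternative is recorded), and chooses two horizontal lines $x_2=\delta$ and $x_2=1-\delta$ with $\delta\lesssim c_*\theta$ along which $\int_0^\ell|\partial_1u_1|^2\lesssim c_*^2\theta^2/q$, so that $u_1(\cdot,\delta)$ and $u_1(\cdot,1-\delta)$ stay within $\frac14c_*\theta$ of the constants $u_1^-:=u_1(0,\delta)$, $u_1^+:=u_1(0,1-\delta)$ on $[0,q]$; short corner paths then transport $u_1(x_1,0)$ and $u_1(x_1,1)$ onto these lines, with errors which, after dividing by $x_1$, integrating over $(a,b)$ and using Cauchy--Schwarz and Fubini, total $\lesssim\delta^{1/2}\mu^{-1/2}a^{-1/2}I^{1/2}(u)$. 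If $|u_1^+-u_1^-|\le\frac12c_*\theta$, this gives \eqref{equ1diffc} (or else $I(u)\gtrsim\mu\theta^2\ln\frac1m$); if not, exterior polygonal loops around the rectangle at scales $x_1\in(\frac18,q)$ force $I(u)\gtrsim\mu(c_*\theta)^2\ln(3+q)$. Optimizing $q$ against the bulk term $\theta^3/q$ (take $q\sim\theta/(\mu\ln(3+\frac\theta\mu))$ if this is $\le\ell$, else $q=\ell$) is precisely what generates the $\mu\theta^2\ln(3+\frac\theta\mu)$ and $\mu\theta^2\ln(3+\ell)$ alternatives. This small-versus-large dichotomy on $|u_1^+-u_1^-|$, the large-scale exterior paths, and the $q$-optimization are all absent from your plan, and without them the prefactor $c_*$ and the $\ln(3+\frac\theta\mu)$ term are out of reach.
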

 
 \begin{figure}
 \begin{center}
  \includegraphics[width=8cm]{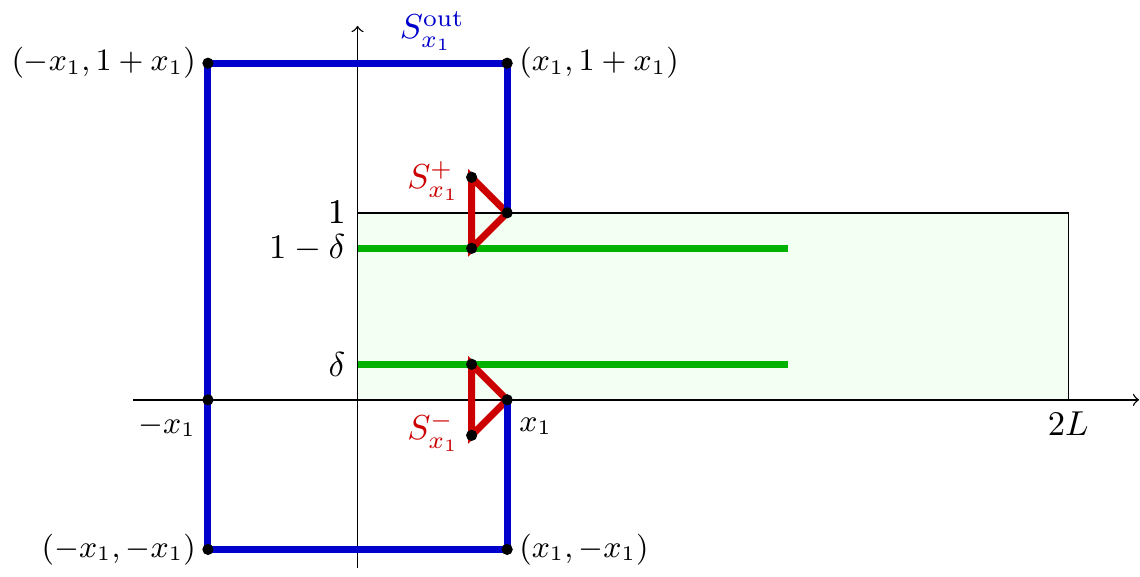}
 \end{center}
\caption{Sketch of the geometry in the proof of Lemma \ref{lemmaystin}. 
The two horizontal lines entering (\ref{equ1diffb}), at $x_2=\delta$ and $x_2=1-\delta$, are shown in green. The 
two polygonal lines $S_{x_1}^+$, $S_{x_1}^-$, 
used 
in (\ref{eqpolygonalminus}) and (\ref{eqpolygonalminus2}) are shown in red. The exterior polygonal line 
$S_{x_1}^\text{out}$ used to obtain (\ref{eqqvcp}) is shown in blue.
\label{figlh}
}
\end{figure}

\begin{proof}
We show below the following: 
either (\ref{equ1diffc}) holds or 
\begin{equation}\label{eqa5iquezq}
  \frac1c I(u)\ge \min\left\{\frac{\theta^3}q,\mu\theta^2 \ln(3+q),\mu\theta^2 \ln\frac1m
  \right\}
  \text{ for any $q\in [1,\ell]$.}
 \end{equation}
We first prove that this implies the assertion.
We let $q_*:=\frac{ 2\theta}{\mu \ln(3+\theta/\mu)}$, observe that $\mu\le\theta$ implies $q_*\ge 1$, and 
 distinguish two cases.  If $q_*\le \ell$ we set $q=q_*$. Then $\theta^3/q_\ast=\frac12\mu\theta^2\ln(3+\frac{\theta}{\mu})$. Further, observe that for any $t>0$ one has
$3+\frac{ 2t}{\ln(3+t)}\ge (3+t)^{1/2}$, which implies $\ln(3+\frac{ 2t}{\ln (3+t)})\ge \frac12\ln (3+t)$, 
and obtain
$ \mu\theta^2\ln(3+q_*)\ge \frac 12\mu\theta^2\ln(3+\frac\theta\mu)$, which concludes the proof.
If instead $q_*\ge \ell$ we set $q=\ell$ and observe that in this case
$\frac{\theta^3}{q}\ge \frac{\theta^3}{q_*}=\frac12\mu\theta^2\ln(3+\frac\theta\mu)$, 
which also concludes the proof.
Therefore it suffices to show that one of (\ref{equ1diffc})  and (\ref{eqa5iquezq}) holds.

\newcommand\ystar{\delta}
Fix $q\in [1,\ell]$.
{We can assume that
$I(u)\le 2^{-9} c_*^3 \frac{\theta^3}{q}$ (if not,  (\ref{eqa5iquezq})  holds and the proof is finished). Since $\|\partial_1 u_1\|_{L^2(\Omega_\ell)}^2\le I(u)$,
 we can choose $\ystar\in (0, \frac1{16}c_*\theta)$ such that
 $u_1(\cdot,\delta)$, $u_1(\cdot, 1-\delta)\in W^{1,2}((0,\ell))$ with
\begin{equation*}
 \int_0^{\ell} |\partial_1 u_1|^2(x_1,\ystar)+|\partial_1 u_1|^2(x_1,1-\ystar)\dx \le \frac1{32} c_*^2\frac{\theta^2}q.
\end{equation*}
This implies, setting
$\bar u_1^-:=u_1(0,\ystar)$ and 
$\bar u_1^+:=u_1(0,1-\ystar)$, 
\begin{equation}\label{equ1diffb}
 \left| u_1({\tilde{x_1}},\ystar)-\bar u_1^-\right|
 +\left| u_1({\tilde{x_1}},1-\ystar)-\bar u_1^+\right|
 \le \frac14c_* \theta \text{ for all } {\tilde{x_1}}\in [0,q].
\end{equation}
}

To shorten notation we define 
\begin{equation*}
 f(x):=
 \begin{cases}
    \min\{|e(u)(x)-\theta e_1\odot e_2|, |e(u)(x)-(\theta-1) e_1\odot e_2|\}, & \text{ if } x\in(0,\ell)\times(0,1),\\
         |\nabla u|(x), & \text{ otherwise.}
 \end{cases}
\end{equation*}
For $x_1\in (\ystar,q)$ we define $z:[0,1]\to\R$ by $z(s):=(u_1-u_2)(x_1-s,s)$. For almost every $x_1$ we can estimate, similar to  (\ref{eqestduxiv1})-(\ref{eq:estduxi}),
\begin{equation}\label{eqzprime}
 \begin{split}
 |z'(s)|=&|\partial_1 u_2+\partial_2 u_1-\partial_1 u_1-\partial_2 u_2 |(x_1-s,s)\\
 \le & \min_{\sigma\in\{\theta,\theta-1\}} |\sigma| + 
 |(e_{12}(u)+e_{21}(u)-\sigma)-(e_{11}(u)+e_{22}(u))|(x_1-s,s)\\
 \le & 1 + 2\min\left\{|e(u)(x_1-s,s)-\theta e_1\odot e_2|, |e(u)(x_1-s,s)-(\theta-1) e_1\odot e_2|\right\}
  \le 1+ 2f(x_1-s,s).
 \end{split}
\end{equation}
With multiple triangular inequalities,
\begin{equation}\label{eqpolygonalminus}
 \begin{split}
  |u_1(x_1,0)-u_1^-|\le& 
  |u_1(x_1,0)-u_1(x_1-\ystar,\ystar)|+|u_1(x_1-\ystar,\ystar)-u_1^-| \\
  \le& \
  |z(0)-z(\ystar)| + |u_2(x_1,0)-u_2(x_1-\ystar,\ystar)|+|u_1(x_1-\ystar,\ystar)-u_1^-|\\
  \le& 
  |z(0)-z(\ystar)|+ |u_2(x_1,0)-u_2(x_1-\ystar,-\ystar)|
  + |u_2(x_1-\ystar,-\ystar)-u_2(x_1-\ystar,\ystar)|\\ &+|u_1(x_1-\ystar,\ystar)-u_1^-|.
 \end{split}
\end{equation}
By the fundamental theorem of calculus, using (\ref{eqzprime}), $|\partial_2 u_2|(x)\le f(x)$ everywhere
and  $|\nabla u_2|(x)\le f(x)$ for $x_2\le 0$, 
\begin{equation}\label{eqpolygonalminus2}
 \begin{split}
  |u_1(x_1,0)-u_1^-|\le&
  |u_1(x_1-\ystar,\ystar)-u_1^-| +\delta+2 \int_{S_{x_1}^-} f \dcalH^1,
 \end{split}
\end{equation}
where  $S_{x_1}^-$ is the polygonal joining
$ (x_1,0), (x_1-\ystar,\ystar), (x_1-\ystar, -\ystar), (x_1,0)$
(see Figure \ref{figlh}). Repeating the computation on the other side with $\tilde z(s):=(u_1+u_2)(x_1-s,1-s)$ leads to
\begin{equation}
 \begin{split}
  |u_1(x_1,1)-u_1^+|\le&
  |u_1(x_1-\ystar,1-\ystar)-u_1^+| +\delta+2 \int_{S_{x_1}^+} f \dcalH^1,
 \end{split}
\end{equation}
where  $S_{x_1}^+$ is the polygonal joining
$ (x_1,1), (x_1-\ystar,1-\ystar), (x_1-\ystar, 1+\ystar), (x_1,1)$.
Adding the two, and recalling (\ref{equ1diffb}) {for $\tilde{x_1}=x_1-\delta$} yields, since $\calH^1(S_{x_1}^+)=(2+2\sqrt2)\delta\le 5\delta$,
and $2\delta\le \frac18 c_*\theta$,
\begin{equation*}
 \begin{split}
  |u_1(x_1,0)-u_1^-|+ |u_1(x_1,1)-u_1^+|\le&
  \frac14 c_*\theta +2\delta+ 2\int_{S_{x_1}^-\cup S_{x_1}^+} f \dcalH^1
\le  \frac38 c_*\theta + 8\ystar^{1/2} \left(\int_{S_{x_1}^-\cup S_{x_1}^+} f^2\dcalH^1\right)^{1/2}
 \end{split}
\end{equation*}
for almost every $x_1\in (\ystar,q)$.
We divide by $x_1$ and integrate over $x_1\in (a,b)$, for some $a,b$ with $\ystar\le a<b\le q$,
\begin{equation*}
 \begin{split}
\int_{a}^b  \frac{|u_1(x_1,0)-u_1^-|+ |u_1(x_1,1)-u_1^+|}{x_1} \dx \le&
  \frac38 c_*\theta \ln\frac{b}{a}+8\ystar^{1/2} \int_{a}^b\frac{1}{x_1} \left(\int_{S_{x_1}^-\cup S_{x_1}^+} f^2 \dcalH^1\right)^{1/2}\dx\\
  \le&
  \frac38 c_*\theta \ln\frac{b}{a} + 8\ystar^{1/2}\left(\int_a^b \frac{1}{x_1^2} \dx \right)^{1/2} \left(\int_a^b\int_{S_{x_1}^-\cup S_{x_1}^+}
f^2 \dcalH^1 \dx\right)^{1/2}.
 \end{split}
\end{equation*}
The first integral is controlled by $1/a$. For the second one we use Fubini's theorem,
\begin{equation*}
 \int_a^b\int_{S_{x_1}^-\cup S_{x_1}^+} f^2 \dcalH^1\dx\le 2 
 \int_{(0,b)\times(-\ystar,1+\ystar)}  f^2 \dxy \le 
  \max\{1,\mu^{-1}\} 
 2 I(u)= \frac{2 I(u)}{\mu},
\end{equation*}
since by assumption $\mu\le \theta\le 1$.
Therefore
\begin{equation}\label{eqabint}
\int_{a}^b  \frac{|u_1(x_1,0)-u_1^-|+ |u_1(x_1,1)-u_1^+|}{x_1} \dx \le
  \frac38 c_*\theta \ln\frac{b}{a} + \frac{12\ystar^{1/2}I(u)^{1/2}}{a^{1/2} \mu^{1/2} } 
  \text{ whenever $\ystar\le a<b\le q$.} 
\end{equation}

We first use (\ref{eqabint}) with $a=m$, $b=1$. This gives, recalling $\ystar\le \theta\le m$, 
\begin{equation*}
 \begin{split}
\int_{m}^1  \frac{|u_1(x_1,0)-u_1^-|+ |u_1(x_1,1)-u_1^+|}{x_1} \dx \le&
 \frac38 c_*\theta \ln\frac{1}{m} + \frac{12}{ \mu^{1/2} } I(u)^{1/2}.
 \end{split}
\end{equation*}
If the second term is larger than $\frac18 c_*\theta \ln\frac{1}{m}$
then $I(u)\ge c \mu\theta^2\ln \frac{1}{m}$, (\ref{eqa5iquezq}) holds and we are done. Otherwise
the right-hand side is not larger than $ \frac12 c_*\theta \ln\frac{1}{m}$, so that
\begin{equation*}
 \begin{split}
\int_{m}^1  \frac{|u_1(x_1,0)-u_1(x_1,1)|}{x_1} \dx \le&
|u_1^--u_1^+|\ln\frac1m +
  \frac12 c_*\theta \ln\frac{1}{m} .
 \end{split}
\end{equation*}
If 
\begin{equation}\label{equ1pm}
|u_1^--u_1^+| \le
  \frac12 c_*\theta
\end{equation}
then (\ref{equ1diffc}) holds and we are done. 

It remains to consider the case that 
(\ref{equ1pm}) does not hold.
For $x_1\in (0,\ell)$ we let $S_{x_1}^\text{out}:=(\partial ((-x_1,x_1)\times(-x_1,1+x_1))) \setminus (0,L)\times(0,1)$ (see Figure \ref{figlh}). Then
\begin{equation*}
  |u_1(x_1,0)-u_1(x_1,1)|\le \int_{S_{x_1}^\text{out}} |\nabla u_1|\dcalH^1
\le \int_{S_{x_1}^\text{out}} f \dcalH^1
\le \left(\calH^1(S_{x_1}^\text{out})\right)^{1/2} \left(\int_{S_{x_1}^\text{out}} f^2 \dcalH^1\right)^{1/2}.
\end{equation*}
Therefore
\begin{equation}\label{eqqvcp}
\begin{split}
\int_{1/8}^q\frac{  |u_1(x_1,0)-u_1(x_1,1)|}{x_1}\dx \le &\int_{1/8}^q\frac{ \left(\calH^1(S_{x_1}^\text{out})\right)^{1/2}}{x_1}  \left(\int_{S_{x_1}^\text{out}} f^2 \dcalH^1\right)^{1/2}\dx\\
\le& \left( \int_{1/8}^q\frac{\calH^1(S_{x_1}^\text{out})}{x_1^2} \dx\right)^{1/2}  \left(\int_0^q\int_{S_{x_1}^\text{out}} f^2 \dcalH^1\dx\right)^{1/2}\\
\le& c \mu^{-1/2}  \ln^{1/2}(8q) I^{1/2}(u),
\end{split}
\end{equation}
where in the last step we used that $\calH^1(S_{x_1}^\text{out})=8x_1+1\le 16x_1$ for $x_1\ge \frac18$.
We use  (\ref{eqabint}) with $a=\frac18$, $b=q$, and obtain
\begin{equation*}
 \begin{split}
\int_{1/8}^q  \frac{|u_1(x_1,0)-u_1^-|+ |u_1(x_1,1)-u_1^+|}{x_1} \dx \le
\frac38 c_*\theta \ln(8q) + \frac{c\ystar^{1/2}}{\mu^{1/2} } I^{1/2}(u) 
\le
\frac38 c_*\theta \ln(8q) + 
\frac{c  \ln^{1/2}(8q) I^{1/2}(u) }{\mu^{1/2}}
    \end{split}
\end{equation*}
where we used $\ystar\le 1/8$ and $q\ge 1$.
Combining with 
(\ref{eqqvcp}) and $\delta\le \frac18$ yields
\begin{equation*}
|u_1^--u_1^+|\ln (8q) 
=\int_{1/8}^q  \frac{|u_1^--u_1^+|}{x_1} \dx 
\le 
  \frac38 c_*\theta \ln(8q) +
  \frac{c  \ln^{1/2}(8q) I^{1/2}(u) }{\mu^{1/2}}. 
  \end{equation*}
Since (\ref{equ1pm}) does not hold, we have
$ I(u)\ge c  \mu\theta^2\ln(8q)\ge 
 c\mu\theta^2\ln(3+q)$
since $q\ge 1$. Therefore (\ref{eqa5iquezq}) holds and the proof is concluded also in this case.
\end{proof}

The next Lemma proves that $u_2$ is, up to a small exceptional set, very well controlled by the energy. In particular, it is significantly smaller than $\theta$, in different measures. In this Section (Lemma  \ref{lem:thetasmallbranching}) we shall use
(\ref{lem:costsu2inA46l32}). Since the proofs are naturally connected, to avoid repetition we present here also the proof of two estimates that will be used in the next Section, specifically, (\ref{lem:costsu2inA3}) in Lemma \ref{lemmainterpolationestim}
and (\ref{lem:costsu2inA46bdryln}) in  Lemma \ref{lemmabdryln}.

\newcommand\Gstar{{\mathcal G}}
\begin{lmm}[Local estimates for $u_2$]\label{lem:costsu2inA}
For all $\bar C>0$ there {exist} $C=C(\bar C)>0$
such that for any $\delta\in(0,1]$ and any $u \in W^{1,2}_{\loc}(\R^2,\R^2)$ 
there is a set $\Gstar{\subseteq[0,L-\xi_1}]$ such that
\begin{equation*}
 I(u)\ge C\min\{\mu,1\} \theta^2\calL^1(\Gstar)
\end{equation*}
and
\begin{enumerate}
\item\label{lem:costsu2inA46l32}{
for any $m\in(0, 1/4]$ and any $x_1\in  [0,L-\xi_1]\setminus \Gstar$, one has 
\begin{equation*}
\int_m^{1} \frac{|u_2(x_1-s\xi_1,1)-u_2(x_1+(1+s)\xi_1,1)|}{s} \ds\le 3\bar C\theta \ln \frac1m,
\end{equation*}
}
\item\label{lem:costsu2inA46bdryln}{
for any $m\in(0, 1/4]$ and any $x_1\in  [0,L-\xi_1]\setminus \Gstar$, one has 
\begin{equation*}
\int_m^{1} \frac{|u_2(x_1+s\xi_1,s)-u_2(x_1+(1-s)\xi_1,1-s)|}{s} \ds\le 6\bar C\theta \ln\frac1m,
\end{equation*}
}
\item\label{lem:costsu2inA3}
for any $x_1\in [0,L-\xi_1]\setminus \Gstar$  and any $\delta\in(0,1]$ one has 
\begin{equation*}
{\frac1\delta\int_{(0,\delta)} |u_2(x_1+{(1-s)\xi_1},1-s)-u_2(x_1+s\xi_1,s)|\ds \le  5\bar C\theta}.
\end{equation*}
\end{enumerate}
\end{lmm}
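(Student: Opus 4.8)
The plan is to carve out a single exceptional set $\Gstar$ by a Chebyshev estimate applied to a suitable averaged energy density, and then to prove all three bounds for every base point $x_1\in[0,L-\xi_1]\setminus\Gstar$. Concretely I would let $\mathcal E(x_1)$ be the sum of three terms: the $(\partial_2u_2)^2$-energy integrated over the vertical strip $\{x_1<X_1<x_1+\xi_1\}\times(0,1)$ and divided by $\xi_1$; the martensitic energy density of $u$ integrated over the slice $\Delta^\xi_{x_1}$; and $\mu\sum_{j\ge0}2^{-j}\int_{T_j(x_1)}|\nabla u|^2\,\mathrm d\calL^2$, where $T_j(x_1)\subset\R^2\setminus\Omega_{2L}$ is the scale-$2^{-j}$ neighbourhood of the two corners $(x_1,0)$, $(x_1+\xi_1,1)$ of $\Delta^\xi_{x_1}$ together with an exterior polygonal arc joining them, of the type appearing in Lemma \ref{lem:gammawuerfel}. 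The strips have $x_1$-length $\xi_1$, and for each fixed $j$ the sets $T_j(x_1)$ cover $\R^2\setminus\Omega_{2L}$ with bounded multiplicity, so the factor $2^{-j}$ makes the $j$-series summable; Fubini (together with \eqref{eq:korn}) then gives $\int_0^{L-\xi_1}\mathcal E(x_1)\,\mathrm dx_1\le c\,I(u)$. Setting $\Gstar:=\{x_1:\mathcal E(x_1)>c_0\bar C^2\min\{\mu,1\}\theta^2\}$ and using Chebyshev yields $I(u)\ge c\,\bar C^2\min\{\mu,1\}\theta^2\,\calL^1(\Gstar)$, which is the first assertion, with $C(\bar C)\sim\bar C^2$.

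Now fix $x_1\notin\Gstar$. From the strip term one controls $u_2$ along every vertical segment $\{x_1+s\xi_1\}\times(a,b)$ by $(b-a)^{1/2}\big(\int_a^b(\partial_2u_2)^2(x_1+s\xi_1,\cdot)\big)^{1/2}$, so $u_2$ at the slice point $(x_1,0)+s\xi$ is, up to a controlled error, the value of $u_2$ at the \emph{nearer} of the two boundary points $(x_1+s\xi_1,0)$, $(x_1+s\xi_1,1)$; a further Fubini in $s$ turns this into a bound $\lesssim\bar C\theta$ after $s$-averaging. From the slice term and \eqref{eq:estduxi} one gets $\int_0^1\min\{|{u^\xi_{x_1}}'-\theta|,|{u^\xi_{x_1}}'+1-\theta|\}\le5\bar C\theta$, and from the corner term that the exterior arc carries $\mu$-weighted energy $\lesssim\bar C^2\mu\theta^2$ at every dyadic scale. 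Since ${u^\xi_{x_1}}$ is \emph{constant} along a slice for any austenitic rigid motion, matching the almost-affine martensitic slice to the austenite at its two endpoints forces $|{u^\xi_{x_1}}(1)-{u^\xi_{x_1}}(0)|\lesssim\bar C\theta$ — in particular the minority slope $\theta-1$ cannot be (almost) attained on a good slice — and hence $|{u^\xi_{x_1}}(s)-{u^\xi_{x_1}}(0)|\le c\bar C\theta$ for all $s\in[0,1]$.

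For the three estimates I would use $u_2=\tfrac14({u^\xi_{x_1}}-u_1)$. In (\ref{lem:costsu2inA3}) the integrand is $\tfrac14$ of the difference of ${u^\xi_{x_1}}$ at the two symmetric points — which is $O(\bar C\theta)$ by the previous paragraph — minus the difference of $u_1$ there, and the latter is estimated by connecting the two endpoints through the exterior arc $S_{x_1}$ and applying Cauchy–Schwarz with the corner term. Routing to the nearer corner is what makes the $\delta$-averaging work: over $s\in(0,\delta)$ the vertical segments sweep a rectangle of area $\lesssim\delta\xi_1$, so the energy entering Cauchy–Schwarz is $\lesssim\bar C^2\theta^2\delta$, and the bound $5\bar C\theta$ follows uniformly in $\delta$. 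Estimate (\ref{lem:costsu2inA46bdryln}) is the diagonal analogue, treated the same way but keeping the dyadic scale and integrating against $s^{-1}$, which produces $\ln\frac1m$. For (\ref{lem:costsu2inA46l32}) the two points lie on the top boundary at horizontal distance $\le\tfrac34$: connecting them by a path that rises into the austenite, crosses, and descends, the corner term gives $|u_2\text{-difference}|\le c\bar C\theta$ uniformly in $s$, and again $\int_m^1 s^{-1}\,\mathrm ds=\ln\frac1m$.

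The main obstacle is that inside the nucleus the energy controls only the two diagonal entries $e_{11}(u)=\partial_1u_1$, $e_{22}(u)=\partial_2u_2$ and the combination $\partial_1u_2+\partial_2u_1$ (up to the variant), never $\partial_1u_2$ or $\partial_2u_1$ alone — the usual $BD$-versus-$BV$ obstruction — so $u_2$ cannot be controlled by slicing or integrating inside $\Omega_{2L}$, and every estimate has to be routed through the $\mu$-weighted austenitic matrix surrounding it. This forces a careful book-keeping of the weights $\mu$ versus $1$ throughout; the delicate point is ruling out the minority variant $\theta-1$ on good slices, where the naive bound degrades from $O(\theta)$ to $O(1)$, which must be done by playing the almost-affine structure of the slice against the (globally small, via \eqref{eq:korn}) elastic energy of the connected austenite at the two endpoints. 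Organising the corner contributions dyadically, so that $\Gstar$ can be chosen independently of $m$ and $\delta$, is exactly what produces the logarithmic factors $\ln\frac1m$ in (\ref{lem:costsu2inA46l32}) and (\ref{lem:costsu2inA46bdryln}).
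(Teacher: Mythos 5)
Your Chebyshev construction of the exceptional set, and your treatment of estimate (\ref{lem:costsu2inA46l32}) by a unit-scale path in the austenite above the top boundary, are in the right spirit (the paper's proof builds the exceptional set out of exactly such Chebyshev pieces: vertical differences of $u_2$ inside $\Omega_{2L}$, horizontal differences of $u_2$ across short austenite paths along the top/bottom boundary, and their $s$-averaged versions). The genuine gap is in your second paragraph, on which (\ref{lem:costsu2inA46bdryln}) and (\ref{lem:costsu2inA3}) rest: the claim that on good slices the minority variant is excluded and $|u^\xi_{x_1}(1)-u^\xi_{x_1}(0)|\lesssim\bar C\theta$ is false. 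Take $u=(\theta x_2,0)$ in $\Omega_{2L}$ with an optimal exterior extension (the affine regime): the martensitic energy and $\partial_2u_2$ vanish on every slice, the $\mu$-weighted exterior energy near the endpoints of a slice in the middle of the nucleus is at most of order $\mu\theta^2$ per unit scale, so once $\bar C$ is small essentially every such slice passes any threshold of size $\bar C^2\min\{\mu,1\}\theta^2$ — yet $u^\xi_{x_1}(1)-u^\xi_{x_1}(0)=\theta$ exactly. These are the $\calC$-slices of (\ref{eqdefC}); their total austenite cost is only $O(\mu\theta^2\ln(3+L))$, so you can neither conclude smallness of the gap from "matching to the austenite" nor afford to absorb them into $\mathcal G$: that would force $\calL^1(\mathcal G)\sim L$ while $I(u)\sim\mu\theta^2\ln(3+L)$, contradicting the first assertion for large $L$.

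Consequently the decomposition $u_2=\tfrac14(u^\xi_{x_1}-u_1)$ cannot produce bounds of size $\bar C\theta$: on low-energy configurations in the affine and corner-laminate regimes both $u^\xi_{x_1}$ and $u_1$ vary by order $\theta$ with constant of order one across a good slice, whereas the lemma is applied later with $\bar C=2^{-7},2^{-8}$ and needs the small prefactor. There is a second, independent obstruction: in (\ref{lem:costsu2inA46bdryln}) and (\ref{lem:costsu2inA3}) the two points lie inside the nucleus, and any route from them to an exterior arc such as $S_{x_1}$ of Lemma \ref{lem:gammawuerfel} must cross $\Omega_{2L}$, where only $e(u)$ is controlled; in particular a vertical move costs $\partial_2u_1$, which the energy does not see, so the $u_1$-difference cannot be estimated this way. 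The paper's proof avoids both problems by never touching $u_1$ or $u^\xi_{x_1}$ in this lemma: vertical increments of $u_2$ inside the nucleus are estimated by $\partial_2u_2=e_{22}(u)$, which is small in \emph{both} wells (so no variant dichotomy ever arises), horizontal increments of $u_2$ along the top and bottom boundaries by austenite paths at height comparable to $|s|$, weighted by $\mu$; the set $\mathcal G$ is the union of five Chebyshev sets of this type, and the factor $\ln\frac1m$ arises from integrating the constant middle term $|u_2(x_1,0)-u_2(x_1+\xi_1,1)|\le 2\bar C\theta$ against $s^{-1}$ together with Cauchy--Schwarz applied to the $s$-averaged pieces. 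You would need to rebuild your argument for (\ref{lem:costsu2inA46bdryln}) and (\ref{lem:costsu2inA3}) along these lines.
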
 

\newcommand\Gvert{\mathcal G_\text{vert}}
\newcommand\Ghor{\mathcal G_\text{hor}}
\newcommand\Ghoravob{\mathcal G_\text{hor}^\text{av,+}}
\newcommand\Ghoravun{\mathcal G_\text{hor}^\text{av,-}}
\newcommand\Gvertav{\mathcal G_\text{vert}^\text{av}}

\begin{figure}
\begin{center}
  \includegraphics[width=12cm]{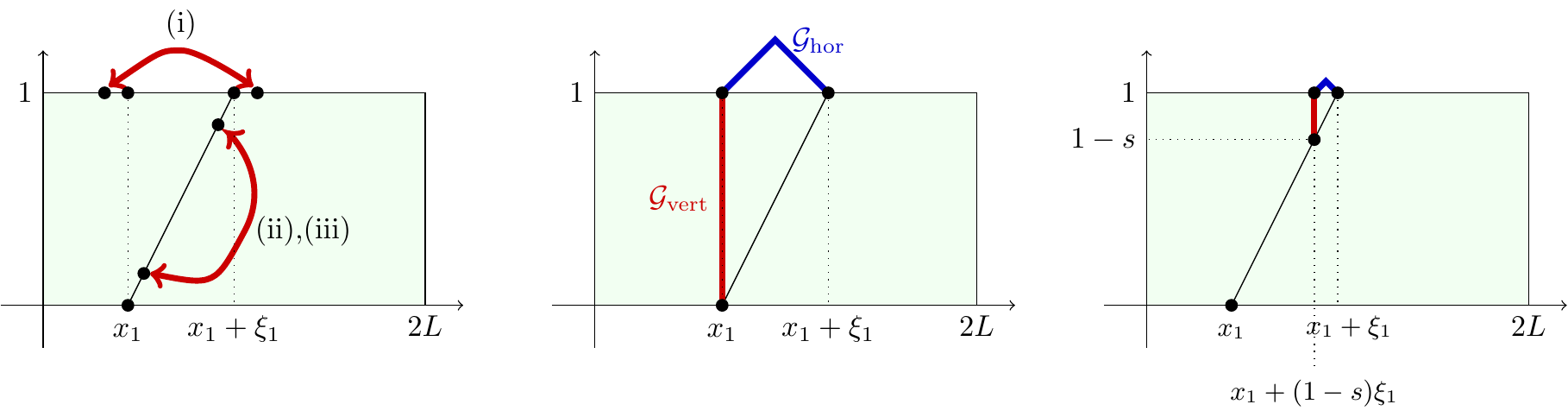} 
\end{center}
  \caption{
Sketch of the geometry in Lemma \ref{lem:costsu2inA}.
Left panel: the boundary estimate {\eqref{lem:costsu2inA46l32}} compares values across the top boundary, at distance $\xi_1(1+2s)$. The other two estimates compare points along the diagonal 
$(x_1,0)+\R\xi$. 
Middle panel: the sets $\Gvert$ and $\Ghor$ compare points which are separated vertically or horizontally, and are estimated integrating along the sketched curves,{see (\ref{eqdefgvert}) and (\ref{eqdefghor}).}
Right panel: the sets $\Ghoravob$ and $\Gvertav$ are used for similar estimated, but integrated over $s$.}
  \label{figlemma310}
 \end{figure}

\begin{proof} 
{
{\bf Step 1. Construction of $\Gstar$.}

We construct $\Gstar$ as the union of different pieces, which are all defined and estimated similarly.
}

The first one contains points with large vertical differences (see Fig. \ref{figlemma310} (middle)),
\begin{equation}\label{eqdefgvert}
\Gvert:=\left\{x_1\in[0,L-\xi_1]:  \bar C\theta\le |u_2(x_1,0)-u_2(x_1,1)|\right\}.
\end{equation}
By the fundamental theorem of calculus, for almost every $x_1$ we have
\begin{equation*}
 |u_2(x_1,0)-u_2(x_1,1)|\le \int_{(0,1)} |\partial_2 u_2|(x_1,s)\ds
\end{equation*}
which gives, using first Hölder's inequality and then Fubini's theorem,
\begin{equation*}
\bar C^2\theta^2\calL^1(\Gvert)\le \int_{(0,L-\xi_1)} |u_2(x_1,0)-u_2(x_1,1)|^2\dx \le \int_{\Omega_L} |\partial_2 u_2|^2\dxy \le I(u).
\end{equation*}
The second one contains points with large horizontal differences along the top boundary,
\begin{equation}\label{eqdefghor}
 \Ghor:=\left\{x_1\in[0,L-\xi_1]:\bar C\theta\le  |u_2(x_1,1)-u_2(x_1+\xi_1,1)| \right\}.
\end{equation}
Let $S_{x_1}$ be the polygonal joining the points {(see Fig. \ref{figlemma310} (middle))}
\begin{equation*}
 (x_1,1), (x_1+\frac12\xi_1,1+\frac12\xi_1), (x_1+\xi_1,1).
\end{equation*}
By the fundamental theorem of calculus, for almost every $x_1$ we have
\begin{equation*}
 |u_2(x_1,1)-u_2(x_1+\xi_1,1)|\le \int_{S_{x_1}} |\nabla  u_2|\dcalH^1
\le\Big(\calH^1(S_{x_1})\int_{S_{x_1}} |\nabla  u_2|^2\dcalH^1\Big)^{1/2}
\end{equation*}
so that, squaring, integrating over $\Ghor$,  and using $\calH^1(S_{x_1})=\sqrt2\xi_1=\frac{1}{2\sqrt2}$, 
\begin{equation*}
\begin{split}
\bar C^2\theta^2\calL^1(\Ghor)\le &
\int_{(0,L-\xi_1)} |u_2(x_1,1)-u_2(x_1+\xi_1,1)|^2\dx 
\le\frac1{2\sqrt2} \int_{\R}  \int_{S_{x_1}} |\nabla  u_2|^2\dcalH^1\dx\\
\le& \frac12 \int_\R \int_{(0,\frac12\xi_1)}  \big[|\nabla u_2|^2(x_1+t,1+t) 
+|\nabla u_2|^2(x_1+\xi_1-t,1+t) \big]
\dt\dx
\\\le&
 \int_{\R\times(1,2)} |\nabla u_2|^2\dxy \le \mu^{-1} I(u). 
\end{split}
\end{equation*}
The next one controls vertical fluctuations, it will be used to estimate $u_2(x_1+s\xi_1,0)-u_2(x_1+s\xi_1,s)$, and the same term on the other side. {Precisely, we set}
\begin{equation}\label{eqdefgvertav}
\Gvertav:=\left\{x_1\in[0,L-\xi_1]:  \bar C^2\theta^2\le  
 \int_{(0,1)} \int_{(0,1)}  \big( |\partial_2 u_2|^2(x_1+s\xi_1,t) + |\partial_2 u_2|^2(x_1+(1-s)\xi_1,t) \big) \ds\dt\right\}.
\end{equation}
Integrating over all $x_1\in\Gvertav$ and swapping the order of integration gives
\begin{equation*}
\bar C^2\theta^2\calL^1(\Gvertav)\le 
2 \int_{(0,1)} \ds \int_{(0,L)}\int_{(0,1)} |\partial_2 u_2|^2(x_1,t)  {\dt \dx}
\le
2\int_{\Omega_L} |\partial_2 u_2|^2\dxy \le 2I(u).
\end{equation*}

{
And finally we consider an averaged version of $\Ghor$,
\begin{equation}\label{eqdefghoravob}
\Ghoravob:=\left\{x_1\in\R: \bar C^2\theta^2\le  \int_{(-1,1)} \frac{|u_2(x_1+s\xi_1,1)-u_2(x_1,1)|^2}{|s|}
\ds  \right\}.
\end{equation}
Let $S^+_{x_1,s}$ be the polygonal line which joins the points 
\begin{equation*}
 (x_1,1),\,\, 
 (x_1+\frac12s\xi_1,1+\frac12 |s|\xi_1),\,\,
 (x_1+s\xi_1,1)
 \end{equation*}
(see Figure \ref{figlemma310}).}
{By the fundamental theorem of calculus for Sobolev functions,
Hölder's inequality and $\calH^1(S^+_{x_1,s})=  |s| \xi_1\sqrt2\le |s|$ we get
\begin{equation*}
|u_2(x_1+s\xi_1,1)-u_2(x_1,1) |
 \le \int_{S^+_{x_1,s}} |\nabla u_2|{\dcalH^1}
 \le |s|^{1/2}\left(\int_{S^+_{x_1,s}} |\nabla u_2|^2 {\dcalH^1}
\right)^{1/2} .
 \end{equation*}
Squaring, dividing by $|s|$ and integrating over $s\in(-1,1)$ gives
\begin{equation*}
\int_{(-1,1)}\frac{
|u_2(x_1+s\xi_1,1)-u_2(x_1,1) |^2}{|s|}
\ds 
 \le \int_{(-1,1)}\int_{S^+_{x_1,s}} |\nabla u_2|^2 {\dcalH^1}\ds.
 \end{equation*}
}
 {
We integrate over all $x_1\in\Ghoravob$ and obtain, using Fubini's theorem as above,
\begin{equation*}
\begin{split}
\bar C^2\theta^2 \calL^1(\Ghoravob) \le&
\int_{\R} \int_{(-1,1)} \frac{|u_2(x_1+s\xi_1,1)-u_2(x_1,1) |^2}{|s|}
\ds \dx \\
\le &\int_{\R} \int_{(-1,1)} \int_{S^+_{x_1,s}} |\nabla u_2|^2 {\dcalH^1}\ds\dx\\
\le& 2\sqrt2\int_{(-1,1)} \int_{(0,\frac12 |s| \xi_1)} \int_{\R} |\nabla u_2|^2(x_1,1+t) \dx\dt\ds \\
\le& 4\sqrt2 \int_{\R\times (1,2)} |\nabla u_2|^2\dxy\le  6 \mu^{-1} I(u).
\end{split}
 \end{equation*}
 }

The {analogue estimate} holds for
\begin{equation}\label{eqdefghoravun}
\Ghoravun:=\{x_1\in\R: \bar C^2\theta^2\le  \int_{(-1,1)} \frac{|u_2(x_1+s\xi_1,0)-u_2(x_1,0)|^2}{|s|}
\ds  \}.
\end{equation}

{We finally define
\begin{equation*}
 \Gstar :=\Gvert\cup \Ghor \cup\Gvertav  \cup
 \Ghoravob\cup  \Ghoravun\cup (\xi_1+\Ghoravob).
\end{equation*}
The previous estimates imply $I(u) \ge C_4 \min\{\theta^2,\mu\theta^2\} \calL^1(\Gstar)$, with 
$C_4:=\frac1{36} \bar C^{2}$.
}

 {\bf Step 2. Proof of (\ref{lem:costsu2inA46l32}) (estimate on the boundary).}
 For any $f\in L^2((0,1))$ one has {for $m\in(0,1)$}
 \begin{equation}\label{eqdlog1mdf}
  \int_{(m,1)} \frac{|f|}{|s|} \ds \le 
   \Big(\int_{(m,1)}\frac1{|s|}\ds\Big)^{1/2}
  \Big(\int_{(m,1)}\frac{|f|^2}{|s|}\ds\Big)^{1/2}
   \le \ln^{1/2}\frac1m \Big(\int_{(0,1)}\frac{|f|^2}{|s|}\ds\Big)^{1/2},
 \end{equation}
 and {similarly} on $(-1,-m)$. 
Therefore, for $x_1\in\R\setminus \Ghoravob$ (recall (\ref{eqdefghoravob})) we have
\begin{equation}\label{eq1m1mg0}
\begin{split}
\int_{(m,1)} \frac{|u_2(x_1+s\xi_1,1)-u_2(x_1,1)|}{|s|}\ds\le &
\bar C\theta \ln^{1/2} \frac1m ,
\end{split}
\end{equation}
{and }analogously
\begin{equation}\label{eq1m1mg1}
\int_{(m,1)} \frac{|u_2(x_1-s\xi_1,1)-u_2(x_1,1)|}{|s|}\ds\le 
   \bar C\theta \ln^{1/2} \frac1m \,.
\end{equation}
For later reference we notice that a similar computation shows that
for $x_1\in\R\setminus \Ghoravun$ (recall (\ref{eqdefghoravun})) we have
\begin{equation}\label{eq1m1mg0min}
\begin{split}
\int_{(m,1)} \frac{|u_2(x_1+s\xi_1,0)-u_2(x_1,0)|}{|s|}\ds\le &
\bar C\theta \ln^{1/2} \frac1m .
\end{split}
\end{equation}

For $x_1\in[0,L-\xi_1]\setminus \Gstar$ we have
\begin{equation*}
\begin{split}
 \int_m^{1} \frac{|u_2(x_1-s\xi_1,1)-u_2(x_1+(1+s)\xi_1,1)|}{s} \ds\le&
 \int_m^{1} \frac{|u_2(x_1-s\xi_1,1)-u_2(x_1,1)|}{s} \ds\\
&+ |u_2(x_1,1)-u_2(x_1+\xi_1,1) |
\int_m^{1} \frac1s \ds\\
&+\int_m^{1} \frac{|u_2(x_1+\xi_1,1)-u_2(x_1+(1+s)\xi_1,1)|}{s} \ds
\\
\le &3\bar C \theta \ln\frac1m,
\end{split}
\end{equation*}
where we used 
$x_1\not\in  \Ghoravob$ and (\ref{eq1m1mg1}) to estimate the first term,
$x_1\not\in  \Ghor$ to estimate the second one, 
and 
$x_1+\xi_1\not\in  \Ghoravob$ and (\ref{eq1m1mg0}) to estimate the third one, and then $\ln^{1/2}\frac1m\le\ln\frac1m$ to simplify the estimate.

This concludes the proof of (\ref{lem:costsu2inA46l32}).

{\bf Step 3. Proof of (\ref{lem:costsu2inA46bdryln}) (estimate on the diagonal).}

Let $x_1\in[0,L-\xi_1]\setminus \Gstar$.
For any $s\in(0,1)$ we have, by the fundamental theorem of calculus and Hölder's inequality,
\begin{equation*}
\begin{split}
\frac{ |u_2(x_1+s\xi_1,s)-u_2(x_1+s\xi_1,0)|^2}{s}\le&
\frac1s\left( \int_{(0,s)} |\partial_2 u_2|(x_1+s\xi_1,t)
 \dt \right)^2 \le \int_{(0,1)} |\partial_2 u_2|^2(x_1+s\xi_1,t)\dt.
\end{split}
\end{equation*}
Integrating over $s\in(m,1)$ and using $x_1\not\in\Gvertav$ (recall (\ref{eqdefgvertav})), 
\begin{equation*}
\begin{split}
\int_0^1\frac{ |u_2(x_1+s\xi_1,s)-u_2(x_1+s\xi_1,0)|^2}{s}\ds\le \bar C^2\theta^2,
\end{split}
\end{equation*}
and combining with  $x_1\not\in\Ghoravun$ (recall (\ref{eqdefghoravun}),
\begin{equation}\label{step4unten}
\begin{split}
\int_0^1\frac{ |u_2(x_1+s\xi_1,s)-u_2(x_1,0)|^2}{s}\ds\le 4\bar C^2\theta^2.
\end{split}
\end{equation}
The same estimate on the other side gives, using  $x_1+\xi_1\not\in\Ghoravob$ (recall (\ref{eqdefghoravob})), 
\begin{equation}\label{step4oben}
\begin{split}
\int_0^1\frac{ |u_2(x_1+(1-s)\xi_1,1-s)-u_2(x_1+\xi_1,1)|^2}{s}\ds\le 4\bar C^2\theta^2,
\end{split}
\end{equation}

By the triangular inequality,
\begin{equation*}
\begin{split}
 \int_m^{1} \frac{|u_2(x_1+s\xi_1,s)-u_2(x_1+(1-s)\xi_1,1-s)|}{s} \ds\le&
 \int_m^{1} \frac{|u_2(x_1+s\xi_1,s)-u_2(x_1,0)|}{s} \ds\\
&+ |u_2(x_1,0)-u_2(x_1+\xi_1,1) |
\int_m^{1} \frac1s \ds\\
&+\int_m^{1} \frac{|u_2(x_1+\xi_1,1)-u_2(x_1+(1-s)\xi_1,1-s)|}{s} \ds.
\end{split}
\end{equation*}
By (\ref{step4unten}) and (\ref{eqdlog1mdf}), the first term is estimated by $2\bar C\theta\ln^{1/2}\frac1m\le 2\bar C\theta\ln\frac1m$. 
The same holds for the last one, by  (\ref{step4oben}) and (\ref{eqdlog1mdf}).
For the middle one we use
 $x_1\not\in\Gvert$
and $x_1\not\in \Ghor$, which give
$ |u_2(x_1,0)-u_2(x_1+\xi_1,1) |\le 2\bar C\theta$. 
Adding these three estimates leads to
\begin{equation*}
\begin{split}
 \int_m^{1} \frac{|u_2(x_1+s\xi_1,s)-u_2(x_1+(1-s)\xi_1,1-s)|}{s} \ds
\le &6\bar C \theta \ln\frac1m,
\end{split}
\end{equation*}
which concludes the proof.

{\bf Step 4. Proof of (\ref{lem:costsu2inA3}) (estimate close to the boundary).}

For any $f\in L^2((0,1))$ and any $\delta\in(0,1)$ one has
 \begin{equation}\label{eqfdeltal1delta}
  \frac1\delta\int_{(0,\delta)} |f| \ds \le 
  \frac1\delta \Big(\int_{(0,\delta)}\frac{|f|^2}{|s|}\ds\Big)^{1/2}
   \Big(\int_{(0,\delta)}{|s|}\ds\Big)^{1/2}
   \le\frac1{\sqrt2}\Big(\int_{(0,1)}\frac{|f|^2}{|s|}\ds\Big)^{1/2},
 \end{equation}
 and {analogously} on $(-\delta,0)$.
Let $x_1\in[0,L-\xi_1]\setminus\Gstar$.
 Using (\ref{eqfdeltal1delta}) and (\ref{step4oben}) we obtain
 \begin{equation*}
\frac1\delta \int_{(0,\delta)} |u_2(x_1+(1-s)\xi_1,1-s)-u_2(x_1+\xi_1,1)|\ds\le \sqrt 2\bar C\theta.
\end{equation*}
Analogously, with (\ref{eqfdeltal1delta}) and (\ref{step4unten}) we obtain
 \begin{equation*}
\frac1\delta \int_{(0,\delta)} |u_2(x_1+s\xi_1,s)-u_2(x_1,0)|\ds\le \sqrt 2\bar C\theta.
\end{equation*}
As above,
 $x_1\not\in\Gvert$
and $x_1\not\in \Ghor$ give
$ |u_2(x_1,0)-u_2(x_1+\xi_1,1) |\le 2\bar C\theta$, so that {by triangle inequality}
 \begin{equation*}
\frac1\delta \int_{(0,\delta)} |u_2(x_1+(1-s)\xi_1,1-s)-u_2(x_1+s\xi_1,s)|\ds\le 5\bar C\theta
\end{equation*}
which concludes the proof of (\ref{lem:costsu2inA3}).

\end{proof}

At this point we are ready to present the main result of this Section, which basically gives the proof of the lower bound in the cases with fine microstructure and small $\theta$. 
{Following  \cite{conti-zwicknagl:16} we introduce two new parameters, $\lambda,m>0$, which will be chosen below (see the proof of Proposition \ref{lem:lbbranching}) in different ways depending on the regime. This permits to unify different parts of the proof of the lower bound.} {Roughly speaking, the parameters $\lambda$ and $m$ correspond to the length scales of the martensitic laminate deep inside the nucleus and on the vertical austenite/martensite interface.}

\begin{lmm}[The case of small $\theta$]\label{lem:thetasmallbranching}
 There exists $m_0\in (0,1/{4}]$ and $c>0$ such that for all $u\in \mathcal{X}$, 
 {
 $\theta>0$, $\mu>0$, $\varepsilon>0$, $\lambda>0$, $\ell>0$, and  $m>0$ 
 which obey}
  \begin{equation*}
  {
 \theta\le m\le m_0\,,\qquad
  1\le \ell\le 2L\,,\qquad 
\eps\le  \theta^2\lambda\,,\qquad
 \text{and}\qquad
  \lambda\le 1
  }
 \end{equation*}
 one has
\[
  I_{\tilde{\Omega}_\ell}(u) \geq c\min 
  \Big{\{}
\frac{  \eps \ell }{\lambda},
\mu  m^2,\,  
\mu  \theta^2 {\lambda}\ln \frac 1m ,\,  
  \theta^2 {\ell^{-1}} {\lambda^2} m\left( \ln \frac1 m \right)^2,
  {\mu\theta^2\ln(3+\frac\theta\mu)},
  {
  \mu\theta^2\ln(3+\ell)}
  \Big{\}}.
 \]
\end{lmm}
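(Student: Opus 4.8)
The plan is to prove Lemma \ref{lem:thetasmallbranching} by localizing the energy to a strip $(0,\ell)\times(0,1)$ and decomposing the estimate according to which ``scenario'' the low-energy configuration realizes. The main dichotomy is between slices that belong to $\mathcal C$ (almost affine of slope $\theta$ or $\theta-1$) and slices in $\mathcal P$ (small boundary jump), in the spirit of Lemma \ref{lem:gammawuerfel}. If a large fraction of the slices in $(0,\ell-\xi_1)$ are in $\mathcal C$, then the boundary values at $x_1$ of those slices differ by roughly $\theta$, which forces elastic energy in the austenite; controlled via the polygonal arcs $S_{x_1}$ of Lemma \ref{lem:gammawuerfel}(\ref{lem:gammawuerfelout}) and integration in $x_1$, this yields either $\mu\theta^2\ln(3+\ell)$ or $\mu\theta^2\ln(3+\theta/\mu)$ (the latter through Lemma \ref{lemmaystin}, which captures the logarithmic divergence of the matrix energy near the corners $(0,0)$ and $(0,1)$). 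So the interesting case is when many slices are \emph{not} in $\mathcal C$; then on each such slice the martensitic bulk term or the interfacial term is active.

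First I would fix the bad set $\Gstar$ from Lemma \ref{lem:costsu2inA} (applied with a suitable $\bar C$) so that outside it the $u_2$-fluctuations on and near the top boundary and along the diagonal $(x_1,0)+\mathbb R\xi$ are $\lesssim \theta$; if $\calL^1(\Gstar)$ is a fixed fraction of $\ell$, we are done via $\mu m^2$ or $\mu\theta^2\lambda\ln\frac1m$ directly. Then, for a typical good slice, $u^\xi_{x_1}$ has derivative close to $\{\theta,\theta-1\}$ but, not being in $\mathcal C$, must actually switch between the two wells; each switch over a length scale costs interfacial energy $\gtrsim\eps$, and a switch of total variation $\gtrsim\theta$ costs elastic energy $\gtrsim\theta^2$ times the length on which the ``wrong'' slope is taken. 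This is where the two free parameters $\lambda$ and $m$ enter: $\lambda$ is the postulated laminate period deep in the bulk and $m$ the period on the vertical interface. Balancing (i) the number of oscillations times $\eps$, giving $\eps\ell/\lambda$; (ii) the elastic cost of the boundary-layer laminate of period $m$ against the austenite energy it saves, giving $\mu m^2$ and $\mu\theta^2\lambda\ln\frac1m$; and (iii) the cost of interpolating between the $m$-scale structure at $x_1\approx 0$ and the $\lambda$-scale structure in the bulk, which by a self-similar/branching argument analogous to \cite[Proposition 6.1]{conti-zwicknagl:16} produces the term $\theta^2\ell^{-1}\lambda^2 m(\ln\frac1m)^2$. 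The estimates of Lemma \ref{lem:costsu2inA} are exactly what is needed to control the $u_2$ contributions entering the $BD$-type slicing so that only the two diagonal entries of $e(u)$ are used.

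Concretely, the key steps in order: (1) reduce to $I_{(0,\ell)\times(-\infty,1)\cup\dots}$ and invoke Fubini to pass to slice quantities, introducing $\mathcal C$, $\mathcal P$ and $\Gstar$; (2) dispose of the cases where $\calL^1(\mathcal C)\gtrsim\ell$ (use Lemma \ref{lem:gammawuerfel}(\ref{lem:gammawuerfelout})–(\ref{lem:gammawuerfelest}) and Lemma \ref{lemmaystin} to get $\mu\theta^2\ln(3+\ell)$ or $\mu\theta^2\ln(3+\theta/\mu)$), and where $\calL^1(\Gstar)\gtrsim\ell$ (get $\mu m^2$); (3) in the remaining case, on a positive fraction of good slices, bound below the number of sign changes of ${u^\xi_{x_1}}'$ and hence the interfacial energy by $\eps\ell/\lambda$, unless the laminate is too coarse; (4) if it is too coarse, a coarse laminate of period $\gtrsim\lambda$ forces either elastic bulk energy $\gtrsim\theta^2\lambda$-ish on the ``minority'' regions or a mismatch at the top boundary producing austenite energy — combined across the $x_1$-direction and across the corner region this yields $\mu\theta^2\lambda\ln\frac1m$; (5) the transition between the near-boundary period $m$ and the bulk period $\lambda$ is handled by a dyadic refinement estimate giving $\theta^2\ell^{-1}\lambda^2 m(\ln\frac1m)^2$; (6) take the minimum over all these alternatives. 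I expect the main obstacle to be step (5), the quantitative branching/interpolation estimate: making the self-similar summation rigorous in the vector-valued, symmetrized-gradient setting — where one must repeatedly feed in the $u_2$-control from Lemma \ref{lem:costsu2inA} at every scale and carefully track the boundary data of the reproduced fine-scale test functions — is the technically delicate core, exactly as flagged in the introduction around Lemma \ref{lemmainterpolationestim}.
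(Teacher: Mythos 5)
Your outline gets several supporting ingredients right (diagonal slicing and Fubini, the control of $u_2$ via Lemma \ref{lem:costsu2inA}, Lemma \ref{lemmaystin} for the corner logarithm, the dichotomy producing $\eps\ell/\lambda$), but the mechanism you propose for the two central terms $\mu\theta^2\lambda\ln\frac1m$ and $\theta^2\ell^{-1}\lambda^2 m(\ln\frac1m)^2$ does not work. You want to obtain them from ``a coarse laminate forces a mismatch at the top boundary'' plus a ``dyadic refinement estimate \dots analogous to \cite[Proposition 6.1]{conti-zwicknagl:16}''; but that proposition is an upper-bound construction, and a self-similar construction cannot be invoked in an ansatz-free lower bound, nor does your sketch explain why exactly the factors $\lambda\ln\frac1m$ and $\ell^{-1}\lambda^2 m\ln^2\frac1m$ should emerge. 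The paper's proof instead runs a duality argument on a single good slice $x_1^*$: by a coarea argument the minority set $\omega$ of $v=u^\xi_{x_1^*}$ consists of at most $\hat C\lambda^{-1}$ intervals with total mass $\lesssim m$, and one builds a test function $\psi$ as a maximum of logarithmic bumps of height $\ln\frac1m$ and widths between $\lambda m$ and $m$ centered on these intervals, with $\|\psi'\|_{L^2}\lesssim \lambda^{-1}m^{-1/2}$ and an extension $\Psi$ with $\|\nabla\Psi\|_{L^2}^2\lesssim\lambda^{-1}\ln\frac1m$. The chain $\theta\ln\frac1m\lesssim\int_0^1 v\,\psi'\,\mathrm{d}s$, transported to the lateral or top boundary by the fundamental theorem (the error $\|\psi'\|_{L^2}\,\ell^{1/2}I^{1/2}(u)$ is precisely the source of $\theta^2\ell^{-1}\lambda^2m\ln^2\frac1m$) and closed by the trace pairing of Lemma \ref{lemmah12app} (the error $\mu^{-1/2}\lambda^{-1/2}\ln^{1/2}\frac1m\, I^{1/2}(u)$ is the source of $\mu\theta^2\lambda\ln\frac1m$), is the heart of the lemma; without this (or an equivalent) test-function/duality step, your plan cannot produce the logarithmic corrections -- as the paper itself notes, localization alone is insufficient for them. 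Note also that no lower bound on the ``number of sign changes'' is ever proved: if a slice oscillates little it is simply used as the good slice, and the cost is charged elsewhere.

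A second concrete gap is the scale of the boundary-jump dichotomy. You keep $\mathcal P$ and $\mathcal C$ at scale $\theta$, but the term $\mu m^2$ forces the dichotomy at scale $m$: the paper uses $\mathcal P_*=\{x_1:\,|u((x_1,0)+\xi)-u(x_1,0)|\le\frac1{10}m\}$, so that abundance of slices outside $\mathcal P_*$ gives $\mu m^2$; working at scale $\theta$ only yields $\mu\theta^2$, which is not among the competing terms and does not imply the statement (the minimum can be much larger than $\mu\theta^2$). Conversely, on the selected good slice one then only knows the jump is $\lesssim m$, so the minority mass may be of order $m\gg\theta$; this is exactly why the bump widths are capped at $m$, why the boundary treatment must be split into $\mu\le\theta$ (add $\psi_B$ with value $\ln\frac1m$ at $s\in\{0,1\}$ and invoke Lemma \ref{lemmaystin}) and $\theta<\mu$ (truncate with $\psi_T$ and prove separately, estimate (\ref{eqvol1323}), that the middle third still carries minority mass $\gtrsim\theta$), and why the $u_2$-bounds of Lemma \ref{lem:costsu2inA} must be fed into the boundary pairings. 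Finally, some misattributions: a large exceptional set from Lemma \ref{lem:costsu2inA} yields $\min\{\theta^2,\mu\theta^2\}\,\ell$ (not $\mu m^2$); a fixed fraction of $\mathcal C$-slices only gives $\sim\mu\theta^2$ because of the $1/(1+x_1)$ weight, so $\mu\theta^2\ln(3+\ell)$ does not follow from that count; and $\mu\theta^2\ln(3+\frac\theta\mu)$ enters exclusively through the corner estimate of Lemma \ref{lemmaystin} inside the $\mu\le\theta$ branch.
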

This proof follows the {strategy} of \cite[Section 5.2]{conti-zwicknagl:16}, with important modifications to treat both the vectorial nature of this problem and the additional logarithmic terms which appear here, due to the different boundary conditions and the fact that we do not have a hard constraint on the order parameter.

\begin{proof}[Proof of Lemma \ref{lem:thetasmallbranching}]

\textbf{Step 1: Preparation.}\\
{We start by choosing a ``good'' slice, parametrized as usual by $x_1$. 
The slice is chosen so as to have boundary values {at the upper and lower boundary of $\Omega_{2L}$} which are close together, in a sense similar to the one used in the definition of 
 the set $\mathcal P$ defined in (\ref{eqdefPth}). However, in order to capture the different logarithmic factor, we need to use a larger set, in which the difference between the boundary values is controlled in the scale of $m\ge\theta$. 
 Specifically,
we consider the set
\begin{equation*}
\mathcal{P}_* := \left\{x_1\in (0,\ell-\xi_1)\,:\, 
|u((x_1,0)+\xi)-u(x_1,0)| \leq \frac1{10} m   \right\}.
\end{equation*}}
For almost every $x_1\in (0, \ell-\xi_1)\setminus\mathcal P_*$
we obtain
by Lemma \ref{lem:gammawuerfel}(\ref{lem:gammawuerfelout})  that
\begin{equation*}
\mu\int_{S_{x_1}}|\nabla u|^2\text{ d}\mathcal{H}^1 \geq c \frac{\mu m^2}{1+ x_1}
\end{equation*}
so that, using Fubini and monotonicity of $1/(1+x_1)$ as in the proof of Lemma \ref{lem:gammawuerfel}(\ref{lem:gammawuerfelest}),
\begin{eqnarray*}
I^\ext_{\tilde{\Omega}_\ell}(u)\geq c\mu m^2\int_{(0, \ell-\xi_1)\setminus\mathcal P_*}\frac{1}{x_1+1}\text{ d}x_1\geq c\mu m^2\int_{p}^{\ell-\xi_1} \frac{1}{x_1+1}\text{ d}x_1= c\mu m^2\ln\frac{\ell+1-\xi_1}{p+1},
\end{eqnarray*}
where $p:=\calL^1(\mathcal P_*)$.
If $p\le \frac12\ell$, then, recalling $1\le\ell$, we see that
$p+1\le \frac12\ell+1\le \frac12\ell+\frac5{14}\ell+\frac9{14}=\frac67(\ell+\frac34)$. Therefore
in this case $\ln\frac{\ell+1-\xi_1}{p+1}\ge \ln\frac76>0$, so that
$I(u)\ge c\mu m^2$ and we are done. Therefore we can assume $\calL^1(\mathcal P_*)> \frac12\ell$ in the following.

{We set $\hat C:=2^{-10}$ and consider the set where the surface energy or the elastic energy are large along slices in the $\xi$ direction.
For $x_1 \in (0, \ell-\xi_1)$ we recall that 
in \eqref{eq:defuxi} we defined $u_{x_1}^\xi:[0,1]\to\R^2$ by $u_{x_1}^\xi
:=(4u_1+u_2)((x_1,0) +s\xi)$. We set
\begin{align*}
\mathcal{R} :=& \left\{x_1\in (0,\ell-\xi_1)\,:\, |\partial_s\partial_su^\xi_{x_1}|((0,1)) \ge  \frac14\hat C\lambda^{-1} \, {\text{ or }}
\left\|\min \left\{|\partial_s u^\xi_{x_1}-\theta |,\,|\partial_s u^\xi_{x_1}+(1-\theta) |\right\}\right\|_{L^2((0,1))}^2 \ge  \hat C \eps \lambda^{-1} 
\right\}.\end{align*}}
{
By  \eqref{eq:estduxi} and Fubini's theorem,
\begin{equation*}
 I_{\tilde\Omega_\ell}(u)\ge c \calL^1(\mathcal R)
 \frac\eps\lambda.
\end{equation*}
}

{
If $\calL^1(\mathcal R)\ge \frac18\ell$, then we have
$I_{\tilde\Omega_\ell}(u)\ge  
c \eps\ell \lambda^{-1}$
and the proof is concluded.}

Let $\Gstar$ be as in  Lemma \ref{lem:costsu2inA}, with $\bar C=2^{-7}$. 
If  $\calL^1(\Gstar)\ge\frac18\ell$
then 
{$I_{\tilde{\Omega}_\ell}(u)\ge c\min\{\theta^2\ell,\mu\theta^2\ell\}\geq \\c\min\{\theta^2\ell^{-1}\lambda^2m\ln^2\frac{1}{m},\,\mu\theta^2\ln(3+\ell)\}$} and we are done (recall that $\ell^{-1}\lambda^2m\ln^2\frac1m\le \ell$ and $\ln(3+\ell)\le c \ell$ by our assumptions). Therefore we can assume
$\mathcal L^1(\mathcal R\cup\Gstar)<\frac14\ell$.

We  choose $x_1^\ast \in\mathcal P_*\setminus\mathcal R\setminus \Gstar$ such that
$v:=u_{x_1^\ast}^\xi\in W^{1,2}((0,1))$ is the trace of $u$, which necessarily satisfies
\begin{equation}\label{eq:estv}
 \begin{split}
|v(1)-v(0)|\le \frac12 m, \qquad
 |\partial_s\partial_s v |((0,1)) <\frac14\hat C\lambda^{-1}\,, \quad
 \left\|\min \left\{| v'-\theta|, | v'+(1-\theta)|\right\}\right\|^2_{L^2((0,1))} <\hat C\eps\lambda^{-1} 
 \end{split}
\end{equation}
{and, since $x_1^*\not\in \Gstar$, by  Lemma \ref{lem:costsu2inA}(\ref{lem:costsu2inA46l32})
\begin{equation}\label{eq:estv2}
 \int_{(m,1)} \frac{1}{s}\left|u_2(x_1^\ast-s\xi_1,1)-u_2(x_1^\ast+(s+1)\xi_1,1)\right| \ds\le  \frac1{32}\theta\ln\frac1m.
\end{equation}
}

For $t\in\R$, we define $\omega_t:=\{s\in (0,1)\,:\, v'(s) \leq \theta-t\}$ and $\chi_t:=\chi_{\omega_t}$. We observe that
\begin{equation}\label{eq:chi12}
 |v'(s)-\theta+\chi_{1/2}(s)|=\min\left\{|v'(s)-\theta|,|v'(s)+(1-\theta)|\right\}.
\end{equation}
By the coarea formula we have
\[\int_{\frac{1}{2}}^{\frac{3}{4}}\calH^0\left(\partial\omega_t\cap(0,1)\right)\dt
\leq\int_{\R}\calH^0\left(\partial\omega_t\cap(0,1)\right)\dt
=\left|\partial_s\partial_s v\right|\left(\left(0,1\right)\right), \]
and hence, by \eqref{eq:estv},
there is $t^\ast \in (1/2,3/4)$ such that $\omega:=\omega_{t^\ast}$ consists of at most $\hat C \lambda^{-1}$ many intervals.

{
We compute
\begin{equation*}
 v(1)-v(0)=\int_{(0,1)} v'\ds =\theta-\calL^1(\omega_{1/2})+\int_{(0,1)} (v'-\theta+\chi_{{1/2}}) \ds 
\end{equation*}
which, by Hölder's inequality, (\ref{eq:estv}), $\eps\le \theta^2\lambda$, {\eqref{eq:chi12}}, and the choice of $\hat C$ gives
\begin{equation*}
| v(1)-v(0)-\theta+\calL^1(\omega_{1/2})|\le \|v'-\theta+\chi_{{1/2}}\|_{L^1((0,1))} \le {\hat{C}^{1/2}\eps^{1/2}\lambda^{-1/2}\leq}2^{-5}\theta.
\end{equation*}
}

{
Using \eqref{eq:estv}, $\omega\subset\omega_{1/2}$ and $\eps\le\theta^2\lambda$ as above, {(note that $v'(s)-\theta\in[-3/4,-1/2]$ implies $v'(s)+1-\theta\in[1/4,1/2]$)}
\begin{equation}\label{eqomegaomega12}
 \begin{split}
0\le 
 \calL^1(\omega_{1/2})-\calL^1(\omega) &\le \calL^1(\{s: v'(s)-\theta \in [-3/4, -1/2]\})
 \le 16 \int_0^1 \min\{|v'-\theta|^2,|v'+(1-\theta)|^2\} \ds \\
 &\le 16\hat C \eps \lambda^{-1}\le 16 \hat C \theta^2\le  2^{-6}\theta,
 \end{split}
\end{equation}
so that
\begin{equation}\label{eqv1v0}
| v(1)-v(0)-\theta+\calL^1(\omega)|\le {|v(1)-v(0)-\theta+\mathcal{L}^1(\omega_{1/2})|+|\mathcal{L}^1(\omega_{1/2})-\mathcal{L}^1(\omega_{1})|\leq}2^{-4}\theta.
\end{equation}
}
{We conclude that $\omega$  consists of at most $\hat C \lambda^{-1}$ many intervals and obeys {(recall \eqref{eq:estv} and $\theta\leq m$)}
\begin{equation}
 \calL^1(\omega) \le |v(1)-v(0)|+\theta+2^{-4}\theta\le 2 m.
\end{equation}}

\begin{figure}
 \begin{center}
  \includegraphics[width=\textwidth]{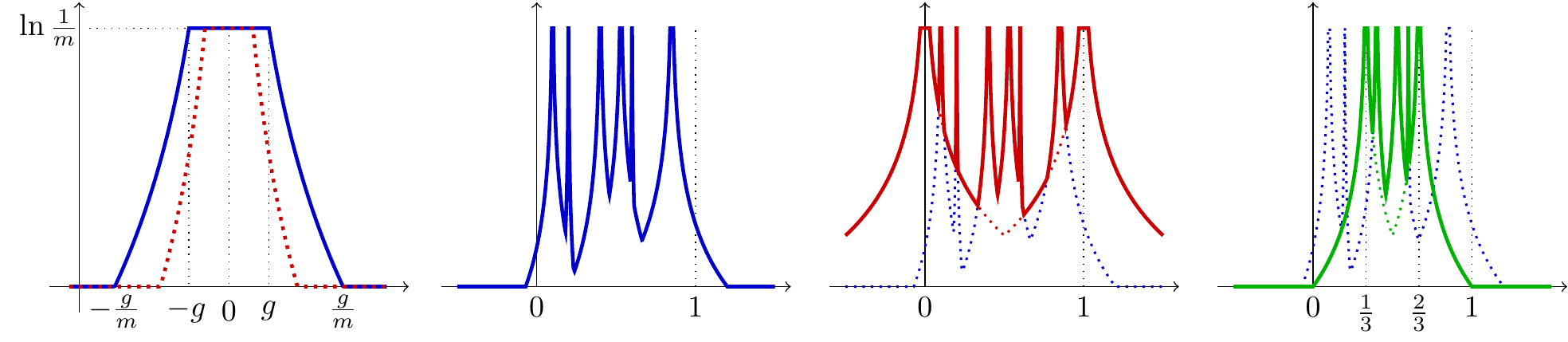}
 \end{center}
 \caption{Sketch of the construction of the test function $\psi$ in the proof of Lemma \ref{lem:thetasmallbranching}. From left to right: the first panel shows two of the functions $\psi_i$ {with different $g_i$},
 the second one the function $\hat\psi$ defined in Step 2, the third one the function $\psi$ defined in Step 3,  which equals $\ln\frac1m$ at the boundary points, and the fourth one the function $\psi$ defined in Step 4, which equals $\ln\frac1m$ at $\frac13$ and $\frac23$ and vanishes at the boundary points. In the last two panels $\hat \psi$, $\max\{\psi_B(t),\psi_B(1-t)\}$ and $\max\{\psi_T(t-\frac13), \psi_T(t-\frac23)\}$ are shown dotted for comparison. The vertical axis is compressed for clarity.}
 \label{figpsi}
\end{figure}

\noindent
\textbf{Step 2: A test function for the logarithmic scaling in the interior.}\\
We denote the connected components of $\omega$ by $(y_i-r_i, y_i+r_i)$ and define {$g_i:=\min\{r_i+\lambda m,m\}$.} {Notice that $2r_i\le\calL^1(\omega)\le 2m$ implies $r_i\le g_i$ for all $i$.}
Recall that the number $n$ of these components is at most $\hat C \lambda^{-1}$.
We then have 
\begin{eqnarray}\label{eq:gi}
\lambda m\leq g_i\leq  m \text{\qquad for all $i$},
\end{eqnarray}
 and \begin{equation*}
\sum_{i=1}^n 2g_i\le{\calL^1}(\omega)+2n\lambda m \leq 2m+2 \hat C m\leq {3}m.      
     \end{equation*}

We consider the test function $\hat\psi:\R\to\R$ defined by (see Fig.~\ref{figpsi})
\begin{equation*}\label{eq:defpsi}
  \hat\psi(t):=\max_{1, \dots n} \psi_i(t-y_i)\,,\hskip1cm
  \psi_i(t):=\left[ \ln \frac{1}{m} - \left(\ln \frac{|t|}{g_i}\right)_+ \right]_+
  {=
  \begin{cases}
  \ln\frac1m, & \text{ if } |t|\le g_i,\\
  \ln\frac{g_i}{m|t|}, & \text{ if } g_i<|t|\le \frac{g_i}{m},\\
   0, & \text{ if } |t|>\frac{g_i}{m}\,,
  \end{cases}}
\end{equation*}
{where $a_+:=\max\{a,0\}$.} 
{Since $m_0\leq 1/{4}$, we have $\ln 1/m\ge {\ln {4}>0}$ for any $m\le m_0$.} 
{Recalling $y_i\in \omega\subset(0,1)$ and $g_i\le m$ we see that $\supp\hat\psi\subset(-1,2)$.}

{We compute  
\begin{equation*}
\|\psi_i\|_{L^1(\R)}\le 2\int_0^{g_i/m} \ln\frac{g_i}{m t} \dt = \frac{2g_i}{m} \int_0^1 \ln \frac1t \dt = \frac{2g_i}{m},
\end{equation*}
}
{and analogously $\|\psi_i\|_{L^2(\R)}^2\le 4 g_i/ m$, which imply, recalling that $\sum_i 2g_i\le 3m$,}
\begin{equation*}
  \|\hat\psi\|_{L^1(\R)} \le \sum_{i=1}^n \|\psi_i\|_{L^1(\R)}\leq\frac{1}{m}\sum_{i=1}^n 2 g_i\leq  3
{\qquad\text{and}\qquad  
\|\hat\psi\|_{L^2(\R)}^2 \le 
\int_\R \max_i  |\psi_i|^2 (t-y_i)\dt \le
\sum_{i=1}^n \|\psi_i\|_{L^2(\R)}^2\leq 6 .}
  \end{equation*}  

  {To estimate the $L^2$ norm of $\hat\psi'$ we first compute
\begin{equation*}
|\psi'_i|(t)=\frac{1}{|t|}\chi_{\{g_i\le |t|\le g_i/m\}}
\end{equation*}  
and observe that $|\hat\psi'|^2(t)\le \max_i |\psi_i'|^2(t-y_i)\le \sum_i |\psi_i'|^2(t-y_i)$.
This implies}
\begin{equation*}
\|\hat\psi'\|^2_{L^2(\R)}\le \sum_{i=1}^n\int_{\R}|\psi_i'(t)|^2\dt
={2}\sum_{i=1}^n\int_{g_i}^{g_i/m} \frac1{t^2}\dt
\le 2\sum_{i=1}^n\frac{1}{g_i}
\leq \frac{2n}{\lambda m}
\le  \frac{1}{{\lambda^2} m}.
\end{equation*}

We then estimate the $H^{1/2}$ norm of $\hat\psi$. Specifically, we define an extension and estimate its homogeneous $H^1$ norm. Following 
\cite[Lemma 5.2]{conti-zwicknagl:16} we let
$\Psi_i(x):=\psi_i(|x|)$ be the radially symmetric extension of $\psi_i$ to $\R^2$ and compute
\begin{eqnarray*}
\int_{\R^2}|\nabla \Psi_i|^2\dxy=2\pi\int_{g_i}^{g_i/m} r\frac{1}{r^2}\,\dr=2\pi\ln\frac{1}{m}.
\end{eqnarray*}
We define the function $\hat\Psi(x_1,x_2):=\max_i \Psi_i(x_1,x_2-y_i)$, which obeys $\hat\Psi(0,t)=\hat\psi(t)$
for $t\in \R$ and $|\nabla \hat\Psi|(x)\le \max_i |\nabla \Psi_i|(x_1, x_2-y_i)$ for almost every $x\in\R^2$. This implies (recall \eqref{eq:gi} and $n\leq \hat{C}\lambda^{-1}\leq (2\pi)^{-1}\lambda^{-1}$)
\begin{equation*}
{\|\nabla \hat\Psi\|^2_{L^2(\R^2)}
=\int_{\R^2} |\nabla \hat\Psi|^2\,\dxy}\le
\sum_{i=1}^n\int_{{\R^2}}|\nabla \Psi_i(x_1,x_2-y_i)|^2\,\dxy
=2\pi n\ln\frac{1}{m}\leq\frac{1}{\lambda}\ln\frac{1}{m}.
\end{equation*}

\textbf{Step 3: Boundary correction for $\mu\le\theta$}.\\
In this situation we take the largest value, $\ln\frac1m$. 
{Specifically, we set $\psi(t):=\max \{\hat\psi(t),\psi_B(t), \psi_B(1-t)\}$, where
\begin{equation*}
  \psi_B(t):=\left[ \ln \frac{1}{m} - \left(\ln \frac{|t|}{m}\right)_+ \right]_+
  {=
  \begin{cases}
  \ln\frac1m, & \text{ if } |t|\le m,\\
  \ln\frac{1}{|t|}, & \text{ if } m<|t|\le 1,\\
   0, & \text{ if } |t|>1\,.
  \end{cases}}
\end{equation*}
We remark that $\psi_B$ has the same form as the functions $\psi_i$, with the only difference that the width of the central region is not $g_i\in[\lambda m,m]$ but exactly $m$. This is important to ensure symmetry of the boundary conditions.

One computes
$\|\psi_B\|_{L^1((0,1))} \le 1$, $\|\psi_B\|_{L^2((0,1))}^2\le 2$ and
$\|\psi_B'\|^2_{L^2((0,1))}\le \frac1m$.
The previous estimates for $\hat\psi$ lead then to
\begin{equation}\label{eqestpsiwithbdry}
 \|\psi\|_{L^1((0,1))}\le 5\,,\hskip4mm
 \|\psi\|^2_{L^2((0,1))}\le 10\,,\hskip4mm
 \|\psi'\|_{L^2(\R)}\le \frac{3}{\lambda m^{1/2}}\,,\hskip4mm
 \|\nabla \Psi\|_{L^2(\R^2)} \le c\frac{1}{\lambda^{1/2}}\ln^{1/2}\frac1m
\end{equation}
where $\Psi(x):=\max\{\hat\Psi (x), \psi_B(|x|), \psi_B(|x-e_2|)\}$ obeys $\supp\Psi\subseteq[-1,1]\times[-1,2]$,
$\Psi(0,t)=\psi(t)$, $\Psi(t,0)=\Psi(t,1)=\psi_B(t)$. Here it is important that $g_i\le m$, so that, in taking the maximum, $\psi_B$ is always the larger one on the top and bottom boundaries, $x_2\in\{0,1\}$.
}

{Since we are working in the case $\mu\le\theta$, from Lemma \ref{lemmaystin} with $c_*:=\frac1{8}$ we obtain the following: either
 \begin{equation*}
  \frac1c I(u)\ge \min\{\mu\theta^2 \ln(3+\frac\theta\mu),\mu\theta^2 \ln\frac1m, \mu\theta^2\ln(3+\ell)
  \}
 \end{equation*}
and (since $\lambda\le 1$) we are done, or 
\begin{equation*}
\int_m^{1} \frac{|(u_1(s,1)-u_1(s,0)) |}{s}\ds\le
 \frac1{8}\theta\ln\frac{1}{m}.
\end{equation*}
Since $\psi_B'=0$ on $(0,m)$ and $\psi_B'(s)=-1/s$ on $(m,1)$, 
this implies
\begin{equation}\label{claimstep3}
 \int_{(0,1)} \psi_B'(s) \big[u_1(s, 1)-u_1(s,0)\big]\ds \le \frac1{8} \theta\ln \frac1m .
\end{equation}
}

We now turn to $u_2$ and recall that (\ref{eq:estv2}) {implies} 
\begin{equation}\label{claimstep3b}
 \int_{(0,1)} \psi'_B(s) \big[u_2(x_1^\ast-s\xi_1,1)-u_2(x_1^\ast+(s+1)\xi_1,1)\big] \ds\le  \frac1{32}\theta\ln\frac1m.
\end{equation}

\textbf{Step 4: Energy estimate for $\mu\le\theta$.\\}

We compute, recalling that $\omega\subset\omega_{1/2}$ and that $\psi=\ln\frac1m$ on $\omega\cup\{0,1\}$,
\begin{equation*}
 \begin{split}
  \calL^1(\omega)\ln\frac1m =& \int_{(0,1)} \chi_{\omega}\psi \ds \le \int_{(0,1)} \chi_{\omega_{1/2}}\psi \ds \\
  =&
  \int_{(0,1)} (\chi_{\omega_{1/2}}-\theta+v')\psi\ds+
  \int_{(0,1)} (\theta-v') \psi \ds
  \\
  \le&
  \|v'+\chi_{\omega_{1/2}}-\theta\|_{L^2((0,1))} \|\psi\|_{L^2((0,1))} + 
  \theta \|\psi\|_{L^1((0,1))}+
  (v(0)-v(1))\ln\frac1m +\int_{(0,1)} v \psi'\ds,
 \end{split}
\end{equation*}
{where we integrated by parts in the last term. }
We recall that (\ref{eqv1v0})
gives
\begin{equation*}
( v(0)-v(1))\ln\frac1m\le
( \calL^1(\omega)-\theta+2^{-4}\theta)\ln\frac1m
\end{equation*}
and that $ \|v'+\chi_{\omega_{1/2}}-\theta\|_{L^2((0,1))}\le 2^{-5}\theta$ by (\ref{eq:estv}) {using $\eps\lambda^{-1}\leq\theta^2$}. 

{
Inserting in the previous expression gives
\begin{equation*}
 \begin{split}
 \theta\ln\frac1m 
  \le& 2^{-5}\theta\|\psi\|_{L^2((0,1))} 
  +\theta \|\psi\|_{L^1((0,1))}
  +2^{-4}\theta \ln\frac1m
  +\int_{(0,1)} v \psi'\ds.
 \end{split}
\end{equation*}
Since the estimates in (\ref{eqestpsiwithbdry}) give $\|\psi\|_{L^2((0,1))} \le 4$ and $\|\psi\|_{L^1((0,1))} \le 5$, choosing $m_0$ such that $5\le 2^{-4}\ln\frac1m$ for $m\in(0,m_0]$ we get
\begin{equation*}
 \begin{split}
\frac12 \theta\ln\frac1m 
  \le& 
  \int_{(0,1)} v \psi'\ds.
 \end{split}
\end{equation*}
}

{Using the definition of $v$, this gives
\begin{equation} \label{eq:compN}
{\frac12 \theta\ln \frac1{ m}}
\leq \int_{(0,1)} u_1((x_1^\ast,0)+s\xi) \psi'(s)\text{ d}s+ 4\int_{(0,1)} u_2((x_1^\ast,0)+s\xi) \psi'(s)\text{ d}s .  
\end{equation}
}

{At this point we distinguish two cases, depending on which of the two terms in \eqref{eq:compN} is larger. In the first case,} 
by the fundamental theorem {and the trace theorem} we have
\begin{equation*}
{\frac1{4} \theta\ln \frac1{ m}}\le
\int_{(0,1)}( u_1(x_1^\ast,0)+s\xi) \psi'(s)\text{ d}s
 =  \int_{(0,1)} \int_0^{x_1^\ast+s \xi_1} \partial_1 u_1(x_1,s\xi_2) \text{ d}x_1\, \psi'(s) \text{ d}s + \int_{(0,1)} u_1(0,s) \psi'(s) \text{ d}s.
 \end{equation*}
The first integral can be estimated by $\|\psi'\|_{L^2((0,1))}\ell^{1/2} I^{1/2}(u)$.
Recalling (\ref{claimstep3}),
\begin{equation*}
{\frac1{8} \theta\ln \frac1{ m}}\le
\|\psi'\|_{L^2((0,1))} \ell^{1/2} I^{1/2}(u)
 + \int_{(0,1)} u_1(0,s) \psi'(s) \ds
 + \int_{(0,1)} (u_1(s,1) -u_1(s,0)) \psi_B'(s) \ds.
 \end{equation*}
For brevity in we write here $I(u)$ for  $I_{\tilde{\Omega}_\ell}(u)$. 
 {We define $F_1:=(-1,1)\times(-1,2)\setminus[0,1]^2$ and observe that the last two integrals can be written as a boundary integral 
of $u_1$ times the tangential derivative $\partial_\tau\Psi$, and that $\Psi$ vanishes on the rest of the boundary of $F_1$. Therefore
\begin{align*}
{\frac1{8} \theta\ln \frac1{ m}}
 \le \int_{\partial F_1} \partial_\tau \psi u_1 \dcalH^1+\|\psi'\|_{L^2((0,1))}\ell^{1/2} I^{1/2}(u)
\end{align*}
With Lemma \ref{lemmah12app} and the estimates for $\psi$  in (\ref{eqestpsiwithbdry}) this gives
\begin{align*}
{\frac1{8} \theta\ln \frac1{ m}}
 \le \| \nabla\Psi\|_{L^2(F_1)} \|\nabla u_1\|_{L^2(F_1)}
 +\|\psi'\|_{L^2((0,1))}\ell^{1/2} I^{1/2}(u)
 \le c\frac{1}{\lambda^{1/2}}\ln^{1/2}\frac1m \mu^{-1/2} I^{1/2}(u) 
+c\frac{1}{\lambda m^{1/2}} \ell^{1/2} I^{1/2}(u),
 \end{align*}
which  gives $I(u)\ge c\min\{\mu\theta^2 \lambda \ln\frac1m, \frac{\theta^2\lambda^2 m}{\ell} \ln^2\frac1m \}$ and concludes the proof in this case.
 }

We now turn to the second case, in which the second term in \eqref{eq:compN} is the largest, and write correspondingly using the fundamental lemma of calculus
\begin{equation*}\begin{split}
{\frac1{16} \theta\ln \frac1{ m}}\le&
\int_{(0,1)} u_2(x_1^\ast+s\xi_1,s\xi_2) \psi'(s)\ds\\
 =& - \int_{(0,1)} \int_{( s\xi_2,1)} \partial_2 u_2(x_1^\ast+s\xi_1,x_2) \, \psi'(s) \dy\ds + \int_{(0,1)} u_2(x_1^\ast+s\xi_1,1) \psi'(s) \text{ d}s,
 \end{split}
\end{equation*}
 with the first integral being estimated by $2\|\psi'\|_{L^2((0,1))} I^{1/2}(u)$.

We recall that (\ref{claimstep3b}) states, after  changing variables separately in the two terms,
\begin{equation*}
- \int_{(-1,0)} u_2(x_1^\ast+s\xi_1,1)  \psi'_B(s) \ds
{-}\int_{(1,2)}u_2(x_1^\ast+s\xi_1,1) \psi'_B(s-1)  \ds\le  \frac1{32}\theta\ln\frac1m,
 \end{equation*}
 sum and obtain
\begin{equation*}
{\frac1{32} \theta\ln \frac1{ m}}\le
2 \|\psi'\|_{L^2((0,1))} I^{1/2}(u)+ \int_{\R} u_2(x_1^\ast+s\xi_1,{1}) \psi'(s) \text{ d}s .
 \end{equation*}
As above, using the estimates for $\psi$  in (\ref{eqestpsiwithbdry}) and Lemma 
\ref{lemmah12app} with 
the extension to $F_2:=(-2,\ell+2)\times {(1,2)}$, {and using $\tilde\Psi(x_1^\ast+s\xi_1,x_2):=\Psi(x_2-1,s)$,} leads to
\begin{equation*}
{\frac1{32} \theta\ln \frac1{ m}}
 \le 
 2\|\psi'\|_{L^2((0,1))} I^{1/2}(u)
 {+2\| \nabla \tilde\Psi\|_{L^2(F_2)}} \|\nabla u_2\|_{L^2(F_2)}
 \le
c\frac{1}{\lambda m^{1/2}} I^{1/2}(u)
+ c{\lambda^{-1/2}}\ln^{1/2}\frac1m \mu^{-1/2} I^{1/2}(u), 
\end{equation*}
which  gives $I(u)\ge c\min\{\mu\theta^2 \lambda\ln\frac1m, {\theta^2\lambda^2 m} \ln^2\frac1m \}$ and, since $1\le\ell$, concludes the proof also in this case.

\textbf{Step 5: Boundary correction for $\theta<\mu$}.\\
In this case we truncate, so that the new function vanishes at $s=0$ and $s=1$. We set
\begin{equation*}
  \psi_T(t):=\left[ \ln \frac{1}{m} - \left(\ln \frac{3|t|}{m}\right)_+ \right]_+
  {=
  \begin{cases}
  \ln\frac1m, & \text{ if } |t|\le \frac13 m,\\
  \ln\frac{1}{3|t|}, & \text{ if } \frac13 m<|t|\le \frac13 ,\\
   0 ,& \text{ if } |t|>\frac13 \,.
  \end{cases}}
\end{equation*}
We remark that $\psi_T$ has the same form as the functions $\psi_i$, with the only difference that the width of the central region is not $g_i\in[\lambda m,m]$ but exactly $\frac13 m$, so that $\supp \psi_T=[-\frac13,\frac13]$. This is important to ensure symmetry of the boundary conditions.
{We then define 
\begin{equation*}
\psi(t):=
\begin{cases}
\max \{\hat\psi(t),\psi_T(t-\frac13), \psi_T(t-\frac23)\}, & \text{ if } t\in(\frac13,\frac23),\\
\psi_T(t-\frac13),&\text{ if } t\le  \frac13,\\
\psi_T(t-\frac23),&\text{ if } t\ge \frac23 ,
\end{cases}
\end{equation*}
and observe that $\psi=0$ on $\R\setminus (0,1)$.
Correspondingly,
\begin{equation*}
\Psi(x):=
\begin{cases}
\max \{\hat\Psi(x),\psi_T(| (\frac13 x_1,x_2-\frac13)|) 
,\psi_T(| (\frac13 x_1,x_2-\frac23)|), & \text{ if } t\in(\frac13,\frac23),\\
\psi_T(| (\frac13 x_1,x_2-\frac13)|) 
,&\text{ if } t\le  \frac13, \\
\psi_T(| (\frac13 x_1,x_2-\frac23)|),&\text{ if }  t\ge \frac23 .
\end{cases}
\end{equation*}
It is apparent that $\Psi(0,t)=\psi(t)$, and that $\Psi=0$ outside $(-1,1)\times(0,1)$.
For $x_2=\frac13$ we observe that 
$\hat\Psi((x_1,\frac13))\le \max_i \psi_i(x_1)\le \min\{\ln\frac1m, (\ln \frac{\max g_i}{m|x_1|})_+\}\le \min\{\ln\frac 1m, (\ln\frac1{|x_1|})_+\}=\psi_T(\frac13|x_1|)$. Therefore $\Psi$ is continuous across the boundaries $x_2\in\{\frac13,\frac23\}$.
}
Repeating the same estimates as above we obtain
\begin{equation}\label{eqestpsiwithbdryd}
 \|\psi\|_{L^1((0,1))}\le 5\,,\hskip4mm
 \|\psi\|^2_{L^2((0,1))}\le 10\,,\hskip4mm
 \|\psi'\|_{L^2(\R)}\le \frac{c}{\lambda m^{1/2}}\,,\hskip4mm
 \|\nabla \Psi\|_{L^2(\R^2)} \le c{\frac{1}{\lambda^{1/2}}}\ln^{1/2}\frac1m.
\end{equation}

At this point we need to check that 
restricting to the central one-third of $(0,1)$ {we} did not {loose} most of the minority phase. Specifically, we claim that {we may assume that} 
\begin{equation}\label{eqvol1323}
 \calL^1(\omega\cap (\frac13,\frac23))\ge \frac1{8}\theta.
\end{equation}
To prove (\ref{eqvol1323}), we first
 show that
\begin{equation}\label{eqalphavalpha}
 \min_{\alpha\in\R}\|v-\alpha \|_{L^1((\frac13,\frac23))}
\le   c (\mu^{-1/2}+ \ell^{1/2}) I^{1/2}(u).
\end{equation}
Let $\alpha_1:=\int_{(-1,0)\times(0,1)} u_1(0,s)\ds$. By  Poincar\'{e}'s inequality and the trace theorem 
in $W^{1,2}$ we have
\begin{equation*}
 \| u_1(0,\cdot)-\alpha_1\|_{L^1((\frac13,\frac23))}\le 
 \| u_1(0,\cdot)-\alpha_1\|_{L^1((0,1))}\le c\|\nabla u_1\|_{L^2((-1,0)\times(0,1))} \le c\mu^{-1/2} I^{1/2}(u)
\end{equation*}
and
\begin{equation*}
\begin{split}
 \int_{1/3}^{2/3} |u_1((x_1^\ast,0)+\xi s)-\alpha_1| \ds
 \le &
\int_{1/3}^{2/3} |u_1(0,s)-\alpha_1| \ds
+\int_{1/3}^{2/3}\int_{(0,x_1^\ast+\xi_1)} |\partial_1 u_1|(t,s) \dt\ds\\
\le& c\mu^{-1/2} I^{1/2}(u) + c \ell^{1/2} I^{1/2}(u).
\end{split}
 \end{equation*}
Analogously, with $\alpha_2:=\int_{(x_1^\ast, x_1^\ast+1)\times(-1,0)} u_2(0,s)\ds$,
\begin{equation*}
\int_{1/3}^{2/3} |u_2((x_1^\ast,0)+\xi s)-\alpha_2| \ds
  \le c\mu^{-1/2} I^{1/2}(u) + c  I^{1/2}(u).
 \end{equation*}
Recalling that $v(s)=(u_1+4u_2)((x_1^\ast,0)+\xi s)$ concludes the proof of (\ref{eqalphavalpha}) {since $\ell\geq 1$}.

We now prove  (\ref{eqvol1323}). If it does not hold, then
\begin{equation*}
\begin{split}
 \int_{1/3}^{2/3} |v'-\theta| \ds \le&
 \int_{1/3}^{2/3} |v'-\theta+\chi_{1/2}| \ds +\calL^1(\omega_{1/2}\cap(\frac13,\frac23)) \\
 \le&
 \int_{(0,1)} |v'-\theta+\chi_{1/2}| \ds +\calL^1(\omega_{1/2}\setminus\omega) 
 +\calL^1(\omega\cap(\frac13,\frac23))\\
 \le&
\hat C^{1/2} \theta+2^{-{6}}\theta + \frac18\theta\le \frac14\theta
 \end{split}
\end{equation*}
where we used \eqref{eq:chi12}, \eqref{eq:estv}, $\eps\lambda^{-1}\leq\theta^2$ and  (\ref{eqomegaomega12}).
This implies $v(s)-v(s')\ge \theta(s-s')-\frac14\theta$ for all $s,s'\in(\frac13,\frac23)$, and therefore
\begin{equation*}
 \min_{\alpha\in\R}\|v-\alpha \|_{L^1((\frac13,\frac23))}\ge 
 \frac12 \int_{0}^{1/6}|v(\frac12+s)-v(\frac12-s)|\ds
\ge \frac12\theta  \int_{1/8}^{1/6} (2s -\frac14) ds =c\theta.  
\end{equation*}
Recalling (\ref{eqalphavalpha}), this implies $I(u)\ge c \min\{\mu\theta^2, \ell^{-1}\theta^2\}$ which, since $\lambda^2m\ln^2\frac1m\le 1$  and $\theta\le \mu$, concludes the proof. Therefore we can assume  that (\ref{eqvol1323}) holds.

\textbf{Step 6: Energy estimate for $\theta<\mu$.\\}
{
The computation is similar to Step 4, but with significant differences in the treatment of the boundary terms. }
Recalling   (\ref{eqvol1323}), that  $\psi=\ln\frac1m$ 
and $-v' \psi\geq t^\ast \ln \frac{1}{m}$ on $\omega\cap (\frac13,\frac23)$, with $t^\ast\ge \frac12$, and $\psi\geq0$,  we have 
\begin{align*}
{\frac1{16} \theta\ln \frac1{ m}}&\le
t^\ast \mathcal{L}^1(\omega\cap (\frac13,\frac23))  \ln \frac1{ m}
\le -\int_{(0,1)} v'(s) \psi(s)\text{ d}s+\int_{\{v'\geq 0\}\cap(0,1)}v'(s) \psi(s) \text{ d}s.
\end{align*}
{First we observe that
\begin{equation*}
\begin{split}
 \int_{\{v'\geq 0\}\cap(0,1)}v'(s) \psi(s) \text{ d}s
& \le \theta \|\psi\|_{L^1((0,1))} + 
 \| \min\{|v'-\theta|, |v'+(1-\theta)|\}\|_{L^2((0,1))} \|\psi\|_{L^2((0,1))}
 \\&\le 5\theta + \hat C^{1/2}\theta 10^{1/2}\le\frac1{32}\theta\ln\frac1m ,
\end{split}
 \end{equation*}
 where in the first step we used (\ref{eqestpsiwithbdryd}) and in the second we assumed that $m_0$ is chosen {such} that $5+\hat C^{1/2}10^{1/2}\le\frac1{32}\ln\frac1m $.
}
Inserting in the previous expression and integrating by parts we get
\begin{equation*}
 \begin{split}
\frac1{32} \theta\ln\frac1m 
  \le&
  \int_{(0,1)} v \psi'\ds.
 \end{split}
\end{equation*}
The rest of the proof is very close to the one of Step 4, with some simplifications in the treatment of the exterior field. For the convenience of the reader we repeat the computation here. Recalling the definition of $v$,
\begin{equation} \label{eq:compNdue}
{\frac1{32} \theta\ln \frac1{ m}}
\leq  \int_{(0,1)} u_1((x_1^\ast,0)+s\xi) \psi'(s)\text{ d}s+ 4\int_{(0,1)} u_2((x_1^\ast,0)+s\xi) \psi'(s)\text{ d}s .  
\end{equation}
If the first term in \eqref{eq:compNdue} is  larger than the second, 
by the fundamental theorem {and the trace theorem}
\begin{equation*}
{\frac1{64} \theta\ln \frac1{ m}}\le
\int_{(0,1)}u_1(x_1^\ast,0)+s\xi) \psi'(s)\text{ d}s
 =  \int_{(0,1)} \int_0^{x_1^\ast+s \xi_1} \partial_1 u_1(x_1,s\xi_2) \text{ d}x_1\, \psi'(s) \text{ d}s + \int_{(0,1)} u_1(0,s) \psi'(s) \text{ d}s,
 \end{equation*}
 {with the first integral being estimated by $\|\psi'\|_{L^2((0,1))} \ell^{1/2}I^{1/2}(u)$.
}

{Letting $F_3:=(-1,0)\times(0,1)$ and recalling that $\Psi(0,t)=\psi(t)$  and $\Psi(-1,t)=\Psi(-t,0)=\Psi(-t,1)=0$ for $t\in(0,1)$, 
we see that the last two integrals can be written as a boundary integral 
of $u_1$ times the tangential derivative $\partial_\tau\Psi$, and that $\Psi$ vanishes on the rest of the boundary of $F_3$. Therefore
\begin{align*}
{\frac1{64} \theta\ln \frac1{ m}}
 \le \int_{\partial F_3} u_1\, \partial_\tau \psi  \dcalH^1+\|\psi'\|_{L^2((0,1))} \ell^{1/2}I^{1/2}(u).
\end{align*}
With Lemma \ref{lemmah12app} and the estimates for $\psi$  in (\ref{eqestpsiwithbdryd}) this gives
\begin{align*}
{\frac1{64} \theta\ln \frac1{ m}}
 \le \| \nabla\Psi\|_{L^2(F_3)} \|\nabla u_1\|_{L^2(F_3)}
 +\|\psi'\|_{L^2((0,1))} \ell^{1/2} I^{1/2}(u)
 \le c\frac{1}{\lambda^{1/2}}\ln^{1/2}\frac1m \mu^{-1/2} I^{1/2}(u) 
+c\frac{1}{\lambda m^{1/2}}\ell^{1/2} I^{1/2}(u),
 \end{align*}
which  gives $I(u)\ge c\min\{\mu\theta^2\lambda \ln\frac1m, \frac{\theta^2\lambda^2 m}{\ell} \ln^2\frac1m \}$ and concludes the proof in this case.
 }

If instead the second term in \eqref{eq:compNdue} is the {larger one}, we write 
\begin{equation*}\begin{split}
{\frac1{256} \theta\ln \frac1{ m}}\le&
\int_{(0,1)} u_2(x_1^\ast+s\xi_1,s\xi_2) \psi'(s)\ds\\
 =& - \int_{(0,1)} \int_{( s\xi_2,1)} \partial_2 u_2(x_1^\ast+s\xi_1,x_2) \, \psi'(s) \dy\ds + \int_{(0,1)} u_2(x_1^\ast+s\xi_1,1) \psi'(s) \text{ d}s,
 \end{split}
\end{equation*}
 with the first integral being estimated by $2\|\psi'\|_{L^2((0,1))} I^{1/2}(u)$.
{As above, using the estimates for $\psi$  in (\ref{eqestpsiwithbdryd}) and Lemma 
\ref{lemmah12app} with 
the extension to $F_4:=(-2,\ell+2)\times (-1,0)$ leads to
\begin{equation*}
{\frac1{256} \theta\ln \frac1{ m}}
 \le 2\| \nabla \Psi\|_{L^2(F_4)} \|\nabla u_2\|_{L^2(F_4)}
 +2\|\psi'\|_{L^2((0,1))} I^{1/2}(u)
 \le c\ln^{1/2}\frac1m \mu^{-1/2} \frac{1}{\lambda^{1/2}} I^{1/2}(u) 
+c\frac{1}{\lambda m^{1/2}} I^{1/2}(u), 
\end{equation*}
which  gives $I(u)\ge c\min\{\mu\theta^2 \lambda\ln\frac1m, {\theta^2\lambda^2 m} \ln^2\frac1m \}$ and, since $1\le\ell$, concludes the proof also in this case.}  
\end{proof}

 We finally turn to the proof of Proposition \ref{lem:lbbranching}, in which the different ingredients proven in this Section are put together.\\
\begin{proof}[Proof of Proposition \ref{lem:lbbranching}] 
Let $m_0\in(0,\frac14]$ be given as in Lemma \ref{lem:thetasmallbranching}, and define $m_1\in(0,m_0)$ as the unique solution to $m_1\ln\frac1{m_1}=m_0$.
If {$\theta  \geq m_1$}
the statement follows directly from Lemma \ref{lem:thetalargebranching}. 
Otherwise we use Lemma \ref{lem:thetasmallbranching} with the following choices of the parameters $\lambda$ and $m$:
 \begin{itemize}
  \item [i)] Consider first the case $\mu \geq \varepsilon^{1/3} {\theta^{-2/3}}\ell^{-1/3}{m_0^{2/3}}$. 
  Choose $m:=m_0$ and $\lambda:=\eps^{1/3}{\theta^{-2/3}}\ell^{2/3}$.
 { Then $\eps\ell^2\le \theta^2$ implies on the one hand $\lambda\le 1$, 
  and on the other hand $\eps\le \theta^2\ell^{-2}\le \theta^2\ell$, which gives $\eps\le \theta^{2}\lambda$.}
Thus, $\lambda$ is admissible. 
In this case, $\mu\theta^2\lambda \ln \frac1m\le c\mu\theta^2$, hence the second and the last two terms in the minimum can be ignored.
  Therefore, Lemma \ref{lem:thetasmallbranching} yields 
  that 
  \[I_{\tilde{\Omega}_\ell}(u)\geq c\min\{\eps^{2/3}{\theta^{2/3}}\ell^{1/3},\,
  \mu\eps^{1/3}{\theta^{4/3}}\ell^{2/3}\}\ge c \eps^{2/3}{\theta^{2/3}}\ell^{1/3}\]
{where we used $\mu\theta^2\lambda\ge
\eps^{2/3}\theta^{2/3}\ell^{1/3}{m_0^{2/3}}$ by the assumption on $\mu$ and the definition of $\lambda$.}
  \item[ii)] Suppose now that $\mu < \varepsilon^{1/3}  \theta^{-2/3}{\ell^{-1/3}}{m_0^{2/3}}$.  
  We set $m := \max \{ \mu^{3/2}\varepsilon^{-1/2} \theta \ell^{1/2},\theta\ln\frac1\theta\}$ and $\lambda:=\max\{\eps\theta^{-2},{{\frac12}
  \mu^{-1/2}\eps^{1/2}\theta^{-1}\ell^{1/2}}\ln^{-1/2}{\frac{1}{m}}\}$. 
  {By the assumption on $\mu$ and $0<\theta<m_1$ we obtain $m\in[\theta,m_0]$.}
  Further, $\lambda\ge\eps\theta^{-2}$ by definition. 
  {It remains to show $\lambda\le1$.
Since $\eps\ell^2\le\theta^2$ implies $\eps\theta^{-2}\le 1$, it suffices to prove that
  \begin{equation}\label{eqminmmln}
  \min\{
  \ln(3+\frac{\eps}{\mu^3\theta^2\ell}),
  \ln(3+\frac1{\theta^2})\}
  \le 4 \ln \frac1m.
  \end{equation}
Indeed,  (\ref{eqminmmln}) and the assumption on $\eps\ell$ give
$   \eps\ell \le 4 \mu\theta^2\ln\frac1m$
which immediately implies $\lambda\le1$.

  It remains to prove the algebraic inequality  (\ref{eqminmmln}).
  If $m= \mu^{3/2}\varepsilon^{-1/2} \theta \ell^{1/2}$, then
(\ref{eqminmmln}) follows from the fact that for any $x\in(0,\frac14)$ we have $\ln(3+\frac1{x^2})\le 3\ln\frac1x$.
If instead $m=\theta\ln\frac1\theta$, we use analogously
\begin{equation*}
\ln(3+\frac1{x^2})\le 4\ln\frac1{x\ln \frac1x} \text{ for all } x\in(0,\frac14].
\end{equation*}
The last inequality is equivalent to  
$3+\frac1{x^2}\le\frac1{x^4\ln^4 \frac1x}$,
which is true, since $x\ln^2\frac1x\le 4e^{-2}\le\frac23$ for all $x\in (0,1)$. 
Therefore (\ref{eqminmmln}) holds.
  }
  
  \sloppypar
 We use Lemma \ref{lem:thetasmallbranching} and estimate the terms separately below. First, using  that $\lambda^{-1} = \min \{\varepsilon^{-1}\theta^2,
 {2}\mu^{1/2} \eps^{-1/2}\theta\ell^{-1/2}\ln^{1/2}\frac{1}{m} \}$
 and then that $\eps\le\theta^2\ell^{-2}\le \theta^2\ell$, we find
 {
  \[ \frac{\eps\ell}{\lambda}=\min\left\{\theta^2\ell,\ 2\mu^{1/2}\eps^{1/2}\theta\ell^{1/2}\ln^{1/2}\frac{1}{m}\right\}
  \ge\min\left\{\eps^{2/3}\theta^{2/3}\ell^{1/3},\ 2\mu^{1/2}\eps^{1/2}\theta\ell^{1/2}\ln^{1/2}\frac{1}{m}\right\}
  .\]}
{From $m\ge \theta\ln\frac1\theta$ we get $\mu m^2\ge \frac13\mu\theta^2\ln(3+\frac1{\theta^2})$, and recalling the assumption on $\eps \ell$ we get
\begin{equation*}
 \mu m^2\ge \frac13 \mu\theta^2\ln(3+\frac1{\theta^2})\ge \frac14(\eps \ell \mu\theta^2)^{1/2}\ln^{1/2}(3+\frac1{\theta^2}).
\end{equation*}
}  
  {Next, by definition of $\lambda$, we have $\lambda\geq {\frac12}\mu^{-1/2}\eps^{1/2}\theta^{-1}\ell^{1/2}\ln^{-1/2}\frac{1}{m}$ and hence
\[\mu\theta^2\lambda\ln \frac{1}{m}\geq \frac12\mu^{1/2}\eps^{1/2}\theta\ell^{1/2}\ln^{1/2} \frac{1}{m}.\]  
  Finally, using that $\lambda\geq {\frac12}\mu^{-1/2}\eps^{1/2}\theta^{-1}\ell^{1/2}\ln^{-1/2}\frac{1}{m}$, $m\geq\mu^{3/2}\varepsilon^{-1/2} \theta \ell^{1/2}$ and then $\ln^{1/2}\frac{1}{m}\leq \ln\frac{1}{m}$, we find
  \begin{eqnarray*}
 \theta^2 \ell^{-1}\lambda^2m\ln^2\frac{1}{m}\geq {\frac14}\theta^2\ell^{-1}\left(\mu^{-1}\eps\theta^{-2}\ell\ln^{-1}\frac{1}{m}\right)\left(\mu^{3/2}\eps^{-1/2}\theta\ell^{1/2}\right)\ln^2\frac{1}{m}\geq {\frac14}\mu^{1/2}\eps^{1/2}\theta\ell^{1/2}\ln^{1/2}\frac{1}{m}.
  \end{eqnarray*}
  }  
  Putting things together,
  and recalling (\ref{eqminmmln}) 
  we obtain
  \begin{equation*}\begin{split}
  I_{\tilde{\Omega}_\ell}(u)&\geq  c\min\Big\{\eps^{2/3}\theta^{2/3}\ell^{1/3},\\
&  \mu^{1/2}\eps^{1/2}\theta\ell^{1/2}\ln^{1/2}(3+\frac{1}{\theta^2}),
  \mu^{1/2}\eps^{1/2}\theta\ell^{1/2}\ln^{1/2}(3+\frac{\eps}{\mu^3\theta^2\ell}),
{\mu\theta^2\ln(3+\frac\theta\mu)},
{ \mu \theta^2 \ln(3+\ell)}
  {\Big\}}
\end{split}  \end{equation*}  
which concludes the proof also in this case.
  \end{itemize}
  \end{proof}

\subsection{A lower bound for small $\eps$, but $\eps L$ not small.}\label{sec:lb3}
We will now consider the remaining cases. We focus here on the situation in which $\eps$ is small, but $L$ is so large that a straight interface along the entire martensitic sample is not optimal.  Although this condition does not appear explicitly in the assumptions of 
Proposition \ref{propinterp}, the result will only be useful in this situation, as the term $\eps L$ appears as one of the options in the estimate.

The proof of the lower bound in this case has a structure similar to the one of Section \ref{sec:lbunif}.
We shall use, as above, the subdivision of the set of diagonal slices into various subsets. In particular, we shall show in Lemma
\ref{lemmainterpolationestim} that the interface between a $\mathcal P$ and a $\mathcal C$ slice is, energetically speaking, expensive. At variance with 
Section \ref{sec:lbunif}, the interpolation between an affine region and a region with periodic boundary values will no longer be penalized with a $D^2u$ term but with a $\partial_1u_1$ term. This requires estimates on $u$, and not only on its derivatives.

\begin{prpstn}[A lower bound in the case {$\eps\le \mu\theta^2$ and $\varepsilon \le \theta^2$}] \label{lem:logsammeln}\label{propinterp}
  There exists $c>0$ such that for  all $L\in[1/2,\infty)$, $\theta\in(0,1/2]$, $\eps>0$, $\mu>0$, and $u \in \mathcal{X}$ with
\begin{equation*}
\eps\le\mu\theta^2 \qquad\text{ and } \qquad \eps\le\theta^2
\end{equation*}
{we have}
\begin{equation*}
 I(u)\ge  c \min\{\eps L,\mu\theta^2 \ln(3+L),  \mu\theta^2\ln (3+\frac{\eps L}{\mu\theta^2}) +
   \eps^{1/2}\theta^{3/2} + 
   \mu\theta^2\ln (3+\frac{\eps}{\mu^2\theta^2}), 
   \mu\theta^2\ln (3+\frac{\eps L}{\mu\theta^2}) +
   \mu\theta^2\ln(3+\frac\theta\mu) \}.
\end{equation*}
\end{prpstn}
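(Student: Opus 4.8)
The plan is to run the slicing dichotomy of the proof of Proposition~\ref{lem:logsammelnthetasmall}, but to penalise the interface between affine and periodic slices more sharply than there. Fix $u\in\mathcal{X}$, recall the slice sets $\mathcal{C}$, $\mathcal{P}$ from \eqref{eqdefC}, \eqref{eqdefPth}, and put $p:=\calL^1(\mathcal{P})$. Since $\eps\le\theta^2$, Lemma~\ref{lem:gammawuerfel}(\ref{lem:gammawuerfelest}) gives
\begin{equation*}
 I(u)\ge c\,\eps\,\calL^1\bigl([0,L-\xi_1]\setminus\mathcal{C}\bigr)
 \qquad\text{and}\qquad
 I(u)\ge c\Bigl(\mu\theta^2\ln\tfrac{L+1-\xi_1}{p+1}+\eps\,p\Bigr).
\end{equation*}
The second inequality holds for the admissible value $p\in[0,L-\xi_1]$, and minimising its right-hand side over $p$ by a routine case distinction (the unconstrained minimiser being at $p+1=\mu\theta^2/\eps\ge1$ since $\eps\le\mu\theta^2$, and the arising $\ln\ln$-correction being absorbed via $\ln(3+\tfrac{2t}{\ln(3+t)})\ge\tfrac12\ln(3+t)$ as in the proof of Lemma~\ref{lemmaystin}) yields
\begin{equation*}
 I(u)\ge c\min\Bigl\{\eps L,\ \mu\theta^2\ln(3+L),\ \mu\theta^2\ln\bigl(3+\tfrac{\eps L}{\mu\theta^2}\bigr)\Bigr\}.
\end{equation*}
Since $\eps\le\mu\theta^2$ the third term does not exceed the second, so the two ``outer'' regimes of $\mathcal{I}$ are already covered, and we may assume $I(u)\ge cA$ with $A:=\mu\theta^2\ln(3+\tfrac{\eps L}{\mu\theta^2})$ — the summand shared by the two remaining regimes.

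Because $\max\{a,b\}\ge\tfrac12(a+b)$, it now suffices to prove the additive estimate
\begin{equation*}
 I(u)\ge c\min\Bigl\{\eps^{1/2}\theta^{3/2}+\mu\theta^2\ln\bigl(3+\tfrac{\eps}{\mu^2\theta^2}\bigr),\ \mu\theta^2\ln\bigl(3+\tfrac{\theta}{\mu}\bigr)\Bigr\}.
\end{equation*}
Adding this to $I(u)\ge cA$ reproduces $c$ times the minimum of the single-truncated-branching and corner-laminate regimes, which, together with the outer regimes, is the assertion. The case $\mu\ge\theta$ needs nothing further: there $\mu\theta^2\ln(3+\tfrac{\theta}{\mu})\sim\mu\theta^2$ and $\mu\theta^2\ln(3+\tfrac{\eps}{\mu^2\theta^2})\ge\mu\theta^2\ln3$, while $A\ge\mu\theta^2\ln3$, so both remaining regimes of $\mathcal{I}$ are comparable to $A$ and $I(u)\ge cA$ already suffices. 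So assume $\mu\le\theta$.

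In the case $\mu\le\theta$ I would work near the left edge $\{x_1=0\}$ of the nucleus and assemble three contributions supported on essentially disjoint regions. First, the corner logarithm: Lemma~\ref{lemmaystin} (applicable since $\mu\le\theta$) either gives a lower bound dominating one of the regimes of $\mathcal{I}$ — harmless — or forces \eqref{equ1diffc}, and feeding \eqref{equ1diffc} into the boundary test function of Lemma~\ref{lemmabdryln} extracts $c\,\mu\theta^2\ln(3+\tfrac{\theta}{\mu})$ from the austenite energy near the corners $(0,0)$ and $(0,1)$. Second, the interpolation cost: Lemma~\ref{lemmainterpolationestim}, the lower-bound counterpart of Lemma~\ref{lem:tildev}, shows that any horizontal transition between a slab of $\mathcal{C}$-slices and a slab of $\mathcal{P}$-slices carries at least $c\,\eps^{1/2}\theta^{3/2}$ of bulk-plus-surface energy, confined to the $O(\theta)$-thick layer next to $\{x_2=1\}$ where the minority variant lives, and estimated through the $(\partial_1u_1)^2$-contribution — available since both wells of $K$ have vanishing $(1,1)$-entry — rather than a $D^2u$-term. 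Third, the remaining branching and boundary logarithms, in particular $\mu\theta^2\ln(3+\tfrac{\eps}{\mu^2\theta^2})$: these come from Proposition~\ref{lem:lbbranching} applied to $I_{\tilde{\Omega}_\ell}(u)$ for a suitable $\ell\in[1,2L]$ meeting its hypotheses (e.g.\ $\ell=1$, since then $\eps\ell^2=\eps\le\theta^2$ and $\eps\ell=\eps\le\mu\theta^2\le\mu\theta^2\min\{\ln(3+\tfrac{1}{\theta^2}),\ln(3+\tfrac{\eps}{\mu^3\theta^2\ell})\}$ as $\ln3\ge1$), together with Lemma~\ref{lemmabdryln}. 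A finite case distinction in the parameter window where the minimum is the single-truncated-branching term then confirms the displayed additive bound.

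The main obstacle is the interpolation estimate: making it sharp in $\theta$ requires confining the horizontal interpolation between the two variants to the $O(\theta)$-thick strip adjacent to $\{x_2=1\}$ and tracking the interplay of the $\partial_1u_1$-energy in the martensite with the $H^{1/2}$-energy of the trace in the austenite, via Lemma~\ref{lem:interpolH12} and Lemma~\ref{lemmah12app} — the point stressed in the discussion preceding Lemma~\ref{lemmainterpolationestim}. A second technical point, needed so that the $\mathcal{C}/\mathcal{P}$ dichotomy for the one-dimensional functions $u^\xi_{x_1}$ becomes a statement about $u_1$ on the top boundary, is that $u_2$ must be shown to be much smaller than $\theta$ off a small exceptional set of slices — exactly the content of Lemma~\ref{lem:costsu2inA}.
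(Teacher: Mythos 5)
Your overall toolbox is the right one (Lemma \ref{lem:gammawuerfel}(\ref{lem:gammawuerfelest}) for the austenite/slice bound, Lemma \ref{lemmainterpolationestim} for $\eps^{1/2}\theta^{3/2}$, Lemmas \ref{lemmaystin}/\ref{lemmabdryln} for the logarithms, and the max-versus-half-sum trick to add contributions), and your treatment of $\mu\ge\theta$ is fine. But the reduction you base everything on is flawed: after minimizing $\mu\theta^2\ln\frac{L+1-\xi_1}{p+1}+\eps p$ over $p$ you obtain a purely parameter-dependent trichotomy, discard the outer regimes, and then claim it ``suffices to prove'' the unconditional additive estimate $I(u)\ge c\min\{\eps^{1/2}\theta^{3/2}+\mu\theta^2\ln(3+\frac{\eps}{\mu^2\theta^2}),\,\mu\theta^2\ln(3+\frac\theta\mu)\}$. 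That estimate is false, and false precisely in part of the window where your reduction needs it: take e.g. $L=10$, $\eps=\tfrac12\mu\theta^2$ and $\mu/\theta\to0$. Then $A=\mu\theta^2\ln 8$ is the smallest of $\{\eps L,\mu\theta^2\ln(3+L),A\}$, so you are in your ``remaining case'', yet the affine competitor $u^{(2)}$ of Theorem \ref{th:upperbound}(ii) has $I\le c\mu\theta^2\ln(3+L)\sim\mu\theta^2$, while both $\eps^{1/2}\theta^{3/2}\sim\mu\theta^2(\theta/\mu)^{1/2}$ and $\mu\theta^2\ln(3+\frac\theta\mu)$ blow up relative to $\mu\theta^2$. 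The reason is structural: a near-affine configuration has $\calL^1(\mathcal P)\approx0$, so it pays neither the interpolation cost nor any corner cost; Lemma \ref{lemmainterpolationestim} only gives the four-way minimum with $\mu\theta^2 p$, $\theta^2 p$, $\eps L$, and when the active term is $\mu\theta^2 p$ (or $\theta^2p$) one must feed $p$ back into the austenite logarithm to recover $\mu\theta^2\ln(3+L)$. In other words, the case analysis has to be carried out at the level of the function $u$ (through $p=\calL^1(\mathcal P)$ and which term of Lemma \ref{lemmainterpolationestim} is active), keeping $\eps L$ and $\mu\theta^2\ln(3+L)$ as fallback options throughout; collapsing the $u$-dependence first and then asking for an unconditional additive bound cannot work. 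This is exactly how the paper's proof is organized.

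Two further points. First, Proposition \ref{lem:lbbranching} with $\ell=1$ cannot deliver the scaling $\mu\theta^2\ln(3+\frac{\eps}{\mu^2\theta^2})$: its conclusion is a minimum containing $\mu\theta^2\ln(3+\ell)=\mu\theta^2\ln4$, so it never yields more than $c\mu\theta^2$. That term must come from Lemma \ref{lemmabdryln}, whose conclusion however also contains the competitor $\mu\theta^2\ln(3+\frac1{\theta^2})$, absent from your target; eliminating it requires the dichotomy $\eps\le\mu^2$ (where $\ln(3+\frac{\eps}{\mu^2\theta^2})\le\ln(3+\frac1{\theta^2})$, so that term is redundant) versus $\mu^2<\eps$ (where $\mu\theta^2\ln(3+\frac{\eps}{\mu^2\theta^2})\le C\eps^{1/2}\theta^{3/2}$, so the single-truncated-branching regime already follows from $A+\eps^{1/2}\theta^{3/2}$). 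Your ``finite case distinction'' gestures at this but does not identify it, and since your displayed additive bound is literally false, no case distinction can ``confirm'' it as stated. Second, the three contributions need not be ``supported on essentially disjoint regions''; the lemmas bound the total energy, and the sum is obtained (as you do elsewhere) from $\max\{a,b\}\ge\frac12(a+b)$ — but only once each summand has been established in the correct conditional form described above.
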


We first prove {some} lemmata used in the proof of Proposition \ref{lem:logsammeln}.
The first one concerns a local variant of the set $\mathcal C$, for which a sharper estimate on the volume is possible.

\newcommand\setRone{\mathcal R}

\begin{lmm}[Estimates near the boundary]\label{lem:linftyonsmallsets}
Assume that {$ \delta\in(0,\frac1{64}\theta]$}, $u \in W^{1,2}(\Omega_L,\R^2)$, and let
{\begin{equation*}
\setRone:= \Big{\{}x_1\in (0,L-\xi_1)\,:\, 
\frac1{32}\theta\le 
\max\{ \|u_{x_1}^\xi(s)-u^\xi_{x_1}(0)\|_{L^\infty((0,\delta ))}, 
\|u_{x_1}^\xi(1-s)-u^\xi_{x_1}(1)\|_{L^\infty((0,\delta))} \Big\}.
       \end{equation*}
Then
\begin{equation*}
 I(u)\ge c\theta \calL^1(\setRone).
\end{equation*}
}
\end{lmm}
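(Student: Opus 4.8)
The plan is to reduce the statement to a one-dimensional estimate along the diagonal slices $\Delta^\xi_{x_1}$ of \eqref{eq:sliceDelta} and then integrate in $x_1$. Write $v:=u^\xi_{x_1}$ as in \eqref{eq:defuxi}; for a.e.\ $x_1\in(0,L-\xi_1)$ one has $v\in W^{1,2}((0,1))\hookrightarrow C^0([0,1])$, so (up to a null set of $x_1$) the set $\setRone$ is well defined and the suprema in its definition are genuine maxima of continuous functions. The core of the proof is the \emph{slice estimate}: for every $x_1\in\setRone$,
\[
\int_0^1\min\{|v'-\theta|^2,\ |v'+(1-\theta)|^2\}\,\ds\ \geq\ \frac{\theta}{64}.
\]
Granting this, \eqref{eq:estduxi}, Fubini's theorem (the shear $(x_1,s)\mapsto(x_1,0)+s\xi$ has unit Jacobian and maps $\{x_1\in(0,L-\xi_1),\,s\in(0,1)\}$ into $\Omega_L\subseteq\Omega_{2L}$), and the nonnegativity of the austenite and surface terms give
\[
I(u)\ \geq\ \int_{\Omega_L}\min\{|e(u)-\theta e_1\odot e_2|^2,|e(u)+(1-\theta)e_1\odot e_2|^2\}\dcalL^2\ \geq\ \frac1{25}\int_{\setRone}\frac{\theta}{64}\dx\ =\ \frac{\theta}{1600}\,\calL^1(\setRone),
\]
which is the claim with $c=1/1600$. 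Note that only the martensitic bulk term is used, so the conclusion in fact holds for any $u\in W^{1,2}(\Omega_L;\R^2)$ (with $I(u)=+\infty$ if $u$ has no extension in $\mathcal X$).

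For the slice estimate, fix $x_1\in\setRone$ and suppose first that the first term in the definition of $\setRone$ is the larger one. By continuity of $v$ there is $s^*\in(0,\delta]$ with $|v(s^*)-v(0)|\geq\theta/32$. Let $\sigma:(0,1)\to\{\theta,\theta-1\}$ be a measurable selection of the well nearest to $v'(s)$, so that $|v'(s)-\sigma(s)|=\min\{|v'(s)-\theta|,|v'(s)+(1-\theta)|\}$ a.e. Since $|\sigma|\leq1-\theta\leq1$ and $s^*\leq\delta\leq\theta/64$,
\[
\Big|\int_0^{s^*}\sigma\,\ds\Big|\leq s^*\leq\frac{\theta}{64},\qquad
\Big|v(s^*)-v(0)-\int_0^{s^*}\sigma\,\ds\Big|=\Big|\int_0^{s^*}(v'-\sigma)\,\ds\Big|\leq\int_0^{s^*}\min\{|v'-\theta|,|v'+(1-\theta)|\}\,\ds,
\]
so that, using $|v(s^*)-v(0)|\geq\theta/32$,
\[
\int_0^{s^*}\min\{|v'-\theta|,|v'+(1-\theta)|\}\,\ds\ \geq\ \frac{\theta}{32}-\frac{\theta}{64}\ =\ \frac{\theta}{64}.
\]
By Cauchy--Schwarz and $s^*\leq\delta\leq\theta/64$,
\[
\int_0^{s^*}\min\{|v'-\theta|^2,|v'+(1-\theta)|^2\}\,\ds\ \geq\ \frac1{s^*}\Big(\int_0^{s^*}\min\{|v'-\theta|,|v'+(1-\theta)|\}\,\ds\Big)^2\ \geq\ \frac{64}{\theta}\cdot\frac{\theta^2}{64^2}\ =\ \frac{\theta}{64},
\]
and the contribution of $(s^*,1)$ is nonnegative; this proves the slice estimate. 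If instead the second term in the definition of $\setRone$ is the larger one, the identical computation run on the interval $(1-s^*,1)$, for a suitable $s^*\in(0,\delta]$ with $|v(1-s^*)-v(1)|\geq\theta/32$, yields the same bound.

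There is no genuine obstacle here: the argument is essentially one application of the fundamental theorem of calculus followed by Cauchy--Schwarz, and does not even require the singular-perturbation term. The only point that needs care is the numerology: the factor $2$ between the threshold $\theta/32$ in the definition of $\setRone$ and the allowed range $\delta\le\theta/64$ is exactly what keeps $\theta/32-\theta/64$ a fixed fraction of $\theta$; if $\delta$ were allowed up to $\theta/32$ the estimate would collapse. One should also keep track of the slicing constants attached to $\xi=(1/4,1)$ — the factor $25$ in \eqref{eq:estduxi} and the unit Jacobian of the shear used in the Fubini step — but these are already recorded in the preliminaries, so no new computation is needed.
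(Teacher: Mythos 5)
Your proof is correct and follows essentially the same route as the paper: a slice-wise estimate showing that on each slice in $\setRone$ the martensitic energy near the endpoint of $\Delta^\xi_{x_1}$ is at least of order $\theta$ (the paper gets this via $|v'|\le 1+\min\{|v'-\theta|,|v'+(1-\theta)|\}$ and Cauchy--Schwarz on $(1-s,1)$, you via the nearest-well selection $\sigma$, the bound $|\sigma|\le 1$, and Cauchy--Schwarz on $(0,s^*)$), followed by \eqref{eq:estduxi} and Fubini. The numerology and constants check out, so no changes are needed.
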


\begin{proof}
For almost every $x_1\in \setRone$ we have $v:=u^\xi_{x_1}\in W^{1,2}((0,1))$.
For  $s\in (0,\delta)$ we estimate, using  (\ref{eq:estduxi}),
\begin{align*}
|v(1-s)-v(1)|&\le s^{1/2}  \|v'\|_{L^2(( 1-s,1))}\\
& \le s^{1/2}(  \|1\|_{L^2(( 1-s,1))}+ \|\min \{ |v' -\theta|, |v'+(1-\theta)|\}\|_{L^2((0,1))})\\
& \le  {\delta} + \delta ^{1/2} 5 \|\min \{ |e(u)-\theta e_1 \odot e_2|, |e(u)+(1-\theta) e_1 \odot e_2|\}\|_{L^2(\Delta^\xi_{x_1})} 
\end{align*}
{
and correspondingly for $|v(s)-v(0)|$. Therefore for almost any $x_1\in \setRone$
we have 
\begin{equation*}
 \frac1{32}\theta\le \delta+\delta ^{1/2} 5 \|\min \{ |e(u)-\theta e_1 \odot e_2|, |e(u)+(1-\theta) e_1 \odot e_2|\}\|_{L^2(\Delta^\xi_{x_1})}. 
\end{equation*}
For $\delta\le\frac1{64}\theta$  we deduce
\begin{equation*}
 c\theta\le  \|\min \{ |e(u)-\theta e_1 \odot e_2|, |e(u)+(1-\theta) e_1 \odot e_2|\}\|_{L^2(\Delta^\xi_{x_1})}^2 
\end{equation*}
and integrating {over $x_1\in\mathcal{R}$} {we obtain} the assertion.
}
\end{proof}

\begin{lmm}[Interpolation estimate]\label{lemmainterpolationestim}
Let $\mathcal P$ be defined as in (\ref{eqdefPth}), $p:=\calL^1(\mathcal P)$.
Assume
\begin{equation*}
\eps\le\mu\theta^2 \qquad\text{ and } \qquad \eps\le\theta^2.
\end{equation*}
Then
\begin{equation*}
I(u)\ge c\min\{\eps^{1/2}\theta^{3/2},  \mu\theta^2 p, \theta^2 p, \eps L\}. 
\end{equation*}
\end{lmm}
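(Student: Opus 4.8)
The plan is to show that if the energy is small, then there must be a ``transition'' between a slice in $\mathcal C$ and a slice in $\mathcal P$, and to quantify the cost of such a transition in a scale-invariant way. First I would dispose of the trivial cases. If $\mathcal C = \emptyset$, then by Lemma \ref{lem:gammawuerfel}(\ref{lem:gammawuerfelest}) we get $I(u)\ge c\min\{\eps,\theta^2\}\,\calL^1([0,L-\xi_1]\setminus\mathcal C) \ge c\min\{\eps,\theta^2\}(L-\xi_1)\ge c\eps L$ (using $\eps\le\theta^2$ and $L\ge1/2$), and we are done. Similarly, if $\mathcal P=\emptyset$ then $p=0$ and there is nothing to prove. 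So we may assume both sets are nonempty; pick $x_c\in\mathcal C$ and $x_p\in\mathcal P$. The set $\mathcal C$ consists of slices that are almost affine with slope $\theta$ or $\theta-1$, while on a $\mathcal P$-slice the endpoints differ by at most $2^{-7}\theta$; by Lemma \ref{lem:gammawuerfel}(\ref{lem:gammawuerfelC}) the $\mathcal C$-slices have endpoint difference at least $\tfrac34\theta$, so the two behaviors are genuinely incompatible and the transition between them costs energy.

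The core of the argument is an interpolation estimate in the horizontal ($x_1$) direction. The key point, as the text emphasizes in the remarks before the lemma, is that the interpolation cost should be localized and measured with a $\partial_1 u_1$ term (hence requiring control on $u$ itself, not just its derivatives), so as to capture the optimal power $\eps^{1/2}\theta^{3/2}$ rather than a weaker bound. Concretely, I would proceed as follows: between $x_c$ and $x_p$ there is an interval $(a,b)\subset(0,L-\xi_1)$ of some length $\ell_0=b-a$ across which, on the corresponding slices, the quantity $u^\xi_{x_1}(1)-u^\xi_{x_1}(0)$ (or an appropriate function of the traces, e.g. $\langle\partial_2 u_1 + \partial_1 u_2\rangle$ along the slice) changes by an amount of order $\theta$. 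On one end it is close to $\theta$ (or $\theta-1$), on the other close to $0$. Using the fundamental theorem of calculus in the $x_1$-variable and Hölder's inequality, the change of order $\theta$ over a horizontal distance $\ell_0$ forces
\begin{equation*}
 \theta \le c\Big(\ell_0^{1/2}\|\partial_1 u\|_{L^2((a,b)\times(0,1))} + \ell_0 \cdot(\text{average surface-energy density})\Big)
\end{equation*}
together with the observation that having $u^\xi_{x_1}{}'$ away from both $\theta$ and $\theta-1$ on a set of slices of measure $\ell_0$ costs $\min\{\eps,\theta^2\}\ell_0$ of bulk/surface energy (via Lemma \ref{lem:gammawuerfel}(\ref{lem:gammawuerfelin})). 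Balancing: either $\|\partial_1 u\|_{L^2}^2\gtrsim \theta^2/\ell_0$, so $I(u)\gtrsim \theta^2/\ell_0$, or the surface/bulk term dominates and $I(u)\gtrsim \min\{\eps,\theta^2\}\,\ell_0 = \eps\ell_0$. Optimizing the worst case over $\ell_0$: if we are forced to take $\ell_0$ large, the $\eps\ell_0$ term is large; the minimum of $\max\{\theta^2/\ell_0,\,\eps\ell_0\}$ over $\ell_0\in(0,L]$ is $\eps^{1/2}\theta^{3/2}$, attained at $\ell_0=\theta\eps^{-1/2}$, unless this length exceeds $L$, in which case $\ell_0=L$ is forced and $I(u)\gtrsim \eps L$.

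Finally I would incorporate the dependence on $p=\calL^1(\mathcal P)$: the above gives a transition cost for a \emph{single} $\mathcal C$--$\mathcal P$ pair, but if $\mathcal P$ is large one expects additional bulk energy $\min\{\eps,\theta^2\}\,p = \eps p$ directly from Lemma \ref{lem:gammawuerfel}(\ref{lem:gammawuerfelest}) (since $\mathcal P\subset[0,L-\xi_1]\setminus\mathcal C$), and moreover one should get a $\mu\theta^2 p$-type contribution or a $\theta^2 p$ one depending on whether $\mu\le 1$, because the $\mathcal P$-slices either carry interior bulk/surface energy of density $\ge\min\{\eps,\theta^2\}$ or force exterior elastic energy in the austenite. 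Combining these with the transition estimate and taking the minimum over all the alternatives produces the claimed bound $I(u)\ge c\min\{\eps^{1/2}\theta^{3/2},\,\mu\theta^2 p,\,\theta^2 p,\,\eps L\}$. I expect the \textbf{main obstacle} to be the localization step: getting the interpolation cost to scale as $\eps^{1/2}\theta^{3/2}$ (and not merely, say, $\eps\theta$ or $\eps^{2/3}\theta^{4/3}$) requires choosing the transition window $(a,b)$ of the \emph{right} length and carefully arranging the Hölder inequality so that both the $\ell_0$-dependent prefactor and the truncation/averaging of $u$ over the window are handled simultaneously — this is precisely the delicate interplay between the $\partial_1 u_1$ interior term and the trace/boundary terms that the paper flags as new.
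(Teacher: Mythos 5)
Your overall skeleton --- pick a $\mathcal C$-slice and a $\mathcal P$-slice, propagate the order-$\theta$ discrepancy horizontally via the fundamental theorem of calculus and H\"older applied to $\partial_1 u_1$, and balance the resulting $1/\ell_0$ term against the cost $\eps\ell_0$ of the intermediate slices --- is indeed the paper's strategy, but the quantitative core is missing, and your own arithmetic signals it: balancing $\theta^2/\ell_0$ against $\eps\ell_0$ yields $\eps^{1/2}\theta$, not $\eps^{1/2}\theta^{3/2}$. A bound of order $\eps^{1/2}\theta$ cannot be proved here (for small $\theta$ it would contradict the single-truncated-branching upper bound), and the reason your estimate overshoots is that the order-$\theta$ discrepancy you invoke is not available over the whole slice: a $\mathcal P$-slice may follow the affine profile $\theta s$ except on an $s$-set of measure of order $\theta$ near the endpoints (a single laminate), so its distance from a $\mathcal C$-slice, measured in $L^2$ or on average over $s\in(0,1)$, is only of order $\theta^{3/2}$, i.e.\ $\theta^3$ after squaring. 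The paper therefore localizes the comparison \emph{in the slice variable} to a boundary layer $s\in(0,\delta)$ with $\delta=\tfrac{1}{64}\theta$: there the averaged difference $\frac1\delta\int_0^\delta|u_1((x_1,0)+s\xi)-u_1((x_1,0)+(1-s)\xi)|\,\mathrm{d}s$ is at least $\tfrac12\theta$ on good $\mathcal C$-slices and at most $\tfrac38\theta$ on good $\mathcal P$-slices, which is what produces $\theta^2\delta\sim\theta^3\le c\,|x_c-x_p|\,I(u)$ and hence, after balancing with $\eps\beta$, the exponent $\eps^{1/2}\theta^{3/2}$. Making ``good'' precise requires two exceptional sets your sketch does not have: the set $\mathcal G$ of Lemma \ref{lem:costsu2inA} (controlling the $u_2$-contributions along the slice through austenite traces above and below the nucleus) and the set $\mathcal R$ of Lemma \ref{lem:linftyonsmallsets} (excluding slices with large $L^\infty$-oscillation of $u^\xi_{x_1}$ within the layer of width $\delta$); without these you cannot pin the discrepancy to the boundary layer, and you cannot reduce the slice quantity to $u_1$ alone, which is the only component you can transport horizontally with $\partial_1u_1$.

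Your proposed source of the terms $\mu\theta^2p$ and $\theta^2p$ is also not correct: $\mathcal P$-slices do not ``force exterior elastic energy in the austenite'' --- by definition they are precisely the slices whose boundary values nearly match, hence the cheap ones for the austenite --- and Lemma \ref{lem:gammawuerfel}(\ref{lem:gammawuerfelest}) only gives $\eps p$ from them, which is weaker than claimed. In the paper these terms arise from the exceptional sets: if $\mathcal P\setminus\mathcal R\setminus\mathcal G$ is negligible, then $\calL^1(\mathcal R\cup\mathcal G)\ge p$, and the bounds $I(u)\ge c\,\theta\,\calL^1(\mathcal R)$ and $I(u)\ge c\min\{1,\mu\}\theta^2\calL^1(\mathcal G)$ give exactly $c\min\{\theta^2p,\mu\theta^2p\}$ (this is where $\mu$ enters, through the trace estimates for $u_2$). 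So the missing ingredients are the localization in $s$ to a layer of width $\sim\theta$ and the two auxiliary lemmas that legitimize the localized comparison; as written, your argument either claims too much (and hence proves nothing) or does not reach the stated bound.
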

\begin{proof}
The proof is based on selecting a good slice in which $u$ is approximately affine, and another one in which the boundary values are close to each other, and estimating the energy in between. We shall work on a thin slice around the boundary, of width  $\delta:=\frac1{64}\theta$.

{\bf Step 1. Estimate on good $\mathcal C$-slices.}\\
We recall that $\mathcal C$ 
was defined in (\ref{eqdefC}) as the set of slices such that the deformation is close to one of the two martensite variants in the interior, and that by Lemma \ref{lem:gammawuerfel}(\ref{lem:gammawuerfelest}) it obeys {(recall that $\eps\leq\theta^2$)}
\begin{equation}\label{eqfinaestC}
 I(u)\ge c \eps \calL^1\left([0,L-\xi_1]\setminus \mathcal C\right).
\end{equation}
We let $\Gstar$ be the set from  Lemma \ref{lem:costsu2inA} with $\bar C:=2^{-7}$, which obeys
\begin{equation}\label{eqfinaestG}
 I(u)\ge c \min\{\theta^2 ,\mu\theta^2 \}\calL^1(\Gstar).
\end{equation}
By Lemma \ref{lem:costsu2inA}(\ref{lem:costsu2inA3}),
\begin{equation*}
{\frac1\delta\int_{(0,\delta)} |u_2(x_1+{(1-s)\xi_1},1-s)-u_2(x_1+s\xi_1,s)|\ds < \frac1{16}\theta}
\text{ for any } x_1\in [0,L-\xi_1]\setminus\Gstar.
\end{equation*}

{
We can assume $\calL^1(\mathcal C\setminus\Gstar)>0$. Indeed, if this were not the case, then (up to null sets)
$\calC\subset\Gstar$ and $([0,L-\xi_1]\setminus \calC) \cup \Gstar=[0,L-\xi_1]$, which implies
 $I(u)\ge cL \min\{\theta^2,\mu\theta^2,\eps\}=c \eps L$ and concludes the proof.} 

{We claim that 
\begin{equation}\label{claimxc}
\begin{split}
\frac1{2}\theta 
&\le\frac1\delta
\int_{(0,\delta)}  |u_1((x_c,0)+s\xi)-u_1((x_c,0)+(1-s)\xi)|\ds
\text{ for any ${x_c}\in\mathcal C\setminus\Gstar$.}
\end{split}
\end{equation}
}
{To see this,
 assume $x_1\in\mathcal C$ and for $\sigma\in\{0,1\}$ let $f_\sigma(s):=u_{x_1}^\xi(0)+s(\theta-\sigma)$.
Then
$|f_\sigma(1-s)-f_\sigma(s)|
=|1-2s|\,|\theta-\sigma|\ge |1-2s|\theta$, therefore for any $s\in(0,\delta)$ we have
\begin{equation*}
\begin{split}
\frac78\theta\le (1-2s)\theta\le&
\min_{\sigma\in\{0,1\}} |f_\sigma(s)-f_\sigma(1-s)|\\
\le &|u^\xi_{x_1}(s)-u^\xi_{x_1}(1-s)|
+ \min_{\sigma\in\{0,1\}}  (|u^\xi_{x_1}(s)-f_\sigma(s)|+
|u^\xi_{x_1}(1-s)-f_\sigma(1-s)|)\\
\le &|u^\xi_{x_1}(s)-u^\xi_{x_1}(1-s)|+\frac2{16}\theta.
\end{split}
\end{equation*}
Therefore, recalling that $u_{x_1}^\xi(s)=u_1((x_1,0)+s\xi)+4u_2((x_1,0)+s\xi)$,
\begin{equation*}
\begin{split}
\frac34\theta&\le
|u_{x_1}^\xi(s)-u_{x_1}^\xi(1-s)|\\
&\le
 |u_1((x_1,0)+s\xi)-u_1((x_1,0)+(1-s)\xi)|
+4|u_2((x_1,0)+s\xi)-u_2((x_1,0)+(1-s)\xi)| 
\end{split}
\end{equation*}
for any $x_1\in\mathcal C$.
Averaging over $s\in(0,\delta)$,  and using that $x_1\not\in\Gstar$,
\begin{equation*}
\begin{split}
\frac34\theta &\le\frac1\delta
\int_{(0,\delta)}\left|u_{x_1}^\xi(s)-u_{x_1}^\xi(1-s)\right|\ds\\
&\le\frac1\delta
\int_{(0,\delta)}  \left|u_1((x_1,0)+s\xi)-u_1((x_1,0)+(1-s)\xi)\right|\ds
+\frac14\theta
\end{split}
\end{equation*}
which proves (\ref{claimxc}).
}

{\bf Step 2. Estimate on good $\mathcal P$-slices.}\\
We 
consider
the set $\setRone$ defined in  Lemma \ref{lem:linftyonsmallsets},  which obeys
\begin{equation}\label{eqfinaestR}
 I(u)\ge c \theta \calL^1(\setRone).
\end{equation}
We can assume $\calL^1(\mathcal P\setminus\setRone\setminus\Gstar)>0$. Indeed, if this were not the case, then 
$\calL^1(\setRone\cup\Gstar)\ge p$, $I(u)\ge cp \min\{\theta^2,\mu\theta^2\}$, and the proof is concluded.\\
We claim  that
\begin{equation}\label{claimxp}
\begin{split}
\frac1\delta
\int_{(0,\delta)}  \left|u_1((x_p,0)+s\xi)-u_1((x_p,0)+(1-s)\xi)\right|\ds\le
\frac3{8}\theta 
\text{ for any $x_p\in\mathcal P\setminus\setRone\setminus\Gstar$.}
\end{split}
\end{equation}
Indeed, let $x_1\in \mathcal P$. Then 
$|u(x_1,0)-u(x_1+\xi_1,1)|\le 2^{-7}\theta$, therefore
$|u_{x_1}^\xi(0)-u_{x_1}^\xi(1)|\le 2^{-4}\theta$.
If $x_1\not\in\setRone$, then 
a triangular inequality shows that for any $s\in(0,\delta)$
\begin{equation*}
\begin{split}
|u_{x_1}^\xi(s)-u_{x_1}^\xi(1-s)|&\le
|u_{x_1}^\xi(0)-u_{x_1}^\xi(1)| +
|u_{x_1}^\xi(s)-u_{x_1}^\xi(0)| +
|u_{x_1}^\xi(1-s)-u_{x_1}^\xi(1)|\le\frac{1}{16}\theta+\frac{2}{32}\theta=\frac18\theta.
 \end{split}
\end{equation*}
A similar computation as above, using 
\begin{equation*}
\begin{split}
 |u_1((x_1,0)+s\xi)-u_1((x_1,0)+(1-s)\xi)|
\le |u_{x_1}^\xi(s)-u_{x_1}^\xi(1-s)|
+
4|u_2((x_1,0)+s\xi)-u_2((x_1,0)+(1-s)\xi)|
\end{split}
\end{equation*}
and $x_1\not\in\Gstar$, leads to
\begin{equation*}
\begin{split}
\frac1\delta
\int_{(0,\delta)}  |u_1((x_1,0)+s\xi)-u_1((x_1,0)+(1-s)\xi)|\ds \le
&\frac1\delta
\int_{(0,\delta)}|u_{x_1}^\xi(s)-u_{x_1}^\xi(1-s)|\ds
+\frac14\theta\le \frac38\theta.
\end{split}
\end{equation*}
This concludes the proof of (\ref{claimxp}).

{\bf Step 3. Interpolation.}\\
We choose $x_p\in\calP\setminus \setRone\setminus\Gstar$ and $x_c\in\mathcal C\setminus\Gstar$ and compute
\begin{equation*}
 \begin{split}
 | u_1((x_c,0)+s\xi)-u_1((x_c,0)+(1-s)\xi)|
\le&
   | u_1((x_c,0)+s\xi)-u_1((x_p,0)+s\xi)|\\
   &+
   | u_1((x_p,0)+s\xi)-u_1((x_p,0)+(1-s)\xi)|\\
   &+
   | u_1((x_p,0)+(1-s)\xi)-u_1((x_c,0)+(1-s)\xi|.
  \end{split}
\end{equation*}
Averaging over $s\in(0,\delta)$ and using  (\ref{claimxc}),   (\ref{claimxp}) and Hölder gives
\begin{equation*}
\begin{split}
\frac12\theta\le & \frac1\delta
\int_{(0,\delta)}  | u_1((x_c,0)+s\xi)-u_1((x_c,0)+(1-s)\xi)|\ds\\
\le &
\frac1\delta \int_{(0,\delta)}  | u_1((x_p,0)+s\xi)-u_1((x_p,0)+(1-s)\xi)|\ds+
\frac1\delta
\int_{(0,\delta)\cup(1-\delta,1)}  | u_1((x_p,0)+s\xi)-u_1((x_c,0)+s\xi)|\ds\\
\le & \frac38\theta + 
\frac{\sqrt2}{\delta^{1/2}} \Big(\int_{(0,\delta)\cup(1-\delta,1)}  | u_1((x_p,0)+s\xi)-u_1((x_c,0)+s\xi)|^2\ds\Big)^{1/2}.
\end{split}
\end{equation*}
Therefore
\begin{equation*}
\begin{split}
\frac1{128}\theta^2\delta\le 
\int_{(0,1)}  | u_1((x_p,0)+s\xi)-u_1((x_c,0)+s\xi)|^2\ds.
\end{split}
\end{equation*}
By the fundamental theorem of calculus and Hölder's inequality, for any $s\in (0,1)$ we have
\begin{equation*}
 | u_1((x_p,0)+s\xi)-u_1((x_c,0)+s\xi)|^2
 \le |x_p-x_c| \int_{(0,L)} |\partial_1 u_1|^2(t,s) \dt.
\end{equation*}
Integrating over $s$ gives
\begin{equation*}
\frac{\theta^3}{c}\le
 \int_0^1 |u_1((x_p,0)+s\xi)-u_1((x_c,0)+s\xi)|^2\ds\le 
  |x_c-x_p|\, \|\partial_1u_1\|^2_{L^2((x_c,x_p)\times(0,1))}\le|x_c-x_p|\, I(u).
\end{equation*}

{\bf Step 4. Conclusion of the proof.}\\
{
Let $\beta:=\calL^1(
\setRone \cup \Gstar
\cup(
[0,L-\xi_1]\setminus\mathcal C\setminus\mathcal P))$.
On the one hand,  (\ref{eqfinaestC}), (\ref{eqfinaestG}), (\ref{eqfinaestR})
give $I(u)\ge c \min\{\eps,\theta^2,\mu\theta^2\} \beta=c \eps \beta$.
On the other hand,
$\inf\{|x_c-x_p|: \text{ (\ref{claimxc}) and (\ref{claimxp}) hold}\}\le \beta$. Therefore
\begin{equation*}
 I(u)\ge c \min_{\beta'\in(0,L]}\big[ \frac{\theta^3}{\beta'} +\eps \beta' \big].
 \end{equation*}
The minimum is attained at $\beta'=\eps^{-1/2}\theta^{3/2}$ or at $\beta'=L$, and
 gives
\begin{equation*}
 I(u)\ge c\min\{\theta^{3/2}\eps^{1/2}, \eps L\}
 \end{equation*}
 which concludes the proof.
 }
  \end{proof}

\begin{lmm}[The boundary logarithm]\label{lemmabdryln}
There are $c>0$ and $m_2\in(0,\frac14]$ such that the following holds. 
If 
\begin{equation*}
\frac12\le L, \qquad
0<\theta\le m_2, \qquad
\eps\le \mu\theta^2 \qquad
\mu\le \theta,\quad\text{and}\quad
\quad \mu^2\theta^2\le\eps,
\end{equation*}
then for any $u\in{\mathcal{X}}$ one has
\begin{equation*}
\frac1c I(u)\ge 
  \min\left\{\eps L,
  \mu\theta^2 \ln(3+\frac1{\theta^2}),
  \mu\theta^2 \ln(3+L) ,
  \mu\theta^2\ln(3+\frac\theta\mu),
   {\mu\theta^2}\ln (3+\frac{\eps}{\mu^2\theta^2})
  \right\}.
\end{equation*}
 \end{lmm}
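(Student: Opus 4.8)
The plan is to bound $I(u)$ from below by producing, on a ``typical'' diagonal slice, a logarithmic competition between elastic energy in the austenite near the left corner and the minority-phase volume, exactly in the spirit of Lemma~\ref{lem:thetasmallbranching} but now in the regime $\mu^2\theta^2\le\eps\le\mu\theta^2$, where the length scale $\lambda$ of the interior laminate is of order one and the ``good'' slice carries essentially no microstructure. First I would use Lemma~\ref{lem:gammawuerfel}(\ref{lem:gammawuerfelest}) (or rather its proof) together with a set $\mathcal P_*$ as in Step~1 of the proof of Lemma~\ref{lem:thetasmallbranching} to reduce to the case that most slices $x_1\in(0,L-\xi_1)$ have close boundary values; if $\calL^1(\mathcal P_*)\le\frac12 L$ one already gets $I(u)\ge c\mu m^2$ for a suitable $m\asymp\theta$, hence the term $\mu\theta^2\ln(3+\frac1{\theta^2})$ up to adjusting constants, or more precisely $\mu\theta^2$ which is absorbed into that term. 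Next, discard slices in $\mathcal R$ (large surface or slice elastic energy, costing $\eps$ per unit length, hence controlled by $\eps L$) and in $\mathcal G$ from Lemma~\ref{lem:costsu2inA} (costing $\min\{\theta^2,\mu\theta^2\}=\mu\theta^2$ per unit length since $\mu\le\theta\le 1$, hence $\mu\theta^2 L\ge \eps L$); on the remaining set of positive measure pick a slice $x_1^*$ as in Step~1 of Lemma~\ref{lem:thetasmallbranching}, giving a trace $v=u^\xi_{x_1^*}$ with $|v(1)-v(0)|$ small and $\|\min\{|v'-\theta|,|v'+1-\theta|\}\|_{L^2}^2\lesssim\eps$.

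The heart of the argument is then the boundary correction and the energy estimate. I would apply Lemma~\ref{lemmaystin} with $c_*$ a small absolute constant and $m\asymp\theta$ (this is where the assumption $\mu\le\theta$ is used): either one of $\mu\theta^2\ln(3+\frac\theta\mu)$, $\mu\theta^2\ln\frac1m\asymp\mu\theta^2\ln\frac1\theta$, $\mu\theta^2\ln(3+L)$ is a lower bound for $I(u)$ and we are done (noting $\ln\frac1\theta$ and $\ln(3+\frac1{\theta^2})$ differ by an absolute factor), or else estimate~\eqref{equ1diffc} holds and provides control of $\int_m^1|u_1(x_1,1)-u_1(x_1,0)|/x_1\,\dx$. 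This is exactly the ingredient needed to make the integration-by-parts/test-function argument of Steps~2--4 of Lemma~\ref{lem:thetasmallbranching} go through: build the logarithmic test function $\psi$ (here with $\lambda\asymp1$, so $\hat\psi$ consists of $O(1)$ bumps of width $\asymp m$), add the boundary piece $\psi_B$ (case $\mu\le\theta$), use Lemma~\ref{lem:costsu2inA}(\ref{lem:costsu2inA46l32}) for the $u_2$ boundary term, and obtain $\theta\ln\frac1m\lesssim\int_{(0,1)}v\psi'\,\ds$, which after splitting into the $u_1$ and $u_2$ parts and applying Lemma~\ref{lemmah12app} yields $I(u)\ge c\min\{\mu\theta^2\ln\frac1m,\ \theta^2 L^{-1}m\ln^2\frac1m\}$ (with $\lambda=1$). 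Since $m\asymp\theta$ and $L\ge\frac12$, the second term is $\gtrsim\theta^3\ln^2\frac1\theta/L$; I need to check that this is $\ge c\eps L$ or folds into one of the listed regimes — here is where $\mu^2\theta^2\le\eps$ enters, relating the two: if $\theta^3\ln^2\frac1\theta/L< \eps L$ then $\eps L^2>\theta^3\ln^2\frac1\theta$, and combined with $\eps\le\mu\theta^2$ one extracts $\mu > \theta\ln^2\frac1\theta/L^2$, which should let one absorb the term into $\mu\theta^2\ln(3+L)$ or $\eps L$ after an elementary manipulation of logarithms.

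The main obstacle, as in the source lemma, will be the bookkeeping of the boundary term for $u_2$ and the algebraic juggling at the end: verifying that the quantity $\theta^2 L^{-1} m (\ln\frac1m)^2$ produced by the test-function estimate, with $m$ forced to be $\asymp\theta$ by Lemma~\ref{lemmaystin}, is genuinely bounded below by a constant times the minimum of the five listed regimes. The inequalities $\eps\le\mu\theta^2$, $\mu\le\theta$, $\mu^2\theta^2\le\eps$ together pin down $\mu\asymp\eps^{1/2}/\theta$ up to the window $[\eps/\theta^2,\theta]$ for $\mu$, and one has to see that in this window the ``interior logarithmic'' bound $\theta^3\ln^2\frac1\theta/L$ never undercuts $\eps L$ and $\mu\theta^2\ln(3+L)$ simultaneously; I expect this to come down to the elementary inequality $\ln(3+\frac1{x^2})\le c\ln\frac1{x\ln\frac1x}$ already used in the proof of Proposition~\ref{lem:lbbranching}, applied with $x=\theta$, together with $L\ge\frac12$. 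A secondary but routine point is that one must re-run the slicing cleanup (discarding $\mathcal R$ and $\mathcal G$) with the correct constants $\bar C$, $\hat C$, and choose $m_2$ small enough that $\ln\frac1{m_2}$ dominates the $O(1)$ contributions $\|\psi\|_{L^1}$, $\|\psi\|_{L^2}$ coming from the test function, just as $m_0$ was chosen in Lemma~\ref{lem:thetasmallbranching}.
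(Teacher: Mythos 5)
Your overall architecture (good-slice selection, logarithmic test function, Lemma \ref{lemmaystin} for the corner, Lemma \ref{lemmah12app} to pay in the austenite) is the right family of tools, but the two specific choices you make — a \emph{typical} $\mathcal P_*$-slice anywhere in $(0,L)$ and the parameter $m\asymp\theta$ — create a genuine gap. First, transplanting Steps 2--4 of Lemma \ref{lem:thetasmallbranching} with $\lambda\asymp 1$, $\ell=2L$ gives at best $I(u)\ge c\min\{\mu\theta^2\ln\frac1\theta,\ \theta^3\ln^2\frac1\theta\, L^{-1}\}$, and the second alternative is \emph{not} controlled from below by the five listed regimes when $L$ is large: take e.g. $\mu=\theta^2$, $\eps=\mu\theta^2=\theta^4$, $L=\theta^{-10}$ (all hypotheses hold), where every listed term is $\gtrsim\theta^4\ln\frac1\theta$ while $\theta^3\ln^2\frac1\theta/L=\theta^{13}\ln^2\frac1\theta$. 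Your proposed repair ("extract $\mu>\theta\ln^2\frac1\theta/L^2$ and absorb") cannot work: you cannot absorb a term of the minimum you \emph{proved} that is smaller than the bound you \emph{claim}; knowing a lower bound on $\mu$ does not upgrade the inequality you actually obtained. The paper avoids the $L^{-1}$ loss by never comparing against a far-away periodic slice: it picks a nearly \emph{affine} slice $x_1\in\mathcal C\setminus\mathcal G$ within distance $q+\frac14$ of the left boundary, where $q:=\calL^1(\mathcal G\cup([0,L-\xi_1]\setminus\mathcal C))$ costs $\ge c\,\eps q$ (Lemma \ref{lem:gammawuerfel}(\ref{lem:gammawuerfelest}) plus Lemma \ref{lem:costsu2inA}); the horizontal transfer then costs only $cm^{-1/2}(q+1)^{1/2}I^{1/2}(u)$, so one gets $\eps q+\frac{\theta^2m}{q+1}\ln^2\frac1m\ge \eps^{1/2}\theta m^{1/2}\ln\frac1m$, an $L$-independent quantity. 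Note also that on a $\mathcal C$-slice the pairing $\frac78\theta\ln\frac1m\le|\int_0^1 u^\xi_{x_1}\psi_C'\,\ds|$ comes directly from the affine structure, so no $\omega$, $\hat\psi$ or $\lambda$ is needed at all; the relevant $u_2$-control is Lemma \ref{lem:costsu2inA}(\ref{lem:costsu2inA46bdryln}), not (\ref{lem:costsu2inA46l32}).

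Second, fixing $m\asymp\theta$ cannot generate the fifth regime $\mu\theta^2\ln(3+\frac{\eps}{\mu^2\theta^2})$. The paper takes $m:=\max\{\theta,\,m_2\mu^2\theta^2/\eps\}$, and precisely when $\eps\le m_2\mu^2\theta$ the choice $m\asymp\mu^2\theta^2/\eps$ turns $\eps^{1/2}\theta m^{1/2}\ln\frac1m$ into $c\,\mu\theta^2\ln(3+\frac{\eps}{\mu^2\theta^2})$; with $m\asymp\theta$ you would instead have to prove the strictly stronger bound with $\mu\theta^2\ln(3+\frac1{\theta^2})$ in the regime $\eps\ll\mu^2$, and no argument for this is offered (nor is it needed, since the weaker statement suffices for Proposition \ref{propinterp}). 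A smaller but real error of the same kind: in your Step 1 dichotomy you claim a bound of $c\mu m^2\asymp c\mu\theta^2$ "is absorbed into" $\mu\theta^2\ln(3+\frac1{\theta^2})$ — this fails for $\theta\to0$ since the logarithm is unbounded; the paper's proof sidesteps this because the complement of $\mathcal C$ is paid at rate $\eps$ (giving the $\eps L$ option), not at rate $\mu m^2$.
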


\begin{proof}
{\bf Step 1. Energy estimate.}\\
We show that there are $c>0$, $m_2\in(0,\frac14]$ such that for any $m\in[\theta,m_2]$
there is $q\in[0,L]$ such that
\begin{equation}\label{eqstep1energyest}
 \frac1c I(u)\ge  \min\Big\{ \eps L,
  \mu\theta^2\ln(3+\frac\theta\mu),
  \eps q +   \frac{\theta^2m}{q+1}\ln^2 \frac1m,
     \mu\theta^2 \ln\frac1m,
      \mu\theta^2 \ln(3+L) 
  \Big\}.
\end{equation}

{
We define, similar to Lemma \ref{lem:thetasmallbranching},  $\psi_C(t):=\max \{\psi_B(t), \psi_B(1-t)\}$, where
\begin{equation*}
  \psi_B(t):=\left[ \ln \frac{1}{m} - \left(\ln \frac{|t|}{m}\right)_+ \right]_+
  =
  \begin{cases}
  \ln\frac1m, & \text{ if } |t|\le m,\\
  \ln\frac{1}{|t|}, & \text{ if } m<|t|\le 1,\\
   0, & \text{ if } |t|>1\,,
  \end{cases}
\end{equation*}
and compute
$\|\psi_B\|_{L^1((0,1))} \le 1$,
$\|\psi_B'\|_{L^1((0,1))}\le \ln\frac1m$, and
$\|\psi_B'\|_{L^2((0,1))}\le m^{-1/2}$, which imply
\begin{equation}\label{eqestpsiclemmaln}
\|\psi_C\|_{L^1((0,1))} \le 2, \quad
\|\psi_C'\|_{L^1((0,1))}\le 2\ln\frac1m,\quad \text{ and }\quad
\|\psi_C'\|_{L^2((0,1))}\le \frac2{m^{1/2}}.
\end{equation}
}

We first claim that
\begin{equation}\label{eqthetalnxc}
\begin{split}
\frac78 \theta\ln\frac1m 
&\le 
 \Big|\int_0^1 u_{x_1}^\xi(s)\psi_C'(s) \ds \Big| 
\quad \text{ for any $x_1\in\mathcal C$},
 \end{split}
\end{equation}
where $\mathcal C$ was defined in (\ref{eqdefC}).
{To see this we compute, for any $x_1\in\mathcal C$ and $\sigma\in\R$,
\begin{equation*}
\begin{split}
 \sigma\ln\frac1m &= \int_0^1 \frac{d}{ds} (s\sigma \psi_C(s)) \ds 
 =\sigma \int_0^1\psi_C\ds + \int_0^1 (s\sigma)\psi_C'(s) \ds\\
 & =\sigma \int_0^1\psi_C\ds + \int_0^1 (s\sigma+u^\xi_{x_1}(0)-u^\xi_{x_1}(s))\psi_C'(s) \ds
 + \int_0^1 (u_{x_1}^\xi(s){-u^\xi_{x_1}(0)})\psi_C'(s) \ds. 
\end{split}
\end{equation*}
{Since $\psi_C(0)=\psi_C(1)$, the last term disappears, and}
\begin{equation*}
\begin{split}
| \sigma|\ln\frac1m & \le |\sigma|\, \|\psi_C\|_{L^1((0,1))}  + 
\|s\sigma{+u^\xi_{x_1}(0)}-u^\xi_{x_1}(s)\|_{L^\infty((0,1))} 
\|\psi_C'\|_{L^1((0,1))} 
+\Big| \int_0^1 u_{x_1}^\xi(s)\psi_C'(s) \ds\Big|.
\end{split}
\end{equation*}
At this point we recall (\ref{eqestpsiclemmaln}) and that $x_1\in\mathcal C$. Therefore there is a choice of $\sigma\in\{\theta,{\theta-1}\}$ such that
\begin{equation*}
\begin{split}
| \sigma|\ln\frac1m & \le 2|\sigma|+ \frac1{16}\theta
\ln\frac1m+\Big| \int_0^1 u_{x_1}^\xi(s)\psi_C'(s) \ds\Big|.
\end{split}
\end{equation*}
}

{
If $m_2$ is sufficiently small, $2\le 2^{-4}\ln\frac1m$. For both choices of $\sigma$ we have $\theta\le|\sigma|$. Therefore
\begin{equation*}
\begin{split}
\frac78 \theta\ln\frac1m & \le \Big| \int_0^1 u_{x_1}^\xi(s)\psi_C'(s) \ds\Big|,
\end{split}
\end{equation*}
which concludes the proof of (\ref{eqthetalnxc}).
}

{
Let $\Gstar$  be as in  Lemma \ref{lem:costsu2inA} with 
$\bar C:=2^{-8}$.
Since $\psi_C'(s)=-\psi_C'(1-s)=-1/s$ on $(m,1/2)$ and $\psi_C'(s)=\psi_C'(1-s)=0$ on $(0,m)$, 
using Lemma \ref{lem:costsu2inA}(\ref{lem:costsu2inA46bdryln}) for 
$x_1\not\in\Gstar$ we have
\begin{equation*}
\begin{split}
\left|\int_0^1 4u_2((x_1,0)+s\xi)\psi_C'(s) \ds\right|\le &
 \int_m^{1/2} \frac4s |u_2((x_1,0)+s\xi)-u_2((x_1,0)+(1-s)\xi)|\ds
 \le \frac18\theta\ln\frac1m. 
\end{split}
\end{equation*}
Recalling that $u_{x_1}^\xi(s)= (u_1+4u_2)((x_1,0)+s\xi)$, we see that 
\begin{equation}\label{eq34gttheatpsic1}
\frac34 \theta\ln\frac1m \le \left|\int_0^1 u_1((x_1,0)+s\xi) \psi_C'(s)\ds\right| \qquad\text{for any $x_1\in\mathcal C\setminus\Gstar$}.
\end{equation}
}

By Lemma \ref{lem:gammawuerfel}(\ref{lem:gammawuerfelest}) and $\eps \le\theta^2$ we have $I(u)\ge c\eps \calL^1([0,L-\xi_1]\setminus\mathcal C)$. 
By Lemma \ref{lem:costsu2inA} and $\eps\le \min\{\theta^2,\mu\theta^2\}$ we have $I(u)\ge c \eps \calL^1(\Gstar)$. 
Then 
\begin{equation*}
I(u)\ge c \eps q, \quad\text{  with }\quad q:=\calL^1(\Gstar\cup ([0,L-\xi_1]\setminus\mathcal C)).  
\end{equation*}
If $q\ge \frac12 L$ we are done. Otherwise we 
 pick $x_1\in[0,{q+\frac14}]\cap \mathcal C\setminus\Gstar$. We compute
\begin{equation*}
\begin{split}
\left| \int_0^1 (u_1((x_1,0)+s\xi)-u_1(0,s)) \psi_C'(s)\ds\right|
&\le \int_0^1\int_0^{x_1+\xi_1} |\partial_1 u_1|(t,s) |\psi_C'|(s) \dt\ds\\
& \le \|\psi_C'\|_{L^2((0,1))} (x_1+\frac14)^{1/2} I(u)^{1/2}
\le c m^{-1/2} (q+1)^{1/2} I(u)^{1/2}
\end{split}
\end{equation*}
where we used (\ref{eqestpsiclemmaln}). 
If the right-hand side is larger than $\frac14\theta\ln\frac1m$ then
$I(u)\ge c (q+1)^{-1} \theta^2 m\ln^2\frac1m$ and {(\ref{eqstep1energyest}) is proven.} Otherwise, with (\ref{eq34gttheatpsic1}) we obtain
\begin{equation*}
\frac12 \theta\ln\frac1m \le \Big|
 \int_0^1 u_1(0,s) \psi_C'(s)\ds\Big|.
\end{equation*}
{
We define $\Psi_C:\R^2\to\R$ by
\begin{equation*}
 \Psi_C(x):=\max\{\psi_B(|x-(0,1)|), \psi_B(|x|)\}.
\end{equation*}
One easily checks that $\Psi_C(0,t)=\psi_C(t)$ and
$\Psi_C(t,0)=\Psi_C(t,1)=\psi_B(t)$ for $t\in(0,1)$, $\Psi_C=0$ on the rest of 
the boundary of $F_1:=(-1,1)\times(-1,2)\setminus(0,1)^2$. Further, $\|\nabla\Psi_C\|_{L^2(\R^2)}^2\le c\ln\frac1m$.

By Lemma \ref{lemmaystin} with $c_*=2^{-4}$ and $\ell=2L$, either $I(u)\ge c\min\{\mu\theta^2\ln\frac1m, \mu\theta^2\ln(3+\frac\theta\mu),\mu\theta^2\ln(3+L)\}$ and (\ref{eqstep1energyest}) holds, so that  we are done, or
\begin{equation*}
\left| \int_0^1 \psi_B'(s)(u_1(s,1)-u_1(s,0)) \ds\right| \le \frac1{16}\theta\ln\frac1m.
\end{equation*}
We 
 conclude that (recalling Lemma  \ref{lemmah12app})
\begin{equation*}
\frac18 \theta\ln\frac1m \le \int_{\partial F_1} u_1 \partial_\tau \Psi_C \dcalH^1
\le c \|\nabla\Psi_C\|_{L^{2}(F_1)} \|\nabla u_1\|_{L^{2}(F_1)}
\le c \ln^{1/2}\frac1m
\mu^{-1/2} I(u)^{-1/2} 
\end{equation*}
which implies  $I(u)\ge c\mu\theta^2\ln \frac1m$ and concludes the proof of (\ref{eqstep1energyest}).
}

{\bf Step 2. Choice of the parameters.}\\
 {
We first remark that
\begin{equation}\label{eqminabx1}
  \min_{x\ge0} \big[ ax+\frac{b}{x+1}\big] =\min_{x\ge0} \big[a(x+1)+\frac{b}{x+1}\big] -a\ge 2a^{1/2}b^{1/2}-a\ge a^{1/2}b^{1/2}
  \quad\text{ whenever  $0<a\le b$ }.
\end{equation}
}

{
We use (\ref{eqstep1energyest}) with $m:=\max\{\theta,m_2\frac{\mu^2\theta^2}{\eps}\}\in[\theta,m_2]$.
If the first, the second or the last term are the smallest, the proof is concluded.  
Assume that the smallest is the third or the fourth one.
We remark that $\eps\le\mu\theta^2$ and $\mu\le\theta$ imply $\eps\le\theta^3$, hence $\eps\le \theta^2m\ln^2\frac1m$. Optimizing 
the third term in (\ref{eqstep1energyest})
in $q$  with (\ref{eqminabx1}) leads to
\begin{equation*}
  \frac1c I(u)\ge 
  \min\left\{
  {\eps^{1/2}\theta m^{1/2} }\ln\frac1m,
  \mu\theta^2 \ln\frac1m
  \right\}.
 \end{equation*}
 }
At this point we distinguish two cases. If 
{$\eps\le m_2\mu^2\theta$ then}
$m=m_2\frac{\mu^2\theta^2}{\eps}$, we insert and obtain 
$I(u)\ge c\mu\theta^2 \ln \frac{\eps}{m_2 \mu^2\theta^2}
\ge c \mu\theta^2 \ln (3+\frac{\eps}{\mu^2\theta^2})$, which concludes the proof.
If instead $m_2\mu^2\theta<\eps$ then $m=\theta$, and the above estimate gives
$ I(u)\ge 
 c\mu\theta^2 \ln\frac1\theta
 \ge c\mu\theta^2 \ln(3+\frac1{\theta^2}) $, which also concludes the proof.
\end{proof}

\begin{proof}[Proof of Proposition \ref{propinterp}]
{We first recall 
that by Lemma \ref{lem:gammawuerfel}(\ref{lem:gammawuerfelest}) we have, since $\eps\le\theta^2$,
\begin{equation}\label{eqsdfmujsth3}
\frac1c  I(u)\ge \mu\theta^2\ln\frac{L-\xi_1+1}{p+1} +\eps p.
 \end{equation}
The 
 interpolation estimate from Lemma \ref{lemmainterpolationestim} gives
\begin{equation}\label{eqinterpprop}
I(u)\ge c\min\{\eps^{1/2}\theta^{3/2},\mu\theta^2 p, \theta^2 p, \eps L\}.
\end{equation}
We treat the four cases separately.}

If the minimum in (\ref{eqinterpprop}) is $\eps L$, we are done. 

If the minimum in (\ref{eqinterpprop}) is $\mu\theta^2p$, then, 
as in (i) in the proof of  Prop. \ref{lem:logsammelnthetasmall},
\begin{equation*}
  \frac1c I(u)\ge \min_{p'\in[0,L-\xi_1]} \big(\mu \theta^2p' +\mu\theta^2\ln\frac{L-\xi_1+1}{p'+1}\big)
  =\mu\theta^2\ln(L-\xi_1+1)\ge c \mu\theta^2\ln(3+L),
 \end{equation*}
and we are done.

If the minimum in (\ref{eqinterpprop}) is $\theta^2p$, then {we can assume $\mu\ge 1$ and,}
as in (ii) in the proof of  Prop. \ref{lem:logsammelnthetasmall},
\begin{equation*}
  \frac1c I(u)\ge \min_{p'\in[0,L-\xi_1]} \left( \theta^2p'+\mu\theta^2\ln\frac{L-\xi_1+1}{p'+1}\right)
  \ge c\min\left\{\mu\theta^2\ln(3+L),\mu\theta^2\ln(3+\frac L\mu), {\theta^2L}\right\}.
 \end{equation*}
{Also in this case we are done. Indeed, 
recalling $\eps\le \theta^2$, we have 
{$\theta^2L\ge \eps L$ and}
$\eps^{1/2}\theta^{3/2}\le \theta^2\le \mu\theta^2$,
$\frac{\eps L}{\mu\theta^2}\le \frac{L}{\mu}$ and $\frac{\eps}{\mu^2\theta^2}\le \frac{1}{\mu^2}\le 1$, so that
$\mu\theta^2\ln(3+\frac{\eps L}{\mu\theta^2})+\mu\theta^2\ln(3+\frac{\eps}{\mu^2\theta^2})+\eps^{1/2}\theta^{3/2}
\le 4 \mu\theta^2\ln(3+\frac{L}{\mu})\le 4c I(u)$}.

We are left with the case that (\ref{eqinterpprop}) states $I(u)\ge c \eps^{1/2}\theta^{3/2}$.

We now show that (\ref{eqsdfmujsth3}) implies $I(u)\ge c \min\{\eps L, \mu\theta^2\ln (3+\frac{\eps L}{\mu\theta^2}),{\mu\theta^2(3+L)}\}$. Indeed, 
 the minimum of the expression in the right-hand side of (\ref{eqsdfmujsth3}) is attained at $p=0$, or at $p=L-\xi_1$, or at
 $p+1=\mu\theta^2/\eps$.
  If it is at $p=0$ then $I(u)\ge c\mu\theta^2\ln (L-\xi_1+1)\ge c\mu\theta^2\ln(3+L)$ and the proof is concluded.
  If it is at some $p\ge\frac15 L$  then $I(u)\ge c \eps p\ge c\eps L$ and the proof is concluded. 
  We are left with the case that 
   the first term 
  is at least $\mu\theta^2$ and  $p+1=\mu\theta^2/\eps$. Then, recalling the previous result from the interpolation estimate, we have
  \begin{equation}\label{eqendpf381}
 \frac1c I(u)\ge  \eps^{1/2}\theta^{3/2}+
 \mu\theta^2\ln(3+\frac{\eps L}{\mu\theta^2}).
  \end{equation}
  Let $m_2$ be as in  Lemma \ref{lemmabdryln}. {We next show that we can assume (with a constant $c$ depending on $m_2$) that
 \begin{equation}\label{eqendpf382}
\frac1c I(u)\ge   \min\left\{
\mu\theta^2 \ln(3+\frac1{\theta^2}),
  \mu\theta^2\ln(3+\frac\theta\mu),
{\mu\theta^2}\ln (3+\frac{\eps}{\mu^2\theta^2})
 \right\}.
 \end{equation}}
If at least one of 
 $m_2\le\theta$, 
  $\theta\le \mu$,
   $\eps\le\mu^2\theta^2$ holds then 
   {the minimum in (\ref{eqendpf382}) is below $c\mu\theta^2$ and  (\ref{eqendpf382})  follows from  (\ref{eqendpf381}).
   If instead} $\theta<m_2$, $\mu<\theta$, $\mu^2\theta^2<\eps$ then {Lemma \ref{lemmabdryln} shows that 
   either $\frac1c I(u)\ge\min\{\eps L, \mu\theta^2\ln(3+L)\}$, and we are done, or (\ref{eqendpf382})  holds.}
   
{It remains to show that (\ref{eqendpf381}) and (\ref{eqendpf382}) imply the assertion.
This is clear if $\eps\le \mu^2$, since in this case the first term in (\ref{eqendpf382})  is not relevant
{and $\eps^{1/2}\theta^{3/2}\ge0$.}
If instead $\mu^2<\eps$, {then $\mu^2\theta^2<\eps$ and therefore $(\frac{\mu^2\theta^2}{\eps})^{1/4}\ln(3+\frac{\eps}{\mu^2\theta^2})\le C$. This implies
$\mu\theta^2\ln(3+\frac{\eps}{\mu^2\theta^2})\le \mu^{1/2}\theta^{3/2}\eps^{1/4}C\le C \eps^{1/2}\theta^{3/2}$,} so that
 (\ref{eqendpf381}) concludes the proof.
} 
\end{proof}
{
\begin{rmrk}
In the last step of the proof, we just removed the scaling $\eps^{1/2}\theta^{3/2}$ in the regime $\mu\theta^2\ln (3+\frac{\eps L}{\mu\theta^2}) +
   \mu\theta^2\ln(3+\frac\theta\mu) +\eps^{1/2}\theta^{3/2}$. We note that this does not change the scaling behaviour of our lower bound: 
We distinguish two possibilities. If $\eps \leq\mu^2\theta$, then $\eps^{1/2}\theta^{3/2}\leq \mu\theta^2$, and we have 
 \[\mu\theta^2\ln (3+\frac{\eps L}{\mu\theta^2}) +
   \mu\theta^2\ln(3+\frac\theta\mu)\leq \mu\theta^2\ln (3+\frac{\eps L}{\mu\theta^2}) +
   \mu\theta^2\ln(3+\frac\theta\mu) +\eps^{1/2}\theta^{3/2}\leq 2\left( \mu\theta^2\ln (3+\frac{\eps L}{\mu\theta^2}) +
   \mu\theta^2\ln(3+\frac\theta\mu) \right).\]
 Otherwise, if $\eps>\mu^2\theta$, we have as in the proof of the upper bound (Theorem \ref{th:upperbound} (d))
   \begin{eqnarray*}
   \mu\theta^2\ln (3+\frac{\eps L}{\mu\theta^2}) +
   \mu\theta^2\ln(3+\frac\theta\mu) +\eps^{1/2}\theta^{3/2}\geq  \mu\theta^2\ln (3+\frac{\eps L}{\mu\theta^2}) +
   \mu\theta^2\ln(3+\frac\theta\mu) \geq c \mu\theta^2\ln(3+L).
   \end{eqnarray*}
\end{rmrk}
}

\subsection{Conclusion of the lower bound}\label{sec:lbconclusion}

We finally bring together the bounds proven in the previous Sections to obtain the desired lower bound.

\begin{thrm}[Lower bound] \label{th:lowerbound}
For any $\eps>0$, $\mu>0$, $L\ge \frac12$, $\theta\in (0,\frac12]$ and {any $u\in\mathcal{X}$} we have
\begin{equation*}
\begin{split}
 I(u)\ge 
   c\mathcal{I}({\mu,\eps,\theta,L}),
\end{split}
\end{equation*}
where $\mathcal I$ was defined in Theorem \ref{th:main}.
\end{thrm}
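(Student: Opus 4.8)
The plan is to assemble the lower bound from the three main propositions proven above, namely Proposition \ref{lem:logsammelnthetasmall} (the regime $\min\{\theta^2,\mu\theta^2\}\le\eps$), Proposition \ref{lem:lbbranching} (the microstructure regime $\eps L^2\le\theta^2$ and $\eps L\lesssim\mu\theta^2\min\{\ln(3+\tfrac1{\theta^2}),\ln(3+\tfrac{\eps}{\mu^3\theta^2 L})\}$, applied with $\ell=2L$), and Proposition \ref{propinterp} (the regime $\eps\le\mu\theta^2$ and $\eps\le\theta^2$). First I would split the parameter space into the two cases $\min\{\theta^2,\mu\theta^2\}\le\eps$ and $\min\{\theta^2,\mu\theta^2\}>\eps$, i.e.\ $\eps\le\mu\theta^2$ and $\eps\le\theta^2$ simultaneously. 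In the first case, Proposition \ref{lem:logsammelnthetasmall} gives $I(u)\ge c\min\{\mu\theta^2\ln(3+L),\theta^2 L,\mu\theta^2\ln(3+\tfrac L\mu)+\eps\theta\}$; since each of these three is one of the eight regimes entering $\mathcal I$, this immediately yields $I(u)\ge c\,\mathcal I(\mu,\eps,\theta,L)$, and that case is done.

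In the second case ($\eps\le\mu\theta^2$ and $\eps\le\theta^2$) I would apply Proposition \ref{propinterp} unconditionally; it gives
\[
I(u)\ge c\min\Big\{\eps L,\ \mu\theta^2\ln(3+L),\ \mu\theta^2\ln(3+\tfrac{\eps L}{\mu\theta^2})+\eps^{1/2}\theta^{3/2}+\mu\theta^2\ln(3+\tfrac\eps{\mu^2\theta^2}),\ \mu\theta^2\ln(3+\tfrac{\eps L}{\mu\theta^2})+\mu\theta^2\ln(3+\tfrac\theta\mu)\Big\}.
\]
Most of these terms are already regimes in $\mathcal I$ (single truncated branching, corner laminate, affine), except the bare $\eps L$, which needs to be upgraded to one of $\eps^{2/3}\theta^{2/3}L^{1/3}+\eps L$, $\mu^{1/2}\eps^{1/2}\theta L^{1/2}(\ln(3+\tfrac1{\theta^2}))^{1/2}+\eps L$, or $\mu^{1/2}\eps^{1/2}\theta L^{1/2}(\ln(3+\tfrac\eps{\mu^3\theta^2 L}))^{1/2}+\eps L$ when the minimum in Proposition \ref{propinterp} is attained at $\eps L$. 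I therefore need a sub-case analysis: if $\eps L^2>\theta^2$ (the ``$L$ large'' case), one shows by elementary algebra that $\eps L\gtrsim\eps^{2/3}\theta^{2/3}L^{1/3}$, so $\eps L$ already dominates the branching regime and the bound from Proposition \ref{propinterp} is enough. If instead $\eps L^2\le\theta^2$, then I invoke Proposition \ref{lem:lbbranching} with $\ell=2L$; its hypothesis $\eps\ell^2\le\theta^2$ holds, and one checks that whenever the remaining hypothesis $\eps\ell\le\mu\theta^2\min\{\ln(3+\tfrac1{\theta^2}),\ln(3+\tfrac\eps{\mu^3\theta^2\ell})\}$ fails, the term $\eps L$ is already comparable to $\mu^{1/2}\eps^{1/2}\theta L^{1/2}(\ln(3+\cdot))^{1/2}$ (since failure of that inequality means $\eps L\gtrsim\mu\theta^2\ln(3+\cdot)$, hence $\eps L=(\eps L)^{1/2}(\eps L)^{1/2}\gtrsim(\mu\theta^2\ln(3+\cdot)\cdot\eps L)^{1/2}$), so again the bound follows; and when the hypothesis holds, Proposition \ref{lem:lbbranching} delivers $I_{\tilde\Omega_{2L}}(u)\ge c\min\{\mu\theta^2\ln(3+L),\mu\theta^2\ln(3+\tfrac\theta\mu),\eps^{2/3}\theta^{2/3}L^{1/3},\mu^{1/2}\eps^{1/2}\theta L^{1/2}(\ln(3+\tfrac1{\theta^2}))^{1/2},\mu^{1/2}\eps^{1/2}\theta L^{1/2}(\ln(3+\tfrac\eps{\mu^3\theta^2 L}))^{1/2}\}$, all of whose terms are dominated by the corresponding eight regimes of $\mathcal I$ (the last two up to the harmless additive $\eps L$, which only makes $\mathcal I$ larger, hence the inequality survives).

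The main obstacle I anticipate is purely bookkeeping: verifying that the finite list of outputs from the three propositions, combined via the various case distinctions, covers all of $(\mu,\eps,\theta,L)\in(0,\infty)\times(0,\infty)\times(0,\tfrac12]\times[\tfrac12,\infty)$ without gaps, and that in each borderline situation the ``missing'' regime is genuinely controlled by an available one through an elementary inequality of the type $\ln(3+ab)\le\ln(3+a)+\ln(3+b)$, $x^{1/2}y^{1/2}\le x+y$, or $(\ln(3+x))^{1/2}\le\ln(3+x)$ together with $\ln(3+x)\ge1$. There is no deep analytic difficulty left at this stage — all the real work is in Propositions \ref{lem:logsammelnthetasmall}, \ref{lem:lbbranching}, \ref{propinterp} — so the proof of Theorem \ref{th:lowerbound} reduces to carefully chaining these estimates together and checking the algebra of the eight regimes. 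I would organize it as a short case split ($\eps\ge\min\{\theta^2,\mu\theta^2\}$ versus the complementary case, and within the latter $\eps L^2\gtrless\theta^2$ and the auxiliary inequality of Proposition \ref{lem:lbbranching} holding or not), citing the three propositions and closing each branch with one or two lines of elementary comparison.
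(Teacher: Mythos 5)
Your proposal follows essentially the same route as the paper's proof: the same initial split via Proposition \ref{lem:logsammelnthetasmall}, then Proposition \ref{propinterp}, the same sub-cases $\eps L^2\gtrless\theta^2$ and the condition of Proposition \ref{lem:lbbranching} (with $\ell=2L$) holding or failing, closed by the same elementary comparisons. One small correction: the additive $\eps L$ in the branching/laminate/corner-laminate regimes is not "harmless because it only makes $\mathcal{I}$ larger" (a larger $\mathcal{I}$ makes the bound harder, not easier); the correct fix, which is what the paper does, is to recall that in this branch Proposition \ref{propinterp} already gives $I(u)\ge c\,\eps L$ and to add this to the output of Proposition \ref{lem:lbbranching}, also using $\ln(3+x)\le 2+x$ to check that $\eps L+\mu\theta^2\ln(3+\frac{\theta}{\mu})$ controls the corner-laminate regime $\mu\theta^2\ln(3+\frac{\eps L}{\mu\theta^2})+\mu\theta^2\ln(3+\frac{\theta}{\mu})$, since $\mu\theta^2\ln(3+\frac{\theta}{\mu})$ alone is not one of the eight regimes.
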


\begin{proof}
{We distinguish several cases. }
\begin{enumerate}
 \item  Assume {that} at least one of $\theta^2\le \eps$ and $\mu\theta^2\le \eps$ holds.
Then Proposition \ref{lem:logsammelnthetasmall}
gives
\begin{equation*}
 I(u)\ge c \min\left\{\mu\theta^2\ln (3+L), \theta^2L, \mu\theta^2\ln (3+\frac L\mu)+\eps\theta\right\},
\end{equation*}
and the proof is concluded.

 \item
From now on we have $\eps\le\theta^2$ and $\eps\le\mu\theta^2$.

We start by applying 
Proposition \ref{propinterp}, see also the remark afterwards. This   
gives that 
\begin{eqnarray*}
 I(u)\ge  c \min\left\{\eps L,\mu\theta^2 \ln(3+L),  \mu\theta^2\ln (3+\frac{\eps L}{\mu\theta^2}) +
   \eps^{1/2}\theta^{3/2} + 
   \mu\theta^2\ln (3+\frac{\eps}{\mu^2\theta^2}), 
   \mu\theta^2\ln (3+\frac{\eps L}{\mu\theta^2}) +
   \mu\theta^2\ln(3+\frac\theta\mu) \right\}.
\end{eqnarray*}
Note that the proof is concluded
unless the first term is the smallest.
Assume now that 
\begin{equation}\label{eqtheolbepsl}
 I(u)\ge c \eps L.
\end{equation}

If  $\theta^2\le \eps (2L)^2$, then
\begin{equation*}
 \frac1c I(u)\ge 2 \eps L 
 =\eps L + \eps^{2/3}L^{1/3}(\eps L^2)^{1/3}
 \ge \eps L + \frac12 \eps^{2/3}\theta^{2/3}L^{1/3}
\end{equation*}
concludes the proof. \\[.2cm]
In the following we can assume {that} $\eps (2L)^2<\theta^2$ and $\eps\le\mu\theta^2$ and (\ref{eqtheolbepsl}) {hold}.
 We distinguish more subcases, depending on the competition between the interfacial energy and the austenite elasticity. Specifically, the critical condition is whether
\begin{equation}\label{eqellessmutheta}
{2\eps L}\le \mu\theta^2 \min\left\{\ln(3+\frac{\eps}{{2\mu^3\theta^2L}}),
  \ln(3+\frac{1}{\theta^2})\right\}
\end{equation} 
holds or not. \\
Assume that (\ref{eqellessmutheta}) does not hold.  Then
  \begin{equation*}
2\eps L \ge   \min\left\{
 \mu^{1/2}\varepsilon^{1/2}\theta L^{1/2}  (\ln (3+ \frac 1 {\theta^{2}}))^{1/2}+\varepsilon L, 
 \mu^{1/2}\varepsilon^{1/2}\theta L^{1/2} (\ln (3+ \frac{\varepsilon}{ \mu^{3}\theta^{2}{L}}))^{1/2}+\varepsilon L\right\}
\end{equation*}
and with (\ref{eqtheolbepsl}) the proof is concluded.\\[.2cm]
Finally, assume that (\ref{eqellessmutheta}) holds. In this situation 
we can use Proposition \ref{lem:lbbranching} with $\ell=2L$. Recalling (\ref{eqtheolbepsl}), this gives
    \begin{equation*}
    \begin{split}
  \frac1c I(u) \geq \eps L + \min 
  \Big{\{}
&{ \mu \theta^2 \ln(3+L)}, 
  {\mu\theta^2\ln(3+\frac\theta\mu)},
{\varepsilon^{2/3}  \theta^{2/3}L^{1/3}},\\
&\mu^{1/2}\varepsilon^{1/2} 
{ \theta}L^{1/2} (\ln (3+ \frac 1 {\theta^2}))^{1/2}, \mu^{1/2}\varepsilon^{1/2}{\theta} L^{1/2}  (\ln (3+ \frac \varepsilon {\mu^3\theta^2{L}}))^{1/2}
  \Big{\}}.
         \end{split}
 \end{equation*}
We remark that the term $\mu\theta^2\ln(3+\frac\theta\mu)$ can be dropped. Indeed, if $\theta\ge \mu L$ then it is larger than $\ln(3+L)$. If instead $\theta<  \mu L$, then it is equivalent to the term $\mu\theta^2\ln(3+\frac{\eps L}{\mu\theta^2}) + \mu\theta^2\ln(3+\frac\theta\mu)+\eps^{1/2}\theta^{3/2}$, that we already included before. Indeed, in this case
$\eps^{1/2}\theta^{3/2}\le \eps^{1/2}L^{1/2}\mu^{1/2}\theta\le \eps L+\mu\theta^2$
 and, using $\ln(3+x)\le 2+x$ in the first term,
\[
 \mu\theta^2\ln(3+\frac{\eps L}{\mu\theta^2}) 
 + \mu\theta^2\ln(3+\frac\theta\mu) + \eps^{1/2}\theta^{3/2}
 \le 2\mu\theta^2+\eps L+ \mu\theta^2\ln(3+\frac\theta\mu)+\eps L +\mu\theta^2
 \le 4 \left( \mu\theta^2\ln(3+\frac\theta\mu)+\eps L\right).
\]
This concludes the proof.
\end{enumerate}
\end{proof}

\section*{Acknowledgements}
The authors are very grateful to an anonymous referee for the careful reading and the many useful comments, which {led} to a substantial improvement of the paper.



\end{document}